\date{}
\newtheorem{theor}{Theorem}[section]
\newtheorem*{theorem*}{Theorem}
\newtheorem{lemma}[theor]{Lemma}
\newtheorem{cor}[theor]{Corollary}
\newtheorem{defi}[theor]{Definition}
\newtheorem{prop}[theor]{Proposition}
\theoremstyle{definition}
\newtheorem{rem}[theor]{Remark}
\theoremstyle{plain}
\newcommand{\N}{\mathbb{N}}
\newcommand{\R}{\mathbb{R}}
\def\Prob{{\mathbb P}}
\newcommand{\supp}{{\rm supp\,}}
\newcommand{\length}[1]{{\rm len}(#1)}
\def\col{{\rm col}}
\def\row{{\rm row}}
\def\dist{{\rm dist}}
\def\tuple{{\mathcal T}}
\def\mult{{\rm mult\,}}
\def\disc{{\rm disc\,}}
\def\Adj{{\rm Adj}}
\def\ione{{\bf i1}}
\def\itwo{{\bf i2}}
\def\jone{{\bf j1}}
\def\jtwo{{\bf j2}}
\def\mdata{{\mathcal D_m}}
\def\gconst{{\mathfrak n}}
\def\cconst{{\mathfrak m}}
\def\zconst{{\mathfrak z}}
\definecolor{oooo}{HTML}{ED7D31}
\newcommand\chng[1]{{#1}}      
\title[Sharp Poincar\'e and log-Sobolev inequalities for the switch chain]{Sharp Poincar\'e and log-Sobolev inequalities for the switch chain on regular bipartite graphs}
\author{
Konstantin Tikhomirov
\and
Pierre Youssef
}
\address{
\medskip
\noindent
Konstantin Tikhomirov,
School~of Math.,
GeorgiaTech,
686 Cherry street,
Atlanta, GA 30332.
\texttt{\small
e-mail:   ktikhomirov6@gatech.edu}
}
\address{
\medskip
\noindent
Pierre Youssef, 
Mathematics, Division of Science, New York University Abu Dhabi, UAE.
\texttt{\small
e-mail:  yp27@nyu.edu}}
\def\R{{\mathbb R}}
\def\N{{\mathbb N}}
\def\trel{{t_{\rm rel}}}
\newcommand{\Perfmatch}{\mathcal{C}_{n,d}}
\newcommand{\Expset}{\mathcal{R}_{n,d}}
\newcommand{\Ugly}{\mathcal{U}_{n,d}}
\newcommand{\Permset}{\mathcal{S}_{nd}}
\def\BipGSet{\Omega^B}
\def\MultBipGSet{{\mathcal M}^B}
\def\ConfBipGSet{\Omega^{B\,C}}
\def\SNeigh{{\mathcal S\mathcal N}}
\def\categ{{\rm Cat}_{n,d}}
\def\tmix{{t_{\rm mix}}}
\def\Prob{{\mathbb P}}
\def\Exp{{\mathbb E}}
\def\Dir{{\mathcal E}}
\def\cf{\mathcal{Q}}
\newcommand{\Ent}{{\rm Ent}}
\newcommand{\Var}{{\rm Var}}
\newcommand{\Psimple}{\pi_{u}}
\newcommand{\Pconfig}{\pi_{BC}}
\newcommand{\Pperm}{\pi}
\def\supp{{\rm supp\, }}
\def\neigh{{\mathcal N}}
\def\dist{{\rm dist}}
\def\cov{{\rm Cov}}
\def\antiexp{{\mathcal Z}}
\tikzset{
	My Style/.style={
        circle,
        draw,
        fill          = black!50,
        inner sep     = 0pt,
        minimum width =4 pt
    }   
}
\newcommand{\eyepic}{
\begin{tikzpicture}[thick,scale=1.9,->,
                   shorten >=2pt+0.5*\pgflinewidth,
                   shorten <=2pt+0.5*\pgflinewidth,
                        ]
\path[draw] 
       node [My Style] at (.2,0) {}  
       node [My Style] at (1.2,1) {} 
       node [My Style] at (.2,1) {}  
       node [My Style] at (1.2,0) {} ;
        \node at (0,0) {$i'$};
        \node at (0,1) {$i$};
        \node at (1.4,0) {$j'$};
        \node at (1.4,1) {$j$};

\begin{scope}	[arrows={-angle 60}]
    \draw (.2,0) -- (1.2,0) ; 
    \draw (.2,1) -- (1.2,1) ;
\end{scope}

\begin{scope}   [arrows={-angle 60}, dashed]   
     \draw (.2,0) -- (1.2,1)  ; 
     \draw (.2,1) -- (1.2,0)  ;
\end{scope}

\end{tikzpicture}
}
\begin{document}

\maketitle

\begin{abstract}
Consider the switch chain on the set of $d$-regular bipartite graphs on $n$ vertices with
$3\leq d\leq n^{c}$, for a small universal constant $c>0$.   
We prove that the chain satisfies a Poincar\'e inequality with a constant of order $O(nd)$; 
moreover, when $d$ is fixed, we establish a log-Sobolev inequality for the chain with a constant of order $O_d(n\log n)$.
We show that both results are optimal.
The Poincar\'e inequality implies 
that in the regime $3\leq d\leq n^c$ the mixing time of the switch chain is at most $O\big((nd)^2 \log(nd)\big)$,
improving on the previously known bound $O\big((nd)^{13} \log(nd)\big)$ due to Kanan, Tetali and Vempala \cite{KTV} and $O\big(n^7d^{18} \log(nd)\big)$ obtained by Dyer et al. \cite{Dyer-arxiv}. 
The log-Sobolev inequality that we establish for constant $d$
implies a bound $O(n\log^2 n)$ on the mixing time of the chain which, up to the $\log n$ factor, 
captures a conjectured optimal bound.
Our proof strategy relies on building,
for any fixed function on the set of $d$-regular bipartite simple
graphs, an appropriate extension to a function on the set of multigraphs given by the configuration model.
We then establish a comparison procedure with the well studied random transposition model in order to obtain the corresponding functional 
inequalities. While our method falls into a rich class of 
comparison techniques for Markov chains on different state 
spaces, the crucial feature of the method --- dealing with chains with a large distortion between their stationary measures
--- is a novel addition to the theory.
\end{abstract}
\smallskip
\noindent \textbf{Keywords.} Switch chain, random regular graph, mixing/relaxation time, Poincar\'e and log-Sobolev inequalities. 
\tableofcontents

\section{Introduction}
Regular graphs or more generally graphs with predefined degree sequences have been popular in applications such as
network analysis, and the active study of these models over past decades
has spawned a large amount of research literature. 
Besides their practical importance, the study of those graphs is interesting from a purely theoretical viewpoint. 
One of the basic problems is sampling uniformly at random from the set of graphs with a predefined degree sequence. 
A conventional method for obtaining an exact uniform sampler is through the use of the \textit{configuration model} \cite{BC,B} (see also \cite[Section~11.1]{Frieze-book}). 
However, a serious drawback in this approach is that the configuration model tends to create multiple edges and the probability of it being simple 
decays very fast as the degree grows (see for example \cite{W-survey}). 
A number of research papers has appeared with algorithms intended to sample regular graphs uniformly, either exactly or approximately.
We refer, in particular, to \cite{MW, W, F, JS0, JS, BKSaberi, SW, KV, GaoW, JansonUG}. 

A general method to sample random elements from some
set of objects is via rapidly mixing Markov chains. 
In the context of graphs with predefined degree sequences, a popular Markov chain --- {\it the switch chain} --- has been extensively studied \cite{KTV,CDG,Greenhill,Greenhill2,MES,EKMS,EMMS,BHS,AK,unified}. It relies on a local operation called {\it the simple switching} which can be described as follows: given a graph $G$ with a predefined degree sequence, take two
non-incident edges $i\to_G j$ and $i'\to_G j'$, and replace them by $i'\to j$ and $i\to j'$
whenever this doesn't introduce multiple edges (see Figure~\ref{fig: switch}).
Note that the simple switching keeps the degree sequence of the graph invariant.
The simple switching was introduced (for general graphs) by Senior \cite{Senior} (in that paper, it was called ``transfusion''); in
the context of regular graphs it was first applied by McKay \cite{Mckay}, and since then has proved to be very useful in problems requiring
certain information about the structure of a typical regular graph, essentially reduced to estimating cardinalities of some subsets of regular graphs.
As just one of such examples, we would like to mention a line of research dealing with the limiting spectral distribution
of random directed regular graphs \cite{Cook-inv,Cook-circ,BCZ,LLTTY:15,LLTTY-rank,LLTTY-sing,LLTTY-ker,LLTTY-circ}.

\begin{figure}[t]
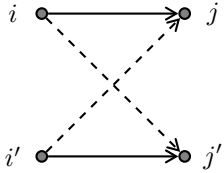


\begin{center}

\eyepic

\end{center}
\caption{The simple switching.}\label{fig: switch}
\end{figure}
Markov chains based on switchings have been introduced by Besag and Clifford \cite{BClifford} for bipartite graphs, Diaconis and Sturmfels \cite{DS3} for contingency tables, and Rao et al. \cite{RJB} for directed graphs.  

Despite the enormous amount of study of the switch chain on various models of graphs, the mixing time is still to be determined exactly. The known polynomial bounds are very far from the truth as we will discuss later on.
One of our motivations was to initiate a line of research
aiming at reaching the optimal mixing time estimates.  
Our focus in this paper will be on the switch chain on bipartite regular graphs.
We leave the study of the uniform undirected $d$--regular model to future works.

For any $n\in \N$ and any $2\leq d\leq n$, let $\BipGSet_n(d)$
be the set of all bipartite $d$--regular graphs without multiple edges on $[n^{(\ell)}]\sqcup[n^{(r)}]$ equipped with the uniform measure $\Psimple$. 
The switch chain is defined as follows: for every graph $G\in \BipGSet_n(d)$, we pick two edges of $G$ independently uniformly at random
(there are $nd(nd-1)$ choices for the ordered pair of the edges).
If the simple switching operation on the edges is admissible (i.e.\ the edges are not incident and the switching
does not introduce multiedges)
then the switching defines the transition to another graph. Otherwise, if the switching
is not admissible, we stay at the same graph $G$. For any two distinct graphs $G_1,G_2$, the transition probability from $G_1$ to $G_2$ takes one of the two values
$0$ or $\big(nd(nd-1)/2\big)^{-1}$.
Accordingly, the Markov generator of the switch chain is given by
$$
Q_u(G_1,G_2):=\begin{cases}
-\frac{|\neigh(G_1)|}{nd(nd-1)/2},&\mbox{if $G_1=G_2$};\\
\big(nd(nd-1)/2\big)^{-1},&\mbox{if $G_2\in \neigh(G_1)$};\\
0,&\mbox{otherwise.}
\end{cases}
$$
Here, $\neigh(G_1)$ denotes the set of all graphs in $\BipGSet_n(d)$ which can be obtained
from $G_1$ by the simple switching operation.  It is easy to see that the chain defined in this way has uniform stationary distribution and that it is reversible i.e. $\Psimple(G_1)Q_u(G_1,G_2)= \Psimple(G_2)Q_u(G_2,G_1)$. The {\it mixing time $\tmix(\varepsilon)$} is formally defined as
$$
\tmix(Q_u,\varepsilon)=\max_{G\in \BipGSet_n(d)} \min\{t\geq 0:\, \Vert P_t(G,\cdot)-\Psimple\Vert_{TV}\leq \varepsilon\},
$$
where $P_t=e^{tQ_u}$ refers to the underlying Markov semi-group and $\Vert\cdot\Vert_{TV}$ denotes the total variation distance. 

In an influential work, Kannan, Tetali and Vempala \cite{KTV} studied the mixing time of the switch chain on regular bipartite graphs. They showed that the conductance of the chain is at least of order $(nd)^{-6}$, which combined with the method of Jerrum and Sinclair \cite{JS-conductance}  implied that for any $\varepsilon\in (0,1)$
\begin{equation}\label{eq: mixing KTV}
\tmix(Q_u,\varepsilon)\leq  C(nd)^{12}\big( \log \vert \BipGSet_n(d)\vert +\log \varepsilon^{-1}\big),
\end{equation}
for some universal constant $C$. When $d$ is small enough, Dyer et al. \cite{Dyer-arxiv} improved on this bound by obtaining 
\begin{equation}\label{eq: mixing Dyer}
\tmix(Q_u,\varepsilon)\leq  Cn^6d^{17}\big( nd\log (nd) +\log \varepsilon^{-1}\big),
\end{equation}
for some universal constant $C$. 
The work of \cite{KTV} was followed by a number of results on the mixing time of 
the switch chain for several models.
To name a few, the switch chain was studied for regular undirected graphs \cite{CDG}, regular directed graphs \cite{Greenhill}, half-regular bipartite graphs \cite{MES}, irregular graphs and digraphs \cite{GS}. We refer to \cite{unified} for a recent unified approach to these results and a complete account of the references. In most of these works, a multicommodity flow argument \cite{Sinclair} was used to estimate the mixing time. As we will see below, our results based on establishing functional inequalities will imply a major improvement, under some growth condition on $d$, of the estimate of \cite{KTV} for bipartite graphs. We believe that our approach can be extended to cover other regular models. 

Functional inequalities have proved to be powerful tools to obtain bounds on the mixing time of Markov chains. 
However, those inequalities are important and interesting on their own right
in view of their close relation to the concentration of measure phenomenon (see \cite{ledoux}).
Our aim in this paper is to derive optimal Poincar\'e and log-Sobolev inequalities for the switch chain on regular bipartite graphs. 
In the context of Markov chains, these inequalities aim at comparing a Dirichlet form associated with the chain 
to the variance or entropy associated with its stationary measure. Given a probability measure $\mu$ on a finite state space $S$ and a reversible Markov generator $Q$, the associated 
Dirichlet form is given by 
$$
\Dir_{\mu}(f,f):=-\langle Qf,f\rangle_{\mu}= -\Exp_{\mu}[fQf]=\frac{1}{2}\sum_{x, y\in S}Q(x,y)(f(x)-f(y))^2\mu(x),
$$
where $Qf(x)=\sum_{y\in S} Q(x,y)f(y)$ 
for any function $f:S\to \R$, and $\Exp_\mu$ refers (here and in the rest of the paper) to the integral with respect to the measure $\mu$. We say that 
$(S, \mu, Q)$ satisfies a {\it Poincar\'e inequality with constant $\alpha$} if for any function $f:\, S\to \R$ 
$$
{\rm Var}_\mu(f):=\frac12 \sum_{x,y\in S} \mu(x)\mu(y)(f(x)-f(y))^2 \leq \alpha\, \Dir_{\mu}(f,f).
$$
Similarly, $(S, \mu, Q)$ satisfies a  {\it Logarithmic Sobolev inequality (LSI) with constant $\alpha$} if for any function $f:\, S\to (0,\infty)$ 
$$
{\rm Ent}_\mu(f^2):= \Exp_\mu\big[f^2(\log f^2- \log \Exp_\mu f^2)\big] \leq \alpha\, \Dir_{\mu}(f,f). 
$$
These functional inequalities allow to bound 
the average global variations of a function (the left hand side of the inequalities) 
by its average local variations, where the notion of ``local'' is dictated by the Markov generator. 
We will refer to the best value of $\alpha$ in the above inequalities as the Poincar\'e and Log-Sobolev constant\footnote{In the literature, it is the inverse of $\alpha$ which is sometimes referred to as the Poincar\'e and Log-Sobolev constant.}, respectively. 
It is a classical fact (see, for example, \cite{book-peres}) that a Poincar\'e inequality provides, in some cases, a control on the relaxation time of a reversible Markov chain as the latter is defined as the inverse of absolute
spectral gap of the chain, while the Poincar\'e constant coincides with the (inverse) spectral gap. Moreover, we have 
\begin{equation}\label{eq: rel-mix}
\big(\trel(Q)-1\big)\log \big(\frac{1}{2\varepsilon}\big)\leq \tmix(Q,\varepsilon)\leq \log \big(\frac{1}{2\varepsilon \mu_{\min}}\big)\, \trel(Q),
\end{equation}  
and 
\begin{equation}\label{eq: mix-logsob}
\tmix(Q,\varepsilon)\leq \frac14 \alpha_{LS}(Q)\big( \log\log \frac{1}{\mu_{\min}}+\log\frac{1}{2\varepsilon^2}\big),
\end{equation}
where $\mu_{\min}=\min_{x\in S} \mu(x)$, $\trel(Q)$ denotes the relaxation time and $\alpha_{LS}(Q)$ the log-Sobolev constant. We refer to \cite{book-peres} for more on the relaxation time and \cite{DS1} for the relation between the logarithmic Sobolev inequality and the mixing time. 

\subsection{Main results}The main result of this paper is a sharp Poincar\'e inequality for the switch chain. 
\begin{theor}[Poincar\'e inequality for the switch chain]\label{th: poincare}
There exist positive universal constants $c, C$ such that the following holds. 
Let $n\in \N$ and $3\leq d\leq n^{c}$. Then $\big(\BipGSet_n(d),\Psimple, Q_u\big)$ satisfies a Poincar\'e inequality with constant $Cnd$. 
In other words, for any $f:\, \BipGSet_n(d)\to \R^+$, we have 
$$
\Var_{\Psimple}(f)\leq Cnd\, \Dir_{\Psimple}(f,f). 
$$
\end{theor}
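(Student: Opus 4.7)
The plan is to transfer the Poincar\'e inequality from the random transposition chain on $S_{nd}$ to the switch chain on $\BipGSet_n(d)$ via the configuration model. A permutation $\sigma\in S_{nd}$ encodes a bipartite $d$--regular multigraph by viewing each vertex as a block of $d$ half-edges and interpreting $\sigma$ as the matching between the left and right half-edges; under this encoding a transposition of two left half-edges corresponds exactly to a switching operation on the underlying (multi)graph. The random transposition chain on $S_{nd}$ is classically known to satisfy a Poincar\'e inequality with constant of order $nd$, which matches the target bound. Hence it suffices to compare the switch chain on simple graphs with the transposition chain on the configuration model.

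The first main step is to construct, for a given $f:\BipGSet_n(d)\to\R$, an extension $\tilde f:S_{nd}\to\R$ that agrees with $f$ on the permutations encoding simple graphs, and on permutations encoding multigraphs is defined as a (possibly randomized) average of $f$-values over a collection of nearby simple graphs obtained by locally resolving each multi-edge via a short sequence of switchings. Using known counting estimates for the configuration model in the regime $d\leq n^c$, this scheme can be calibrated so that if $\sigma$ is drawn uniformly from $S_{nd}$ then the random simple graph sampled in the definition of $\tilde f(\sigma)$ is close to uniformly distributed on $\BipGSet_n(d)$. This yields a variance comparison of the form $\Var_\pi(\tilde f)\gtrsim \Var_{\Psimple}(f)$, where $\pi$ is the uniform measure on $S_{nd}$. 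The second main step is a Dirichlet form comparison $\Dir_\pi(\tilde f,\tilde f)\lesssim \Dir_{\Psimple}(f,f)$ (the left-hand side being taken with respect to the random transposition chain). The key point is to decompose the change in $\tilde f$ across a single half-edge transposition into a bounded number of evaluations of $f$ on pairs of simple graphs that differ by a single admissible switching, each of which contributes to $\Dir_{\Psimple}(f,f)$. Combining both comparisons with the Poincar\'e inequality for random transpositions yields the desired bound $\Var_{\Psimple}(f)\leq C nd\,\Dir_{\Psimple}(f,f)$.

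The main obstacle is the enormous distortion between the multigraph measure $\pi$ and the simple graph measure $\Psimple$: within the configuration model the weight of a configuration decays like $e^{-\Theta(M^{2})}$ or so in the number $M$ of multi-edges, and the fraction of simple configurations is itself exponentially small in $d$, so standard comparison techniques, which typically presume a bounded Radon--Nikodym derivative between the stationary measures of the two chains, cannot be applied directly. Designing the extension so that each multigraph is canonically paired with a nearby simple graph in a way that controls both the number of preimages of each $G\in \BipGSet_n(d)$ and the length of the switching paths used to transport transpositions is the technical heart of the paper. The restriction $d\leq n^c$ enters precisely here: it guarantees that, with high probability under $\pi$, the number of multi-edges of a random configuration is small enough for the resolution scheme to be well-behaved and for the contribution coming from the ``multi'' side of the model to remain subdominant in the Dirichlet form comparison.
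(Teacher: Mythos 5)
Your overall route is the same as the paper's (extend $f$ from $\BipGSet_n(d)$ to the configuration model, compare variances and Dirichlet forms, invoke the order-$nd$ Poincar\'e constant of random transpositions), but there is a genuine gap at the step you treat as a calibration issue: the variance lower bound $\Var_{\pi}(\tilde f)\gtrsim \Var_{\Psimple}(f)$ simply fails for an extension defined by averaging $f$ over nearby simple graphs, once $d$ grows with $n$. The point is not whether the simple graph sampled in your resolution scheme is close to uniform --- it is that averaging contracts: for a typical function (e.g.\ a random $\{0,1\}$-valued $f$), the average of $f$ over any large collection of ``resolved'' simple graphs concentrates at $\Exp_{\Psimple}f$, so $\tilde f$ is nearly constant on the multigraph part of the space, which carries all but an $e^{-(d-1)^2/2}$ fraction of the stationary mass by \eqref{eq: prob-simple}. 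Consequently $\Var_{\pi}(\tilde f)=o(1)\cdot\Var_{\Psimple}(f)$ for such $f$; this is exactly the heuristic the paper gives against the harmonic extension, and it applies verbatim to any averaging scheme, canonical pairing or not. Your parenthetical ``possibly randomized'' does not repair this: to restore the variance one must add an explicit fluctuation term $\xi_G\sqrt{w(G)}$ (the empirical standard deviation of $f$ on the $s$--neighborhood), where the weights $\xi_G$ form a Gaussian field engineered so that $\cov(\xi_{G_1},\xi_{G_2})\geq 1-4/\gconst$ for adjacent multigraphs (so the Dirichlet form does not blow up) while being sufficiently decorrelated for multigraphs whose adjacency matrices differ in many entries (so the variance survives). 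Constructing this field, together with the separate treatment of simple graphs with poor expansion (the set $\Expset(\zconst)$ and the rescaling used when \eqref{eq: condition-blow up-def} fails), is the technical heart of the variance comparison, and nothing in your proposal supplies it or identifies the need for it.

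A secondary but still substantive issue is your Dirichlet-form step: it is not true that a single transposition/switching on the multigraph side decomposes into a \emph{bounded} number of evaluations of $f$ along single admissible switchings. Only the ``perfect pairs'' of adjacent multigraphs admit a distance-one bijective matching of their $s$--neighborhoods; for the remaining pairs one must build canonical paths (``connections'') of length that can grow with the number of multiedges (up to order $d^2\log n$ here), and the real work is the congestion/counting argument showing that no edge of $\BipGSet_n(d)$ is used by too many such paths. So even granting an extension with the right variance, the comparison $\Dir_{\pi}(\tilde f,\tilde f)\lesssim\Dir_{\Psimple}(f,f)$ requires a path-counting analysis that your sketch elides rather than a local, bounded decomposition.
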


As we will show in Section~\ref{sec: preliminaries}, the above estimate is sharp. 
The constant $c$ in the above statement can be taken to be $1/1143$; we haven't tried to optimize its value.
We refer to the next subsection for a discussion of the restrictions on $d$ in our argument.
In view of the above, Theorem~\ref{th: poincare} (see also Remark~\ref{rem: lambda-min} below) asserts that
the relaxation time of the switch chain on regular bipartite graphs is of order $nd$. 
Moreover, applying \eqref{eq: rel-mix}, we deduce the following bound on the mixing time. 

\begin{cor}
There exist positive universal constants $c, C$ such that the following holds. Let $n\in \N$ and $3\leq d\leq n^{c}$. 
Then the relaxation and mixing time of the switch chain satisfy 
$$
\trel(Q_u)\leq Cnd\quad \text{and}\quad \tmix(Q_u,\varepsilon)\leq Cnd \big( \log \vert \BipGSet_n(d)\vert+\log(2\varepsilon^{-1})\big). 
$$
\end{cor}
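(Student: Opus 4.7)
The plan is that this corollary is essentially a direct readout from Theorem~\ref{th: poincare} combined with standard reversible-chain machinery, so there is no substantial obstacle beyond carefully bookkeeping the constants. I would proceed in two short steps.

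First, I would invoke the fact, recalled in the text preceding \eqref{eq: rel-mix}, that for a reversible Markov chain the Poincar\'e constant coincides with the relaxation time (i.e.\ with the reciprocal of the absolute spectral gap of $-Q_u$). Since the switch chain is reversible with respect to $\Psimple$, Theorem~\ref{th: poincare} immediately yields $\trel(Q_u) \le Cnd$ for all $3 \le d \le n^c$, producing the first claimed inequality.

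Second, for the mixing time estimate I would simply plug into the upper bound in \eqref{eq: rel-mix}, namely $\tmix(Q_u,\varepsilon) \le \log\bigl(1/(2\varepsilon\, \mu_{\min})\bigr)\, \trel(Q_u)$. Because $\Psimple$ is the uniform measure on $\BipGSet_n(d)$, one has $\mu_{\min} = 1/|\BipGSet_n(d)|$, and therefore $\log(1/(2\varepsilon\,\mu_{\min})) = \log |\BipGSet_n(d)| + \log(2\varepsilon^{-1})$. Combining with the bound on $\trel(Q_u)$ from the previous step gives
\[
\tmix(Q_u,\varepsilon) \le Cnd\bigl(\log|\BipGSet_n(d)| + \log(2\varepsilon^{-1})\bigr),
\]
up to an additive $\trel(Q_u)$ term that is absorbed into the constant $C$, which is the second claimed bound.

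As noted above, the only ``obstacle'' is purely cosmetic: one must be careful about whether the constant $C$ produced here is the same as the one in Theorem~\ref{th: poincare}, but since the statement only asks for the existence of some universal $C$, one may simply enlarge the constant by an $O(1)$ factor at the end. No additional structural properties of $\BipGSet_n(d)$ or of the switch chain beyond reversibility and uniformity of $\Psimple$ enter the proof.
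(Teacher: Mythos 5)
Your second step is exactly the intended argument: plug $\trel(Q_u)\le Cnd$ into the upper bound of \eqref{eq: rel-mix} and use that $\Psimple$ is uniform, so $\mu_{\min}=1/|\BipGSet_n(d)|$ and $\log\bigl(1/(2\varepsilon\mu_{\min})\bigr)=\log|\BipGSet_n(d)|+\log(2\varepsilon^{-1})$; there is not even an additive $\trel$ term to absorb, since that bound is already multiplicative. The gap is in your first step. It is not true in general that the Poincar\'e constant coincides with the reciprocal of the \emph{absolute} spectral gap: the Poincar\'e constant only controls the gap at the top of the spectrum (this is precisely why the text preceding \eqref{eq: rel-mix} says ``in some cases''). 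With the paper's definition of $\trel$ via the absolute spectral gap, one must additionally rule out an eigenvalue near $-1$, and Theorem~\ref{th: poincare} gives no information about that part of the spectrum.

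The paper closes this hole in Remark~\ref{rem: lambda-min}: every graph has holding (self-loop) probability at least $1-\frac{nd-d}{nd-1}>\frac{1}{2n}$, so the smallest eigenvalue of the chain is at least $-1+\frac{1}{n}$, and hence the negative part of the spectrum contributes at most $n\le nd$ to the relaxation time; combined with Theorem~\ref{th: poincare} this yields $\trel(Q_u)\le Cnd$, after which your second step goes through verbatim. (If instead you work purely with the continuous-time semigroup $P_t=e^{tQ_u}$ appearing in the definition of $\tmix$, negative eigenvalues cause no trouble and the relaxation time is the inverse spectral gap, i.e.\ the Poincar\'e constant --- but then you should not phrase the identification through the absolute spectral gap, and you should state which convention you use, since the paper's definition of $\trel$ requires the extra input of Remark~\ref{rem: lambda-min}.)
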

The above bound on the mixing time improves considerably on the bounds stated in \eqref{eq: mixing KTV} and \eqref{eq: mixing Dyer}. 
Moreover, combined with known enumeration estimates for the number of regular bipartite graphs (see for example \cite{W-survey, W-ICM}), it implies that for any $\varepsilon\in (0,1)$  
$$
\tmix(Q_u,\varepsilon)\leq Cnd \big( nd\log (nd)+\log(2\varepsilon^{-1})\big),
$$
for some universal constant $C>0$. Our approach also allows us to derive a log-Sobolev inequality when $d$ is a constant independent of $n$, which yields an improvement on the above bound for the mixing time. 

\begin{theor}[Log-Sobolev inequality for the switch chain]\label{th: log-sob}
Let $d\geq 3$ be a fixed integer. 
Then $\big(\BipGSet_n(d),\Psimple, Q_u\big)$ satisfies a log-Sobolev inequality with constant $C_dn \log n$, where $C_d>0$ may only depend on $d$. In other words, for any $f:\, \BipGSet_n(d)\to \R^+$, we have 
$$
{\rm Ent}_{\Psimple}(f^2)\leq C_d n\log n\, \Dir_{\Psimple}(f,f). 
$$
\end{theor}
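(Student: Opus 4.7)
My plan is to adapt the extension-and-comparison scheme underlying Theorem~\ref{th: poincare} to the entropy functional, replacing the spectral gap of the random transposition chain by its sharp log-Sobolev constant of order $nd\log(nd)$ due to Lee--Yau. Label the $d$ half-edges at each of the $2n$ vertices, so that a configuration of the bipartite configuration model is a bijection from left to right half-edges, i.e.\ an element $\sigma\in S_{nd}$, and let $G(\sigma)\in \ConfBipGSet$ denote the underlying (possibly non-simple) bipartite multigraph. A transposition $(a\,a')$ in $S_{nd}$ acts on $\sigma$ exactly as a simple switching on $G(\sigma)$ at the two edges through $a,a'$, provided the resulting graph remains simple. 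Given $f:\BipGSet_n(d)\to \R^+$, the idea is to build an extension $\tilde f$ on the full configuration space and prove two comparisons: an \emph{entropy transfer} $\Ent_{\Psimple}(f^2)\leq C_d\,\Ent_\pi(\tilde f^{\,2})$ and a \emph{Dirichlet transfer} $\Dir_{\pi}^{\mathrm{tr}}(\tilde f,\tilde f)\leq C_d\,\Dir_{\Psimple}(f,f)$, where $\Dir^{\mathrm{tr}}$ is the random-transposition Dirichlet form on $S_{nd}$.

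Chaining the two transfers against Lee--Yau yields
\[
\Ent_{\Psimple}(f^2)\leq C_d\,\Ent_\pi(\tilde f^{\,2})\leq C_d\,nd\log(nd)\,\Dir_{\pi}^{\mathrm{tr}}(\tilde f,\tilde f)\leq C_d\,n\log n\,\Dir_{\Psimple}(f,f),
\]
which is the target bound. The Dirichlet-form comparison I would handle by a Diaconis--Saloff-Coste canonical path argument: transpositions that correspond to valid switches contribute directly to $\Dir_{\Psimple}$ after averaging over the fibres of $G$, while transpositions that would create a multi-edge or exchange parallel half-edges must be replaced by a short path of valid switches in $\BipGSet_n(d)$. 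Because $d$ is fixed, the length and congestion of this path system are $O_d(1)$, which is where the constant-degree hypothesis enters and which yields the $O_d(1)$ loss in this step.

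The main obstacle I expect is the entropy transfer, and it is precisely the ``large distortion between stationary measures'' issue flagged in the abstract. Unlike variance, which decomposes pairwise and so interacts cleanly with restriction to a sub-state-space, entropy is non-local and genuinely sees the fluctuations of $\tilde f$ on the non-simple part of $S_{nd}$: even for fixed $d$ a $\Theta(1)$-fraction of configurations are non-simple, and the choice of $\tilde f$ there can drive $\Ent_\pi(\tilde f^{\,2})$ up without any corresponding growth of $\Ent_{\Psimple}(f^2)$ unless the extension is engineered carefully. I would therefore define $\tilde f$ on each non-simple configuration via an averaging/coupling procedure --- repairing the few parallel edges by local switches to land on a nearby simple graph and averaging $f$ over the resulting distribution --- and then use a conditional-entropy decomposition to bound the residual non-simple entropy by the switch-chain Dirichlet form of $f$, reusing the canonical paths from the second step to absorb the error. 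Showing that the total entropy loss in this construction is a constant independent of $n$, uniformly in $f$, is the delicate technical core of the argument and the likely reason that the log-Sobolev statement is restricted to constant $d$, since both the coupling length and the path congestion above depend on $d$.
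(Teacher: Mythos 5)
Your overall route --- extend $f$ to the configuration model, invoke the Lee--Yau bound $O(nd\log(nd))$ for random transpositions (equivalently, Proposition~\ref{prop: poincare-logsob-config} for the switch chain on $\ConfBipGSet_n(d)$), and transfer entropy and Dirichlet form back --- is the paper's. But you have located the difficulty in the wrong place. The entropy transfer is not the delicate core; for constant $d$ it is a one-line consequence of measure comparability. Indeed, by \eqref{eq: prob-simple}, $\Pconfig(G)=\Pconfig\big(\BipGSet_n(d)\big)\Psimple(G)\geq \tfrac12 e^{-(d-1)^2/2}\Psimple(G)$ for every $G\in\BipGSet_n(d)$, and since $u\mapsto u\log(u/t)-u+t$ is pointwise nonnegative, the variational identity $\Ent_\mu(g)=\min_{t>0}\Exp_\mu\big[g\log(g/t)-g+t\big]$ gives $\Ent_{\Psimple}(f^2)\leq 2e^{(d-1)^2/2}\,\Ent_{\Pconfig}(\hat f^{\,2})$ for \emph{any} genuine extension $\hat f$ of $f$; this is exactly what \cite[Theorem~4.1.1]{saloff} packages, and it is how the paper argues. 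In particular, your worry that large values of the extension on non-simple configurations inflate $\Ent_\pi(\tilde f^{\,2})$ is harmless for the direction of inequality you need (extra nonnegative terms only help); what such fluctuations genuinely threaten is the Dirichlet form. The only caveat on the entropy side is that the extension must agree with $f$ on \emph{all} of $\BipGSet_n(d)$, which is why the paper replaces its randomized $\tilde f$ of Definition~\ref{def: extension} (which may alter $f$ on poorly expanding simple graphs) by $\hat f$ equal to $f$ on $\BipGSet_n(d)$ and to $\tilde f$ elsewhere.

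Conversely, the step you treat as routine --- the Dirichlet comparison --- is where essentially all the work lies, and the claim that the path system has length and congestion $O_d(1)$ cannot be taken for granted. Your construction only addresses transitions out of simple configurations, but the Dirichlet form on the configuration space also charges pairs of adjacent \emph{non-simple} multigraphs, where the extension is an average of $f$ over nearby simple graphs; bounding those differences requires coupling the two $s$--neighborhoods and building canonical connections whose congestion must be counted (Sections~\ref{sec: bijection}--\ref{sec: connections} and the estimates of Section~\ref{sec: Dirichlet}, which the paper reuses verbatim for the log-Sobolev statement). Moreover, even for fixed $d$ a configuration can carry up to order $n$ multiedges; no $O_d(1)$-length repair exists there, and one must truncate (the paper sets the extension constant outside $\categ([1,\cconst])$) and show the truncated part is negligible. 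Finally, the comparison one actually obtains is $\Exp\,\Dir_{\Pconfig}(\hat f,\hat f)\leq C\,\Dir_{\Psimple}(f,f)+C\tfrac{d}{n^2}\Var_{\Psimple}(f)$, and the variance term is absorbed using the already-established Poincar\'e inequality (Theorem~\ref{th: poincare}); your outline has no mechanism for this term. None of this is fatal --- for constant $d$ the required estimates are far easier than in the Poincar\'e proof --- but as written your plan devotes the delicate work to a step that is a short measure-comparison argument and waves through the step that actually needs the machinery.
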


We show in Section~\ref{sec: preliminaries} that the above estimate on the log-Sobolev constant is optimal. 
In fact, we implement a comparison procedure between the switch chain and the random transposition model on $\{1,\ldots, nd\}$, which implies
that the Poincar\'e and the log-Sobolev constants coincide (up to constant multiples) for the two models. 
In view of \eqref{eq: mix-logsob}, the above statement implies that for any $\varepsilon\in (0,1)$ 
$$
\tmix(Q_u,\varepsilon)\leq C_d n\log n \big( \log n+\log\frac{1}{2\varepsilon^2}\big),
$$
for some constant $C_d$ depending only on $d$. This estimate matches the numerical upper bound on the mixing time stated in \cite{BHS}\footnote{The simulations in \cite{BHS} covered also more general degree sequences.}. 
However, we believe, as was mentioned in \cite{CDG} (see a remark after Theorem~1 there), 
that the correct mixing time is of order $n\log n$ when $d$ is constant. Thus, Theorem~\ref{th: log-sob} captures, 
up to a logarithmic factor, the predicted mixing time for the switch chain on regular bipartite graphs. 
We expect that the tools developed in this paper can be extended to treat the switch chain on other regular models of graphs 
and considerably improve the mixing time estimates available in the literature \cite{CDG, Greenhill, MES, GS, unified}. 
We plan to pursue this program in the near future. 
Finally, let us note that in this paper, we focused on the Poincar\'e and the log-Sobolev inequalities and have not discussed 
the {\it modified log-Sobolev inequality} which is also known to imply estimates on the mixing time (see \cite{bobkov-tetali}). 
In particular, establishing the optimal modified log-Sobolev inequality would remove the extra logarithmic factor from our mixing time estimate. 
As proving this inequality will require additional effort, we chose to leave it for future work to keep the current paper of a reasonable size. 

We should note that besides the implications of Theorem~\ref{th: poincare} and Theorem~\ref{th: log-sob} 
on the mixing time of the switch chain, those functional inequalities
are interesting on their own right as they also imply concentration inequalities on the space of graphs. 
Indeed, one can derive from Theorem~\ref{th: poincare} an exponential concentration inequality for any Lipschitz function on $\BipGSet_n(d)$, and in particular, 
for the edge count and other graph statistics.  Similarly, Theorem~\ref{th: log-sob} implies a corresponding sub-Gaussian concentration inequality. 
We refer the interested reader to \cite[Chapters~3 and 5]{ledoux} for more information on how these concentration inequalities follow from the Poincar\'e and the log-Sobolev inequality.

\subsection{Strategy of the proof} 
At a high-level, our proof is an implementation of a ``double'' comparison procedure that can be described as follows. 
We consider the switch chain on $\ConfBipGSet_n(d)$, the set of all $d$--regular bipartite 
multigraphs on $[n^{(\ell)}]\sqcup[n^{(r)}]$ equipped with 
the probability measure $\Pconfig$ induced by the configuration model 
(see Section~\ref{sec: preliminaries} for the exact definition).  
As the first (simple) step, we establish the corresponding functional inequalities on $\ConfBipGSet_n(d)$ by comparing the model with 
the so-called {\it random transposition model} on the set of permutations of $\{1,\ldots,nd\}$. This comparison is rather straightforward and is 
carried in Section~\ref{sec: preliminaries}. The second, and main, comparison step is for 
the switch chain on $(\ConfBipGSet_n(d), \Pconfig)$ (whose generator will be denoted by $Q_c$) and the switch chain on $(\BipGSet_n(d), \Psimple)$.
It will require considerable effort and novel ideas. 

The random transposition model is a well studied Markov chain. The mixing time and relaxation time were established for this chain \cite{D,DS2}. 
Moreover, a corresponding log-Sobolev inequality for the random transposition model was also 
derived \cite{DS1,Lee-Yau} (see also \cite{G} for the modified log-Sobolev inequality). 
In view of our comparison procedure, it is not surprising that the Poincar\'e and the log-Sobolev constants we obtain 
in Theorems~\ref{th: poincare} and \ref{th: log-sob} match the corresponding ones from the random transposition model. 

Comparison techniques for Markov chains are a set of tools originally developed by Diaconis and Saloff-Coste 
\cite{DS-Comparison, DS-Comparison2}, which have been extensively used since then to estimate 
the relaxation and mixing times of Markov chains. In its essence, those methods aim at transferring 
knowledge of statistics of a known Markov chain (such as the relaxation time) to another Markov 
chain of interest. The main idea behind the methods of \cite{DS-Comparison, DS-Comparison2} is that 
when the two stationary measures are ``comparable'', it is enough to provide a comparison of the corresponding 
Dirichlet forms of the two chains. The canonical path or flow method then aims precisely at providing such 
a comparison between the Dirichlet forms when the two chains share the same state space. 
We refer to \cite{saloff,RT,Dyer} for an extensive review of those techniques. 

However, those methods have been typically used for the case when the two chains share the same state space 
and the ratio between their stationary measures is a well controlled constant. 
We are only aware of two works \cite{DS-Comparison3,Feder,CDGH} where the assumption on having the same 
state space is relaxed as to having one of the spaces embedded or included in the other. However, in both 
these works, the two stationary measures are in a certain sense comparable. 

In our setting, the probability measures $\Pconfig$ and $\Psimple$ differ significantly as $d$ 
grows with $n$. Indeed, it is known (see \cite[Theorem~6.2]{Janson}) that the probability that 
the configuration model produces a simple graph is asymptotically equivalent to $e^{\frac{-(d-1)^2}{2}}$. 
This discrepancy between $\Pconfig$ and $\Psimple$ makes any comparison procedure based on the 
results of \cite{DS-Comparison, DS-Comparison2} inefficient as it produces an extra factor of 
$e^{\frac{(d-1)^2}{2}}$. Thus, it is only when $d$ is constant (which is the case of Theorem~\ref{th: log-sob}), 
when those techniques could be useful in our setting (see Section~\ref{sec: log-sob}). 
Proving Theorem~\ref{th: poincare} for growing $d$ requires 
us to compare not only the Dirichlet forms, but also variances of functions on the two probability spaces. 

The above discussion leads us to the problem of building, for a given real function $f$ on $(\BipGSet_n(d), \Psimple,Q_u)$, 
an appropriate extension $\tilde f:\, (\ConfBipGSet_n(d),\Pconfig,Q_c)\to \R$ such that the Dirichlet 
forms and variances are in some sense comparable. This would allow us to transfer a Poincar\'e inequality on 
$(\ConfBipGSet_n(d),\Pconfig,Q_c)$ to the one on $(\BipGSet_n(d), \Psimple,Q_u)$.
More specifically, if for any given function $f$ we are able to construct another function $\tilde f$ on $\ConfBipGSet_n(d)$
such that
\begin{equation}\label{eq: 2087604987245}
\Var_{\Psimple}(f)\leq c\,\Var_{\Pconfig}(\tilde f)\quad\mbox{ and }\quad \Dir_{\Pconfig}(\tilde f,\tilde f)\leq C \Dir_{\Psimple}(f,f),
\end{equation}
then we immediately get a Poincar\'e inequality on $\BipGSet_n(d)$ using known results for the random transposition model.

Note that, in a sense, the extension $\tilde f$ must simulteneously have a ``large enough'' variance and a ``small enough''
value of the Dirichlet form.
Since the Dirichlet form measures the local variations of a function, it is natural to define the 
extension in such a way that it varies little locally.
It is a well known fact that the smallest possible value of $\Dir_{\Pconfig}(\tilde f,\tilde f)$ for a function $\tilde f$
on $\ConfBipGSet_n(d)$ which coincides with $f$ on $\BipGSet_n(d)$, is achieved for the {\it harmonic extension}
of $f$, i.e.\ under the assumption that $\tilde f$ is harmonic on $\ConfBipGSet_n(d)\setminus \BipGSet_n(d)$,
with $\BipGSet_n(d)$ viewed as the boundary of the domain for $\tilde f$
(we note here that, in particular, the harmonic extension was used  
by Aldous (see \cite[Theorem~13.20]{book-peres}) to compare the spectral gap of a Markov chain 
with an induced chain).
In probabilistic terms, the harmonic extension $f_H$ of $f$ to the space $\ConfBipGSet_n(d)$ is given by 
$$
f_H(x)= \Exp[f(X_{T_{\BipGSet_n(d)}})\mid X_0=x],
$$
where $(X_t)_{t\geq 0}$ denotes the switch chain on $\ConfBipGSet_n(d)$ and $T_{\BipGSet_n(d)}$ denotes the first time 
the chain hits $\BipGSet_n(d)$. While the harmonic extension minimizes the Dirichlet form over all possible 
extensions, it remains difficult to analyse as it requires a deep understanding of the underlying space in 
order to capture the hitting times essential to its definition.
A much more serious problem is that the harmonic extension of a function in general does not satisfy the leftmost relation
in \eqref{eq: 2087604987245} when the degree $d$ grows with $n$. We would like to give a heuristic argument here
without providing a rigorous proof.

Assume that $d\to\infty$ with $n$ and that $d=o(n)$.
For every subset $\mathcal T$ of $\BipGSet_n(d)$, let $f_{\mathcal T}$ be a function on $\BipGSet_n(d)$
which equals one for $G\in \mathcal T$ and equals zero otherwise. It is 
not difficult to see that for a vast majority of choices of $\mathcal T$ (with respect to the uniform counting measure on
$2^{\BipGSet_n(d)}$), the variance of $f$, $\Var_{\Psimple}(f)=1/4+o(1)$.
Now, let $f_{H,\mathcal T}$ be the harmonic extension of $f_H$.
Then for each graph $G'\in \ConfBipGSet_n(d)$,
$$
f_{H,\mathcal T}(G')=\sum_{G\in \BipGSet_n(d)}w(G,G')\,f_{\mathcal T}(G),
$$
where $w(G,G')$ are non-negative weights not depending on $\mathcal T$ and summing up to one for each fixed $G'$.
The actual values of the weights are not important for us; the only observation we need is that
$\max\limits_{G'}\max\limits_{G}w(G,G')=o(1)$ whenever $G'\notin \BipGSet_n(d)$.
This, together with standard concentration inequalities for linear combinations of independent Bernoulli variables,
implies that for any fixed $G'\in \ConfBipGSet_n(d)\setminus \BipGSet_n(d)$ and for $1-o(1)$ fraction of $2^{\BipGSet_n(d)}$,
$f_{H,\mathcal T}(G')=1/2+o(1)$. A simple double counting argument then gives that
$\Var_{\Pconfig}(f_{H,\mathcal T})=o(1)$ for a typical $\mathcal T$.
Thus, the harmonic extension (in the regime $d\to\infty$) does not satisfy \eqref{eq: 2087604987245}. 

Although the harmonic functions are not suitable for our purposes, they provide
a good illustration of a desired property: the {\it averaging} behaviour of the extension, when
the value at a given multigraph is defined as some average over simple graphs.
Instead of launching a random walk from an element of $G\in\ConfBipGSet_n(d)$ until it reaches $\BipGSet_n(d)$, we exhibit 
a special type of tractable (defined in an explicit and simple manner)
walks which we refer to as {\it the simple paths} (see Definition~\ref{def: simple path}), whereas 
a set of simple graphs reached via the simple paths from $G$
is referred to as the {\it $s$-neighborhood} of $G$ (see Definition~\ref{def: s-neighborhood}).
The value of the function extension at $G$ will then be essentially determined by the average of the original function $f$
over the $s$--neighborhood of $G$. However, this still leaves a problem with the variance:
a function extension defined as such an average will not satisfy the leftmost inequality in \eqref{eq: 2087604987245} in general.
For that, we need to introduce {\it controlled fluctuations} in the definition of the extension, which will be large
enough to get a satisfactory estimate for the variance, but not too large in order not to destroy the required bound
for the Dirichlet form. Those fluctuations are essentially the standard deviation of $f$ restricted to a given $s$--neighborhood.

Let us be a little more specific at this stage while still avoiding technical details which would only overload
the presentation. Suppose that for every multigraph $G\in \ConfBipGSet_n(d)\setminus \BipGSet_n(d)$
we defined its $s$--neighborhood $\SNeigh(G)$ --- a collection of ``nearby'' simple graphs, with a crucial property
that for multigraphs which are at a close distance to each other, their $s$--neighborhoods are also
close in an appropriate sense (Section~\ref{sec: bijection} will make this precise).
We then set
$$
h(G):=\frac{1}{\vert \SNeigh(G)\vert} \sum_{\tilde G\in \SNeigh(G)} f(\tilde G),\quad
w(G):= \frac{1}{|\SNeigh(G)|}
\sum_{\tilde G\in \SNeigh(G)}(f(\tilde G)-h(G))^2,
$$
and define the extension
$$
\tilde f(G)\approx h(G)+\xi_G\,\sqrt{w(G)},
$$
where the weights $\xi_G$ are to be discussed below.
Let us emphasize that the actual definition of $\tilde f$ is more complicated (see Definition~\ref{def: extension}),
hence the ``$\approx$'' sign above.

Without a proper choice of the coefficients $\xi_G$, the above definition is still unsatisfactory.
Indeed, to capture the right bound for the variance, we would need to make sure that $\xi_{G_1}$ and $\xi_{G_2}$
differ significantly when $G_1$ and $G_2$ are far enough from each other (this assertion should hold at least in the average,
for a large fraction of couples\footnote{Here and in the rest of the paper, the term ``couple'' refers to an ordered pair.} $(G_1,G_2)$). On the other hand, for graphs $G_1,G_2$ which are close (for example,
adjacent), the values of $\xi_{G_1}$ and $\xi_{G_2}$ should ideally be close to each other as well 
because otherwise the Dirichlet form of $\tilde f$ may blow up. This produces
complicated restrictions on $\xi_G$.
Our approach consists in defining $\xi_G$ using randomness. In fact, $\{\xi_G\}_{G\in \ConfBipGSet_n(d)}$
shall be a specially constructed {\it centered Gaussian field} on $\ConfBipGSet_n(d)$ whose covariance
structure will guarantee all the required properties with a non-zero probability.
Thus, the function extension $\tilde f$ which we will be working on is randomized: in a sense, we are
dealing with an uncountable collection of functions which, as it turns out, contains a function satisfying
\eqref{eq: 2087604987245}. The Gaussian field and its properties will be discussed in Section~\ref{sec: construction}.

The above description is considerably simplified. In fact, as a first step we
reduce ourselves to the study of a subset of multigraphs which have edges of multiplicities
one and two only, no incident multiedges, and with no more than a prescribed number of multiedges.
With our restrictions on $d$, those multigraphs hold the main weight of $\Pconfig$, and the remaining ``rare'' multigraphs 
can be handled differently with a special trick.
We refer to Definition~\ref{def: category} for a precise definition of the ``standard'' multigraphs in our analysis.
Another complication to the proof is connected with the fact that certain (small) number of simple graphs,
the ones with a very weak expansion property, cannot be reached by our simple path construction,
and as a result either do not belong to any $s$--neighborhood at all or are contained in very few of them.
This destroys our counting argument, and so those graphs require a special treatment.
Only those considerations, together with the Gaussian field construction, allow us to obtain a working
definition of the function extension. We refer to sections Section~\ref{sec: neighborhood}
and Section~\ref{sec: construction} for all further details.

In Section~\ref{sec: variance}, we prove the leftmost inequality \eqref{eq: 2087604987245} (which
holds with a non-zero constant probability with respect to the randomness of $\tilde f$).

To complete the proof, we compare the Dirichlet forms in Section~\ref{sec: Dirichlet}
to obtain the rightmost relation in \eqref{eq: 2087604987245}. To this aim, similarly to the technique 
of \cite{DS-Comparison, DS-Comparison2} using {\it flows},
we create, for every pair of adjacent (or equal) multigraphs, a special collection of paths
on the set of simple graphs $\BipGSet_n(d)$.
The choice of those paths is determined by our construction of the function extension
and, disregarding certain rare cases, is essentially a mapping between the respective $s$--neighborhoods
(or within the same $s$--neighborhood when the multigraphs are equal).
The Dirichlet form associated with $\tilde f$ is then bounded above
by a weighted sum of terms of the form $\sum_P\sum_{t=1}^{\length{P}}(f(P[t])-f(P[t-1]))^2$, where $P$
are paths on $\BipGSet_n(d)$.
The success or failure of this procedure then crucially depends on the structure of the paths $P$
and on values of the weights.
Roughly, we need to make sure that the cumulative weight of any given pair of adjacent simple graphs in the sum
is not too large, which corresponds to {\it controlling the congestion of a flow}.

In Section~\ref{sec: bijection}, we show that for most pairs of adjacent multigraphs, which we call 
{\em perfect pairs} (see Definition~\ref{def: perfect pair}), there is a bijective matching between their $s$--neighborhoods
such that all the matched graphs are at distance one from each other. These perfect pairs turn out to be the main 
contributors to the Dirichlet form and are dealt with in a relatively simple manner.
To work with the remaining pairs of multigraphs, we build
special paths which we call {\it connections}.
We will deal with three types of connections: between simple graphs within
the $s$--neighborhoods of two non-perfect adjacent multigraphs,
between two simple graphs within the same $s$--neighborhood, and
between a simple graph and the $s$--neighborhood of an adjacent multigraph.
As has already been mentioned, the main difficulty is to construct the connections in such a way that
they do not overuse any given edge in $\BipGSet_n(d)$ since otherwise our estimate for the Dirichlet form will blow up.
The actual argument is lengthy and involved; we refer to Section~\ref{sec: connections} for details.


As a concluding remark, we would like to comment on the conditions on $d$ which we obtain in our proof.
The value of the constant $c>0$ in the condition $d\leq n^c$ that we impose in our result on the Poincar\'e constant,
can definitely be improved with more careful computations in Section~\ref{sec: connections}. 
At the same time, there are more fundamental obstacles appearing when considering a relatively large $d$.
Specifically, in the case $d\gg n^{1/3}$ a typical multigraph (drawn according to $\Pconfig$) will contain
a multiedge of multiplicity three or greater.
Furthermore, when $d\gg \sqrt{n}$ the number of multiedges contained in a typical multigraph
becomes comparable to or exceeds the number of vertices.
This destroys our argument which relies heavily on the fact that {\it perfect pairs} of multigraphs
contribute the main weight to the Dirichlet form.

Treatment of the log--Sobolev inequality for constant $d$ is simpler (because of the restriction on $d$) and to a large extent repeats
the approach for the Poincar\'e inequality. We refer directly to Section~\ref{sec: log-sob} for further details.

\medskip 
\noindent
 \textbf{Acknowledgment:} The authors would like to thank Catherine Greenhill for bringing to their attention the paper \cite{Dyer-arxiv}. 
 This project was initiated when the first named author visited the second named author 
 at New York University Abu Dhabi. Both authors would like to thank the institution for excellent working conditions. 
 The first named author was partially supported by the Sloan Fellowship.

\section{Preliminaries}\label{sec: preliminaries}
Throughout the paper, we will make use of several parameters which we list below
\begin{equation}\label{eq: parameters}
r_0=40,\quad \zconst=\lfloor n/d\log^2 n\rfloor,\quad \gconst=\lceil n/d\rceil,\quad \text{and}\quad \cconst=d^2\lfloor \log n\rfloor. 
\end{equation}
We suppose that $n$ is large enough and that 
\begin{equation}\label{eq: assumption-d}
3\leq d\leq n^{1/1143}. 
\end{equation}

\chng{Throughout the paper, we will use ``$\subset$'' to denote set inclusion which is not necessarily strict.}

Recall that we denote by $\ConfBipGSet_n(d)$ the set of all $d$--regular bipartite multigraphs on $[n^{(\ell)}]\sqcup[n^{(r)}]$,
and by $\Pconfig$ the probability measure on $\ConfBipGSet_n(d)$ induced by the configuration model.  
For any graph $G'\in \ConfBipGSet_n(d)$, we denote by $\Adj(G')$ its adjacency matrix, with
$$
\Adj(G')_{ij}=\mult_{G'}(i,j),\quad (i,j)\in[n]\times[n],
$$
where $\mult_{G'}(i,j)$ is the {\it multiplicity} of the edge $(i,j)$ in $G'$ (whenever $G'$ does not contain an edge
$(i,j)$, we will assume that its multiplicity in $G'$ is zero). Edges whose multiplicity is greater than one,
will be called {\it multiedges} in this paper.

A simple switching operation can be uniquely identified by a 
quadruple $\langle i_1,i_2,j_1,j_2\rangle$, which determines the two edges $(i_1,j_1)$ and $(i_2,j_2)$ to be switched.
More formally, a simple switching operation 
$\phi=\langle i_1,i_2,j_1,j_2\rangle$, $i_1,i_2\in [n^{\ell}]$, $j_1,j_2\in [n^{(r)}]$, on the set of multigraphs
$\ConfBipGSet_n(d)$ is defined as follows: 
for any graph $G'$
with $\mult_{G'}(i_1,j_1)>0$ and $\mult_{G'}(i_2,j_2)>0$, the graph $\phi(G')$ is uniquely identified by the conditions
\begin{itemize}
\item $\mult_{\phi(G')}(i_1,j_1)=\mult_{G'}(i_1,j_1)-1$;
\item $\mult_{\phi(G')}(i_2,j_2)=\mult_{G'}(i_2,j_2)-1$;
\item $\mult_{\phi(G')}(i_1,j_2)=\mult_{G'}(i_1,j_2)+1$;
\item $\mult_{\phi(G')}(i_2,j_1)=\mult_{G'}(i_2,j_1)+1$;
\item The multiplicities of all edges outside of the set $\{(i_1,j_1),(i_1,j_2),(i_2,j_1),(i_2,j_2)\}$ are the same for $G'$
and $\phi(G')$.
\end{itemize}
The domain of $\phi$ is the set of all multigraphs $G'$ with $\mult_{G'}(i_1,j_1)>0$ and $\mult_{G'}(i_2,j_2)>0$.
In what follows, we will often use the ``quadruple'' notation $\langle i_1,i_2,j_1,j_2\rangle$ for the switching.

\medskip

The restriction of $\phi$ to the domain $\big\{G'\in \BipGSet_n(d):\;\mult_{G'}(i_k,j_k)=1,\;\mult_{G'}(i_k,j_{3-k})=0,\;k=1,2\big\}$
will be called a simple switching operation on the set of simple graphs $\BipGSet_n(d)$.

\subsection{The switch chain on the configuration model}
Observe that $(\ConfBipGSet_n(d), \Pconfig)$ is generated from $(\Permset, \Pperm)$, the set of permutations of $nd$ elements  
equipped with the uniform measure $\Pperm$, once one takes into account the invariance of the graph under
permutation within multiedges and permutations within the ``buckets'' of the configuration model.
More precisely, for any $G\in \ConfBipGSet_n(d)$, we have 
$$
\Pconfig (G)= \frac{(d!)^{2n}}{(nd)! \prod_{1\leq i,j\leq n} \mult_{G}(i,j)!}.
$$
The following estimate, which holds for large enough $n$, is taken from \cite[Theorem~6.2]{Janson}
and will be often used:
\begin{equation}\label{eq: prob-simple}
\frac12 e^{-\frac{(d-1)^2}{2}}\leq \Pconfig\big(\BipGSet_n(d)\big) \leq 2e^{-\frac{(d-1)^2}{2}}. 
\end{equation}
Note that the above relation can be equivalently rewritten as
\begin{equation}\label{eq: size-simple}
\frac12 e^{-\frac{(d-1)^2}{2}} \frac{(nd)!}{(d!)^{2n}}\leq \vert \BipGSet_n(d)\vert \leq 2e^{-\frac{(d-1)^2}{2}}\frac{(nd)!}{(d!)^{2n}}. 
\end{equation}

We will always assume a graph structure on $\ConfBipGSet_n(d)$,
with two elements (graphs) of $\ConfBipGSet_n(d)$ connected by an edge whenever there is a simple switching
operation on edges that are not incident (regardless whether it creates or destroys
multiedges) transforming one graph to the other. Given a graph $G$ in $\ConfBipGSet_n(d)$,
we denote by $\neigh(G)$ the set its neighbors $\ConfBipGSet_n(d)$.

We obtain {\it the switch chain} on $\ConfBipGSet_n(d)$ via 
its generator $Q_c$ defined for any $G_1, G_2\in \ConfBipGSet_n(d)$ by
$$
Q_c(G_1,G_2):=\begin{cases}
 \frac{\mult_{G_1}(i,j)\,\mult_{G_1}(i',j') }{nd(nd-1)/2},&\mbox{if $G_2\in \neigh(G_1)$ is obtained from $G_1$}\\
 &\mbox{ by the switching $\langle i,i',j,j'\rangle$};\\
-\sum_{G'\in \neigh(G_1)} Q_c(G_1,G'),&\mbox{if $G_1=G_2$};\\
&\\
0,&\mbox{otherwise.}
\end{cases}
$$
In the next lemma, we verify that the chain is reversible and aperiodic. 
\begin{lemma}
The Markov generator $Q_c$ defined above is reversible with respect to $\Pconfig$ and aperiodic provided $d\geq 2$.
\end{lemma}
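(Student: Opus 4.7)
The plan is to verify the detailed balance relation for $Q_c$ directly from the definitions, and to argue aperiodicity by exhibiting a strictly positive holding rate when $d\geq 2$. Both parts are essentially mechanical once one unpacks the explicit formulas for $\Pconfig$ and for $Q_c$.

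For reversibility, fix distinct $G_1,G_2\in\ConfBipGSet_n(d)$. If $G_2\notin\neigh(G_1)$, then symmetrically $G_1\notin\neigh(G_2)$ and both sides of detailed balance vanish, so we may assume that $G_2$ is obtained from $G_1$ by a switching $\phi=\langle i,i',j,j'\rangle$. The multiplicities of $G_1$ and $G_2$ agree at every edge outside the four edges $(i,j),(i',j'),(i,j'),(i',j)$. Setting $a=\mult_{G_1}(i,j)$, $b=\mult_{G_1}(i',j')$, $c=\mult_{G_1}(i,j')$, $e=\mult_{G_1}(i',j)$, the explicit formula $\Pconfig(G)=(d!)^{2n}/\bigl((nd)!\,\prod_{k,\ell}\mult_{G}(k,\ell)!\bigr)$ gives, after cancellation of the unchanged factorials,
$$
\frac{\Pconfig(G_1)}{\Pconfig(G_2)}=\frac{(a-1)!\,(b-1)!\,(c+1)!\,(e+1)!}{a!\,b!\,c!\,e!}=\frac{(c+1)(e+1)}{ab}.
$$
Now $Q_c(G_1,G_2)=\frac{2ab}{nd(nd-1)}$ by definition, while the reverse switching $\phi^{-1}=\langle i,i',j',j\rangle$ takes $G_2$ back to $G_1$ and involves the edges $(i,j'),(i',j)$ of multiplicities $c+1,e+1$ in $G_2$, so $Q_c(G_2,G_1)=\frac{2(c+1)(e+1)}{nd(nd-1)}$. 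The two identities combine to $\Pconfig(G_1)Q_c(G_1,G_2)=\Pconfig(G_2)Q_c(G_2,G_1)$, which is the detailed balance relation.

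For aperiodicity, note that because $d\geq 2$, every vertex of every $G\in\ConfBipGSet_n(d)$ is incident to at least two edges (counted with multiplicity). Among the $nd(nd-1)/2$ unordered pairs of edges sampled by the chain there are therefore always pairs sharing a vertex, and such pairs do not correspond to any quadruple $\langle i,i',j,j'\rangle$ with $i\neq i'$ and $j\neq j'$; they contribute nothing to the outgoing rate from $G$. Hence $|Q_c(G,G)|<1$ strictly, which implies $P_t(G,G)>0$ for every $t\geq 0$ in the continuous-time semigroup $P_t=e^{tQ_c}$ (and gives a strictly positive holding probability for the embedded discrete-time chain), establishing aperiodicity. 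The only delicate point in the whole argument is the factorial telescoping in the reversibility calculation; everything else follows immediately from unwinding the definitions.
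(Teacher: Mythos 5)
Your proof is correct and follows essentially the same route as the paper: detailed balance is checked directly from the multiplicity changes at the four affected positions (the paper leaves this factorial cancellation as ``easy to check''; you carry it out explicitly), and aperiodicity is obtained by showing that the total off-diagonal rate out of any state is strictly below $1$ because pairs of incident edge instances, which exist once $d\geq 2$, are excluded from admissible switchings. The paper's aperiodicity step is merely a more quantitative version of the same idea, bounding the outgoing rate by $\frac{nd-d}{nd-1}$ via $d$-regularity.
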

\begin{proof}
Let $G_1, G_2\in \ConfBipGSet_n(d)$ be two adjacent multigraphs and let $\Phi= \langle i,i',j,j'\rangle$ be the switching operation such that 
$G_2=\Phi(G_1)$. Note that the multiplicities $\mult_{G_1}(r,s)=\mult_{G_2}(r,s)$ for any $(r,s)\not\in\{(i,j), (i,j'), (i',j), (i',j')\}$;
$$
\mult_{G_1}(i,j)= \mult_{G_2}(i,j)+1,\quad \mult_{G_1}(i',j')= \mult_{G_2}(i',j')+1,
$$
and 
$$
\mult_{G_1}(i,j')= \mult_{G_2}(i,j')-1,\quad \mult_{G_1}(i',j)= \mult_{G_2}(i',j)-1. 
$$
Using this, it is easy to check that $\Pconfig(G_1) Q_c(G_1,G_2)= \Pconfig(G_2) Q_c(G_2,G_1)$ and deduce the reversibility. 

To prove that $Q_c$ is aperiodic, it is enough to show that for any $G\in  \ConfBipGSet_n(d)$, we have 
$$
\sum_{G'\in \ConfBipGSet_n(d)\setminus \{G\}} Q_c(G,G')<1. 
$$
We have
$$
\sum_{G'\in \ConfBipGSet_n(d)\setminus \{G\}} Q_c(G,G')
=\frac12 \sum_{\underset{\text{not incident}}{e\neq e'\in E_G}}\frac{\mult_{G}(e)\,\mult_{G}(e') }{nd(nd-1)/2},
$$
where we denoted by $E_G$ the set of (multi)edges of $G$, with multiplicities not counted
(i.e.\ an edge enters the set $E_G$ once even if its multiplicity is greater than one).
Given $e\in E_G$, we have by the regularity of $G$ that 
$$
\sum_{\underset{e' \text{ incident or equal to }e}{e'\in E_G}} \mult_G(e')\geq d.
$$
Therefore, using that $\sum_{e\in E_G} \mult_G(e)=nd$, we deduce
$$
\sum_{G'\in \ConfBipGSet_n(d)\setminus \{G\}} Q_c(G,G')
\leq\frac{nd(nd-d)}{nd(nd-1)}= \frac{nd-d}{nd-1}<1,
$$
whenever $d\geq 2$. 
\end{proof}

\begin{rem}\label{rem: lambda-min}
Note that the calculation above shows that for every $G\in \BipGSet_n(d)$, the probability that there is a self-loop at $G$ (for the switch chain on both $\ConfBipGSet_n(d)$ and $\BipGSet_n(d)$) is at least $1-\frac{nd-d}{nd-1}>\frac{1}{2n}$. It follows that the smallest eigenvalue of the switch chain is bounded below by $-1+\frac{1}{n}$ (see for instance \cite[Page 702]{DS-Comparison2}). In view of this and the lower bound on the Poincar\'e constant (see Proposition~\ref{prop: lower bound poincare}), the relaxation time of the switch chain is equivalent (up to a constant factor) to the Poincar\'e constant. 
\end{rem}

The transposition chain on the set of permutations $(\Permset, \pi)$ has been widely studied. Its Markov generator $Q$ is defined for any $\sigma,\sigma'\in \Permset$ by
$$
Q(\sigma,\sigma'):=\begin{cases}
 \frac{2}{(nd)^2},&\mbox{if $\sigma$ and $\sigma'$ differ by a transposition}\\
-\frac{nd-1}{nd} ,&\mbox{if $\sigma=\sigma'$};\\
0,&\mbox{otherwise.}
\end{cases}
$$
It is known \cite{DS2, DS1,Lee-Yau} that the Poincar\'e constant of the random transposition model is of order $nd$ while the log-Sobolev constant is of order $nd\log (nd)$. 
The next proposition encapsulates the first comparison step transferring the knowledge of the Poincar\'e and log-Sobolev constant 
of the random transposition model  to the switch chain on $\ConfBipGSet_{n}(d)$.  

\begin{prop}[Poincar\'e and log-Sobolev constant for the switch chain on the configuration model]\label{prop: poincare-logsob-config}
The Poincar\'e constant of $(\ConfBipGSet_n(d), \Pconfig, Q_c)$ is of order $nd$ while the log-Sobolev constant is of order $nd\log (nd)$. 
\end{prop}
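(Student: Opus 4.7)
The proof plan is to transfer the known Poincar\'e and log-Sobolev inequalities from the random transposition chain on $(\Permset,\Pperm,Q)$ to the switch chain on $(\ConfBipGSet_n(d),\Pconfig,Q_c)$ via the projection associated with the configuration model, and then to verify sharpness with explicit test functions.

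First, I would set up the projection $\Pi: \Permset \to \ConfBipGSet_n(d)$: view a permutation $\sigma\in S_{nd}$ as a perfect matching between $nd$ labeled ``left half-edges'' and $nd$ labeled ``right half-edges'', partitioned into $2n$ buckets of size $d$, and let $\Pi(\sigma)$ be the multigraph obtained by collapsing each bucket to a single vertex. A direct count of pairings compatible with a given multigraph gives $|\Pi^{-1}(G)| = (d!)^{2n}/\prod_{i,j}\mult_G(i,j)! = (nd)!\,\Pconfig(G)$, so $\Pi_{\ast}\Pperm = \Pconfig$. For any $f: \ConfBipGSet_n(d)\to \R^+$, setting $\tilde f = f\circ \Pi$, the pushforward identity yields
$$
\Var_{\Pperm}(\tilde f) = \Var_{\Pconfig}(f) \quad \text{and} \quad \Ent_{\Pperm}(\tilde f^2) = \Ent_{\Pconfig}(f^2).
$$

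Second, I would compare the Dirichlet forms by tracking the effect of a transposition $\tau=(a,b)$ on the projection. Let $I_a,I_b$ be the left buckets containing $a,b$ and $J_a,J_b$ the right buckets containing $\sigma(a),\sigma(b)$. If $I_a=I_b$ or $J_a=J_b$, then $\tau$ only relabels half-edges inside a single bucket and so $\Pi(\sigma\tau)=\Pi(\sigma)$, contributing zero to the Dirichlet form. Otherwise $\Pi(\sigma\tau)$ differs from $\Pi(\sigma)$ by the admissible switching $\phi=\langle I_a,I_b,J_a,J_b\rangle$. The key counting step is that, for a fixed $G_1$ and a switching $\phi=\langle i_1,i_2,j_1,j_2\rangle$ applicable to $G_1$, the number of transpositions $\tau$ inducing $\phi$ from a fixed $\sigma\in\Pi^{-1}(G_1)$ equals $\mu_1\mu_2 := \mult_{G_1}(i_1,j_1)\,\mult_{G_1}(i_2,j_2)$: one picks one of the $\mu_1$ half-edges of bucket $i_1$ paired with bucket $j_1$, and one of the $\mu_2$ half-edges of bucket $i_2$ paired with bucket $j_2$. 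Summing yields
$$
\Dir_{\Pperm}(\tilde f,\tilde f) = \frac{1}{(nd)^2}\sum_{G_1}\Pconfig(G_1)\sum_{\phi}\mu_1\mu_2\big(f(G_1)-f(\phi G_1)\big)^2,
$$
while unfolding the definition of $Q_c$ gives
$$
\Dir_{\Pconfig}(f,f) = \frac{1}{nd(nd-1)}\sum_{G_1}\Pconfig(G_1)\sum_{\phi}\mu_1\mu_2\big(f(G_1)-f(\phi G_1)\big)^2.
$$
Hence $\Dir_{\Pperm}(\tilde f,\tilde f) = \tfrac{nd-1}{nd}\Dir_{\Pconfig}(f,f)\leq \Dir_{\Pconfig}(f,f)$. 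Combined with the Poincar\'e and log-Sobolev constants for the random transposition model established in \cite{DS2,DS1,Lee-Yau} (of order $nd$ and $nd\log(nd)$, respectively), this yields $\Var_{\Pconfig}(f)\lesssim nd\,\Dir_{\Pconfig}(f,f)$ and $\Ent_{\Pconfig}(f^2)\lesssim nd\log(nd)\,\Dir_{\Pconfig}(f,f)$.

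Finally, I would supply matching lower bounds via explicit test functions. For the Poincar\'e bound, take $f(G)=\mult_G(1,1)$: a direct second-moment computation in the configuration model gives $\Var_{\Pconfig}(f)\asymp d/n$, while $\Dir_{\Pconfig}(f,f)\asymp 1/n^2$, since only switchings that add or remove the edge $(1,1)$ contribute, and these have total $Q_c$-weight of order $1/n^2$; the ratio is $\asymp nd$. For the log-Sobolev bound, a bottleneck construction suffices: set $A=\{G:\mult_G(1,1)=d\}$, whose $\Pconfig$-measure is of order $n^{-d}$, and take $f=\mathbf{1}_A$; then $\Ent_{\Pconfig}(f^2)\asymp n^{-d}\,d\log n$, while $\Dir_{\Pconfig}(f,f)\asymp n^{-d}/n$ since from any $G\in A$ a single switching must touch the saturated edge $(1,1)$, yielding the ratio $\asymp nd\log(nd)$. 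The main obstacle --- more a careful bookkeeping than a conceptual difficulty --- is the counting identity showing that the multiplicity factors $\mu_1\mu_2$ obtained from counting transpositions exactly reproduce the weights in $Q_c$; this makes the Dirichlet form comparison an equality up to the harmless factor $(nd-1)/nd$, which is precisely what allows the sharp functional constants to be inherited from the random transposition model.
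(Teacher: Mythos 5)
Your proposal is correct and follows essentially the same route as the paper: lift $f$ to $(\Permset,\Pperm)$ through the configuration-model projection, observe that variance and entropy are preserved exactly while the Dirichlet forms agree up to the factor $\frac{nd-1}{nd}$ (via the same multiplicity count $\mult_{G_1}(i_1,j_1)\,\mult_{G_1}(i_2,j_2)$ for transpositions inducing a given switching), and then import the known constants for the random transposition chain. The explicit test functions you add for the matching lower bounds are a reasonable supplement (the paper leaves that direction implicit), but the core comparison argument is identical.
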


\begin{proof}
Let $\psi: \Permset\to  \ConfBipGSet_n(d)$ be the many-to-one mapping of the configuration model. 
Let $f: \ConfBipGSet_n(d)\to \R_+$. We consider the function $\tilde f=f\circ \psi$ on $\Permset$. We will verify that 
$$
\Var_{\Pperm}(\tilde f)= \Var_{\Pconfig}(f),\quad \Ent_{\Pperm}(\tilde f)= \Ent_{\Pconfig}(f),\quad
 \text{and } \Dir_{\Pperm}(\tilde f,\tilde f)=  \frac{nd-1}{nd}\Dir_{\Pconfig}(f,f).
$$
This will allow to deduce the statement of the proposition using the Poincar\'e and log-Sobolev constants of the transposition chain on $\Permset$. 
First note that given $G\in \ConfBipGSet_n(d)$, the set $\psi^{-1}(\{G\})$ has cardinality  $\frac{(d!)^{2n}}{\prod_{1\leq i,j\leq n} \mult_{G}(i,j)!}$. With this estimate in hand, we can write 
\begin{align*}
\Exp_{\Pperm}[\tilde f] &= \frac{1}{(nd)!}\sum_{\sigma \in \Permset} \tilde f(\sigma)
= \frac{1}{(nd)!} \sum_{G\in \ConfBipGSet_n(d)} \sum_{\sigma\in \psi^{-1}(\{G\})} \tilde f(\sigma) \\
&= \frac{1}{(nd)!} \sum_{G\in \ConfBipGSet_n(d)} f(G)\cdot \vert \psi^{-1}(\{G\})\vert= \Exp_{\Pconfig} [f]. 
\end{align*}
Similarly, we have $\Exp_{\Pperm}[{\tilde f}^2] = \Exp_{\Pconfig} [f^2]$ and $\Exp_{\Pperm}[{\tilde f}\log \tilde f] = \Exp_{\Pconfig} [f\log f]$ which proves that 
$\Var_{\Pperm}(\tilde f)= \Var_{\Pconfig}(f)$ and  $\Ent_{\Pperm}(\tilde f)= \Ent_{\Pconfig}(f)$. 

Finally, to compare the Dirichlet forms, we write 
$$
\Dir_{\Pperm}(\tilde f,\tilde f)= \frac{1}{(nd)!\, (nd)^2}  \sum_{\sigma \sim \sigma'\in \Permset}  \big(\tilde f(\sigma)- \tilde f(\sigma')\big)^2. 
$$
For every pair of adjacent permutations $\sigma, \sigma' \in \Permset$, we either have $\psi(\sigma)=\psi(\sigma')$ or $\psi(\sigma)$ and $\psi(\sigma')$ 
are adjacent with respect to the switch graph. Moreover, if $\psi(\sigma)=\psi(\sigma')$, we have $\tilde f(\sigma)=\tilde f(\sigma')$. 
Thus, we can write 
\begin{align*}
\Dir_{\Pperm}(\tilde f,\tilde f)&= \frac{1}{(nd)!\, (nd)^2}  \sum_{G\sim G' \in  \ConfBipGSet_n(d)} \sum_{\underset{(\sigma,\sigma') \in \psi^{-1}(\{G\})\times\psi^{-1}(\{G'\})}{\sigma\sim \sigma'}}  \big(\tilde f(\sigma)- \tilde f(\sigma')\big)^2\\
&=  \frac{1}{(nd)!\, (nd)^2}  \sum_{G\sim G' \in  \ConfBipGSet_n(d)} \big( f(G)-  f(G')\big)^2 \cdot\vert \Gamma(G,G') \vert,
\end{align*}
where we denoted $\Gamma(G,G'):=\{(\sigma,\sigma') \in \psi^{-1}(\{G\})\times\psi^{-1}(\{G'\}):\, \sigma\sim \sigma'\}$. 
Fix $G\sim G'$ and let $\langle i,i',j,j'\rangle$ be the switching operation used to transform $G$ into $G'$. Now given $\sigma \in \psi^{-1}(\{G\})$, it is not difficult to see that there are $\mult_{G}(i,j)\,\mult_{G}(i',j')$ choices for $\sigma'\in \psi^{-1}(\{G'\})$ such that $\sigma\sim \sigma'$. Therefore, we deduce that 
$$
\vert \Gamma(G,G')\vert = \mult_{G}(i,j)\cdot\mult_{G}(i',j')\cdot \vert  \psi^{-1}(\{G\})\vert= \frac{(d!)^{2n}\mult_{G}(i,j)\cdot\mult_{G}(i',j')}{\prod_{1\leq i,j\leq n} \mult_{G}(i,j)!}. 
$$
Replacing this identity above, we get that $\Dir_{\Pperm}(\tilde f,\tilde f)=\frac{nd-1}{nd}\Dir_{\Pconfig}(f,f)$ and finish the proof. 
\end{proof}

\subsection{Lower bounds on the Poincar\'e and log-Sobolev constants}

We verify in this subsection that the estimates in Theorems~\ref{th: poincare} and \ref{th: log-sob} are optimal. 
To this aim, we will provide matching lower bounds for the Poincar\'e and log-Sobolev constants of the switch chain on $\BipGSet_n(d)$. 

\begin{prop}[Lower bounds for Poincar\'e and log-Sobolev constants]\label{prop: lower bound poincare}
Let $2\leq d\leq \frac{n}{2}$. 
The Poincar\'e and log-Sobolev constants of $(\BipGSet_n(d), \Psimple, Q_u)$ are at least $nd/4$ and $\frac12
nd\log(\frac{n}{d})$, respectively. 
\end{prop}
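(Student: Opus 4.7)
The plan is to exhibit a single test function realizing both lower bounds via the variational characterizations of the Poincar\'e and log-Sobolev constants. I take $f:=\mathbf{1}_A$ on $\BipGSet_n(d)$, where $A=\{G\in\BipGSet_n(d):\,(1,1)\in G\}$ is the set of simple $d$-regular bipartite graphs containing a prescribed edge. The heuristic is that a single edge is a ``slow'' coordinate for the switch chain, since only switchings involving $(1,1)$ can flip $f$.

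The first step is a trivial symmetry computation: by vertex-transitivity of $\Psimple$ (equivalently, by double-counting, since $\Exp_{\Psimple}[\,|E(G)|\,]=nd$ is spread uniformly over the $n^2$ potential edges), one has $\Psimple(A)=d/n$. Hence
\[
\Var_{\Psimple}(f)=\frac{d(n-d)}{n^2}\quad\mbox{and}\quad \Ent_{\Psimple}(f^2)=\frac{d}{n}\log\frac{n}{d}.
\]

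The heart of the argument is an upper bound on the Dirichlet form. Any switching changing $f$ must involve edge $(1,1)$, so it has the form $\langle 1,i',1,j'\rangle$ with $i',j'\neq 1$; admissibility on $\BipGSet_n(d)$ further requires $(i',j')\in G_1$, $(1,j')\notin G_1$ and $(i',1)\notin G_1$. For fixed $G_1\in A$, the number $N(G_1)$ of such switchings is thus at most the number of edges of $G_1$ whose right endpoint $j'$ satisfies $(1,j')\notin G_1$. There are exactly $n-d$ such right vertices (since $1^{(\ell)}$ has degree $d$ in $G_1$), and each has degree $d$; hence $N(G_1)\leq d(n-d)$. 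Combined with $Q_u(G_1,G_2)=2/(nd(nd-1))$ for adjacent pairs, this yields
\[
\Dir_{\Psimple}(f,f)\leq \Psimple(A)\cdot d(n-d)\cdot \frac{2}{nd(nd-1)}=\frac{2d(n-d)}{n^2(nd-1)}.
\]

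Dividing now produces both claims essentially for free. The Poincar\'e ratio simplifies to $\Var_{\Psimple}(f)/\Dir_{\Psimple}(f,f)=(nd-1)/2\geq nd/4$ for $nd\geq 2$, proving the first assertion. For the log-Sobolev ratio, the same division gives $n(nd-1)\log(n/d)/(2(n-d))$, and the elementary inequality $n(nd-1)\geq nd(n-d)$ (i.e., $d^2\geq 1$, which holds since $d\geq 2$) rearranges this to $\geq \tfrac{1}{2}nd\log(n/d)$. There is no real obstacle --- the argument is a direct variational computation --- but the sharp constants require the refined count $N(G_1)\leq d(n-d)$ rather than the trivial $N(G_1)\leq nd$. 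Note that attempting to pull back the lower bound from Proposition~\ref{prop: poincare-logsob-config} is not viable, since the simple-graph chain $(\BipGSet_n(d),\Psimple,Q_u)$ could in principle relax faster than its counterpart on $\ConfBipGSet_n(d)$; hence the direct approach.
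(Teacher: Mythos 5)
Your proposal is correct and follows essentially the same route as the paper: the identical test function $f=\mathbf{1}_{\{(1,1)\in G\}}$, the same computation of $\Var_{\Psimple}(f)$ and $\Ent_{\Psimple}(f^2)$, and an upper bound on $\Dir_{\Psimple}(f,f)$ via counting admissible switchings that destroy the edge $(1,1)$. The only difference is your sharper count $N(G_1)\leq d(n-d)$ of admissible second edges (the paper uses the cruder bound $nd-1$), which yields marginally better constants but does not change the argument.
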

\begin{proof}
Denote by $\alpha$ and $\beta$ the Poincar\'e and the log-Sobolev constants, respectively. 
By definition, we have 
$$
\alpha=\sup \frac{\Var_{\Psimple}(f)}{\Dir_{\Psimple}(f,f)}\quad \text{ and } \quad \beta=\sup \frac{\Ent_{\Psimple}(f^2)}{\Dir_{\Psimple}(f,f)},
$$
where the supremum is taken over all functions $f:\, \BipGSet_n(d)\to \R$. 
To obtain the required lower bounds, we shall use a test function.  
Define $f:\, \BipGSet_n(d)\to \R$ by 
$$
f(G)=\begin{cases}
1& \text{ if $(1,1)$ is an edge in $G$};\\
0& \text{otherwise}. 
\end{cases}
$$
Note that any $G\in  \BipGSet_n(d)$ with $f(G)=1$ has at most $(nd-1)$ adjacent graphs $G'$ satisfying $f(G')=0$. 
Indeed, any such $G'$ is obtained from $G$ by a switching involving the edge $(1,1)$, and it remains to choose the second edge participating in the switching. 
It follows from $d$-regularity that 
$$
\vert\{G\in \BipGSet_n(d):\, f(G)=1\}\vert =\frac{d}{n}\vert \BipGSet_n(d)\vert.
$$
Using this, we now calculate 
$$
\Dir_{\Psimple}(f,f)=  \sum_{\underset{f(G)=1}{G\in\BipGSet_n(d)}} \sum_{\underset{G'\sim G,\, f(G')=0}{G'\in  \BipGSet_n(d)}} \Psimple(G)Q_u(G,G')
\leq \frac{2}{n^2}.
$$
On the other hand, we have $\Exp_{\Psimple} f= \frac{d}{n}$. Therefore we have 
$$
\Var_{\Psimple}(f)= \frac{d}{n}\Big(1-\frac{d}{n}\Big)\quad \text{and}\quad \Ent_{\Psimple}(f^2)= -\frac{d}{n}\log\Big(\frac{d}{n}\Big). 
$$
Putting these estimates together, we deduce that 
$$
\alpha\geq \frac12 nd \Big(1-\frac{d}{n}\Big)\quad \text{ and }\quad \beta\geq \frac12 nd \log\Big(\frac{n}{d}\Big). 
$$
The result follows from the assumption on $d$. 
\end{proof}

\section{Simple neighborhoods of multigraphs}\label{sec: neighborhood}

The goal of this section is to associate to a given multigraph $G\in \ConfBipGSet_n(d)$ a collection of simple graphs connected to it. This will allow us 
to naturally extend a function on simple graphs to one on $\ConfBipGSet_n(d)$ by making use of such collections.

Given a path $P$ of length $\length{P}$ on $\ConfBipGSet_n(d)$ or on $\BipGSet_n(d)$ starting at a graph $G'$,
we say that the sequence of switchings $(\phi_1,\dots,\phi_{\length{P}})$ {\it generates $P$} if 
for every $0\leq t<\length{P}$, $\phi_{t+1}$ is the simple switching operation transforming $P[t]$ to $P[t+1]$.
Note that the ordering may be important in general.

\begin{defi}[Category $k$ multigraphs]\label{def: category}
Given an integer $0\leq k\leq nd/2$, we define $\categ(k)$ as the set of multigraphs $G\in \ConfBipGSet_n(d)$ which satisfy the following: 
\begin{itemize}
\item $G$ has exactly $k$ multiedges of multiplicity $2$; 
\item None of those multiple edges are incident to one another; 
\item $G$ has no edges of multiplicity three or greater. 
\end{itemize}
\end{defi}
In the terminology of matrices, the above conditions mean that the adjacency matrix $\Adj(G)$ of $G$ has all its entries smaller or equal to $2$, among which exactly $k$ entries equal to $2$, and no row or column contains more than one entry equal to $2$. When $G\in \categ(k)$, we will refer to $k$ as {\it the category number of $G$}. Note that with this definition, $\BipGSet_n(d)$ consists of graphs which are of category $0$. 
Given two integers $a,b$, we also denote $\categ([a,b]):=\cup_{k=a}^{b} \categ(k)$. 

\medskip

Let $\cconst$ be as in \eqref{eq: parameters} and define 
$$
\Ugly(\cconst)=\ConfBipGSet_n(d)\setminus \categ([0,\cconst]).
$$

Let us record the following useful observations. 
\begin{lemma}\label{lem: nice-multigraph-prob and transition}
Given an integer $0\leq k\leq nd/2$ and $G\in \categ(k)$, then 
$$
\Pconfig(G)= \frac{(d!)^{2n}}{(nd)! 2^{k}}= \frac{\Pconfig\big(\BipGSet_n(d)\big)}{2^k\vert \BipGSet_n(d)\vert},
$$
and 
$$
\vert \{G'\in \Ugly(\cconst):\, G'\sim G\}\vert\leq 
\begin{cases}
4kd(d-2)^2&\mbox{ if $k\leq \cconst -2$}; \\
4kd(d-2)^2+(nd-2k)(d-1)^2& \mbox{ otherwise}.
\end{cases}
$$
\end{lemma}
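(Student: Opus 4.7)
The first equality follows directly from the general expression $\Pconfig(G)=\frac{(d!)^{2n}}{(nd)!\,\prod_{i,j}\mult_G(i,j)!}$ recalled earlier in Section~\ref{sec: preliminaries}: for $G\in\categ(k)$ all multiplicities lie in $\{0,1,2\}$ with exactly $k$ entries equal to $2$, so the denominator product reduces to $2^k$. The second equality is obtained by specializing to $k=0$ (since $\BipGSet_n(d)=\categ(0)$), which gives $\Pconfig(H)=\frac{(d!)^{2n}}{(nd)!}$ for every $H\in\BipGSet_n(d)$, whence $\frac{(d!)^{2n}}{(nd)!} = \Pconfig(\BipGSet_n(d))/|\BipGSet_n(d)|$.

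For the neighbor count, I would fix a switching $\phi=\langle i,i',j,j'\rangle$ and classify its effect according to how $G'=\phi(G)$ can fail to lie in $\categ([0,\cconst])$: (a) $G'$ contains an edge of multiplicity $\geq 3$, (b) $G'$ contains two incident edges of multiplicity $2$, or (c) $G'$ is otherwise a valid multigraph but with more than $\cconst$ double edges. A direct inspection of the multiplicity update rules shows that the number of double edges changes by at most $+2$ under a switching, so case~(c) is vacuous for $k\leq\cconst-2$, which accounts for the dichotomy in the bound. For case~(a), a mult-$\geq 3$ edge can only appear at $(i,j')$ or $(i',j)$, which forces this to already be one of the $k$ double edges of $G$; after fixing the double edge and which of the two positions it occupies, the two remaining free endpoints each have $d-2$ choices (the mult-$1$ neighbors of the corresponding fixed vertex), giving at most $2k(d-2)^2$ ordered switchings. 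For case~(b), since $G$ has no incident double edges, at least one of the two incident mult-$2$ edges in $G'$ is newly created at $(i,j')$ or $(i',j)$, and its partner is a preexisting double edge of $G$ sharing a vertex with it. Using that each vertex of $G$ is incident to at most one double edge pins down the partner uniquely once the shared vertex is chosen; enumerating the four sub-cases (two choices for the new edge times two choices for the shared endpoint), the two remaining free endpoints contribute factors at most $(d-1)$ and $(d-3)$ respectively, giving at most $4k(d-1)(d-2)(d-3)\leq 4kd(d-2)^2$ ordered switchings. Halving to convert ordered switchings into distinct adjacent graphs (each unordered pair of swapped edges yields exactly two orderings $\langle i,i',j,j'\rangle$ and $\langle i',i,j',j\rangle$), and absorbing the contribution of case~(a), produces the bound $4kd(d-2)^2$ for $k\leq\cconst-2$.

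For $k>\cconst-2$ I would handle case~(c) separately by a crude bound: any switching triggering it must satisfy $\mult_G(i,j')=1$ or $\mult_G(i',j)=1$ (otherwise the double-edge count cannot increase). Such a switching fixes $(i,j')$ or $(i',j)$ as one of the $nd-2k$ mult-$1$ edges of $G$, and each of the two remaining endpoints admits at most $d-1$ options, giving $2(nd-2k)(d-1)^2$ ordered switchings which halve to the additional term $(nd-2k)(d-1)^2$. Combining with the contribution of cases~(a) and~(b) yields the second branch of the bound. The delicate step is the enumeration in case~(b): one must check that the preexisting mult-$2$ partner edge is not itself among the four edges $(i,j),(i',j'),(i,j'),(i',j)$ touched by the switching --- for if it were $(i,j)$ or $(i',j')$ it would fail to remain a double edge in $G'$ --- which is precisely the source of the factor $d-3$ (rather than $d-2$) in the count.
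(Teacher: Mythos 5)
Your proof is correct and follows essentially the same route as the paper: the first identity is read off from the general formula for $\Pconfig$, and the neighbor count uses the same case analysis (creation of a multiplicity-$3$ edge, creation of a double edge incident to a surviving old one, and category overflow, the last only possible when $k\geq \cconst-1$), with your ordered-count-then-halve bookkeeping playing the role of the paper's ``without loss of generality'' normalization of the switching quadruple. The only blemish is a prose slip in your case (b) — after fixing the partner double, the new-edge position and the shared endpoint, there are \emph{three} remaining free vertices to choose (accounting for the factors $(d-2)$, $(d-3)$ and $(d-1)$ in your displayed bound), not two — but the stated bound and the final estimates are valid.
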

\begin{proof}
The first part of the lemma follows from the definition of $\Pconfig$ and $\categ(k)$.
Now, we prove the second assertion. Let $G'\in \Ugly(\cconst)$ be adjacent to $G$, and let $\langle i,i',j,j'\rangle$ be the switching used to pass from $G$ to $G'$. Without loss of generality, we can assume that $\mult_G(i',j)\geq \mult_G(i,j')$. Then the switching necessarily satisfies one of the following:
\begin{itemize}
\item $\mult_G(i',j)=2$ (so that a multiplicity $3$ edge is created in $G'$);
\item $i$ is incident to a multiple edge and $\mult_G(i,j)=\mult_G(i,j')=1$ OR
$i'$ is incident to a multiple edge and $\mult_G(i',j)=\mult_G(i',j')=1$ (two multiedges in the same row);
\item $j$ is incident to a multiple edge and $\mult_G(i,j)=\mult_G(i',j)=1$ OR
$j'$ is incident to a multiple edge and $\mult_G(i,j')=\mult_G(i',j')=1$ (two multiedges in the same column);
\item $\mult_G(i,j)=\mult_G(i',j')=\mult_G(i,j')=1$ when $G\in \categ([\cconst-1,\cconst])$ (two multiedges are created). 
\end{itemize}
Since there are $k$ multiplicity $2$ edges in $G$, then using regularity of $G$, the number of switchings
$\langle i,i',j,j'\rangle$
satisfying the first case is at most $k(d-2)^2$.

To count the number of switching satisfying the second case, note that since $G\in \categ(k)$, then there are at exactly $k$ left vertices $i$ incident to a multiple edge. Therefore, there are at most $k(d-2)^2$ choices of indices $i,j,j'$ satisfying $\mult_G(i,j)=\mult_G(i,j')=1$, and
there are at most $d-1$ choices for the index $i'$ to complete construction of the switching. Thus, the total number can be bounded above by twice $k(d-2)^2(d-1)$.

The third case is completely identical to the second one.

Finally, to bound the number of switchings in the fourth case, note that there are at most $(nd-2k)$ choices for the edge $(i,j)$, and at most $(d-1)^2$ choices for the indices $i', j'$ to satisfy $\mult_G(i',j')=\mult_G(i,j')=1$. Putting together the above estimates, we finish the proof. 
\end{proof}

We now define the simple paths which lead from a multigraph $G'\in\categ([1,\cconst])$ to a simple graph. 
\begin{defi}[Simple paths]\label{def: simple path}
Let $1\leq k\leq \cconst$, $G'\in \categ(k)$ and let $\{(i_s,j_s)\}_{1\leq s\leq k}$ be the multiplicity $2$ edges of $G'$ ordered in increasing order of $(i_s)_{1\leq s\leq k}$.  
We will say that a path $P$ starting at $G'$ is {\it simple} if for every $0\leq t<k$, the simple switching used to obtain $P[t+1]$ from $P[t]$ is given by $\langle i_{t+1},i_{t+1}',j_{t+1},j_{t+1}'\rangle$ where $i_{t+1}'\in [n^{(\ell)}], j_{t+1}'\in [n^{(r)}]$ satisfy the following conditions:
\begin{itemize}
\item For every $0\leq t<k$, we have 
$$
i_{t+1}'\not\in \{i_s\}_{1\leq s\leq k}, \quad j_{t+1}'\not\in \{j_s\}_{1\leq s\leq k},
$$
and all $(i_s')_{1\leq s\leq k}$ (resp. $(j_s')_{1\leq s\leq k}$) are pairwise distinct. 
\item For every $0\leq t<k$, $\mult_{G'}(i_{t+1}',j_{t+1})= \mult_{G'}(i_{t+1},j_{t+1}')=0$. 
\end{itemize}
\end{defi}

It is clear from the above definition that the length of any simple path starting at $G'\in \categ(k)$, if it exists, is equal to $k$ and its
endpoint belongs to $\BipGSet_n(d)$. 
As we will see below, simple paths always exist provided $\cconst$ is bounded appropriately. 

Next, we define the $s$--neighborhood of a multigraph. 

\begin{defi}[$s$--neighborhood]\label{def: s-neighborhood}
Let $G'\in \categ([1,\cconst])$.
The set of all endpoints of simple paths starting at $G'$ will be denoted by $\SNeigh(G')$ and called {\it the $s$--neighborhood}
of the graph. 
\end{defi}

The next lemma asserts that every simple path 
is uniquely determined by its starting point and endpoint. 

\begin{lemma}[Uniqueness of a simple path]\label{l: 2958720598750}
Let $1\leq k\leq \cconst$ and $G'\in \categ(k)$ with a non-empty $s$--neighborhood. 
Then any graph $G\in \SNeigh(G')$ uniquely determines the collection of $k$ simple switching operations generating
the simple path leading from $G'$ to $G$.  
\end{lemma}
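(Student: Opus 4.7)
The plan is to recover, from the pair $(G',G)$, the ordered list of pairs $(i_s',j_s')$ that — together with the multiedge list $(i_s,j_s)_{s=1}^{k}$ of $G'$ — determines each switching $\langle i_s,i_s',j_s,j_s'\rangle$. Note that the ordering of the multiedges $(i_s,j_s)$ is prescribed by $G'$ alone (increasing $i_s$), so the only thing to extract from $G$ is the auxiliary pair at each step.

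First I would record the strong disjointness property forced by Definition~\ref{def: simple path}. The conditions imply that the $2k$ row indices $\{i_1,\dots,i_k,i_1',\dots,i_k'\}$ are pairwise distinct, and likewise for the column indices $\{j_1,\dots,j_k,j_1',\dots,j_k'\}$. Consequently, for each $s$, the only four entries of the adjacency matrix altered by the $s$-th switching, namely $(i_s,j_s)$, $(i_s,j_s')$, $(i_s',j_s)$, $(i_s',j_s')$, are disjoint from the quadruple of entries altered at any other step $t \neq s$. Hence the differences between $\Adj(P[s])$ and $\Adj(P[s-1])$ decouple completely, and the total change $\Adj(G)-\Adj(G')$ is the superposition of these $k$ disjoint local modifications.

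Next I would invert this structure. Given $G'$, pick the $s$-th multiedge $(i_s,j_s)$. By the disjointness above, the row $i_s$ of $\Adj(G)-\Adj(G')$ has exactly one entry equal to $+1$ (and one entry, at column $j_s$, equal to $-1$); this unique $+1$ position is $(i_s,j_s')$, which recovers $j_s'$. Symmetrically, the column $j_s$ of $\Adj(G)-\Adj(G')$ has a unique $+1$ entry at $(i_s',j_s)$, which recovers $i_s'$. Performing this for every $s=1,\dots,k$ reconstructs all switchings $\phi_s=\langle i_s,i_s',j_s,j_s'\rangle$ and hence the entire simple path. There is no genuine obstacle: the only point requiring a bit of care is the verification of the global disjointness across steps, after which the recovery is immediate.
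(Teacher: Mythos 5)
Your proof is correct and follows essentially the same route as the paper's: both arguments rest on the fact that, by the definition of a simple path and of $\categ(k)$, each left vertex $i_s$ and right vertex $j_s$ is touched by exactly one switching, so the unique change in row $i_s$ (resp.\ column $j_s$) of the adjacency matrix pins down $j_s'$ (resp.\ $i_s'$). Phrasing it through the difference matrix $\Adj(G)-\Adj(G')$ and its unique $+1$ per relevant row and column is just a repackaging of the paper's multiplicity comparison, and all the disjointness claims you invoke do follow from Definitions~\ref{def: category} and~\ref{def: simple path}.
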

\begin{proof}
Let $1\leq k\leq \cconst$,  $G'\in \categ(k)$ and $G\in \SNeigh(G')$. 
Let $\{(i_s,j_s)\}_{1\leq s\leq k}$ be the multiplicity $2$ edges of $G'$ ordered so that the sequence
$(i_s)_{1\leq s\leq k}$ is increasing. Assume further that
$(\langle i_{t+1},i_{t+1}',j_{t+1},j_{t+1}'\rangle)_{t<k}$ and
$(\langle i_{t+1},i_{t+1}'',j_{t+1},j_{t+1}''\rangle)_{t<k}$ are two collections of simple switchings generating simple paths leading from $G'$ to $G$.
We need to show that necessarily $i_{t+1}'=i_{t+1}''$ and $j_{t+1}'=j_{t+1}''$ for all $t<k$.

Fix any $t<k$.
Note that, according to the definition of a simple path, the {\it only} switching from the first collection which operates on $i_{t+1}$--st left vertex is
$\langle i_{t+1},i_{t+1}',j_{t+1},j_{t+1}'\rangle$,
and analogous assertion is true for the second collection.
Further, $\mult_{G'}(i_{t+1},j_{t+1}')= \mult_{G'}(i_{t+1},j_{t+1}'')=0$ while $\mult_G(i_{t+1},j_{t+1}')= \mult_G(i_{t+1},j_{t+1}'')=1$.
Hence, we must have $j_{t+1}'=j_{t+1}''$.

Similarly, the {\it only} switching from the first collection which operates on $j_{t+1}$--st right vertex is
$$\langle i_{t+1},i_{t+1}',j_{t+1},j_{t+1}'\rangle,$$
and the same is true for the second collection.
Applying the above argument, we obtain $i_{t+1}'=i_{t+1}''$, and the result follows.
%
\end{proof}

We now estimate the cardinality of every $s$--neighborhood. 

\begin{lemma}[Cardinality of $s$--neighborhoods]\label{l: 2-9857-2985}
Let $1\leq k\leq \cconst$ and $G\in \categ(k)$. Then necessarily
$$
\big((n-k-2d)d\big)^k\leq |\SNeigh(G)|\leq  \big((n-k)d\big)^k. 
$$
Moreover, 
 $$
 |\SNeigh(G)|\in \Big[\frac{(nd)^k}{2}, (nd)^k\Big]. 
 $$
\end{lemma}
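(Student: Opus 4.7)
The plan is to apply Lemma~\ref{l: 2958720598750} to identify $|\SNeigh(G)|$ with the number of admissible switching sequences generating a simple path starting at $G$, and to factorize this count step by step. Fix the ordering $\{(i_s,j_s)\}_{s=1}^{k}$ of the multiedges of $G$. At step $t+1$ one selects a pair $(i'_{t+1},j'_{t+1})$ satisfying the two bulleted conditions of Definition~\ref{def: simple path}, together with the implicit requirement that $(i'_{t+1},j'_{t+1})$ forms an edge of $P[t]$. The distinctness condition forces the vertices involved in the previous and future switchings to be disjoint from $\{i'_{t+1},j'_{t+1}\}$, so the multiplicity of $(i'_{t+1},j'_{t+1})$ in $P[t]$ coincides with its multiplicity in $G$, and this last requirement is therefore equivalent to $(i'_{t+1},j'_{t+1})$ being an edge of $G$. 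Hence
\begin{equation*}
|\SNeigh(G)|=\prod_{t=0}^{k-1}N_t,
\end{equation*}
where $N_t$ denotes the number of valid pairs at step $t+1$.

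For the upper bound, the first bulleted condition restricts $i'_{t+1}$ to at most $n-k$ values in $[n^{(\ell)}]$, and each left vertex is incident to at most $d$ distinct edges of $G$; hence $N_t\leq(n-k)d$ and $|\SNeigh(G)|\leq((n-k)d)^k$. For the lower bound I would rewrite $N_t$ as the number of distinct edges $(i,j)$ of $G$ with $i$ avoiding the forbidden set $I_t\cup\{i:\mult_G(i,j_{t+1})\geq 1\}$ and $j$ avoiding $J_t\cup\{j:\mult_G(i_{t+1},j)\geq 1\}$, where $I_t=\{i_s\}_{s=1}^{k}\cup\{i'_s\}_{s=1}^{t}$ and $J_t$ is defined analogously. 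The category-$k$ structure forces each endpoint of a multiedge to have exactly $d-1$ distinct neighbors, so each forbidden set has cardinality at most $k+t+(d-1)$, with $i_{t+1}$ and $j_{t+1}$ already accounted for in their respective overlaps. Applying inclusion-exclusion to the $nd-k$ distinct edges of $G$ and bounding the edges meeting either forbidden set by the forbidden-set size times $d$, I expect the uniform per-step estimate $N_t\geq(n-k-2d)d$ to emerge for all $t\in\{0,\ldots,k-1\}$, yielding $|\SNeigh(G)|\geq((n-k-2d)d)^k$.

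Finally, to obtain the refined range $|\SNeigh(G)|\in[(nd)^k/2,(nd)^k]$, I would use the Bernoulli-type inequality to write
\begin{equation*}
\left(\frac{(n-k-2d)d}{nd}\right)^k=\left(1-\frac{k+2d}{n}\right)^k\geq 1-\frac{k(k+2d)}{n},
\end{equation*}
and then check that $k(k+2d)/n\leq 1/2$ for $n$ large using the upper bound $k\leq\cconst=d^2\lfloor\log n\rfloor$ from \eqref{eq: parameters} together with $d\leq n^{1/1143}$ from \eqref{eq: assumption-d}; these assumptions force $k(k+2d)/n$ to decay polynomially. The only mildly delicate point is the inclusion-exclusion bookkeeping needed to extract the clean constant $(n-k-2d)d$ rather than a weaker variant; the rest is routine combinatorial counting, so I do not anticipate any serious obstacle.
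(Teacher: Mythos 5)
Your proposal follows essentially the same route as the paper: identify $|\SNeigh(G)|$ with the number of admissible switching sequences (you via the uniqueness Lemma~\ref{l: 2958720598750}, the paper via the observation that diverging paths never re-merge), count the choices step by step, and deduce the refined range $[(nd)^k/2,(nd)^k]$ from the parameter choices \eqref{eq: parameters}, \eqref{eq: assumption-d}. Your remark that the distinctness constraints make ``$(i'_{t+1},j'_{t+1})$ is an edge of $P[t]$'' equivalent to ``$(i'_{t+1},j'_{t+1})$ is an edge of $G$'' is correct and is the right justification for counting in $G$, and your upper bound $N_t\leq (n-k)d$ matches the paper's.

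The one quantitative slip is in the per-step lower bound: the constant $(n-k-2d)d$ does not follow from your own bookkeeping. Your forbidden row set $I_t\cup\{i:\mult_G(i,j_{t+1})\geq 1\}$ has size up to $k+t+d-1$, and likewise for columns, so ``forbidden-set size times $d$'' excludes up to roughly $2(k+t+d)d$ edges, giving a per-step bound of order $\big(n-2(k+t)-2d\big)d$ rather than $(n-k-2d)d$; there is no inclusion--exclusion trick that recovers the stronger constant in the worst case, since the excluded rows and columns can contribute essentially disjointly. (For what it is worth, the paper's own derivation is comparably generous at this point: it subtracts only the $kd$ edges in multiedge rows and the two adjacency terms $2(d-1)(d-2)$, ignoring the multiedge-column and previously-used-vertex exclusions.) This does not endanger the statement you actually need downstream: with the weaker per-step bound $N_t\geq nd-Ck d-Cd^2$ the same Bernoulli estimate gives $\prod_t N_t\geq (nd)^k\big(1-Ck(k+d)/n\big)\geq (nd)^k/2$, since $k\leq\cconst=d^2\lfloor\log n\rfloor$ and $d\leq n^{1/1143}$ make $k(k+d)/n=o(1)$. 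So either prove the weaker per-step estimate honestly and state the first display with the corresponding constant, or keep the paper's constant but supply the missing accounting; as written, the sentence ``I expect the uniform per-step estimate $N_t\geq(n-k-2d)d$ to emerge'' is the only unproved step.
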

\begin{proof}
Let $1\leq k\leq \cconst$, $G\in \categ(k)$ and let $\{(i_s,j_s)\}_{1\leq s\leq k}$ be the ordered multiedges of $G$. 
It follows from the construction of simple paths that if $P$ and $P'$ are two simple paths starting at $G$,
then $P[t]\neq P'[t]$ implies that $P[t+1]\neq P'[t+1]$ for any $0\leq t< k$.
In view of this, our task is reduced to estimating the number of choices of $P[t+1]$ conditioned on a choice of
$P[t]$, for every $0\leq t<k$.
We only need to estimate the number of choices of the edge
$(i_{t+1}',j_{t+1}')$ on which the $t$--th switching will be operated.
Since $G\in \categ(k)$, then clearly there are at most $(nd-kd)$ choices for such an edge. The upper bound follows.

Further, consider the lower bound.
Note that there are at most $(d-1)(d-2)$ choices of $(i_{t+1}',j_{t+1}')$ such that both $(i_{t+1}',j_{t+1}')$ and $(i_{t+1},j_{t+1}')$ are edges in $P[t]$. Similarly, there are at most $(d-1)(d-2)$ choices of $(i_{t+1}',j_{t+1}')$  such that both $(i_{t+1}',j_{t+1}')$ and $(i_{t+1}',j_{t+1})$ are edges in $P[t]$. Thus, we deduce that there are at least $(nd-k d)- 2(d-1)(d-2)$ choices for the edge $(i_{t+1}',j_{t+1}')$ on which the $t$--th switching can be operated. This proves the first part of the lemma. 
The second one follows by the choice of $\cconst$ in \eqref{eq: parameters}, the assumption on $d$ in \eqref{eq: assumption-d}, and the fact that $n$ is large enough. 
\end{proof}

Note that the previous lemma implies that simple paths always exist under our assumptions on $\cconst$ and $d$.  
It will also be important to know how many $s$--neighborhoods contain a given simple graph. Unfortunately, it turns out that some simple 
graphs belong to few, if any, $s$--neighborhoods. However, this only concerns a small proportion of graphs which have bad expansion properties. 
To measure ``non-expansion'' degree of $G$ in a convenient way for us, we introduce the following quantity. 

\begin{defi}[A measure of anti-expansion]\label{def: measure of ae}
For any graph $G\in \BipGSet_n(d)$, define
\begin{align*}
\antiexp(G):=\big|\big\{
(i,j)\in[n]\times[n]:\;&\exists\; k\in[n]\setminus\{i\},\; \mbox{$(i,j)$ and $(k,j)$ are edges of $G$},\\
&|\supp\row_i(\Adj(G))\cap \supp\row_k(\Adj(G))|\geq 2
\big\}\big|.
\end{align*}
Let $\zconst$ be as in \eqref{eq: parameters} and define 
$$
\Expset(\zconst):=\{G\in \BipGSet_n(d):\, \antiexp(G)\leq \zconst\}. 
$$
\end{defi}

As it is well known, most simple regular graphs are very good expanders. In the next lemma, we verify that this also applies to 
our measure of expansion, and collect some other useful properties. 

\begin{lemma}[Properties of the anti-expansion measure]\label{lem: size-bad graphs}
The following assertions hold.
\begin{itemize}
\item[i.] For any two adjacent simple graphs $G$ and $G'$ we have
$$
\antiexp(G)\leq \antiexp(G')+2d^2.
$$
\item[ii.] Let $G\in  \BipGSet_n(d)$. Then 
$$
\vert\{G'\in  \BipGSet_n(d):\, G'\sim G\mbox{ and }\antiexp(G')< \antiexp(G)\}\vert \geq \frac12 \antiexp(G)(nd-2d^4).
$$
\item[iii.] Let $G\in  \BipGSet_n(d)$. Then
$$
\vert\{G'\in  \BipGSet_n(d):\, G'\sim G\mbox{ and }\antiexp(G')> \antiexp(G)\}\vert \leq nd^5.
$$
\item[iv.] We have $\Psimple\big(\Expset(\zconst)\big)\geq 1-\frac{4nd^7}{(\zconst +2-2d^2)(nd-2d^4)}$.
\end{itemize}
\end{lemma}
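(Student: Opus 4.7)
All four assertions rest on a careful case analysis of how a single simple switching $\phi = \langle i_1,i_2,j_1,j_2\rangle$ can change the count $\antiexp$. I would fix this bookkeeping once and reuse it in every part.

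For part (i), I would begin by classifying the pairs $(i,j)$ whose contribution to $\antiexp$ can differ between $G$ and $G'=\phi(G)$. A pair can change only if one of the following holds: (a) $(i,j)$ itself is one of the four switched entries; (b) $i \in \{i_1,i_2\}$, so that $\row_i$ changes; or (c) some witness row $\row_k$ with $(k,j)$ an edge is affected, forcing $k \in \{i_1,i_2\}$ and hence $j$ a neighbor of $i_1$ or $i_2$ (at most $2d$ choices) and $i$ a neighbor of $j$ (at most $d$ choices). Case (c) contributes at most $2d^2$ pairs and the remaining cases contribute $O(d)$; a careful accounting grouping by the symmetry between rows and columns yields the clean bound $2d^2$.

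For part (ii), for each pair $(i,j)$ counted in $\antiexp(G)$ I would fix once and for all a witness $(k,j_*)$, so that $k \neq i$, $j_* \neq j$ and all four entries $(i,j),(k,j),(i,j_*),(k,j_*)$ are edges of $G$. The plan is to associate to this pair a large family of edges $(i'',j'')$ of $G$ such that the switching $\langle i,i'',j,j''\rangle$ is admissible, destroys the bad pair $(i,j)$ (which is no longer an edge in the new graph), and does not introduce any new contribution to $\antiexp$. The ``forbidden'' edges $(i'',j'')$ are those too close to the neighborhoods of $i,k$ and of the columns $j,j_*$; using the case analysis of part (i) one shows that their count is at most $2d^4$, so at least $nd-2d^4$ admissible safe switchings survive. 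Since each such switching can be assigned to at most two distinct bad pairs, this yields the factor $\tfrac12$. The delicate point, and in my view the main obstacle, is to verify that a generic choice of $(i'',j'')$ really does not create any new bad pair; this requires combining part (i) with an explicit enumeration of the $O(d^4)$ unsafe configurations.

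For part (iii), I would argue that if $\antiexp(\phi(G)) > \antiexp(G)$ then some pair $(i,j)$ is bad in $\phi(G)$ but not in $G$. By the case analysis of part (i), such a pair must lie close to the switching support: either $i \in \{i_1,i_2\}$, or its witness vertex $k \in \{i_1,i_2\}$. Enumerating the switchings compatible with any one such new bad-pair structure --- $n$ choices for a distinguished left vertex, $d$ choices for an incident edge, and at most $O(d^4)$ ways to complete the switching consistent with the new bad-pair data --- gives the claimed $nd^5$ bound. Finally, for part (iv), I would double-count the ordered adjacencies $(G,G')$ with $\antiexp(G)>\antiexp(G')$. By (ii) this count is at least $\tfrac12(nd-2d^4)\sum_G \antiexp(G)$, while by (iii) it is at most $|\BipGSet_n(d)|\cdot nd^5$, so $\Exp_{\Psimple}[\antiexp] \leq 2nd^5/(nd-2d^4)$. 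The stated probability bound then follows from Markov's inequality, with part (i) absorbing a $2d^2$-range around $\zconst$ (since any neighbor of a graph with $\antiexp\leq \zconst$ has $\antiexp\leq \zconst+2d^2$, a level-set analysis in the $O(d^2)$ window near $\zconst$ introduces the shift $-2d^2$ in the denominator and the extra factor $\sim d^2$ in the numerator).
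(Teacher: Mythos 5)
Your parts (i)--(iii) are essentially the paper's own arguments. For (i) the paper performs the same locality count: only pairs lying in the two switched rows, or having a witness in those rows, can change status, and these number at most $2d+2d(d-1)=2d^2$. For (ii) the paper does exactly what you propose, with the factor $\tfrac12$ coming, as you say, from the fact that a given switching can serve at most two bad pairs while distinct admissible switchings produce distinct neighbors; the one point to tighten is the exclusion zone: to guarantee that no new bad pair is created you must keep the partner edge $(i'',j'')$ away from \emph{all} rows adjacent to the column $j$ (not only the fixed witness $k$), and keep its column away from all rows whose support meets $\supp\row_i$ --- this is precisely how the paper obtains its two $d^4$ terms, so your $2d^4$ count survives but the zone you describe needs this adjustment. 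For (iii) the paper pins the completion count down to exactly $d^2\cdot d^2$ (one deleted edge chosen through a neighbor of $i$, the other through a neighbor of $j$), whereas you only claim $O(d^4)$; the unspecified constant is harmless here because the target bound in (iv) carries a spare factor of roughly $d^2$, but to get the lemma verbatim you would have to run the case analysis explicitly, including the case in which the newly counted pair is one of the two \emph{created} edges (hence not an edge of $G$), where one should anchor on the witness edge of $G$ instead.

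Part (iv) is where you genuinely depart from the paper, and your route is simpler. The global double count $\tfrac12(nd-2d^4)\sum_G\antiexp(G)\le |\BipGSet_n(d)|\,nd^5$, obtained from (ii) and (iii), gives $\Exp_{\Psimple}[\antiexp]\le 2nd^5/(nd-2d^4)$, and Markov's inequality then yields $1-\Psimple\big(\Expset(\zconst)\big)\le 2nd^5/\big(\zconst(nd-2d^4)\big)$, which implies the stated bound since $\zconst+2-2d^2\le 2d^2\zconst$. In particular assertion (i) is not needed for your version of (iv), and your closing remark about a level-set analysis in a $2d^2$-window is superfluous. The paper instead runs a recursion over level sets $A_h=\{\antiexp=h\}$, using (i) to locate the smaller-$\antiexp$ neighbors in the window $[h-2d^2,h-1]$ and proving $\Psimple(A_h)\le\frac{2nd^5}{h(nd-2d^4)}\sum_{s=h-2d^2}^{h-1}\Psimple(A_s)$, then summing over $h>\zconst$; that is exactly where the factors $d^7$ and $\zconst+2-2d^2$ in the statement come from. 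Your expectation-plus-Markov argument buys a shorter proof and a marginally stronger estimate at no cost, while the paper's level-set recursion is what forces assertion (i) into the lemma.
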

\begin{proof}\hspace{0cm}\\
\begin{itemize}
\item[i.] Let $G, G'\in \BipGSet_n(d)$, $G\sim G'$, and let $\langle i,i',j,j'\rangle$ be the switching transforming $G$ to $G'$. 
Given $\tilde i\in [n^{(\ell)}]\setminus \{i,i'\}$, note that if $(\tilde i, \tilde j)$ is an edge of $G$ such that there exists $k\in [n^{(\ell)}]\setminus \{i,i'\}$ with $(k,\tilde j)$ an edge of $G$ and 
$$
|\supp\row_{\tilde i}(\Adj(G))\cap \supp\row_k(\Adj(G))|\geq 2,
$$
then necessarily 
$$
|\supp\row_{\tilde i}(\Adj(G'))\cap \supp\row_k(\Adj(G'))|\geq 2,
$$
since $\Adj(G)$ and $\Adj(G')$ coincide on these rows. This automatically implies that 
\begin{align*}
\antiexp(G)&\leq \antiexp(G')+ \vert\{(\tilde i, \tilde j)\in E_G:\, \tilde i\in \{i,i'\}\}\vert\\
&\quad + \vert\{(\tilde i, \tilde j)\in E_G:\, \exists k \in \{i,i'\}, (k,\tilde j)\in E_G\}\vert, 
\end{align*}
where we denoted $E_G$ the set of edges of $G$. 
Note that there are $2d$ edges incident to $\{i,i'\}$, and at most $(d-1)$ edges which share the same right vertex with a given edge. Thus, we get 
$$
\antiexp(G)\leq \antiexp(G')+ 2d+ 2d(d-1),
$$ 
and finish the proof. 
\item[ii.] Let $(i,j)$ be an edge of $G$ such that for some $k\neq i$ we have $(k,j)$ is an edge of $G$ and 
$$
\big|\supp\row_i(\Adj(G))\cap \supp\row_k(\Adj(G))\big|\geq 2.
$$
Clearly, the number of such pairs $(i,j)$ is $\antiexp(G)$. 
We now pick an edge $(i',j')$ of $G$ satisfying the following: 
\begin{itemize}
\item For any $\tilde i\in[n^{(\ell)}]$ such that $(\tilde i, j)$ is an edge of $G$, we have 
$$
\supp\row_{i'}(\Adj(G))\cap \supp\row_{\tilde i}(\Adj(G))=\emptyset.
$$
\item For any $\tilde i\in[n^{(\ell)}]$ such that $(\tilde i, j')$ is an edge of $G$, we have 
$$
\supp\row_{i}(\Adj(G))\cap \supp\row_{\tilde i}(\Adj(G))=\emptyset.
$$
\end{itemize}
Note that with these assumptions, the switching $\langle i,i',j,j'\rangle$ can be performed and the resulting simple graph $G'$
satisfies $\antiexp(G')\leq \antiexp(G)-1$. 
It remains to count the number of choices of the edge $(i',j')$ as above. Note that there are at most $d(1+(d-1)^2)$ choices of $i'$ which violate the first assumption, and thus at most $d^4$ choices of edges $(i',j')$ violating the first assumption. Similarly, note that there are at most $d(d-1)+1$ indices $\tilde i$ such that the supports of $\row_{\tilde i}(\Adj(G))$ and $\row_{i}(\Adj(G))$ intersect, and thus at most $d^4$ choices of edges $(i',j')$ violating the second assumption. Putting together these estimates, we get the result. 
\item[iii.] 
Note that if $G'\sim G$ satisfies $\antiexp(G')>\antiexp(G)$, then necessarily there exists  an edge $(i,j)$ of $G$ (and $G'$) such that for any $k\in [n]\setminus \{i\}$ with $(k,j)$ an edge in $G$ we have 
$$
\supp\row_{i}(\Adj(G))\cap \supp\row_k(\Adj(G))=\{j\},
$$
while this property is violated for $G'$. 
Let $(i,j)$ be an edge of $G$ satisfying the above. There are clearly $nd-\antiexp(G)$ choices of such an edge. We will count the number of switchings which makes $(i,j)$ violate the above property. 
To this aim, choose an edge $(a,b)$ such that $b\neq j$ and $b$ is adjacent to $i$. There are at most $d^2$ choices of such an edge. 
Now choose an edge $(a',b')$ such that $a'\neq i$ and $a'$ is adjacent to $j$. There are at most $d^2$ choices for such an edge. 
It is not difficult to see that the switching $\langle a,a',b,b'\rangle$, if possible, would result in $(i,j)$ violating the above property. 
Putting together the estimates, the desired bound follows. 
\item[iv.] Let $h>\zconst$ be an integer.
We define a relation $R$ between the two sets $A_h:=\{G\in \BipGSet_n(d):\, \antiexp(G)=h\}$ and $B_h:=\{G'\in \BipGSet_n(d):\, h-2d^2\leq\antiexp(G')\leq h-1\}$ by letting $(G,G')\in R$ whenever $G\sim G'$. We will view $R$ as a multimap from $A_h$ to $B_h$. \chng{Note that, in view of
the first assertion of the lemma, the value of $\antiexp(\cdot)$ on adjacent graphs can differ by at most $2d^2$.}
Then, using the \chng{second} assertion, we get for any $G\in A_h$ 
$$
\vert R(G)\vert= \vert\{G'\sim G: \antiexp(G')<\antiexp(G)\}\vert \geq \frac{h}{2} (nd-2d^4),
$$
and for any $G'\in B_h$,
$$
\vert R^{-1}(G')\vert\leq \vert\{G'\sim G: \antiexp(G')>\antiexp(G)\}\vert \leq nd^5.
$$
Thus, we deduce that 
$$
\Psimple(A_h)\leq \frac{2nd^5}{h(nd-2d^4)}\Psimple(B_h)= \frac{2nd^5}{h(nd-2d^4)} \sum_{s=h-2d^2}^{h-1} \Psimple(A_s). 
$$
Finally, summing up over $h>\zconst$, we get
\begin{align*}
\Psimple\big(\Expset(\zconst)^c\big) &\leq \frac{2nd^5}{(nd-2d^4)}\sum_{h\geq \zconst +1} \frac{1}{h}\sum_{s=h-2d^2}^{h-1}\Psimple(A_s)\\
&= \frac{2nd^5}{(nd-2d^4)}\sum_{s\geq \zconst +1-2d^2}\Psimple(A_s) \sum_{h=s+1}^{s+2d^2} \frac{1}{h}.
\end{align*}
It remains to use that $\sum_{h=s+1}^{s+2d^2} \frac{1}{h}\leq \frac{2d^2}{ \zconst +2-2d^2}$ to finish the proof. 
\end{itemize}
\end{proof}

Equipped with the definition of $\antiexp(G)$, we can now show that any simple graph belongs to many $s$--neighborhoods provided $\antiexp(G)$ is not too large. 
\begin{lemma}[Number of $s$--neighborhoods containing a graph]\label{l: 98562098724}
Let $G\in \BipGSet_n(d)$. Then for every $1\leq k\leq \cconst$, we have
$$
\frac{\big(nd-\antiexp(G)-2 k d^3\big)^{k} }{k !} (d-1)^{2k}\leq \big|\big\{G'\in \categ(k):\; G\in \SNeigh(G')\big\}\big|\leq {nd \choose k} (d-1)^{2k}. 
$$
Moreover, 
for every $G\in \Expset(\zconst)$,
$$
\big|\big\{G'\in \categ(k):\; G\in \SNeigh(G')\big\}\big|\in \Big[\frac{(nd)^{k}}{2\,k!}(d-1)^{2k}, \frac{(nd)^{k}}{k!}(d-1)^{2k}\Big]. 
$$
\end{lemma}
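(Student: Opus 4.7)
The plan is, via Lemma~\ref{l: 2958720598750}, to parametrize $\{G' \in \categ(k):\ G \in \SNeigh(G')\}$ by ordered sequences of $k$ reverse switchings operating on $G$, and then bound these sequences in two ways. Concretely, each such $G'$ corresponds to a unique simple path from $G'$ to $G$, and hence to a unique ordered sequence $(\langle i_s, i_s', j_s, j_s'\rangle)_{s=1}^k$ (the order of $s$ being fixed by the ordering of the multiedges of $G'$). Tracing the reverse switchings on $G$ and unpacking Definition~\ref{def: simple path}, one checks that these $4$-tuples satisfy: the $2k$ left indices $\{i_s, i_s'\}$ and the $2k$ right indices $\{j_s, j_s'\}$ are each pairwise distinct; the pairs $(i_s, j_s), (i_s, j_s'), (i_s', j_s)$ are edges of $G$ while $(i_s', j_s')$ is not; and the reverse switchings commute, as they act on disjoint rows and columns of $\Adj(G)$. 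Conversely, any unordered collection of $4$-tuples with these properties determines a unique $G' \in \categ(k)$, so the desired cardinality equals $\frac{1}{k!}$ times the number of such ordered sequences.

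For the upper bound I would count choices position-by-position: at most $nd$ for the edge $(i_s, j_s)$, and at most $d-1$ each for the right-neighbor $j_s'$ of $i_s$ (other than $j_s$) and the left-neighbor $i_s'$ of $j_s$ (other than $i_s$). Enforcing only that the central edges $(i_s, j_s)$ are distinct across $s$, the ordered count is bounded by $k!\binom{nd}{k}(d-1)^{2k}$, which gives the upper bound $\binom{nd}{k}(d-1)^{2k}$ after dividing by $k!$.

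For the lower bound I would leverage $\antiexp(G)$ to dispose of the non-edge constraint $(i_s', j_s') \notin E(G)$. Call a $4$-tuple \emph{good} when $(i_s, j_s)$ is not one of the $\antiexp(G)$ edges counted by $\antiexp$; by the definition of $\antiexp$, for such a tuple $i_s$ and $i_s'$ have exactly one common right-neighbor, namely $j_s$, so any right-neighbor $j_s'$ of $i_s$ other than $j_s$ automatically satisfies $(i_s', j_s') \notin E(G)$. Hence the set $A$ of good $4$-tuples has size exactly $(nd - \antiexp(G))(d-1)^2$. A direct enumeration then shows that every fixed vertex lies in at most $2d(d-1)^2 \leq 2d^3$ elements of $A$, accounting for its two possible coordinate positions. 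Building the ordered sequence greedily, the $4(s-1)$ already-used vertices at step $s$ exclude at most $8(s-1)d^3$ tuples of $A$; using $d \geq 3$ so that $(d-1)^2 \geq 4$, one has $8(s-1)d^3 \leq 2kd^3(d-1)^2$, leaving at least $(d-1)^2(nd - \antiexp(G) - 2kd^3)$ valid choices at step $s$. Taking the product over $s$ and dividing by $k!$ produces the claimed lower bound.

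For the ``moreover'' part I would insert $\antiexp(G) \leq \zconst$ and $k \leq \cconst$ into the lower bound: the parameters~\eqref{eq: parameters} together with the hypothesis~\eqref{eq: assumption-d} force $k(\antiexp(G) + 2kd^3)/(nd) = o(1)$, so that $(1 - (\antiexp(G) + 2kd^3)/(nd))^k \geq 1/2$ for $n$ large, whence the lower bound exceeds $\frac{(nd)^k}{2\,k!}(d-1)^{2k}$; the matching upper half of the interval is $\binom{nd}{k}(d-1)^{2k} \leq \frac{(nd)^k}{k!}(d-1)^{2k}$. The only conceptually delicate step is setting up the bijection in the first paragraph; it requires careful bookkeeping with the distinctness conditions of Definition~\ref{def: simple path} to justify the commutation of the reverse switchings and to verify that the resulting multigraph indeed lies in $\categ(k)$. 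The remaining counting is routine.
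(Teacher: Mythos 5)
Your proof is correct and follows essentially the same route as the paper: encode each $G'$ by the (unique, via Lemma~\ref{l: 2958720598750}) collection of $k$ switchings acting on $G$, get the upper bound by choosing the $k$ central edges and $(d-1)^2$ completions per edge, and get the lower bound by restricting the central edges to lie outside the $\antiexp(G)$ exceptional set and building the collection greedily, finally dividing by $k!$ and inserting the parameter choices for the ``moreover'' part. The only cosmetic difference is the lower-bound bookkeeping: the paper imposes pairwise disjoint row/column supports and loses at most $2sd^3$ central-edge choices at step $s$, while you impose only vertex-disjointness and absorb the at most $8(s-1)d^3$ excluded tuples into $2kd^3(d-1)^2$ using $d\ge 3$ --- both yield the same bound.
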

\begin{proof}
Fix $G\in \BipGSet_n(d)$ and $1\leq k\leq \cconst$. We first establish the upper bound. 
Let us start with complexity of the choice of $k$ multiedges (of multiplicity $2$) over all $G'\in \categ(k)$ for which $G\in \SNeigh(G')$. 
Note that every multiedge of such a graph $G'$ must be an edge in $G$, by the definition of a simple path. 
Clearly, there are at most ${nd \choose k}$ choices of the locations $\{(i_s,j_s)\}_{s=1,\ldots, k}$ of those multiedges. 
Next, we give an upper bound for the number of multigraphs $G'\in \categ(k)$ having multiplicity $2$ edges given by $\{(i_s,j_s)\}_{s=1,\ldots, k}$ and such that $G\in \SNeigh(G')$. 
To reconstruct all possible paths leading to $G$, let $1\leq t\leq k$ and suppose that $P[t],\ldots, P[k]$ are given. Our aim is to control the number of possible realizations of $P[t-1]$, that is, the number of switchings of the form $\langle i',i_t,j_t,j'\rangle$,
such that both $(i',j_t)$ and $(i_t,j')$ are edges in $P[t]$. Clearly, there are at most $d-1$ choices for $i'$ and at most $d-1$ choices for $j'$. 
The upper bound follows. 

Now, we prove the lower bound. 
Define 
\begin{align*}
\mathcal{H}(G):=\big\{
(i,j)\in E_G:\, &\forall\; q\in[n]\setminus\{i\}\mbox{ such that $(q,j)$ is an edge in $G$},\\
&\supp\row_i(\Adj(G))\cap \supp\row_q(\Adj(G))= \{j\}\big\},
\end{align*}
where $E_G$ denotes the set of edges of $G$. 
Note that $\vert \mathcal{H}(G)\vert =nd-\antiexp(G)$. 
We will estimate the number of $k$ tuples $\{(i_s,i_s',j_s,j_s')\}_{s=1,\ldots, k}$ such that 
\begin{itemize}
\item $(i_s,j_s)\in \mathcal{H}(G)$ for any $s=1,\ldots,k$. 
\item $(i_s,j_s),\, (i_s',j_s),\, (i_s, j_s')$ are edges in $G$ but $(i_s',j_s')$ is not. 
\item All $i_s, i_s'$ (resp. $j_s, j_s'$), $s=1,\dots,k$,  are distinct. 
\item The supports of $\row_{i_s}(\Adj(G))$ (resp. $\col_{j_s}(\Adj(G))$), $s=1,\ldots, k$, are pairwise disjoint. 
\end{itemize} 
Note that with those conditions, applying the switchings $\{\langle i_s',i_s,j_s,j_s'\rangle\}_{s=1,\ldots, k}$ to $G$ will
result in a multigraph $G'\in \categ(k)$ for which 
$G\in \SNeigh(G')$. Therefore, it is enough bound the number of such tuples from below. 
Since $\vert \mathcal{H}(G)\vert =nd-\antiexp(G)$, there are $nd-\antiexp(G)$ choices for $(i_1,j_1)$. Moreover, it follows from the definition of $\mathcal{H}(G)$ 
that for any $i_1'\in \supp\col_{j_1}(\Adj(G))\setminus\{i_1\}$
and any $j_1'\in \supp\row_{i_1}(\Adj(G))\setminus\{j_1\}$, $(i_1',j_1')$ is not an edge of $G$. Thus, there are $(d-1)^2$ choices of the couple $(i_1',j_1')$. 

Now note that there are at most $d^2$ indices $i_2$ (resp. $j_2$) such that
$$\supp \row_{i_2}(\Adj(G))\cap \supp \row_{i_1}(\Adj(G))\neq \emptyset$$
\big(resp. $\supp \col_{j_2}(\Adj(G))\cap \supp \row_{j_1}(\Adj(G))\neq \emptyset$\big). Therefore, there are at least $nd-\antiexp(G)-2d^3$ choices for $(i_2,j_2)\in \mathcal{H}(G)$ such that the supports of $\row_{i_2}(\Adj(G))$ (resp. $\col_{j_2}(\Adj(G))$) and that of $\row_{i_1}(\Adj(G))$ (resp. $\col_{j_1}(\Adj(G))$) are disjoint. As above, there are $(d-1)^2$ choices of the couple $(i_2',j_2')$ such that $(i_2',j_2),\, (i_2, j_2')$ are edges in $G$. 

Let $1\leq t< k$ and suppose that $\{(i_s,i_s',j_s,j_s')\}_{s=1,\ldots, t}$  has been chosen. Note that there are at most $td^2$ indices $i_{t+1}$ (resp. $j_{t+1}$) such that $\supp \row_{i_{t+1}}(\Adj(G))\cap \supp \row_{i_s}(\Adj(G))\neq \emptyset$ \big(resp. $\supp \col_{j_{t+1}}(\Adj(G))\cap \supp \row_{j_s}(\Adj(G))\neq \emptyset$\big) for some $s=1,\ldots, t$. Therefore, there are at least $nd-\antiexp(G)-2td^3$ choices for $(i_{t+1},j_{t+1})\in \mathcal{H}(G)$ such that the supports of $\row_{i_{t+1}}(\Adj(G))$ (resp. $\col_{j_{t+1}}(\Adj(G))$) and that of $\row_{i_s}(\Adj(G))$ (resp. $\col_{j_s}(\Adj(G))$) are disjoint for any $s=1,\ldots,t$. As above, there are $(d-1)^2$ choices of the couple $(i_{t+1}',j_{t+1}')$ such that $(i_{t+1}',j_{t+1}),\, (i_{t+1}, j_{t+1}')$ are edges in $G$. 

Thus, the number of $k$ tuples $\{(i_s,i_s',j_s,j_s')\}_{s=1,\ldots, k}$ satisfying the above conditions is at least 
$$
\big(nd-\antiexp(G)\big) \big(nd-\antiexp(G)-2d^3\big) \ldots \big(nd-\antiexp(G)-2(k-1)d^3\big) (d-1)^{2k}.
$$
Finally, note that the order of the above chosen switchings $\{(i_s,i_s',j_s,j_s')\}_{s=1,\ldots, k}$ does not affect the resulting multigraph $G'\in \categ(k)$, so the product we have obtained must be divided by $k!$. This proves the first part of the Lemma. The second claim follows from the choice/assumptions on $\zconst, \cconst$ and $d$. 
\end{proof}

\section{Construction of the function extension}\label{sec: construction}

In this section, we construct, for an arbitrary real valued function $f$ on $\BipGSet_n(d)$, its {\it extension} $\tilde f$ to the space $\ConfBipGSet_n(d)$.
As we discussed in the proof overview, the extension will be constructed in such a way that both the Dirichlet form
and the variance of $\tilde f$ can be bounded appropriately in terms of the Dirichlet form and the variance of the original function $f$.
This shall be achieved by combining two strategies: averaging and {\it controlled fluctuations}. 
Our construction will be randomized, and to this aim we first introduce a special Gaussian field which will be used for the randomization. 
Recall that $\gconst$ and $\cconst$ are two global parameters used throughout the paper (see \eqref{eq: parameters}).

Take any collection of disjoint subsets $(T^\ell)_{\ell=1}^{\gconst}$ of $[n]\times[n]$
and any subset $W\subset\{0,1,\dots,nd\}$, and denote $\mathcal T:=(T^\ell)_{\ell=1}^{\gconst}\sqcup W$. 
Everywhere in this subsection, we let $W^c$ to be the complement of $W$ in $\{0,1,\dots,nd\}$.
We shall define a centered Gaussian field $(\xi_G^{\mathcal T})$ on $\ConfBipGSet_n(d)$ as follows.
Let $(g^\ell)_{\ell=1}^{\gconst}\cup (\tilde g^\ell)_{\ell=1}^{\gconst}$ be i.i.d.\ standard
Gaussian variables.
For every $G\in \ConfBipGSet_n(d)$,
we let
\begin{equation}\label{eq: gaussian-field-def}
\xi_G^{\mathcal T}:=\frac{1}{\sqrt{\gconst}}\bigg(\sum_{\ell=1}^{\gconst} \big(g^\ell\,{\bf 1}_{(\ell,\mathcal T)}
+\tilde g^\ell\,{\bf 1}'_{(\ell,\mathcal T)}\big)\bigg),
\end{equation}
where ${\bf 1}_{(\ell,\mathcal T)}={\bf 1}_{(\ell,\mathcal T)}(G)$ denotes indicator of the boolean expression
$$
\sum_{(i,j)\in T^\ell}\Adj(G)_{i j}\in W,
$$
and ${\bf 1}'_{(\ell,\mathcal T)}={\bf 1}'_{(\ell,\mathcal T)}(G)$
is the indicator of ``$\sum_{(i,j)\in T^\ell}\Adj(G)_{i j}\in W^c$''.

\medskip

Let us record the following properties. 

\begin{lemma}\label{l: 0841847-085-469746}
For any choice of $\mathcal T$, $(\xi_G^{\mathcal T})$ is a centered Gaussian field on $\ConfBipGSet_n(d)$ with $\Var\, \xi_G^{\mathcal T}=1$
for every graph $G\in \ConfBipGSet_n(d)$. 
Moreover, if $G_1, G_2\in \ConfBipGSet_n(d)$ are adjacent, then 
$$
\cov\big(
\xi_{G_1}^{\mathcal T},\xi_{G_2}^{\mathcal T}
\big)\geq 1-\frac{4}{\gconst}.
$$
\end{lemma}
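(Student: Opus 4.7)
The plan is straightforward: unpack the definition of $\xi_G^{\mathcal T}$ as a linear combination of i.i.d.\ standard Gaussians with deterministic (indicator-valued) coefficients, and then compute the variance and covariance directly using the orthogonality of the $(g^\ell,\tilde g^\ell)$.

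First I would observe that for each $\ell$ and each graph $G$, exactly one of ${\bf 1}_{(\ell,\mathcal T)}(G)$ and ${\bf 1}'_{(\ell,\mathcal T)}(G)$ equals $1$ and the other equals $0$, since $W$ and $W^c$ partition $\{0,1,\dots,nd\}$ and the sum $\sum_{(i,j)\in T^\ell}\Adj(G)_{ij}$ lies in exactly one of the two sets. Hence $\xi_G^{\mathcal T}$ is a fixed affine functional (with $\pm 1$ indicator coefficients) of the Gaussian vector $(g^\ell,\tilde g^\ell)_{\ell=1}^{\gconst}$, so the family is a centered Gaussian field. The variance computation is then immediate: by independence of the $(g^\ell,\tilde g^\ell)$,
$$
\Var(\xi_G^{\mathcal T})=\frac{1}{\gconst}\sum_{\ell=1}^{\gconst}\bigl({\bf 1}_{(\ell,\mathcal T)}(G)^2+{\bf 1}'_{(\ell,\mathcal T)}(G)^2\bigr)=\frac{1}{\gconst}\cdot\gconst=1.
$$

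For the covariance bound, the key observation is that for two adjacent multigraphs $G_1,G_2\in\ConfBipGSet_n(d)$, their adjacency matrices differ in at most the four entries $(i_1,j_1),(i_1,j_2),(i_2,j_1),(i_2,j_2)$ determined by the switching $\langle i_1,i_2,j_1,j_2\rangle$ that links them. Because the sets $T^1,\dots,T^{\gconst}$ are pairwise disjoint, each of these four coordinates belongs to at most one $T^\ell$, so at most four indices $\ell$ can satisfy $\sum_{(i,j)\in T^\ell}\Adj(G_1)_{ij}\neq\sum_{(i,j)\in T^\ell}\Adj(G_2)_{ij}$. For every other $\ell$ the two sums coincide, hence ${\bf 1}_{(\ell,\mathcal T)}(G_1)={\bf 1}_{(\ell,\mathcal T)}(G_2)$ and ${\bf 1}'_{(\ell,\mathcal T)}(G_1)={\bf 1}'_{(\ell,\mathcal T)}(G_2)$.

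Using independence of the $(g^\ell,\tilde g^\ell)$ once more, the covariance expands as
$$
\cov\bigl(\xi_{G_1}^{\mathcal T},\xi_{G_2}^{\mathcal T}\bigr)=\frac{1}{\gconst}\sum_{\ell=1}^{\gconst}\bigl({\bf 1}_{(\ell,\mathcal T)}(G_1){\bf 1}_{(\ell,\mathcal T)}(G_2)+{\bf 1}'_{(\ell,\mathcal T)}(G_1){\bf 1}'_{(\ell,\mathcal T)}(G_2)\bigr),
$$
and each summand equals $1$ whenever the two indicator patterns agree at index $\ell$, which happens for at least $\gconst-4$ values of $\ell$ by the previous paragraph. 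This yields $\cov(\xi_{G_1}^{\mathcal T},\xi_{G_2}^{\mathcal T})\geq(\gconst-4)/\gconst=1-4/\gconst$, as claimed. There is no real obstacle here; the entire argument reduces to (i) exploiting that the pair of indicators $({\bf 1}_{(\ell,\mathcal T)},{\bf 1}'_{(\ell,\mathcal T)})$ is complementary and (ii) exploiting disjointness of the $T^\ell$ together with locality of a single simple switching, which limits the number of indices $\ell$ affected to four.
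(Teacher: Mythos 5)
Your proof is correct and follows essentially the same route as the paper: expand the covariance using independence of the $(g^\ell,\tilde g^\ell)$, note that the complementary indicators give variance $1$, and use that a single switching changes only four entries of the adjacency matrix (together with disjointness of the $T^\ell$) so that the indicator pattern agrees for all but at most four indices $\ell$. The paper states the variance claim without computation, but your explicit verification is exactly the intended argument.
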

\begin{proof}
The first part of the lemma follows clearly from the definition. 
To prove the second, we start by noticing that
\begin{align*}
&\gconst\,\cov\big(
\xi_{G_1}^{\mathcal T},\xi_{G_2}^{\mathcal T}
\big)=\gconst\,\Exp [\xi_{G_1}^{\mathcal T}\cdot\xi_{G_2}^{\mathcal T}]\\
&=
\Exp \,\bigg(\sum_{\ell_1=1}^{\gconst} \big(g^{\ell_1}\,{\bf 1}_{(\ell_1,{\mathcal T})}(G_1)+\tilde g^{\ell_1}
\,{\bf 1}'_{(\ell_1,{\mathcal T})}(G_1)\big)\bigg)
\bigg(\sum_{\ell_2=1}^{\gconst} \big(g^{\ell_2}\,{\bf 1}_{(\ell_2,{\mathcal T})}(G_2)+\tilde g^{\ell_2}
\,{\bf 1}'_{(\ell_2,{\mathcal T})}(G_2)\big)\bigg)\\
&= \sum_{\ell=1}^{\gconst}{\bf 1}_{(\ell,{\mathcal T})}(G_1)\,{\bf 1}_{(\ell,{\mathcal T})}(G_2)+
\sum_{\ell=1}^{\gconst}{\bf 1}'_{(\ell,{\mathcal T})}(G_1)\,{\bf 1}'_{(\ell,{\mathcal T})}(G_2),
\end{align*}
where $
{\bf 1}_{(\ell_h,{\mathcal T})}(G_h)
$ and ${\bf 1}'_{(\ell_h,{\mathcal T})}(G_h)$,  $h=1,2$, are defined after formula \eqref{eq: gaussian-field-def}. 
Now, we observe that
$\Adj(G_1)$ and $\Adj(G_2)$ differ at four entries, whence
${\bf 1}_{(\ell,{\mathcal T})}(G_1)={\bf 1}_{(\ell,{\mathcal T})}(G_2)$
and ${\bf 1}'_{(\ell,{\mathcal T})}(G_1)={\bf 1}'_{(\ell,{\mathcal T})}(G_2)$
for all but at most $4$ indices $\ell$. Thus,
$$
\gconst\,\cov\big(
\xi_{G_1}^{\mathcal T},\xi_{G_2}^{\mathcal T}
\big)\geq \gconst-4,
$$
and the result follows.
\end{proof}

The following lemma will be used to compare the variance of a function to that of its extension which we will construct at the end of this section. 

\begin{lemma}\label{l: 045602472-4098098}
Let $\mathcal M_{\cconst,\gconst}$ be the set of pairs $(G_1,G_2)\in \categ([1,\cconst])\times \categ([1,\cconst])$ such that
$$
\vert\{(i,j)\in [n]\times [n]:\, \Adj(G_1)_{ij}\neq \Adj(G_2)_{ij}\}\vert\geq \gconst.
$$
Further, let 
$(K_{G_1,G_2})_{(G_1,G_2)\in \mathcal M_{\cconst,\gconst}}$
be any sequence of real numbers indexed over the set $\mathcal M_{\cconst,\gconst}$,
and let $(L_{G_1,G_2})_{(G_1,G_2)\in \mathcal M_{\cconst,\gconst}}$ 
be any sequence of non-negative numbers.
Then there is a choice of the collection
$\mathcal T=(T^\ell)_{\ell=1}^{\gconst}\sqcup W$ such the corresponding Gaussian field
$(\xi_{G}^{\mathcal T})$ on $\ConfBipGSet_n(d)$ satisfies the following condition. 
For any $G_1,G_2\in \ConfBipGSet_n(d)$, let
$\eta_{G_1,G_2}$ be the indicator of the event that both $\xi_{G_1}^{\mathcal T} K_{G_1,G_2}$ and $-\xi_{G_2}^{\mathcal T}K_{G_1,G_2}$
are non-negative, $\xi_{G_1}^{\mathcal T}$ and $-\xi_{G_2}^{\mathcal T}$ have the same sign,
and both $\xi_{G_1}^{\mathcal T}$ and $\xi_{G_2}^{\mathcal T}$ are at least $1$ in absolute value. Then we have
\begin{align*}
\sum_{(G_1,G_2)\in \mathcal M_{\cconst,\gconst}} \eta_{G_1,G_2}L_{G_1,G_2} 
\geq c\sum_{(G_1,G_2)\in \mathcal M_{\cconst,\gconst}} L_{G_1,G_2},
\end{align*}
with probability at least $c$, for some universal constant $c>0$.
\end{lemma}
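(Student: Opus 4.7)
The plan is a probabilistic-method argument: randomize the choice of $\mathcal{T}$ and show that
\[
\Exp_{\mathcal{T}, g}[\eta_{G_1, G_2}] \geq c_0
\]
for every fixed pair $(G_1,G_2) \in \mathcal{M}_{\cconst,\gconst}$, with some universal $c_0>0$. Concretely, I would take the following randomization: assign each cell of $[n] \times [n]$ independently and uniformly to one of the $\gconst$ bins $T^1,\dots,T^{\gconst}$, and choose $W \subset \{0,1,\dots,nd\}$ uniformly at random. Once the pointwise bound is established, linearity gives $\Exp_{\mathcal T,g}[\sum \eta_{G_1,G_2}L_{G_1,G_2}]\geq c_0\sum L_{G_1,G_2}$; since the random variable $X:=\sum \eta L$ takes values in $[0,S]$ with $S:=\sum L$, Markov's inequality applied to $S-X$ yields $\Prob[X\geq c_0 S/2]\geq c_0/3$ over the joint randomness (say), and a Fubini argument then extracts a deterministic $\mathcal{T}$ realizing the desired $g$-probability.

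To prove the per-pair bound, I would first unwrap the covariance: as in the proof of Lemma~\ref{l: 0841847-085-469746}, one has
\[
\cov\bigl(\xi_{G_1}^{\mathcal T},\xi_{G_2}^{\mathcal T}\bigr)=1-\frac{M}{\gconst},
\]
where $M$ counts the bins $\ell$ on which ${\bf 1}_{(\ell,\mathcal T)}(G_1)$ and ${\bf 1}_{(\ell,\mathcal T)}(G_2)$ disagree (``straddling'' bins). Set $\Delta:=\Adj(G_1)-\Adj(G_2)$: by assumption $\Delta$ has at least $\gconst$ nonzero entries, and since $G_1,G_2\in\categ([1,\cconst])$ contain at most $\cconst$ multiedges each, at most $2\cconst$ of these entries have absolute value $2$; under \eqref{eq: assumption-d} we have $\cconst\ll\gconst$, so at least $\gconst/2$ of them satisfy $|\Delta_{ij}|=1$. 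For every bin $\ell$, $s_\ell(G_1)-s_\ell(G_2)=\sum_{ij}\Delta_{ij}X_{ij}$ with $X_{ij}:={\bf 1}_{(i,j)\in T^\ell}$ independent Bernoulli$(1/\gconst)$, and this sum has the same parity as $Z:=\sum_{(i,j):\,|\Delta_{ij}|=1}X_{ij}$. The standard XOR formula gives
\[
\Prob(Z\text{ is odd})=\tfrac{1}{2}\bigl(1-(1-2/\gconst)^{N_1}\bigr)\geq \tfrac{1}{2}(1-e^{-1}),
\]
where $N_1\geq\gconst/2$. Hence $\Prob_{\mathcal T}(s_\ell(G_1)\neq s_\ell(G_2))\geq c_1$ for a universal $c_1>0$. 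The uniformly random $W$ separates a given pair of distinct values with probability exactly $1/2$, so $\Exp_{\mathcal T,W}[M]\geq c_1\gconst/2$; Markov's inequality then yields $\Prob[M\geq c_1\gconst/4]\geq c_1/4$, and on that event the correlation satisfies $\rho\leq 1-c_1/4$.

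It remains to translate this correlation bound into a lower bound on the probability defining $\eta$. For jointly Gaussian $(\xi_{G_1},\xi_{G_2})$ with unit variances and correlation $\rho\in[0,1-c_1/4]$, the bivariate tail probability $\Prob(\xi_{G_1}\geq 1,\xi_{G_2}\leq -1)$ is continuous and strictly decreasing in $\rho$ (as one sees by writing $\xi_{G_2}=\rho\xi_{G_1}+\sqrt{1-\rho^2}Z$ with $Z$ standard Gaussian independent of $\xi_{G_1}$), hence is bounded below by a universal $\alpha>0$; the same bound holds for $\Prob(\xi_{G_1}\leq -1,\xi_{G_2}\geq 1)$. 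Splitting the definition of $\eta_{G_1,G_2}$ according to the sign of $K_{G_1,G_2}$, in each case the event $\{\eta=1\}$ is implied by ``$\xi_{G_1}$ and $\xi_{G_2}$ take opposite signs and both have magnitude at least $1$'', whose conditional probability is at least $\alpha$. Combining the two steps produces $\Exp_{\mathcal T,g}[\eta_{G_1,G_2}]\geq \alpha c_1/4$, which is the desired pointwise bound. The heart of the argument is the anti-concentration step, and the parity trick is what makes it tractable: it bypasses the delicate cancellations that would otherwise arise between positive and negative entries of $\Delta$.
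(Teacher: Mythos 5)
Your argument is correct, and its overall architecture is the same as the paper's: you randomize $\mathcal T$ with exactly the same product measure (i.i.d.\ uniform bin assignment for the cells of $[n]\times[n]$, uniform $W$), show that each fixed pair in $\mathcal M_{\cconst,\gconst}$ gets decorrelated ($\cov\leq 1-c$) with constant probability, feed this into a bivariate Gaussian lower bound split according to the sign of $K_{G_1,G_2}$, and finish by averaging over $\mathcal T$ and a reverse Markov inequality (the paper extracts $\mathcal T$ before applying reverse Markov in $g$, you do it in the opposite order; both are valid). The one genuinely different ingredient is the anti-concentration step. The paper bounds $\Prob\{\sum_{(i,j)\in U}b_{ij}\Delta_{ij}\neq 0\}$ via the L\'evy--Kolmogorov--Rogozin inequality when $|U|\geq C_2\gconst$, with a separate elementary argument (exactly one $b_{ij}$ nonzero) in the regime $\gconst\leq |U|<C_2\gconst$; you instead discard the at most $2\cconst$ cells with $|\Delta_{ij}|=2$, use $2\cconst\ll\gconst$ (which indeed holds under \eqref{eq: parameters} and \eqref{eq: assumption-d}), and compute the exact parity probability $\tfrac12\big(1-(1-2/\gconst)^{N_1}\big)$. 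Your route is more elementary and self-contained, at the cost of invoking the specific relation between $\cconst$ and $\gconst$, whereas the paper's Rogozin-based step works for any pair with $|U|\geq\gconst$ regardless of how many entries have modulus $2$; also your accounting of the random $W$ (separation in either direction, probability $1/2$) is a harmless simplification of the paper's directed count (probability $1/4$), since only agreement versus disagreement of the indicators enters the covariance. One phrasing to tighten: the symmetric event ``$\xi_{G_1},\xi_{G_2}$ have opposite signs and magnitudes at least $1$'' does not by itself imply $\eta_{G_1,G_2}=1$ for a fixed sign of $K_{G_1,G_2}$; what you actually use (and what is correct) is that the appropriate one-sided event $\{\xi_{G_1}\geq 1,\,\xi_{G_2}\leq -1\}$ or $\{\xi_{G_1}\leq -1,\,\xi_{G_2}\geq 1\}$ is contained in $\{\eta_{G_1,G_2}=1\}$ and has probability at least $\alpha$.
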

\begin{proof}
Let $\mathcal S$ be the collection of all unions $(T^\ell)_{\ell=1}^{\gconst}\sqcup W$,
where $(T^\ell)_{\ell=1}^{\gconst}$ is a partition of $[n]\times[n]$
(where we allow the subsets to be empty), and $W$ is a subset of $\{0,1,\dots,nd\}$.
Let $\mu$ be the uniform probability measure on $\mathcal S$
defined as
$$
\mu\big((T^\ell)_{\ell=1}^{\gconst}\sqcup W\big):=2^{-(nd+1)}\gconst^{-n^2},\quad (T^\ell)_{\ell=1}^{\gconst}\sqcup W\in \mathcal S.
$$
A natural interpretation of the measure is the following: let $\xi_{ij}$, $(i,j)\in[n]\times[n]$,
be a collection of i.i.d.\ random variables taking values in $\{1,2,\dots,\gconst\}$
and uniformly distributed in the set, and let $\mathcal W$ be an independent uniform random subset of $\{0,1,\dots,nd\}$. Then
$$
\big(\{(i,j)\in[n]\times[n]:\;\xi_{ij}=\ell\}\big)_{\ell=1}^{\gconst}\sqcup \mathcal W
$$
is distributed according to $\mu$.

Fix for a moment any pair of multigraphs $(G_1,G_2)\in \mathcal M_{\cconst,\gconst}$. 
Let $U$ be the set of all pairs $(i,j)\in[n]\times[n]$ with $\Adj(G_1)_{ij}\neq \Adj(G_2)_{ij}$
and recall that $|U|\geq \gconst$, according to our definition of $\mathcal M_{\cconst,\gconst}$.
Let $b_{ij}$, $(i,j)\in U$, be i.i.d.\ Bernoulli($1/\gconst$) variables.
We claim that
\begin{equation}\label{eq: 1098403948}
\Prob\Big\{\sum_{(i,j)\in U}b_{ij}(\Adj(G_1)_{ij}-\Adj(G_2)_{ij})\neq 0\Big\}\geq c_2,
\end{equation}
for a universal constant $c_2>0$. To see this, we can use a classical anti-concentration inequality of L\'evy--Kolmogorov--Rogozin
\cite{Rogozin}: the L\'evy concentration function of $b_{ij}$ satisfies $\cf(b_{ij},t):=\sup_{\lambda\in\R}
\Prob\{|b_{ij}-\lambda|\leq t\}=1-1/\gconst$ for any $t<1/2$, whence there is a universal constant $C_2\geq 1$
such that \eqref{eq: 1098403948} holds provided that $|U|\geq C_2\,\gconst$. In the situation when $\gconst\leq|U|<C_2\,\gconst$,
we can simply observe that with a constant probability exactly one of the variables in $\{b_{ij}\}_{(i,j)\in U}$
is non-zero, in which case $\big|\sum_{(i,j)\in U}b_{ij}(\Adj(G_1)_{ij}-\Adj(G_2)_{ij})\big|=2$.

Viewing $b_{ij}$ as indicators of the event that $(i,j)\in {\bf T}^{\tilde\ell}$ for a fixed $\tilde \ell$, where
the distribution of random set ${\bf T}^{\tilde\ell}$ is induced by $\mu$, we obtain: for any fixed $1\leq \tilde\ell\leq \gconst$,
we have
$$
\mu\Big\{(T^\ell)_{\ell=1}^{\gconst}\sqcup W:\;\sum_{(i,j)\in T^{\tilde\ell}}\Adj(G_1)_{ij}
\neq\sum_{(i,j)\in T^{\tilde\ell}}\Adj(G_2)_{ij}\Big\}\geq c_2.
$$
Next, observe that, given any two distinct numbers $a$ and $b$ in $\{0,1,\dots,nd\}$
and a random set ${\bf W}$ uniformly distributed on the collection of all subsets of $\{0,1,\dots,nd\}$,
we have $a\in {\bf W}$ and $b\in {\bf W}^c$ with probability $1/4$.
Thus, applying the definition of $\mu$, for every fixed $1\leq \tilde\ell\leq \gconst$ we get
$$
\mu\Big\{(T^\ell)_{\ell=1}^{\gconst}\sqcup W:\;\sum_{(i,j)\in T^{\tilde\ell}}\Adj(G_1)_{ij}\in W
\mbox{ and }\sum_{(i,j)\in T^{\tilde\ell}}\Adj(G_2)_{ij}\in W^c\Big\}\geq c_3,
$$
for a universal constant $c_3:=c_2/4$. Hence, using the reverse Markov inequality,
\begin{align*}
\mu\Big\{(T^\ell)_{\ell=1}^{\gconst}\sqcup W:\;&\sum_{(i,j)\in T^\ell}\Adj(G_1)_{ij}\in W
\mbox{ and }\sum_{(i,j)\in T^\ell}\Adj(G_2)_{ij}\in W^c\\
&\mbox{for at least $c_4\gconst$ indices $\ell$}\Big\}\geq c_4,
\end{align*}
for some universal constant $c_4>0$.

Returning to the Gaussian field, the last relation implies that for any pair of graphs $G_1,G_2$ 
from $\mathcal M_{\cconst,\gconst}$ we have
\begin{align*}
\mu\Big\{\mathcal T=(T^\ell)_{\ell=1}^{\gconst}\sqcup W:\;
|\cov\big(
\xi_{G_1}^{\mathcal T},\xi_{G_2}^{\mathcal T}
\big)|\leq 1-c_5\Big\}\geq c_4
\end{align*}
for a universal constant $c_5>0$.
Note that for any two-dimensional Gaussian $(\bar g,\hat g)$, where $\bar g$ and $\hat g$ are standard and
$|\cov(\bar g,\hat g)|\leq 1-c_5$, and for arbitrary real number $t$, we have
$$
\Prob\big\{\mbox{both $t\,\bar g$ and $-t\,\hat g$ are non-negative;
$\bar g$ and $\hat g$ have opposite signs; }|\bar g|,|\hat g|\geq 1\big\}\geq c_6
$$ 
for a universal constant $c_6>0$.
Accordingly, we have 
\begin{align*}
\mu\Big\{\mathcal T=(T^\ell)_{\ell=1}^{\gconst}\sqcup W:\;
\Prob\big\{\eta_{G_1,G_2}^{\mathcal T}=1\big\}\geq c_6\Big\}\geq c_4.
\end{align*}
The above relation gives
$$
\sum_{\mathcal T}\sum_{(G_1,G_2)\in \mathcal M_{\cconst,\gconst}}L_{G_1 G_2}\,\mu(\mathcal T)\,\Exp\,\eta_{G_1,G_2}^{\mathcal T}
\geq c_4 c_6\sum_{(G_1,G_2)\in \mathcal M_{\cconst,\gconst}}L_{G_1 G_2},
$$
whence there is $\mathcal T$ such that
$$
\sum_{(G_1,G_2)\in \mathcal M_{\cconst,\gconst}}L_{G_1 G_2}\,\Exp\,\eta_{G_1,G_2}^{\mathcal T}
\geq c_4 c_6\sum_{(G_1,G_2)\in \mathcal M_{\cconst,\gconst}}L_{G_1 G_2}.
$$
It remains to apply a reverse Markov inequality to finish the proof. 
\end{proof}

Given any function $f:\, \BipGSet_n(d)\to \R$, 
we define $h:\, \categ([1,\cconst])\to \R$ by 
\begin{equation}\label{eq: hdefinition}
h(G_1)=\frac{1}{\vert \SNeigh(G_1)\vert} \sum_{G\in \SNeigh(G_1)} f(G).
\end{equation}
As a direct corollary of Lemmas~\ref{l: 0841847-085-469746} and~\ref{l: 045602472-4098098}, we get

\begin{prop}[Gaussian field properties]\label{prop: 29870536509385}
Fix a function $f$ on $\BipGSet_n(d)$. 
There is a centered Gaussian field $\{\xi_G\}_{G\in \ConfBipGSet_n(d)}$ with the following properties:
\begin{itemize}
\item $\Var \xi_G=1$;
\item For any two adjacent graphs $G_1,G_2\in \ConfBipGSet_n(d)$,
$\cov(\xi_{G_1},\xi_{G_2})\geq 1-\frac{4}{\gconst}$;
\item Let $\mathcal M_{\cconst,\gconst}$ be the set of pairs $(G_1,G_2)\in \categ([1,\cconst])\times \categ([1,\cconst])$ such that
$$
\vert\{(i,j)\in [n]\times [n]:\, \Adj(G_1)_{ij}\neq \Adj(G_2)_{ij}\}\vert\geq \gconst.
$$
Further, for any $G_1,G_2\in \categ([1,\cconst])$, let
$\eta_{G_1,G_2}$ be the indicator of the event that both $\xi_{G_1}(h(G_1)-h(G_2))$ and $-\xi_{G_2}(h(G_1)-h(G_2))$
are non-negative, $\xi_{G_1}$ and $-\xi_{G_2}$ have the same sign,
and both $\xi_{G_1}$ and $\xi_{G_2}$ are at least $1$ in absolute value. Then we have
\begin{align*}
&\sum_{G_1,G_2\in \mathcal M_{\cconst,\gconst}} \eta_{G_1,G_2}\frac{\Pconfig(G_1)\Pconfig(G_2)}{\vert\SNeigh(G_1)\vert \vert \SNeigh(G_2)\vert}\sum_{G'\in \SNeigh(G_1), G''\in \SNeigh(G_2)} (f(G')-f(G''))^2\\
&\geq c\sum_{G_1,G_2\in \mathcal M_{\cconst,\gconst}} \frac{\Pconfig(G_1)\Pconfig(G_2)}{\vert\SNeigh(G_1)\vert \vert \SNeigh(G_2)\vert}\sum_{G'\in \SNeigh(G_1), G''\in \SNeigh(G_2)} (f(G')-f(G''))^2,
\end{align*}
with probability at least $c$, for some universal constant $c>0$.
\end{itemize}
\end{prop}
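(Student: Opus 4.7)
The plan is to combine the two preceding lemmas in a straightforward manner: Lemma~\ref{l: 0841847-085-469746} already supplies the first two items (variance one and the covariance lower bound for adjacent graphs) for \emph{any} choice of $\mathcal T$, while Lemma~\ref{l: 045602472-4098098} is tailored to produce a specific $\mathcal T$ that, after the appropriate specialization, yields the third item. The whole proof is thus an assembly step, and there is no real obstacle to overcome once the two lemmas are in hand.

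To realize the third bullet I would invoke Lemma~\ref{l: 045602472-4098098} with the choices
$$
K_{G_1,G_2} := h(G_1) - h(G_2), \qquad
L_{G_1,G_2} := \frac{\Pconfig(G_1)\,\Pconfig(G_2)}{|\SNeigh(G_1)|\,|\SNeigh(G_2)|}\sum_{\substack{G'\in \SNeigh(G_1) \\ G''\in \SNeigh(G_2)}} (f(G')-f(G''))^2,
$$
indexed over $(G_1,G_2)\in \mathcal M_{\cconst,\gconst}$, where $h$ is the averaging function introduced in \eqref{eq: hdefinition}. The $K_{G_1,G_2}$ are real numbers (since $f$, and hence $h$, is real-valued), and the $L_{G_1,G_2}$ are manifestly non-negative, so the hypotheses of the lemma are satisfied. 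With this specialization, the event defining $\eta_{G_1,G_2}^{\mathcal T}$ in Lemma~\ref{l: 045602472-4098098} is exactly the event defining $\eta_{G_1,G_2}$ in the proposition statement, since the sign and magnitude conditions on $\xi_{G_1}^{\mathcal T}$ and $\xi_{G_2}^{\mathcal T}$ translate verbatim, and $K_{G_1,G_2}=h(G_1)-h(G_2)$ is precisely the multiplicative factor appearing there.

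I would then let $\xi_G := \xi_G^{\mathcal T}$ be the Gaussian field associated with the $\mathcal T$ produced by Lemma~\ref{l: 045602472-4098098}. Applying that lemma with the choices above yields, with probability at least $c$, the inequality
$$
\sum_{(G_1,G_2)\in \mathcal M_{\cconst,\gconst}} \eta_{G_1,G_2}\,L_{G_1,G_2} \;\geq\; c\sum_{(G_1,G_2)\in \mathcal M_{\cconst,\gconst}} L_{G_1,G_2},
$$
which is exactly the third assertion of the proposition once the definition of $L_{G_1,G_2}$ is unfolded. Finally, the first two bullets are inherited directly from Lemma~\ref{l: 0841847-085-469746}, whose conclusions hold for $\xi_G^{\mathcal T}$ uniformly in $\mathcal T$ and therefore for our specific choice as well. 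The entire probabilistic content has been absorbed into the preceding two lemmas, so no additional computation is needed here; the proposition simply repackages them into the form that will be convenient in the variance and Dirichlet form comparisons of the subsequent sections.
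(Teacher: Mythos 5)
Your proposal is correct and matches the paper exactly: the paper states the proposition as a direct corollary of Lemmas~\ref{l: 0841847-085-469746} and~\ref{l: 045602472-4098098}, and your specialization $K_{G_1,G_2}=h(G_1)-h(G_2)$ together with the weighted choice of $L_{G_1,G_2}$ is precisely the intended instantiation.
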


Now, we are ready to construct a randomized extension of a function $f$ on $\BipGSet_n(d)$ to the set
of multigraphs $\ConfBipGSet_n(d)$. 

\begin{defi}[Randomized extension] \label{def: extension}
Let $f:\, \BipGSet_n(d)\to \R$. 
Given any graph $G\in \ConfBipGSet_n(d)$, we define $\tilde f(G)$ according to the following rule:
\begin{itemize}

\item If $G\in \BipGSet_n(d)$ and
\begin{equation}\label{eq: condition-blow up-def}
\sum_{G',G''\in \Expset(\zconst)}\Psimple(G') \Psimple(G'') \big(f(G')-f(G'')\big)^2\geq  2^{-7}\Var_{\Psimple}(f),
\end{equation}
then we set $\tilde f(G):=f(G)$. 

\item Otherwise, if $G\in \BipGSet_n(d)$ but
\eqref{eq: condition-blow up-def} does not hold, we set
$$\tilde f(G):=\frac{1}{\sqrt{\Pconfig\big(\BipGSet_n(d)\big)}} \big(f(G)-\Exp_{\Psimple} f\big)+\Exp_{\Psimple} f$$
whenever $G\in \BipGSet_n(d)\setminus \Expset(\zconst)$, and $\tilde f(G)=f(G)$ whenever $G\in \Expset(\zconst)$; 

\item Otherwise, if $G\in \categ([1,\cconst])$, we let $w(G)$ be given by
$$
w(G):= \frac{1}{|\SNeigh(G)|}
\sum_{G'\in \SNeigh(G)}(f(G')-h(G))^2,
$$
and define
$$
\tilde f(G):=h(G)+\xi_G\,\sqrt{w(G)},
$$
where the Gaussian field $\{\xi_G\}$ is given by Proposition~\ref{prop: 29870536509385}
and $h(G)$ is defined in \eqref{eq: hdefinition};

\item Otherwise, we set $\tilde f(G):=\Exp_{\Psimple} f$.

\end{itemize}
\end{defi}
Note that the constructed function $\tilde f$ is, strictly speaking, not an extension because
its value on the simple graphs with large $\antiexp(\cdot)$ may be different from that of $f$.
However, since the number of such graphs is relatively small, we will call $\tilde f$ an extension of $f$
as a minor abuse of terminology.


In what follows, we will need the next simple observation:
\begin{lemma}\label{l: 972-987-9587}
Assume that $f$ is a function on $\BipGSet_n(d)$ of zero mean, and that condition \eqref{eq: condition-blow up-def}
does not hold. Then
\begin{equation*}
\sum_{G'\in \Expset(\zconst)}\Psimple(G')f(G')^2
\leq \frac{1}{2^6}\sum_{G'\in \BipGSet_n(d)\setminus \Expset(\zconst)}
\Psimple(G') f(G')^2.
\end{equation*}
\end{lemma}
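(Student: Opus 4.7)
The plan is to reduce the claim to a short calculation using the zero mean assumption together with the fact, supplied by Lemma~\ref{lem: size-bad graphs}(iv) and the parameter choices in \eqref{eq: parameters} and \eqref{eq: assumption-d}, that $q:=\Psimple(\BipGSet_n(d)\setminus \Expset(\zconst))$ is very small (in particular, $q\leq 2^{-10}$ comfortably). Write $A:=\Expset(\zconst)$, $B:=\BipGSet_n(d)\setminus \Expset(\zconst)$, $p:=\Psimple(A)=1-q$, and
$$
E_A:=\sum_{G'\in A}\Psimple(G')f(G')^2,\qquad E_B:=\sum_{G'\in B}\Psimple(G')f(G')^2.
$$
Since $\Exp_{\Psimple}f=0$, we have $\Var_{\Psimple}(f)=\Exp_{\Psimple}f^2=E_A+E_B$.

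The first step is to expand the left-hand side of the failed condition \eqref{eq: condition-blow up-def}. A direct computation gives
$$
\sum_{G',G''\in A}\Psimple(G')\Psimple(G'')(f(G')-f(G''))^2
=2pE_A-2\bigl(\textstyle\sum_{G'\in A}\Psimple(G')f(G')\bigr)^{2}.
$$
Now I would invoke the zero mean assumption to replace the average of $f$ over $A$ by minus the average over $B$, so that $\bigl(\sum_{G'\in A}\Psimple(G')f(G')\bigr)^2=\bigl(\sum_{G'\in B}\Psimple(G')f(G')\bigr)^2$. Applying Cauchy--Schwarz to the second form bounds this quantity by $qE_B$. Hence
$$
2pE_A-2qE_B\;\leq\;\sum_{G',G''\in A}\Psimple(G')\Psimple(G'')(f(G')-f(G''))^2\;<\;2^{-7}(E_A+E_B),
$$
where the second inequality is the negation of \eqref{eq: condition-blow up-def}.

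The final step is purely arithmetic: rearranging,
$$
(2p-2^{-7})E_A\;<\;(2^{-7}+2q)E_B,
$$
and since $q$ is small ($p\geq 15/16$ suffices), the coefficient on the left is at least $29/16$ and the coefficient on the right is at most $2^{-6}\cdot(29/16)$, yielding $E_A\leq 2^{-6}E_B$, which is exactly the asserted inequality.

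The only substantive point is verifying that the parameters in \eqref{eq: parameters} and the degree restriction \eqref{eq: assumption-d} really do make $q$ small enough for the arithmetic step to go through with the constant $2^{-6}$; this is immediate from Lemma~\ref{lem: size-bad graphs}(iv), which gives $q=O(d^{7}\log^{2}n/n)=o(1)$. No obstacle is expected beyond bookkeeping.
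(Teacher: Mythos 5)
Your proof is correct and follows essentially the same route as the paper's: both arguments expand the quadratic form in the negated condition \eqref{eq: condition-blow up-def} using the zero-mean hypothesis and conclude by elementary arithmetic from the smallness of $q=\Psimple\big(\BipGSet_n(d)\setminus\Expset(\zconst)\big)$ supplied by Lemma~\ref{lem: size-bad graphs}(iv) and the parameter choices \eqref{eq: parameters}, \eqref{eq: assumption-d}; the paper expands the mixed sum over $\Expset(\zconst)\times\BipGSet_n(d)$, while you expand the sum over $\Expset(\zconst)\times\Expset(\zconst)$ and control the cross term by Cauchy--Schwarz, which in fact yields a slightly stronger left-hand coefficient ($2p-2^{-7}$ versus the paper's $1-2^{-7}-2q$). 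One small numerical slip: $p\ge 15/16$ does \emph{not} suffice for your final comparison, since then $2^{-7}+2q$ can be as large as about $0.13$, exceeding $2^{-6}\cdot\tfrac{29}{16}\approx 0.028$; you need roughly $q\lesssim 10^{-2}$, which is harmless here because the actual bound is $q\le 2^{-9}$ for large $n$ (indeed $q=O(d^{7}\log^{2}n/n)=o(1)$), exactly as the paper uses.
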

\begin{proof}
We have
\begin{align*}
&\sum_{G'\in \Expset(\zconst)}
\Psimple(G') f(G')^2+\Psimple\big(\Expset(\zconst)\big)\Var_{\Psimple}(f)\\
&=\sum_{G'\in \Expset(\zconst),G''\in \BipGSet_n(d)}
\Psimple(G') \Psimple(G'') f(G')^2+\sum_{G'\in \Expset(\zconst),G''\in \BipGSet_n(d)}
\Psimple(G') \Psimple(G'') f(G'')^2\\
&=
\sum_{G'\in \Expset(\zconst),G''\in \BipGSet_n(d)}
\Psimple(G') \Psimple(G'') \big(f(G')-f(G'')\big)^2\\
&<  2^{-7}\Var_{\Psimple}(f)
+\sum_{G'\in \BipGSet_n(d),G''\in \BipGSet_n(d)\setminus \Expset(\zconst)}
\Psimple(G') \Psimple(G'') \big(f(G')-f(G'')\big)^2\\
&\leq 2^{-7}\Var_{\Psimple}(f)+\Psimple\big(\BipGSet_n(d)\setminus \Expset(\zconst)\big)\Var_{\Psimple}(f)+\Var_{\Psimple}(f),
\end{align*}
which implies
\begin{align*}
&\big(1-2^{-7}-2\Psimple\big(\BipGSet_n(d)\setminus \Expset(\zconst)\big)\sum_{G'\in \Expset(\zconst)}
\Psimple(G') f(G')^2\\
&\hspace{1cm}\leq
\big(2^{-7}+2\Psimple\big(\BipGSet_n(d)\setminus \Expset(\zconst)\big)\big)\sum_{G'\in \BipGSet_n(d)\setminus \Expset(\zconst)}
\Psimple(G') f(G')^2.
\end{align*}
The result now follows in view of the estimate
$\Psimple\big( \BipGSet_n(d)\setminus \Expset(\zconst)\big) \leq 2^{-9}$ given by Lemma~\ref{lem: size-bad graphs} together with the choice of parameters in \eqref{eq: parameters} and the assumption on $d$ in \eqref{eq: assumption-d}.
\end{proof}

\section{Coupling of $s$--neighborhoods}\label{sec: bijection}

The goal of this section is to construct a perfect matching between the $s$--neighborhoods of two adjacent graphs, provided they satisfy certain assumptions. 
More precisely, we introduce the following set which we refer to as ``perfect pairs''. 
\begin{defi}[Perfect pairs]\label{def: perfect pair}
Given $1\leq k\leq \cconst$, we define $\Perfmatch(k)$ as the set of adjacent graphs $(G_1,G_2)\in \categ(k)\times \categ(k)$ 
such that the switching $\Phi:=\langle i,i',j,j'\rangle$ used to obtain $G_2$ from $G_1$ satisfies the following conditions:
\begin{itemize}
\item Vertices $i,i',j,j'$ are not incident to any multiedges.
\item Vertices $i,i',j,j'$ are not adjacent to vertices incident to some multiedges.
\end{itemize}
See Figure~\ref{fig: perfpair}.
\end{defi}

\begin{figure}[h]
\caption{A visualization of a perfect pair of adjacent multigraphs $G_1$, $G_2$ (for $d=3$).
The graphs are represented by their adjacency matrices.
The elements colored in black correspond to multiedges. The elements in gray are edges incident to the multiedges.
The rows and columns colored in dark blue correspond to left and right vertices incident to multiedges.
Those colored in light blue correspond to left and right vertices adjacent to vertices incident to some multiedges.
The edges (matrix elements) participating in the simple switching connecting $G_1$ and $G_2$ must lie outside the colored subset.}
\centering
\includegraphics[width=0.60\textwidth]{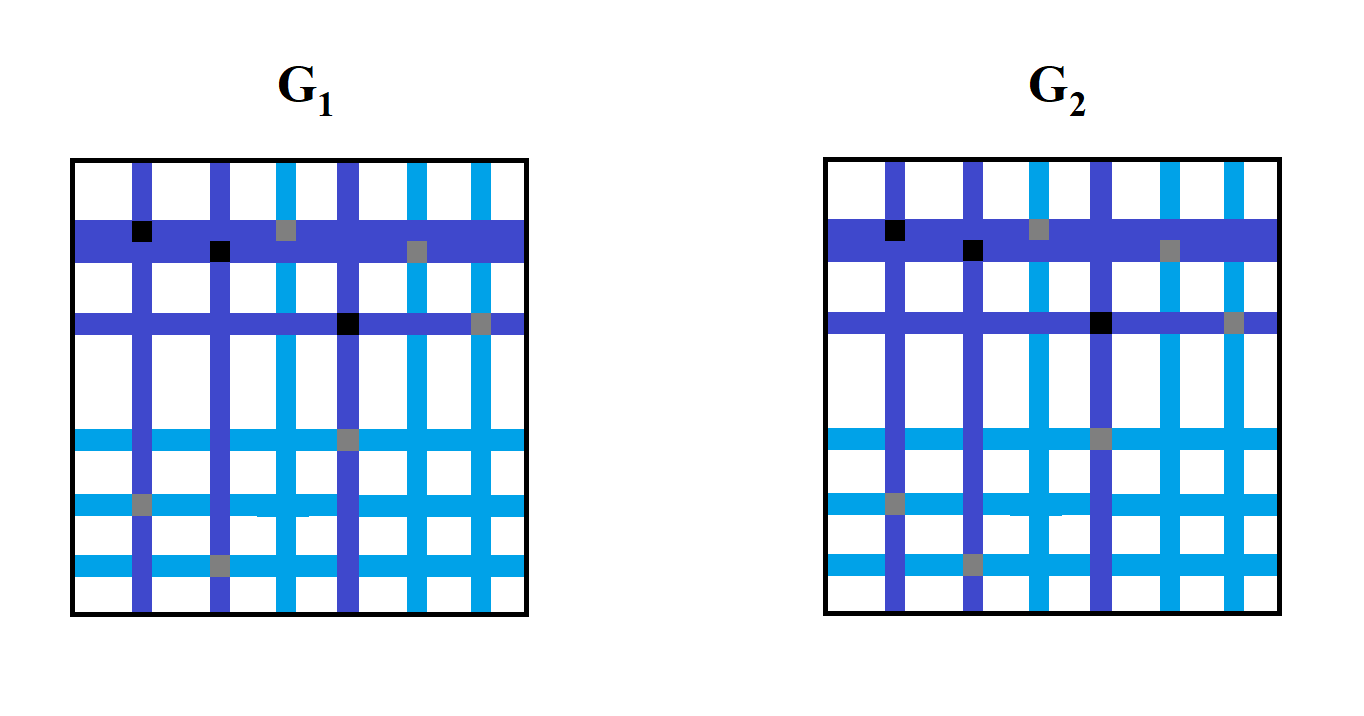}
\label{fig: perfpair}
\end{figure}

As we will see later, $\Perfmatch(k)$ forms a vast majority
in the set of couples of adjacent graphs in $\categ(k)\times \categ(k)$, and are the main ``contributors'' to the Dirichlet form.
The next proposition establishes a property crucial for us: there is a natural coupling between 
the $s$--neighborhoods of two adjacent graphs from  $\Perfmatch(k)$.

\begin{prop}[Matching of $s$--neighborhoods of perfect pairs]\label{prop: bijection}
Let $1\leq k\leq \cconst$ and $(G_1,G_2)\in \Perfmatch(k)$. 
Then there is a bijective mapping $\psi_{G_1,G_2}: \SNeigh(G_1) \to \SNeigh(G_2)$
such that
$$\mbox{$\dist(G,\psi_{G_1,G_2}(G))= 1$ for all $G\in \SNeigh(G_1)$}.$$
\end{prop}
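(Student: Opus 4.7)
The plan is to construct the bijection at the level of simple-path choice tuples. By the perfect-pair conditions, the switching $\Phi=\langle i,i',j,j'\rangle$ touches only entries in the set $\{(i,j),(i',j'),(i,j'),(i',j)\}$, which is disjoint from the rows and columns indexed by $\{i_s\}$ and $\{j_s\}$; hence $G_1$ and $G_2$ share the same multiedge set $\{(i_s,j_s)\}_{s=1}^k$, and the non-edge constraints appearing in Definition~\ref{def: simple path} (that $\mult_{\cdot}(i_s',j_s)=\mult_{\cdot}(i_s,j_s')=0$) are identical for $G_1$ and $G_2$. Consequently, $\SNeigh(G_1)$ and $\SNeigh(G_2)$ are parametrized by the same collection of admissible choice tuples $\{(i_s',j_s')\}_{s=1}^k$, differing only in the edge requirement $(i_s',j_s')\in E(G_m)$ and only when $(i_s',j_s')$ falls in the special four-element set.

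Given a $G_1$-valid tuple $c_1$, I would define $c_2$ by the local swap rule $(i,j)\mapsto(i,j')$ and $(i',j')\mapsto(i',j)$ (with the alternative rule $(i,j)\mapsto(i',j)$ and $(i',j')\mapsto(i,j')$ available if the first rule violates distinctness), leaving non-special entries unchanged. A direct computation of $G(c_1)-G(c_2)$ position by position would then show that the two endpoints agree except at exactly four positions forming a switching rectangle, so they lie at graph distance one. Concretely, when no special entry occurs in $c_1$ the rectangle is $\{(i,j),(i',j'),(i,j'),(i',j)\}$ and $G(c_2)=\Phi(G(c_1))$; when $(i,j)$ occurs in $c_1$ at some index $s_0$ and is replaced by $(i,j')$, the rectangle shifts to $\{(i_{s_0},j),(i_{s_0},j'),(i',j),(i',j')\}$ and the connecting switching is $\langle i_{s_0},i',j,j'\rangle$; the symmetric cases and the case with two special entries are handled analogously.

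The main obstacle is to reconcile the distinctness conditions after the swap. The substitution $(i,j)\mapsto(i,j')$ fails exactly when some other index $s_A$ already has $j_{s_A}'=j'$, while $(i,j)\mapsto(i',j)$ fails exactly when some index $s_B$ has $i_{s_B}'=i'$; the coincidence $s_A=s_B$ would force $(i',j')\in c_1$, which is a different case. Generically at least one of the two rules is admissible. The genuinely delicate subcase is when both are blocked by distinct auxiliary indices $s_A,s_B$; here I would perform a three-way rearrangement of the entries at $s_0,s_A,s_B$ and verify its validity using the perfect-pair conditions (which preclude the worst edge/non-edge configurations), or, as a cleaner alternative, invoke a Hall-type argument on the bipartite graph between $\SNeigh(G_1)$ and $\SNeigh(G_2)$ whose edges are distance-one pairs, deducing Hall's condition from the natural symmetry exchanging $(i,j),(i',j')$ with $(i,j'),(i',j)$. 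This conflict resolution is the technical heart of the argument.
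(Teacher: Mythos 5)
Your tuple parametrization and your generic swap rules do reproduce the paper's construction: the paper treats exactly your three situations (no special entry, one, or two), always making what you call the first rule's replacement --- the switching $\langle i_{s_0},i,j_{s_0},j\rangle$ that consumed $(i,j)$ becomes $\langle i_{s_0},i,j_{s_0},j'\rangle$, and symmetrically for $(i',j')$ --- and the connecting switchings $\langle i_{s_0},i',j,j'\rangle$ and $\langle i_{s_1},i_{s_2},j,j'\rangle$ you compute are the same as in the paper's cases (2) and (3); bijectivity is then obtained there by checking that $\psi_{G_1,G_2}$ and $\psi_{G_2,G_1}$ compose to the identity. Up to this point your sketch is consistent with the paper.

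The gap is precisely the case you yourself label the technical heart, and it is not resolved. The conflict is real under Definition~\ref{def: simple path}: pairwise distinctness of the $j_s'$ (resp.\ $i_s'$) is part of the definition of a simple path, and the perfect-pair conditions of Definition~\ref{def: perfect pair} only forbid edges between $\{i,i',j,j'\}$ and vertices incident to multiedges, so another switching of the path may perfectly well use an edge $(a,j')$ with $a$ not incident to a multiedge (blocking the substitution $(i,j)\mapsto(i,j')$), and simultaneously a further switching may use an edge in row $i'$ (blocking $(i,j)\mapsto(i',j)$). For this doubly blocked case you offer only two unexecuted ideas, neither of which is obviously viable. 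A ``three-way rearrangement'' at $s_0,s_A,s_B$ alters the chosen edges at three indices, and you give no verification that the two endpoints still differ by a single switching --- the distance-one requirement is exactly what such a rearrangement endangers, and the needed cancellations must be exhibited. The Hall-type fallback is not substantiated either: the ``natural symmetry'' exchanging $(i,j),(i',j')$ with $(i,j'),(i',j)$ fails to map valid tuples to valid tuples precisely in the conflict cases, so neither $|\SNeigh(G_1)|=|\SNeigh(G_2)|$ nor Hall's condition for the distance-one bipartite graph follows from it, and verifying Hall's condition there is essentially the whole problem (Hall would also only give existence of a matching, which is fine for the statement, but the condition itself is unproved). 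Finally, even outside the blocked case, a case-dependent choice between your two rules destroys the easy invertibility check: you never argue injectivity or surjectivity of the resulting map, whereas the paper's single canonical rule admits an explicit inverse. Note also that the paper's written proof does not introduce your rule A/rule B dichotomy at all and does not discuss the distinctness clash, so you cannot defer this case to ``the same argument as the paper''; having correctly isolated it, your proposal must supply the missing argument, and as it stands it does not.
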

\begin{proof}
Let $1\leq k\leq \cconst$, $(G_1,G_2)\in \Perfmatch(k)$ and $\Phi=\langle i,i',j,j'\rangle$ be the switching used to get $G_2$ from $G_1$. 
Let $G\in \SNeigh(G_1)$ and $\Phi_1,\ldots, \Phi_k$ be a sequence
of switchings such that $G= \Phi_k \circ \ldots\circ \Phi_1 (G_1)$. Recall that this collection is uniquely determined,
up to the ordering (see Lemma~\ref{l: 2958720598750}). 
Let us note that by the assumptions on the vertices of $\Phi$ and the definition of a simple path, the locations $(i,j')$ and $(i',j)$ cannot be 
used by any of the switchings $\Phi_1,\ldots, \Phi_k$
(we recall that, according to our convention, $\Adj(G_1)_{i j'}=\Adj(G_1)_{i' j}=0$). 
Indeed, if that was the case, it would necessarily imply that $i$ or $j'$ (resp.\ $i'$ or $j$) is incident to a multiedge in $G_1$.  
We will consider several cases:

\begin{enumerate}

\item None of the switchings used in $\Phi_1,\ldots, \Phi_k$ involve any of the edges $(i,j)$ or $(i',j')$. Then we set $\psi_{G_1,G_2}(G)$ to be $\Phi(G)$. 
Thus, $G'=\psi_{G_1,G_2}(G)$ is adjacent to $G$; moreover, since $\Phi$ does not operate on any common elements with $\{\Phi_1,\ldots, \Phi_k\}$,
the switchings commute, namely, we have
$$\Phi_k \circ \ldots\circ \Phi_1 (G_2)=\Phi_k \circ \ldots\circ \Phi_1 \circ\Phi (G_1)
=\Phi\circ (\Phi_k \circ \ldots\circ \Phi_1(G_1))=G',$$
whence $G'$ belongs to $\SNeigh(G_2)$. 

\item Exactly one of the two edges $(i,j)$ or $(i',j')$ is operated on by one of the switchings $\Phi_1,\ldots, \Phi_k$. 
Without loss of generality, we suppose that this edge is $(i,j)$. Let $1\leq s\leq k$ be such that the switching $\Phi_s=\langle i_s,i_s',j_s,j_s'\rangle$ 
operates on the edge $(i,j)$, where $(i_s,j_s)$ is a multiedge in $G_1$. 
Then necessarily $i=i_s'$ and $j=j_s'$. 
Note that, by the assumptions on $i,i',j,j'$, the locations $(i_s,j')$ and $(i',j_s)$ are edges neither in $G_1$ nor $G_2$, and they cannot be used by any of the switchings $\Phi_1,\ldots, \Phi_k$. 
Indeed, if $(i_s,j')$ (resp.\ $(i',j_s)$) was an edge, it would imply that $j'$ (resp. $i'$) is adjacent to the vertex $i_s$ (resp.\ $j_s$) which is incident to a multiedge. Moreover, if the location $(i_s,j')$ (resp.\ $(i',j_s)$) 
was used by any of the switchings $\Phi_1,\ldots, \Phi_k$, then it would imply that either $i_s$ (resp.\ $j_s$) is incident to more than one multiedge or $j'$ (resp. $i'$) is incident to a multiedge. 

Define $\psi_{G_1,G_2}(G)$ to be the switching $\langle i_s,i', j,j'\rangle$ applied to $G$;
set $G'=\psi_{G_1,G_2}(G)$. It is not difficult to check that 
$G'= \Phi_k'\circ\ldots\circ \Phi_1'(G_2)$ where $\Phi_s'= \langle i_s,i,j_s,j'\rangle$ and $\Phi_r'=\Phi_r$ for every $r\neq s$. Therefore $G'=\psi_{G_1,G_2}(G)\in  \SNeigh(G_2)$ and is adjacent to $G$.
See Figure~\ref{fig: bijcase2}.

\begin{figure}[h]
\caption{Construction of a bijective mapping between $s$--neighborhoods of $G_1$ and $G_2$, case (2).
The graphs are represented by their adjacency matrices. In the drawing, the switching $\Phi_1=\langle i_1,i,j_1,j\rangle$
operates on $(i,j)$. The corresponding switching for $G_2$ is $\Phi_1'= \langle i_1,i,j_1,j'\rangle$.}
\centering
\includegraphics[width=0.80\textwidth]{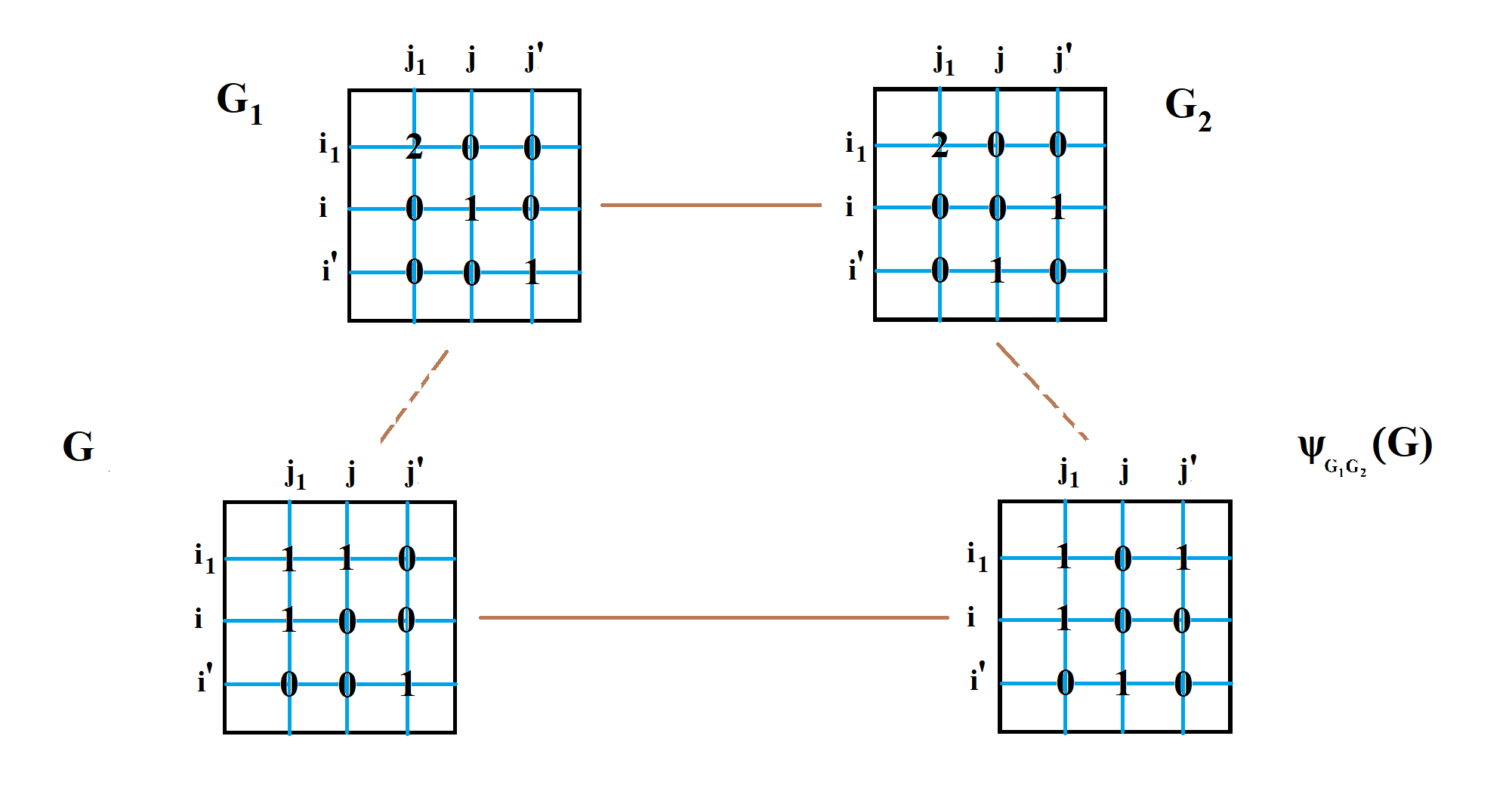}
\label{fig: bijcase2}
\end{figure}

\item Suppose that both edges $(i,j)$ and $(i',j')$ are operated on by the switchings $\Phi_1,\ldots, \Phi_k$.
This means that there exist two indices $1\leq s_1,s_2\leq k$ such that $\Phi_{s_1}=\langle i_{s_1},i_{s_1}', j_{s_1},j_{s_1}'\rangle$ (resp.\ $\Phi_{s_2}=\langle i_{s_2},i_{s_2}', j_{s_2},j_{s_2}'\rangle$) operates on $(i,j)$ (resp.\ $(i',j')$),
where $(i_{s_1},j_{s_1})$ and $(i_{s_2},j_{s_2})$ are multiedges in $G_1$.
Thus, $i=i_{s_1}'$, $j=j_{s_1}'$, $i'=i_{s_2}'$, $j'=j_{s_2}'$. 

Note that the locations $(i_{s_1},j')$, $(i',j_{s_1})$, $(i_{s_2},j)$, $(i,j_{s_2})$ are edges neither in
$G_1$ nor $G_2$ since otherwise, this would imply (in the same order) that $j'$, $i'$, $j$, $i$ are adjacent to a vertex incident to a multiedge. Moreover, these locations could not have been used in any of the switchings $\Phi_1,\ldots, \Phi_k$ since otherwise, this would imply (in the same order) that either 
$i_{s_1}$, $j_{s_1}$, $i_{s_2}$, $j_{s_2}$ are incident to more than one multiedge or that $j'$, $i'$, $j$, $i$ are incident to a multiedge. 

Define $\psi_{G_1,G_2}(G)$ to be the switching $\langle i_{s_1}, i_{s_2}, j, j'\rangle$ applied to $G$; set
$G'=\psi_{G_1,G_2}(G)$. Note that $G'= \Phi_k'\circ\ldots\circ \Phi_1'(G_2)$ where 
$\Phi_{s_1}'= \langle i_{s_1}, i, j_{s_1}, j'\rangle$ and $\Phi_{s_2}'=\langle i_{s_2}, i', j_{s_2},j\rangle$ while $\Phi_r'=\Phi_r$ for any $r\neq s_1,s_2$. 
Therefore $G'=\psi_{G_1,G_2}(G)\in  \SNeigh(G_2)$ and is adjacent to $G$.
See Figure~\ref{fig: bijcase3}.

\begin{figure}[h]
\caption{Construction of a bijective mapping between $s$--neighborhoods of $G_1$ and $G_2$, case (3).
The graphs are represented by their adjacency matrices. In the drawing, the switching $\Phi_1=\langle i_1,i,j_1,j\rangle$
operates on $(i,j)$, and the switching $\Phi_2=\langle i_{2},i', j_{2},j'\rangle$ operates on $(i',j')$.
The corresponding switchings for the graph $G_2$ are $\Phi_1'=\langle i_1, i, j_1, j'\rangle$ and
$\Phi_2'=\langle i_2, i', j_2,j\rangle$.}
\centering
\includegraphics[width=0.80\textwidth]{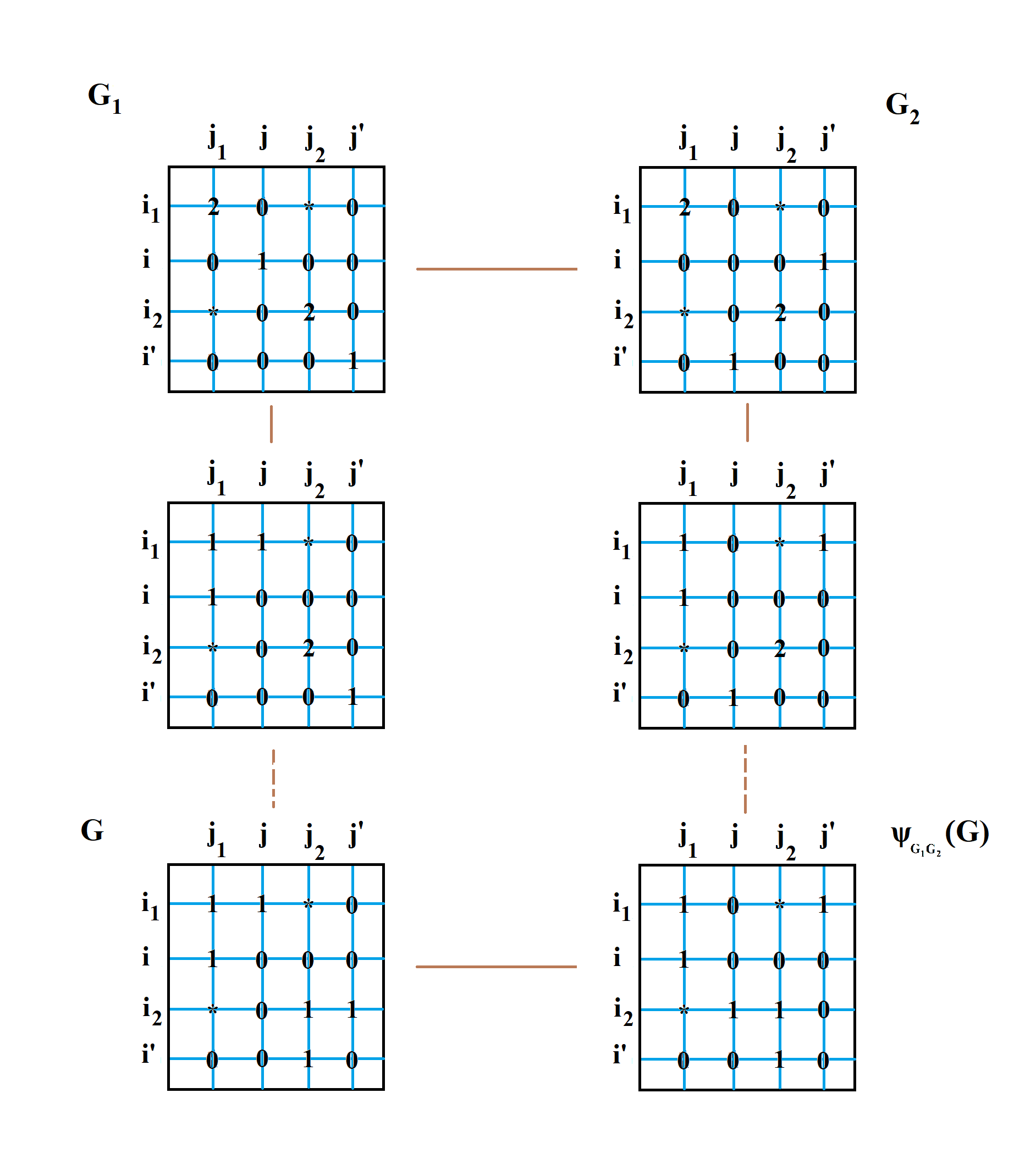}
\label{fig: bijcase3}
\end{figure}
\end{enumerate}
With the above definition, it is not difficult to check that $\psi_{G_1,G_2}\circ \psi_{G_2,G_1}$ (resp.\ $\psi_{G_2,G_1}\circ \psi_{G_1,G_2}$) is the identity operator on $\SNeigh(G_2)$ (resp.\ $\SNeigh(G_1)$). Moreover, we have $\dist(G,\psi_{G_1,G_2}(G))= 1$ for all $G\in \SNeigh(G_1)$. This finishes the proof. 
\end{proof}

\begin{rem}
In what follows, we always refer to $\psi_{G_1,G_2}$ constructed in the above proposition as {\it the} bijective mapping between $\SNeigh(G_1)$ and $\SNeigh(G_2)$.
\end{rem}

It turns out that the above procedure can be reversed in the following sense.

\begin{prop}[Matching of perfect pairs]\label{prop: matching perfect pairs}
Let $1\leq k\leq \cconst$ and $G_1\in \categ(k)$. Let $G\in \SNeigh(G_1)$ and let $G'\in \BipGSet_n(d)$ be adjacent to $G$. 
Then there exists at most one multigraph $G_2\in  \categ(k)$ such that $(G_1,G_2)\in \Perfmatch(k)$ and $\psi_{G_1,G_2}(G)=G'\in  \SNeigh(G_2)$.
\end{prop}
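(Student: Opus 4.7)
The plan is to show that the switching $\Phi = \langle i, i', j, j' \rangle$ taking $G_1$ to any admissible $G_2$ is uniquely reconstructible from the triple $(G_1, G, G')$; this of course forces $G_2 = \Phi(G_1)$ to be unique. By Lemma~\ref{l: 2958720598750} the simple path $\Phi_1, \dots, \Phi_k$ from $G_1$ to $G$ is already pinned down by $(G_1, G)$, so the entire collection of quadruples $\{(i_s, i_s', j_s, j_s')\}_{s=1}^k$ is available as auxiliary data. The remaining information must come from the one-step switching $\Psi$ taking $G$ to $G'$, which is read off directly from the two simple graphs.

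The first key observation is that for every admissible $G_2$ the right-vertex pair of $\Psi$ equals $\{j, j'\}$. This is immediate from inspecting the three cases in the proof of Proposition~\ref{prop: bijection}: in case (1) one has $\Psi = \Phi$; in case (2) (say $(i,j)$ is operated on by $\Phi_s$), $\Psi = \langle i_s, i', j, j' \rangle$; and in case (3), $\Psi = \langle i_{s_1}, i_{s_2}, j, j' \rangle$. Writing $\Psi = \langle \alpha, \beta, j, j' \rangle$ with $(\alpha, j), (\beta, j')$ being its input edges (and hence both edges of $G$), this determines the set $\{j, j'\}$ and the pairing $\alpha \leftrightarrow j$, $\beta \leftrightarrow j'$. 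Only the two left vertices $i, i'$ of $\Phi$ (paired to $j, j'$, respectively) remain to be identified.

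The reconstruction of $\{i, i'\}$ would be carried out by splitting into four subcases according to which of $(\alpha, j), (\beta, j')$ belong to $E_{G_1}$. If both do, we must be in case (1), because otherwise $\alpha$ or $\beta$ would be a multiedge left endpoint $i_s$, and the perfect-pair hypothesis on $j$ or $j'$ would preclude $(i_s, j)$ or $(i_s, j')$ from being an edge of $G_1$; this gives $(i, i') = (\alpha, \beta)$. If $(\alpha, j) \notin E_{G_1}$ but $(\beta, j') \in E_{G_1}$, then $(\alpha, j)$ must be created by some path switching $\Phi_s$ whose new edges, by Definition~\ref{def: simple path}, are $(i_s, j_s')$ and $(i_s', j_s)$; the perfect-pair condition on $j$ forbids $j = j_s$, forcing $(\alpha, j) = (i_s, j_s')$, hence $\alpha = i_s$ and $j = j_s'$. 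Since the multiedges of a $\categ(k)$-graph have pairwise distinct left endpoints, $s$ is uniquely determined by $\alpha$, and then $i = i_s'$ and $i' = \beta$. The case $(\alpha, j) \in E_{G_1}, (\beta, j') \notin E_{G_1}$ is symmetric, and when neither edge lies in $E_{G_1}$ the same reasoning applied to each input edge of $\Psi$ separately yields unique indices $s_1, s_2$ and the identification $i = i_{s_1}', i' = i_{s_2}'$.

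I expect the main obstacle to be the careful bookkeeping in the case analysis, in particular verifying that the perfect-pair assumptions on both left and right vertices of $\Phi$ eliminate all ambiguous subcases and force each of the four possible configurations of $(G_1, G, G')$ to correspond to exactly one of the cases (1)--(3) of Proposition~\ref{prop: bijection}. Once those reductions are in place the reconstruction of $\Phi$, and hence of $G_2$, is automatic.
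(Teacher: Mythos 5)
Your argument is correct and follows essentially the same route as the paper: both proofs reconstruct the switching $\Phi_{G_1,G_2}$ uniquely from the triple $(G_1,G,G')$, using the uniqueness of the simple path (Lemma~\ref{l: 2958720598750}) together with a case analysis that parallels the three cases in the proof of Proposition~\ref{prop: bijection}, with the perfect-pair conditions ruling out the ambiguous configurations. The only cosmetic difference is the case-split criterion — you test whether the input edges of $\Psi$ lie in $E_{G_1}$, while the paper tests whether the left vertices of $\Psi$ lie among the multiedge endpoints $\{i_s\}_{s\leq k}$ — and under the perfect-pair and simple-path constraints these two criteria are equivalent.
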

\begin{proof} 
Let $\Phi_1=\langle i_1,i_1',j_1,j_1'\rangle, \ldots, \Phi_k=\langle i_k,i_k',j_k,j_k'\rangle$ be the unique collection
of switchings leading from $G_1$ to $G$ (as before, we assume that
$(i_s,j_s)$ are the multiedges of $G_1$) and let $\psi=\langle a,a',b,b'\rangle$ be the switching used to pass from $G$ to $G'$.  
For any $G_2\in \categ(k)$ with $(G_1,G_2)\in \Perfmatch(k)$, denote by $\Phi_{G_1,G_2}=\langle i_{G_1,G_2},i_{G_1,G_2}',j_{G_1,G_2},j_{G_1,G_2}'\rangle$ the simple switching used to pass from $G_1$ to $G_2$. It follows from the construction of $\psi_{G_1,G_2}$ that the switchings  $\psi_{G_1,G_2}$ and $\Phi_{G_1,G_2}$ share the same right vertices. 
Moreover, the left vertices operated on by $\Phi_{G_1,G_2}$ and $\psi_{G_1,G_2}$,
coincide whenever they do not belong to $\{i_s\}_{1\leq s\leq k}$. 

Suppose there are two graphs $G_2,G_2'\in  \categ(k)$ such that $(G_1,G_2), (G_1,G_2')\in \Perfmatch(k)$ and $G'=\psi(G)=\psi_{G_1,G_2}(G)=\psi_{G_1,G_2'}(G)$. 
We will prove that $\Phi_{G_1,G_2}=\Phi_{G_1,G_2'}$ which would imply that $G_2=G_2'$. 
It follows from the above that $\Phi_{G_1,G_2}$ and $\Phi_{G_1,G_2'}$ share the same right vertices and these are $b$ and $b'$ i.e. $j_{G_1,G_2}=j_{G_1,G_2'}=b$ and $j_{G_1,G_2}'=j_{G_1,G_2'}'=b'$. 

We will consider several cases parallel to the ones in the proof of Proposition~\ref{prop: bijection}:
\begin{enumerate}
\item Neither of the two left vertices $a,a'$ belong to $\{i_s\}_{1\leq s\leq k}$.
This means that $\psi_{G_1,G_2}(G)=\psi_{G_1,G_2'}(G)$ is constructed as in the first case of the proof of Proposition~\ref{prop: bijection},
i.e.\ $i_{G_1,G_2}= i_{G_1,G_2'}=a$, $i_{G_1,G_2}'= i_{G_1,G_2'}'=a'$, and $\Phi_{G_1,G_2}=\Phi_{G_1,G_2'}=\langle a,a',b,b'\rangle$.

\item Exactly one of the left vertices $a,a'$ belongs to $\{i_s\}_{1\leq s\leq k}$. Without loss of generality, we suppose it is $a$. Let $1\leq s\leq k$ be such that $i_s=a$. Note that since $a'\not\in \{i_s\}_{1\leq s\leq k}$, then we have $i_{G_1,G_2}'= i_{G_1,G_2'}'=a'$. Necessarily, we are in the second case of Proposition~\ref{prop: bijection},
i.e.\ the switching used to pass from $G$ to $G'$ is
$$\psi=\langle i_s,i_{G_1,G_2}',j_{G_1,G_2},j_{G_1,G_2}'\rangle
=\langle i_s,i_{G_1,G_2'}',j_{G_1,G_2'},j_{G_1,G_2'}'\rangle.
$$
Note that the second case of Proposition~\ref{prop: bijection} assumes that $\Phi_s$ operates on one of the edges $(i_{G_1,G_2},j_{G_1,G_2})$
or $(i_{G_1,G_2}',j_{G_1,G_2}')=(a',b')$, i.e.\ either $(i_s',j_s')=(i_{G_1,G_2},b)$
or $(i_s',j_s')=(a',b')$.
The latter case is impossible because it would mean that the edge $(a',b')$ is removed from the graph $G$, so $\psi$ could not be applied to $G$.
Thus, 
$i_{G_1,G_2}=i_s'$.
By the same reasoning we get that $i_{G_1,G_2'}=i_s'$, 
so $\Phi_{G_1,G_2}=\Phi_{G_1,G_2'}=\langle i_s',a',b,b'\rangle$.

\item Both left vertices $a,a'$ belong to $\{i_s\}_{1\leq s\leq k}$. Let $1\leq s_1,s_2\leq k$ be such that $a=i_{s_1}$ and $a'=i_{s_2}$. Note that it is the third case of Proposition~\ref{prop: bijection} which could lead to such configuration:
$$
\psi=\langle i_{s_1},i_{s_2},j_{G_1,G_2},j_{G_1,G_2}'\rangle
=\langle i_{s_1},i_{s_2},j_{G_1,G_2'},j_{G_1,G_2'}'\rangle.
$$
Note that the third case assumes that $\Phi_{s_1}$ operates on the edge $(i_{G_1,G_2},j_{G_1,G_2})$
and $\Phi_{s_2}$ operates on $(i_{G_1,G_2}',j_{G_1,G_2}')$, i.e.\ $(i_{s_1}',j_{s_1}')=(i_{G_1,G_2},j_{G_1,G_2})$,
$(i_{s_2}',j_{s_2}')=(i_{G_1,G_2}',j_{G_1,G_2}')$, so
$\Phi_{G_1,G_2}=\langle i_{s_1}',i_{s_2}',b,b'\rangle$. Similarly, we must have $\Phi_{G_1,G_2'}=\langle i_{s_1}',i_{s_2}',b,b'\rangle$,
so 
$\Phi_{G_1,G_2}=\Phi_{G_1,G_2'}$, and the proof is complete. 
\end{enumerate}
\end{proof}

\section{Connections}\label{sec: connections}

The goal of this section is to define canonical paths
between simple graphs, which will be called {\it connections}.
As was mentioned in the proof overview, connections will be used when bounding from above the Dirichlet
form of the function extension in terms of the Dirichlet form of the original function.
Whereas the ``main weight'' of the Dirichlet form of the extension is comprised by perfect pairs
of multigraphs which admit a bijective mapping between their $s$--neighborhoods (see Section~\ref{sec: bijection}),
the imperfect pairs have to be dealt with differently.
We prefer to give a simplistic viewpoint here.
Given an imperfect pair of multigraphs $(G_1',G_2')$, we will first match all couples $(G_1,G_2)$
of simple graphs within the $s$--neighborhoods of $G_1'$ and $G_2'$ and then will bound
expressions of the form $(f(G_1)-f(G_2))^2$, which shall appear in computations, by
$$
\length{P}\,\sum_{t=1}^{\length{P}}(f(P[t])-f(P[t-1]))^2,
$$
where $P$ is a connection between $G_1$ and $G_2$ --- a path starting at $G_1$ and ending at $G_2$
that is determined not only by $G_1$ and $G_2$ but also the multigraphs $G_1',G_2'$ and thus depends on the entire $4$--tuple
$\tuple=(G_1,G_1',G_2,G_2')$.
Connections must be constructed in such a way that in the resulting weighted sum over $4$--tuples
$$
\sum\limits_{\tuple}\sum_{t=1}^{\length{P_\tuple}} \mbox{weight}(t,\tuple)\;(f(P_\tuple[t])-f(P_\tuple[t-1]))^2,
$$
no pair of adjacent simple graphs receives ``too big'' total weight,
and this is the main difficulty in properly defining them.
Connections will also be employed when mapping simple graphs within a common $s$--neighborhood,
and in the situation when a multigraph is adjacent to a simple graph which does not belong to its $s$--neighborhood.

Thus, we will deal with three types of connections:
\begin{itemize}

\item {\it Type $1$.} Paths connecting simple graphs in $s$--neighborhoods of two adjacent multigraphs.

\item {\it Type $2$.} Paths connecting two simple graphs within the $s$--neighborhood of a given multigraph.

\item {\it Type $3$.} A path connecting a simple graph within the $s$--neighborhood of a multigraph, and
a simple graph adjacent to the multigraph but not contained within its $s$--neighborhood.

\end{itemize}

Let us introduce a classification on quadruples of graphs.
Let $\tuple:=(G_1,G_1',G_2,G_2')$ be a $4$--tuple of graphs in $\ConfBipGSet_n(d)$, where
$G_1,G_2\in \BipGSet_n(d)$. We say that $\tuple$ is
\begin{itemize}

\item {\it Of type $1$}, if $G_1\in \SNeigh(G_1')$, $G_2\in \SNeigh(G_2')$, the two graphs $G_1'$ and $G_2'$ are adjacent in
$\ConfBipGSet_n(d)$; $G_1',G_2'\in \categ([1,\cconst])$, and the pair $(G_1',G_2')$ is not perfect
(see Definition~\ref{def: perfect pair}).

\item {\it Of type $2$}, if $G_1\in \SNeigh(G_1')$, $G_2\in \SNeigh(G_2')$, and $G_1'=G_2'\in \categ([1,\cconst])$.

\item {\it Of type $3$}, if $G_1=G_1'$ is adjacent to $G_2'\in\categ([1,\cconst])$, and $G_2\in \SNeigh(G_2')$, $G_1\notin \SNeigh(G_2')$.

\end{itemize}

A $4$--tuple of any of the above three types will be called {\it admissible}.
We would like to emphasize that the graphs $G_1'$ and $G_2'$ for a type $1$ tuple may be of different categories
(this may happen if the switching operation between the graphs operates on multiedges).
Clearly, in the cases of type $2$ and $3$, $3$--tuples could have been used without any loss of information,
but the above convention will allow us to treat all three types in a uniform way whenever possible.
The following simple lemma follows directly from the definitions:
\begin{lemma}
Let $\tuple:=(G_1,G_1',G_2,G_2')$ be of type $3$. Then necessarily $G_2'$ has category number $2$,
with two multiedges of multiplicity two each, and the simple switching used to produce $G_1=G_1'$ from $G_2'$,
operates on those multiedges.
\end{lemma}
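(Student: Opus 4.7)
The plan is to extract information from the switching operation $\phi=\langle i,i',j,j'\rangle$ that transforms $G_2'$ into $G_1'=G_1$, using only two inputs: that $G_1$ is simple, and that $G_2'\in\categ([1,\cconst])$ (so every multiedge of $G_2'$ has multiplicity exactly $2$ and none is incident to another multiedge).

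First I would track how $\phi$ acts on multiplicities at the four affected locations. Using $\mult_{G_1'}(a,b)\leq 1$ for every $(a,b)$, I get four immediate constraints: $\mult_{G_2'}(i,j'),\mult_{G_2'}(i',j)=0$ (otherwise the increment would push the multiplicity above $1$), and $\mult_{G_2'}(i,j),\mult_{G_2'}(i',j')\in\{1,2\}$ (at least $1$ for $\phi$ to be applicable, at most $2$ because $G_2'\in\categ([1,\cconst])$). Since $\phi$ leaves all other entries unchanged and $G_1$ has no multiedges, every multiedge of $G_2'$ must sit at $(i,j)$ or at $(i',j')$. This immediately forces the category number $k$ of $G_2'$ to satisfy $k\leq 2$.

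Next I would rule out $k=1$. Assume $G_2'$ has a single multiedge; the analysis above places it at $(i,j)$ (or symmetrically at $(i',j')$), so $\mult_{G_2'}(i,j)=2$ and $\mult_{G_2'}(i',j')=1$. I would then verify that the ordered switching $\langle i,i',j,j'\rangle$ satisfies Definition~\ref{def: simple path} with $k=1$: we have $i'\neq i$, $j'\neq j$, and we already know $\mult_{G_2'}(i',j)=\mult_{G_2'}(i,j')=0$. Therefore the length-one path from $G_2'$ to $G_1$ is a simple path, forcing $G_1\in \SNeigh(G_2')$ and contradicting the type $3$ hypothesis $G_1\notin\SNeigh(G_2')$. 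Hence $k\geq 2$.

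Combining $k\leq 2$ and $k\geq 2$ gives $k=2$, and the only way to fit two multiedges into the locations $\{(i,j),(i',j')\}$ is to place one at each, which means $\phi$ operates precisely on the two multiedges of $G_2'$. There is no real obstacle here: the entire argument is a short case analysis, and the only nontrivial observation is the one in the $k=1$ case, namely recognizing that the constraints forced on $G_2'$ by $G_1$ being simple are exactly the ones in Definition~\ref{def: simple path}, so that type $3$ cannot be realized unless the category number is exactly $2$.
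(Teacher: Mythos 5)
Your proposal is correct, and it is essentially the argument the paper leaves implicit: the paper's ``proof'' is only a pointer to Figure~\ref{fig: type3}, and your multiplicity bookkeeping at the four positions $(i,j),(i',j'),(i,j'),(i',j)$ is exactly what that picture encodes. The one genuinely nontrivial step --- excluding category $1$ by observing that the forced conditions $\mult_{G_2'}(i',j)=\mult_{G_2'}(i,j')=0$, $i\neq i'$, $j\neq j'$ make the single switching a simple path in the sense of Definition~\ref{def: simple path}, so that $G_1\in\SNeigh(G_2')$ would contradict the type $3$ hypothesis --- is handled correctly, so nothing is missing.
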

\begin{proof}
We refer to Figure~\ref{fig: type3} for illustration.
\end{proof}

\begin{figure}[h]
\caption{The structure of a type $3$ tuple $\tuple=(G_1,G_1',G_2,G_2')$.
The graphs are represented by their adjacency matrices.}
\centering
\includegraphics[width=0.95\textwidth]{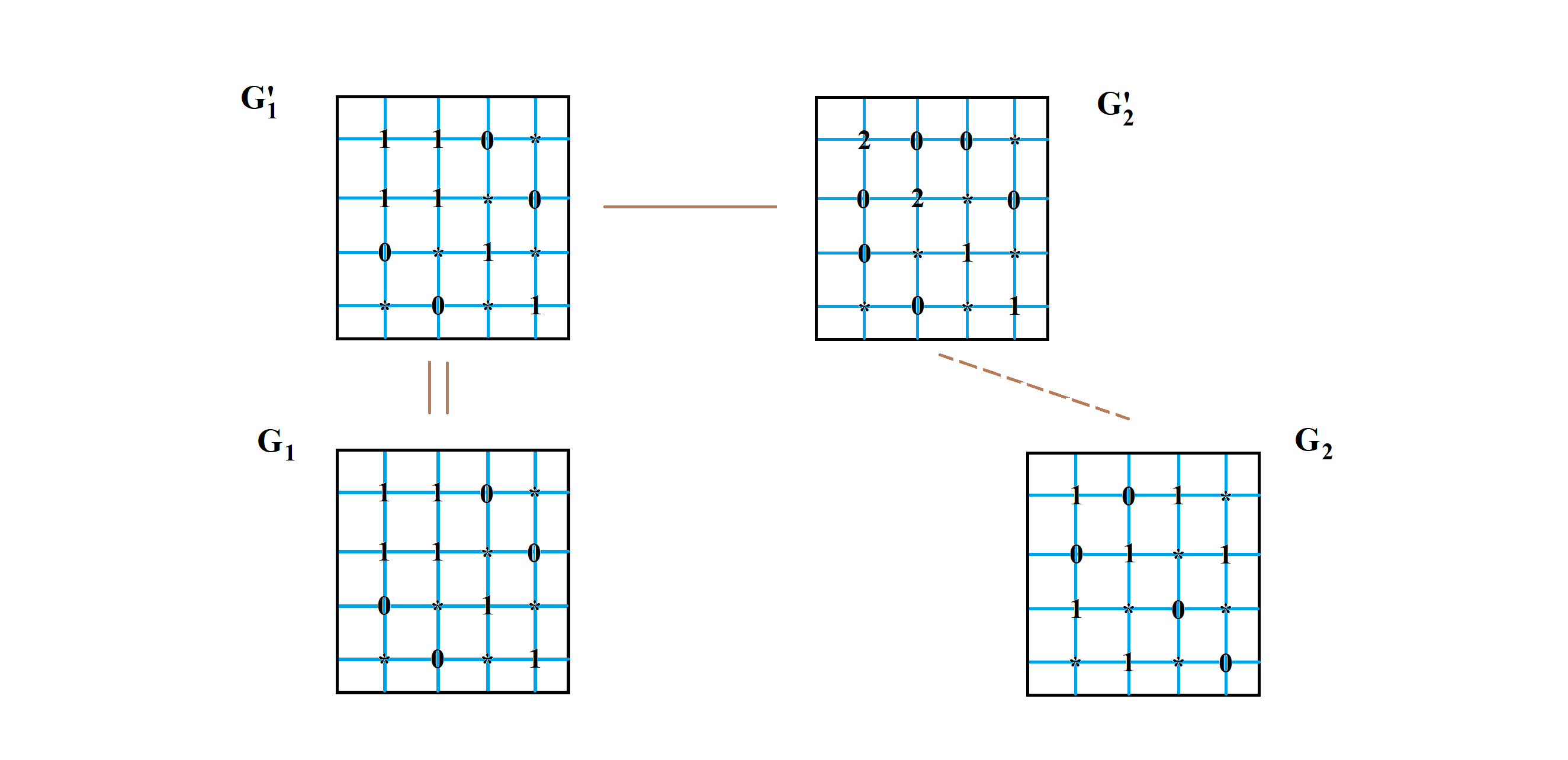}
\label{fig: type3}
\end{figure}

\medskip

Let $\tuple:=(G_1,G_1',G_2,G_2')$ be any admissible $4$--tuple.
Denote by $I(G_2,G_2')$ the collection of all indices $i\leq n$ such that a simple path leading from $G_2'$
to $G_2$ contains a simple switching involving left vertex $i^\ell$.
We will partition $I(G_2,G_2')$ into two subsets
$$
I(G_2,G_2')=I_m(G_2,G_2')\sqcup I_s(G_2,G_2'),
$$
where $I_m(G_2,G_2')$ is defined as the collection of all left vertices $i^\ell$ such that $G_2'$ has a multiple edge
emanating from $i^\ell$. Clearly, $I_s(G_2,G_2')$ then consists of all the left vertices
which do not have incident multiple edges and which are employed in the simple switching operations leading 
from $G_2'$ to $G_2$.
We define $I(G_1,G_1')=I_m(G_1,G_1')\sqcup I_s(G_1,G_1')$ according to the same rules
for types $1$ and $2$; let $I_m(G_1,G_1')=I_s(G_1,G_1')=\emptyset$ for type $3$ tuples,
and let $I(\tuple):= I(G_1,G_1')\cup I(G_2,G_2')$.

\begin{defi}\label{def: standard}
We say that an edge $(i,j)$ is {\bf $m$--standard} with respect to a $4$--tuple $\tuple=(G_1,G_1',G_2,G_2')$
of type $1$ or $2$ if all of the following conditions
are satisfied (see Figure~\ref{fig: m-standard}):
\begin{enumerate}

\item The multiplicity of $(i,j)$ both in $G_1'$ and $G_2'$ is $2$;


\item\label{p: standard-1} The (unique) left vertex $\ione\neq i$ with $(\ione,j)$ an edge in $G_1$ but not an edge in $G_1'$,
is not contained in $I(G_2,G_2')$, and

\item\label{p: standard-2} The (unique) left vertex $\itwo\neq i$ with $(\itwo,j)$ an edge in $G_2$ but not an edge in $G_2'$,
is not contained in $I(G_1,G_1')$;

\item\label{p: 13981749827} For the unique index $\jtwo$ such that $(i,\jtwo)$ an edge in $G_2$ but not an edge in $G_2'$,
the couple $(\ione,\jtwo)$ is not an edge in $G_1$ or in $G_2$;

\item\label{p: standard-3} In case when $G_1'$ and $G_2'$ do not coincide, the simple switching operation used to pass from $G_1'$ to $G_2'$,
does not operate on any of the left vertices $\{i,\ione,\itwo\}$.

\end{enumerate} 
\end{defi}

For future reference, we provide an illustration in the case when all properties above, except for property~\ref{p: 13981749827},
are satisfied; see Figure~\ref{fig: non-standard}.

Denote the set of all $m$--standard edges w.r.t.\ $\tuple$ by $E_S(\tuple)$
(for type $3$ tuples, we will set $E_S(\tuple):=\emptyset$).
Observe that each $m$--standard edge $(i,j)\in E_S(\tuple)$ naturally defines four numbers (indices),
which we will denote by $\ione(i,\tuple)$, $\itwo(i,\tuple)$, $\jone(i,\tuple)$, $\jtwo(i,\tuple)$:

\begin{itemize}

\item $\ione=\ione(i,\tuple)$ is the unique left vertex such that
the pair of vertices $(\ione,i)$ is operated on by a switching in a simple path from $G_1'$ to $G_1$.
Note that $(\ione,j)$ an edge in $G_1$ but not an edge in $G_1'$;

\item $\itwo=\itwo(i,\tuple)$ is the unique left vertex which, together with $i$,
is operated on by a switching in a simple path from $G_2'$ to $G_2$.
Again, we note that that $(\itwo,j)$ an edge in $G_2$ but not an edge in $G_2'$;

\item $\jone=\jone(i,\tuple)$ is the unique right vertex such that $(i,\jone)$ an edge in $G_1$ but not an edge in $G_1'$;

\item $\jtwo=\jtwo(i,\tuple)$ is the unique right vertex such that $(i,\jtwo)$ an edge in $G_2$ but not an edge in $G_2'$.

\end{itemize}

Note that the uniqueness of $\ione(i,\tuple)$ and $\itwo(i,\tuple)$ follows from the fact that each double edge of $G_1'$ or $G_2'$ is uniquely
mapped to a generating simple switching for $G_1$ and $G_2$, i.e.\ is guaranteed by Lemma~\ref{l: 2958720598750}.

\begin{figure}[h]
\caption{A schematic depiction of an $m$--standard edge $(i,j)$
(the multigraphs are represented by their adjacency matrices).}
\centering
\includegraphics[width=0.75\textwidth]{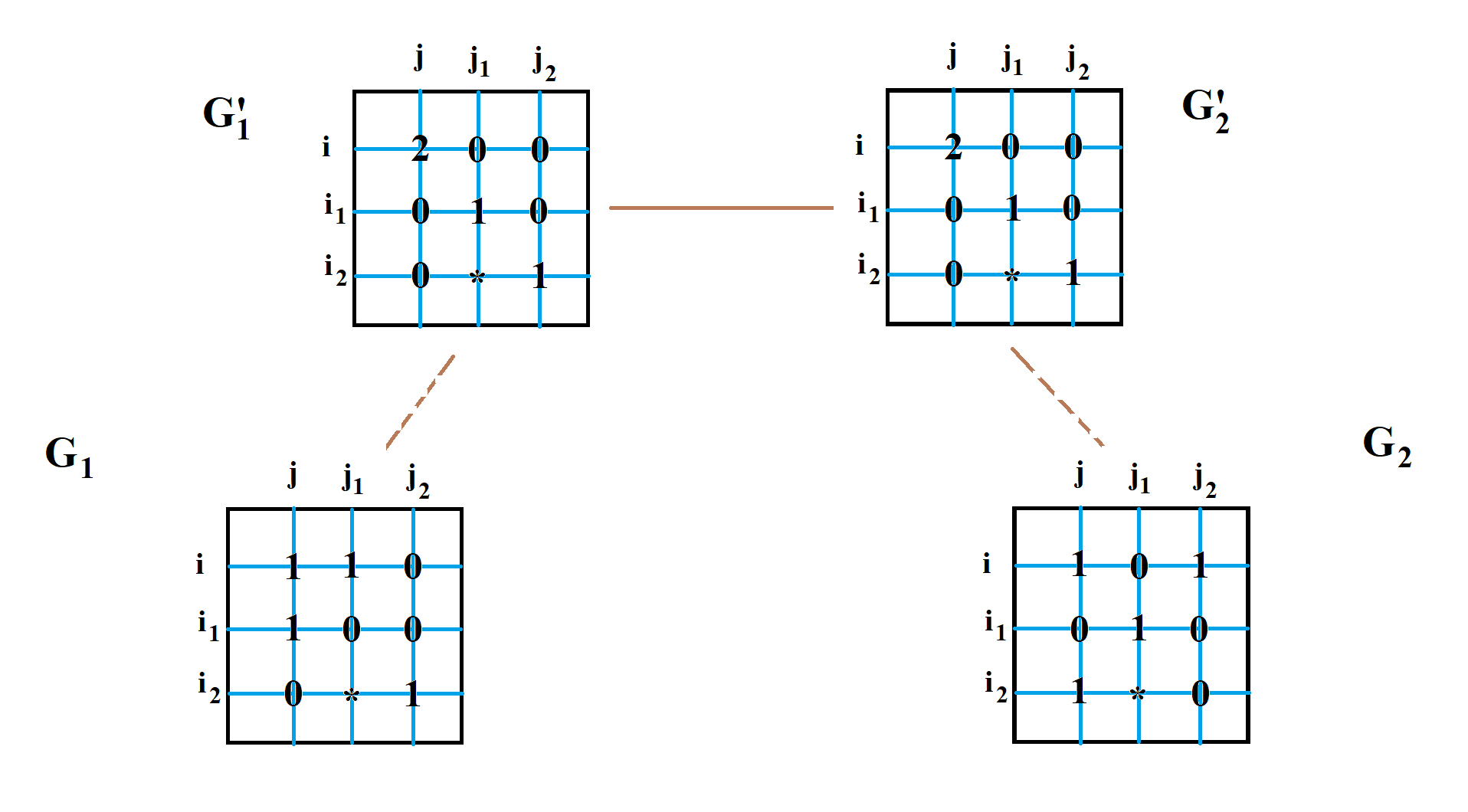}
\label{fig: m-standard}
\end{figure}

\begin{figure}[h]
\caption{An edge $(i,j)$ satisfying all conditions for an $m$--standard edge, except for property~\ref{p: 13981749827}
(the multigraphs are represented by their adjacency matrices). More precisely, we illustrate two situations: the first when the edge $(\ione, \jtwo)$ is present 
in both $G_1$ and $G_2$, while the second is when the edge is present in $G_2$ and absent in $G_1$.}
\centering
\includegraphics[width=0.75\textwidth]{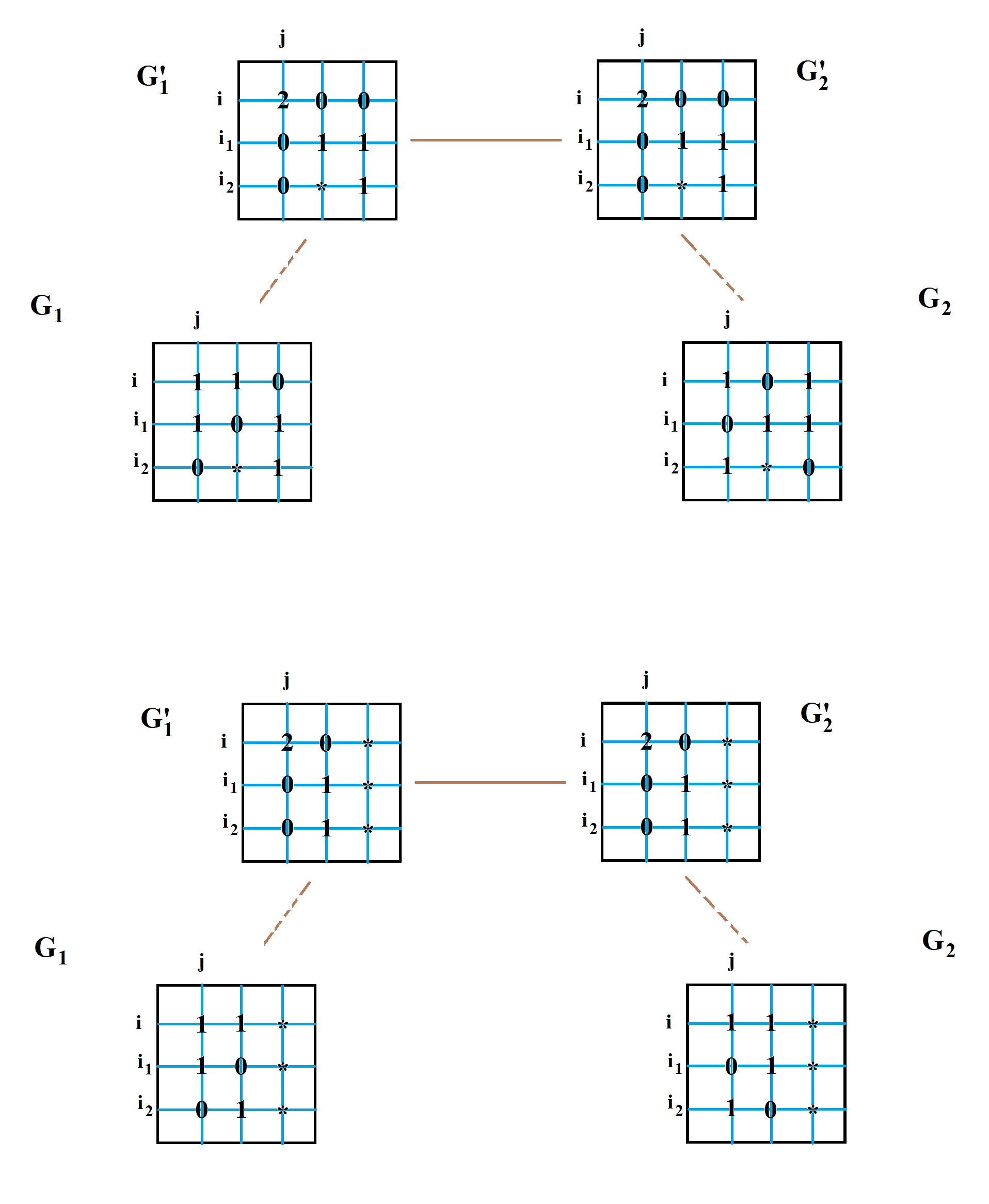}
\label{fig: non-standard}
\end{figure}

For any admissible $\tuple=(G_1,G_1',G_2,G_2')$, denote
\begin{align*}
I_{ST}(\tuple):=&\{i\leq n:\;\mbox{$(i,j)\in E_S(\tuple)$ for some $j$}\}\cup
\{\tilde i\leq n:\; \mbox{there are $i,j\leq n$ such that}\\
&\hspace{6.5cm}\mbox{$(i,j)\in E_S(\tuple)$ and $\tilde i\in\{\ione(i,\tuple),\itwo(i,\tuple)\}$}\}.
\end{align*}
The set $I_{ST}(\tuple)$ comprises ``standard'' left vertices for which a definition of corresponding switching
operations leading from $G_1$ to $G_2$ is relatively straightforward.

\begin{defi}
Let $\tuple=(G_1,G_1',G_2,G_2')$ be of type $1$ or $3$.

\begin{itemize}
\item We say that the simple switching operation used to transfer between $G_1'$ and $G_2'$, is
of {\bf type A}
if the left vertices it operates on are not contained in $I(\tuple)$.


\item The switching is of {\bf type B} if at least one of the left vertices it operates on is in $I(\tuple)$.

\end{itemize}

Note that in case of type $3$ tuples, the switching is necessarily of type $B$, whereas in case of type $1$ tuples
it can be either type $A$ or type $B$.
Moreover, for an admissible tuple $\tuple$ with type A switching, the condition that the pair $(G_1',G_2')$ is not perfect,
immediately implies that either the vertices the switching operates on are incident to
some multiedges of $G_1',G_2'$
or those vertices are adjacent to vertices incident to some multiedges
(see Definition~\ref{def: perfect pair}).
\end{defi}

For any admissible tuple $\tuple=(G_1,G_1',G_2,G_2')$, denote by $I_A(\tuple)$
the set of left vertices operated on by the simple switching of type $A$ leading from $G_1'$ to $G_2'$,
whenever such a switching is defined, and set $I_A(\tuple):=\emptyset$ otherwise.

\medskip

Let $\tuple=(G_1,G_1',G_2,G_2')$ be any admissible $4$--tuple.
We define the collection of ``non-standard'' left indices with respect to $\tuple$ as follows.
Set $\tilde I$ to be empty set if $G_1'=G_2'$ (i.e.\ $\tuple$ is of type $2$)
or if the simple switching operating between the two graphs is of type $A$,
and let $\tilde I$ be the (two) left vertices operated on by the switching from $G_1'$ to $G_2'$ if it is of type $B$.
Then we set
\begin{align*}
I_{NS}(\tuple):=\big(I(\tuple)\setminus I_{ST}(\tuple)\big)\cup \tilde I.
\end{align*}
The set $I_{NS}(\tuple)$ comprises ``non-standard'' left indices.

\medskip

Note that with the above definitions, with each admissible tuple $\tuple=(G_1,G_1',G_2,G_2')$
we have associated three disjoint sets $I_{ST}(\tuple)$, $I_{NS}(\tuple)$ and $I_A(\tuple)$.
The canonical path ({\it{}a connection}) between the simple graphs $G_1$ and $G_2$ will be constructed
by combining simple switching operations on $I_{ST}(\tuple)$, followed by the switching on $I_A(\tuple)$ (if any),
and then switchings on the left vertices from $I_{NS}(\tuple)$.
The major issue in constructing the canonical paths is to find a construction procedure which would guarantee
that there is no pair of adjacent simple graphs belonging to ``too many'' connections
(in which case we would not be able to efficiently bound the Dirichlet form).
The most difficult element of the construction is treatment of the set $I_{NS}(\tuple)$,
because of its relatively complicated structure.

\subsection{Structure of the set $I_{NS}(\tuple)$}

The following simple estimates will be important:
\begin{lemma}\label{l: 2-9572059827098}
For any admissible tuple $\tuple=(G_1,G_1',G_2,G_2')$,
let $M_h$ be 
the number of multiedges incident to $I_{NS}(\tuple)$ in $G_h'$, $h=1,2$.
Then necessarily
$$
M_h\leq |I_{NS}(\tuple)|, \quad h=1,2.
$$
\end{lemma}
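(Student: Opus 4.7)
The plan is short because the statement follows almost immediately from the defining property of $\categ$. Recall from Definition~\ref{def: category} that every $G\in \categ([1,\cconst])$ has the property that no two of its multiplicity-$2$ edges are incident to one another. Equivalently, each left vertex (and each right vertex) of $G$ is the endpoint of at most one multiedge.

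First I would unpack the definition of $I_{NS}(\tuple)=(I(\tuple)\setminus I_{ST}(\tuple))\cup \tilde I$ and observe that each of its constituents lies inside $[n^{(\ell)}]$: the set $I(\tuple)$ is defined as a collection of indices of left vertices appearing in the simple paths from $G_h'$ to $G_h$; $I_{ST}(\tuple)$ is built from left vertices $i$, $\ione(i,\tuple)$, $\itwo(i,\tuple)$ of $m$-standard edges; and $\tilde I$ consists of the two left vertices operated on by a type $B$ switching. Hence $I_{NS}(\tuple)\subset [n^{(\ell)}]$, and ``multiedge incident to $I_{NS}(\tuple)$'' means a multiedge whose left endpoint lies in $I_{NS}(\tuple)$.

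Fix $h\in\{1,2\}$. Since $G_h'\in\categ([1,\cconst])$, the map sending each multiedge $e$ of $G_h'$ to its left endpoint is injective (if two multiedges shared a left endpoint they would be incident, violating the category condition). Restricting this map to those multiedges incident to $I_{NS}(\tuple)$, we obtain an injection into $I_{NS}(\tuple)$, which gives $M_h\leq |I_{NS}(\tuple)|$, as required. There is no real obstacle here; the only thing to verify carefully is the bookkeeping that $I_{NS}(\tuple)$ is indeed a set of \emph{left} vertices, after which the category constraint does all the work.
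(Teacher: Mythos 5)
Your proof is correct and is essentially the argument the paper leaves implicit (its own proof is a one-line appeal to the definitions): since the multiedges of a graph in $\categ([1,\cconst])$ are pairwise non-incident, the map sending each multiedge incident to $I_{NS}(\tuple)$ to its left endpoint is injective into $I_{NS}(\tuple)$. The only minor caveat is that for a type $3$ tuple the graph $G_1'=G_1$ is simple rather than in $\categ([1,\cconst])$, but then $M_1=0$ and the bound is trivial, so nothing is lost.
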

\begin{proof}
The lemma follows from the definition of $I_{NS}(\tuple)$ and the definition of an $s$--neighborhood.
\end{proof}

\begin{lemma}\label{l: 208762058709283}
For any admissible tuple $\tuple=(G_1,G_1',G_2,G_2')$, the cardinality of the set of non-standard left indices $I_{NS}(\tuple)$ satisfies
\begin{align*}
&|\{(i,j):\;i\in I_{NS}(\tuple),\,(i,j)\mbox{ is an edge in precisely one of the graphs $G_1,G_2$}\}|\\
&\leq 8\,|I_{NS}(\tuple)|
+4\cdot{\bf 1}_{\{\mbox{$G_1'$ and $G_2'$ are adjacent and connected by a type B switching}\}}.
\end{align*}
\end{lemma}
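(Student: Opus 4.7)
The plan is to bound the target sum as
$$
\sum_{i \in I_{NS}(\tuple)} \big|\supp\row_i(\Adj(G_1)) \,\triangle\, \supp\row_i(\Adj(G_2))\big|
$$
(which equals the cardinality in the lemma's statement, since $G_1,G_2$ are simple) by triangulating through the intermediate multigraphs $G_1'$ and $G_2'$:
$$
\big|\supp\row_i(\Adj(G_1)) \triangle \supp\row_i(\Adj(G_2))\big|
\leq T_1(i) + T_{\rm mid}(i) + T_2(i),
$$
where $T_1(i), T_{\rm mid}(i), T_2(i)$ denote the row--$i$ symmetric differences between $G_1$ and $G_1'$, between $G_1'$ and $G_2'$, and between $G_2'$ and $G_2$ respectively. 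I would then estimate the three terms separately and sum over $i$.

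First I would argue that $T_1(i)\le 2$ and $T_2(i)\le 2$ for every $i$. By Lemma~\ref{l: 2958720598750} the simple path from $G_h'$ to $G_h$ is generated by a uniquely determined sequence of switchings $\Phi_t=\langle i_t,i_t',j_t,j_t'\rangle$; the definition of a simple path, together with the fact that multiedges of a graph in $\categ([1,\cconst])$ are pairwise non-incident, forces the left vertices $\{i_t\}_t\sqcup\{i_t'\}_t$ appearing along the path to be pairwise distinct. Hence each vertex $i$ is used by at most one switching in the path, and a direct inspection shows that a single switching changes row $i$ only at positions $j_t,j_t'$, so it alters the support of that row in at most two positions (one insertion and possibly one removal). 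This yields the bounds on $T_1$ and $T_2$.

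Next I would control the middle term. If $G_1'=G_2'$ (type~$2$ tuple) then $T_{\rm mid}\equiv 0$; otherwise the unique simple switching $\langle i_0,i_0',j_0,j_0'\rangle$ between $G_1'$ and $G_2'$ affects only rows $i_0,i_0'$, and by the same two-entry argument $T_{\rm mid}(i_0),T_{\rm mid}(i_0')\le 2$, while $T_{\rm mid}(i)=0$ for all other $i$. So only the pair $\tilde I=\{i_0,i_0'\}$ contributes to $\sum_i T_{\rm mid}(i)$, with a total contribution of at most $4$. The final piece is a short case check: if the switching is of type~$A$ then by definition $\tilde I = I_A(\tuple)$ is disjoint from $I_{NS}(\tuple)$, so the middle-term contribution to our sum vanishes; if it is of type~$B$ then $\tilde I\subset I_{NS}(\tuple)$ by the construction of $I_{NS}(\tuple)$, and the middle contribution of at most $4$ is guarded exactly by the type~$B$ indicator in the lemma.

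Combining the three estimates gives
$$
\sum_{i \in I_{NS}(\tuple)} \big|\supp\row_i(\Adj(G_1)) \triangle \supp\row_i(\Adj(G_2))\big|
\leq 4\,|I_{NS}(\tuple)| + 4\cdot \mathbf{1}_{\{G_1',G_2'\text{ adjacent, type }B\}},
$$
which is stronger than and implies the stated bound. I expect no genuine obstacle here; the argument is careful bookkeeping of which left vertices can be touched by each of the three ``segments'' --- the simple path $G_1'\to G_1$, the inter-multigraph switching $G_1'\to G_2'$, and the simple path $G_2'\to G_2$. The only step demanding real attention is the final case split into types~$1A$, $1B$, $2$, and $3$ to make sure the $\tilde I$ contribution falls cleanly on the side of the type~$B$ indicator rather than being double counted in $|I_{NS}(\tuple)|$.
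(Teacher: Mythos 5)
Your argument is correct and follows essentially the same route as the paper's proof: a row-by-row count over $i\in I_{NS}(\tuple)$ of the entries that can change, using that each left vertex is operated on by at most one switching in each simple path (and possibly by the middle switching, whose vertices lie in $I_{NS}(\tuple)$ exactly when it is of type $B$), which is just a more explicit version of the paper's bookkeeping via the multiedge counts $M(i)$, $M_h$ and Lemma~\ref{l: 2-9572059827098}. Your triangulation through $G_1'$ and $G_2'$ even yields the slightly sharper constant $4|I_{NS}(\tuple)|+4\cdot{\bf 1}_{\{\text{type }B\}}$, which of course implies the stated bound.
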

\begin{proof}
Denote by $Y$ the set of left vertices operated on by the simple switching connecting $G_1'$ and $G_2'$
(we set $Y=\emptyset$ if $G_1'=G_2'$).
First, take any $i\in I_{NS}(\tuple)\setminus Y$,
and observe that $\row_i(\Adj(G_1'))=\row_i(\Adj(G_2'))$, and, in particular, 
the number $M(i)$ of multiedges of $G_h'$ incident to $i^\ell$ is the same for $h=1,2$.
Then the number of indices $j$ such that $\mult_{G_1}(i,j)\neq \mult_{G_2}(i,j)$,
can be roughly bounded from above by 
$4\max(M(i),1)$.

Further, take any $i\in I_{NS}(\tuple)\cap Y$, and observe that, setting $M_h(i)$ to be 
the number of multiedges of $G_h'$ incident to $i^\ell$ ($h=1,2$), the cardinality of the set
$$
\big\{j\leq n:\;\mult_{G_1}(i,j)\neq \mult_{G_2}(i,j)\big\}
$$
can be roughly estimated from above by $2\max(M_1(i),1)+2+2\max(M_2(i),1)$.

Summing up over all $i\in I_{NS}(\tuple)$, we get

\begin{align*}
&|\{(i,j):\;i\in I_{NS}(\tuple),\,(i,j)\mbox{ is an edge in precisely one of the graphs $G_1,G_2$}\}|\\
&\leq 4+4|I_{NS}(\tuple)|+2M_1+2M_2, 
\end{align*}
where $M_h$ denotes 
the number of multiedges incident to $I_{NS}(\tuple)$ in $G_h'$, $h=1,2$.
It remains to apply Lemma~\ref{l: 2-9572059827098}.
\end{proof}

\medskip

\begin{defi}
Assume $I_{NS}(\tuple)\neq\emptyset$.
We say that a partition $W_1\cup W_2\cup\dots\cup W_h$
of $I_{NS}(\tuple)$ is {\it $\tuple$--admissible} if all of the following conditions are satisfied:
\begin{itemize}

\item for any pair of left vertices $\tilde i,\hat i$ used by a simple switching
operation in a simple path leading from $G_1'$ to $G_1$, we have either $\{\tilde i,\hat i\}\subset W_u$
for some $u\leq h$ or $\{\tilde i,\hat i\}\cap I_{NS}(\tuple)=\emptyset$;

\item similarly, for any pair of left vertices $\tilde i,\hat i$ used by a simple switching
operation in the path from $G_2'$ to $G_2$, either $\{\tilde i,\hat i\}\subset W_u$
for some $u\leq h$ or $\{\tilde i,\hat i\}\cap I_{NS}(\tuple)=\emptyset$;

\item additionally, if $G_1'$ and $G_2'$ are adjacent and are connected by a type B switching then
for the left vertices \chng{$\{\tilde i,\hat i\}$} employed by the simple switching operation to transit between $G_1'$ and $G_2'$,
we have $\{\tilde i,\hat i\}\subset W_u$
for some $u\leq h$.

\end{itemize}
\end{defi}

First, the definition of $I_{NS}(\tuple)$ implies that $\{I_{NS}(\tuple)\}$ is a $\tuple$--admissible partition.
It is not difficult to see that there is a common refinement to all $\tuple$--admissible partitions
which is also $\tuple$--admissible, and which we will call {\it the canonical partition of $I_{NS}(\tuple)$}.
It will be convenient to fix an order of elements for a canonical partition: we will write
$I_{NS}(\tuple)=W_1\sqcup W_2\sqcup\dots\sqcup W_h$,
where $\min W_q<\min W_{q+1}$ for all $q<h$. To emphasize dependence on $\tuple$, we will occasionally use notation
$W_1(\tuple),W_2(\tuple),\dots$. Further, we will adopt a convention that $W_q(\tuple)=\emptyset$ whenever $I_{NS}(\tuple)$ is empty
or the index $q$ is larger than the cardinality of the canonical partition of $I_{NS}(\tuple)$, so that $I_{NS}(\tuple)$
can be associated with infinite sequence $(W_q(\tuple))_{q=1}^\infty$.

The canonical partition can be interpreted in the following way. Consider the collection $S$
of all simple switchings operating on left vertices from $I_{NS}(\tuple)$ and participating in a simple path
from $G_1'$ to $G_1$ (if any) or from $G_2'$ to $G_2$, together with the switching between $G_1'$ and $G_2'$ if it is of type B.
We then connect any two distinct switchings $\phi,\psi\in S$ by an edge whenever they operate on intersecting
$2$--subsets of left vertices. Then the canonical partition of $I_{NS}(\tuple)$ will correspond to partitioning the resulting
graph on $S$ into connected components.

\medskip

It will be crucial for us to have a description of the structure of the canonical partition.
Note that a delicate feature of $I_{NS}(\tuple)$ is that number and location of multiedges of $G_1'$
and $G_2'$ incident to $I_{NS}(\tuple)$ might differ in the case when the simple switching
connecting $G_1'$ and $G_2'$ operates on some multiedges (in particular, when $\tuple$ is of type $3$).

\begin{lemma}[Structure of canonical partition for types $1,2$]\label{l: 05610948704987}
Let $\tuple=(G_1,G_1',G_2,G_2')$ be a tuple of type $1$ or $2$.
Assume further that $I_{NS}(\tuple)$ is non-empty, and let
$(W_q)_{q=1}^\infty$ be its canonical partition. Fix any $u$ with $W_u\neq\emptyset$, and denote
by $M_u^{h}$ the 
number of multiedges of $G_h'$ incident to $W_u$, $h=1,2$.
Let $Y$ be the set of left vertices employed in the simple switching operation taking $G_1'$ to $G_2'$
(if $G_1'=G_2'$ then we set $Y=\emptyset$).
Then we have
\begin{itemize}

\item If $Y\cap W_u=\emptyset$ then necessarily $\max(|W_u|-2,1)\leq 2M_u^1-1=2M_u^2-1=M_u^1+M_u^2-1$;

\item If $Y\cap W_u\neq\emptyset$ then $Y\subset W_u$, and $|W_u|\leq 2+M_u^1+M_u^2$.

\end{itemize}

\chng{In particular, in either case we have $|W_u|\leq 2+M_u^1+M_u^2$.}
\end{lemma}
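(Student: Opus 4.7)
The plan is to encode $W_u$ as a connected component of an auxiliary multigraph on $I_{NS}(\tuple)$ and then bound its size via the inequality $|E|\geq |V|-1$ for connected graphs.

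First I will build a multigraph $H$ with vertex set $I_{NS}(\tuple)$ as follows. For each switching $\phi=\langle i,i',j,j'\rangle$ appearing in (a) the unique simple path from $G_1'$ to $G_1$, (b) the unique simple path from $G_2'$ to $G_2$, or (c) the switching connecting $G_1'$ and $G_2'$ when $\tuple$ is of type $1$ and this switching is of type $B$, I place an edge $\{i,i'\}$ provided both endpoints lie in $I_{NS}(\tuple)$. The remark preceding the lemma---that $\{I_{NS}(\tuple)\}$ is a $\tuple$-admissible partition---guarantees that whenever one endpoint of such a switching lies in $I_{NS}(\tuple)$ so does the other, so no edges are lost. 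Since the canonical partition is a $\tuple$-admissible refinement of $\{I_{NS}(\tuple)\}$, its parts coincide with connected components of $H$; in particular $H|_{W_u}$ is connected and
\[
|E(H|_{W_u})|\geq |W_u|-1.
\]
To count the edges of $H|_{W_u}$, note that every switching in the $G_h$-simple path is indexed by a unique multiedge $(i_s,j_s)$ of $G_h'$, and by the definition of a simple path the partner $i_s'$ is distinct from every multiedge vertex of $G_h'$. Hence admissibility of the canonical partition shows that a $G_h$-simple-path switching contributes an edge to $H|_{W_u}$ exactly when its multiedge vertex lies in $W_u$, producing precisely $M_u^h$ such edges. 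The type $B$ switching (if any) contributes one further edge iff both its left endpoints lie in $W_u$.

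In Case $1$, admissibility forces the two-element set $Y$ to sit in a single part of the canonical partition, so $Y\cap W_u=\emptyset$ implies that the type $B$ switching (if present) contributes nothing. Moreover, any switching between $G_1'$ and $G_2'$ operates only on left vertices outside $W_u$, so $\Adj(G_1')$ and $\Adj(G_2')$ agree on all rows indexed by $W_u$; in particular $M_u^1=M_u^2=:a$ and $|E(H|_{W_u})|=2a$. Combined with the connectedness bound this yields $|W_u|\leq 2a+1$, which matches $\max(|W_u|-2,1)\leq 2a-1$ for $|W_u|\geq 3$. For $|W_u|\leq 2$ I will verify separately that $a\geq 1$: any $i\in W_u$ satisfies $i\in I(\tuple)\setminus I_{ST}(\tuple)$ (since $Y\cap W_u=\emptyset$ rules out membership in $\tilde I$), so $i$ participates in some simple-path switching whose multiedge endpoint, by admissibility, must also lie in $W_u$. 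In Case $2$, $Y\cap W_u\neq\emptyset$ together with admissibility forces $Y\subset W_u$, so $|E(H|_{W_u})|=M_u^1+M_u^2+1$ and connectedness yields $|W_u|\leq M_u^1+M_u^2+2$. The final uniform bound $|W_u|\leq 2+M_u^1+M_u^2$ is then immediate from either case.

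The main technical obstacle lies in Case $1$, where I need to obtain the sharper factor of $2$: this rests on the observation that a switching operating entirely outside $W_u$ leaves every row of the adjacency matrix indexed by $W_u$ invariant, which is exactly what allows me to equate $M_u^1$ with $M_u^2$ and to replace the naive edge count $M_u^1+M_u^2+1$ by the tighter $2a$, matching the precise form of the bound in the statement.
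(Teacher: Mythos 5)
Your argument is correct and is essentially the paper's own proof in different dress: the paper's ordered union of the sets $\{i\}\cup S(i,1)\cup S(i,2)$ (with the extra token for $Y$ in the second case) is exactly your auxiliary multigraph $H$, and its ordering/union bound is your inequality $|E|\geq |V|-1$ for a connected graph, with the same edge count $M_u^1+M_u^2$ (plus one for the type $B$ switching). The one step to phrase more carefully is the connectivity of $W_u$ in $H$: being a $\tuple$--admissible refinement of $\{I_{NS}(\tuple)\}$ only makes each part a union of components of $H$, and you also need minimality of the canonical partition (the component partition is itself $\tuple$--admissible, so the common refinement coincides with it) --- which is precisely the interpretation the paper records just before the lemma and invokes in its proof via the ``otherwise $W_u$ could be split'' remark.
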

\begin{proof}
First, consider the case $Y\cap W_u=\emptyset$. Note that in this case the location 
of the multiedges incident to left vertices from $W_u$, is the same for both graphs $G_1'$ and $G_2'$,
so necessarily $M_u^1=M_u^2$. Further, for each $i\in W_u$, denote by $M_u(i)$ 
the number of multiedges of $G_1'$ (or $G_2'$) incident to $i$ (observe that $M_u(i)$ can only take values
$0$ or $1$). Taking into account the definition of a simple path and
of $I_{NS}(\tuple)$, each such left vertex $i$ is associated with a set $S(i,h)$ of left vertices not containing $i$,
with $S(i,h)\subset W_u$, where $S(i,h)$ are the vertices operated on by simple switchings involving
$i^\ell$ and participating in generation of the path leading from $G_h'$
to $G_h$, $h=1,2$. Note that, by the conditions on $\tuple$, the set $S(i,h)$ is either empty or has cardinality $1$;
$|S(i,h)|=M_u(i)$, $h=1,2$.
If $\hat I$ is the set of all left vertices in $W_u$ incident to multiedges then
$$
W_u=\bigcup_{i\in \hat I}\big(\{i\}\cup S(i,1)\cup S(i,2)\big).
$$
Now, a crucial observation is that we can introduce a total order ``$\precsim$'' on the vertices in $\hat I$ in such a way that
$$
\big(\{i\}\cup S(i,1)\cup S(i,2)\big)\cap \bigcup\limits_{j\in\hat I,\,j\prec i}\big(\{j\}\cup S(j,1)\cup S(j,2)\big)\neq \emptyset, \quad i\in
\hat I\setminus\min\limits_{\precsim}\hat I.
$$
The last assertion follows from the definition of the canonical partition as a common refinement of all $\tuple$--admissible
partitions of $I_{NS}(\tuple)$: if the condition was not satisfied then it would be possible to split $W_u$
into smaller subsets.
This property implies that
\begin{align*}
| W_u|=\Big|\bigcup_{i\in \hat I}\big(\{i\}\cup S(i,1)\cup S(i,2)\big)\Big|&\leq \sum_{i\in \hat I}\big|\big(\{i\}\cup S(i,1)\cup S(i,2)\big)\big|
-\big(|\hat I|-1\big)\\
&=\sum_{i\in \hat I}\big|\big(S(i,1)\cup S(i,2)\big)\big|+1\\
&\leq 2\sum_{i\in \hat I}M_u(i)+1
=2M^h_u+1,\quad h=1,2.
\end{align*}
Thus, $|W_u|\leq 2M^h_u+1$, $h=1,2$. Moreover, since $M_u^h\geq 1$, $h=1,2$ then necessarily $2M^h_u-1\geq 1$.
Therefore, we get $\max(|W_u|-2,1)\leq 2M^h_u-1$.

\medskip

Now, consider the case $Y\cap W_u\neq\emptyset$, when necessarily $Y\subset W_u$.
This case is more complicated since the number and location of multiedges incident to $W_u$ does not have to be the same
for $G_1'$ and $G_2'$. Denote by $\hat I_h$ the set of all left vertices in $W_u$ incident to multiedges of $G_h'$, $h=1,2$,
and define $S(i,h)$, $i\in \hat I_h$, as in the previous case. 
It will be convenient to set $S(i,h):=\emptyset$ for any $i\in \hat I_{3-h}\setminus \hat I_h$, $h=1,2$,
and define $S(i):=\{i\}\cup S(i,1)\cup S(i,2)$, $i\in \hat I_1\cup \hat I_2$.
We then have
$$
W_u=Y\cup\bigcup_{i\in \hat I_1\cup \hat I_2}\big(\{i\}\cup S(i,1)\cup S(i,2)\big)
=Y\cup \bigcup_{i\in \hat I_1\cup \hat I_2}S(i).
$$
Let us introduce a ``fake'' token $\circledast$ and set $S(\circledast):=Y$.
Then, again using the definition of the canonical partition, we can claim existence of a total order ``$\precsim$'' on
$\hat I:=\hat I_1\cup \hat I_2\sqcup\{\circledast\}$, so that
$$
S(i)\cap \bigcup\limits_{j\in\hat I,\,j\prec i}S(j)\neq \emptyset, \quad i\in
\hat I\setminus\min\limits_{\precsim}\hat I.
$$
Thus,
$$
|W_u|\leq \sum\limits_{i\in\hat I}|S(i)|-(|\hat I|-1)=2+\sum\limits_{i\in\hat I_1\cup\hat I_2}|S(i,1)\cup S(i,2)|
\leq 2+M_u^1+M_u^2.
$$
The result follows.
\end{proof}

\begin{lemma}[Structure of canonical partition for type $3$]
Let $\tuple=(G_1,G_1',G_2,G_2')$ be a tuple of type $3$.
Then $I_{NS}(\tuple)$ is of cardinality $4$, and its canonical partition consists of a single element
$W_1(\tuple)$.
\end{lemma}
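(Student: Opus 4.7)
The plan is to exploit the preceding structural lemma for type $3$ tuples: $G_2'$ has category number exactly $2$ with two multiedges of multiplicity two, and the simple switching connecting $G_1'=G_1$ to $G_2'$ operates precisely on those two multiedges. From this, everything else should fall out by chasing the definitions.

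First I would compute $I(\tuple)$. Since by convention $I(G_1,G_1')=\emptyset$ for type $3$, we have $I(\tuple)=I(G_2,G_2')$. The simple path from $G_2'$ to $G_2$ has length $2$, and Definition~\ref{def: simple path} forces each switching to pair one of the two multiedge left vertices with a ``new'' left vertex not incident to any multiedge, with all four left vertices pairwise distinct. Thus $|I_m(G_2,G_2')|=|I_s(G_2,G_2')|=2$ and $|I(\tuple)|=4$.

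Next I would identify the other three sets in play. For type $3$ we set $E_S(\tuple):=\emptyset$, so $I_{ST}(\tuple)=\emptyset$. Since the switching $G_1'\leftrightarrow G_2'$ operates on the two multiedge left vertices, which lie in $I_m(G_2,G_2')\subset I(\tuple)$, the switching is of type B, hence $I_A(\tuple)=\emptyset$ and $\tilde I\subset I(\tuple)$. Therefore
$$
I_{NS}(\tuple)=\big(I(\tuple)\setminus I_{ST}(\tuple)\big)\cup \tilde I = I(G_2,G_2'),
$$
which has cardinality $4$, proving the first assertion.

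The only remaining point, which is routine but worth spelling out, is that the canonical partition contains a single block. Using the interpretation of the canonical partition as connected components of the graph on the set of relevant switchings $S$, here $S$ consists of three elements: the two switchings $\phi_1,\phi_2$ generating the simple path from $G_2'$ to $G_2$, and the switching $\psi$ between $G_1'$ and $G_2'$. By construction, $\phi_u$ operates on exactly one of the multiedge left vertices $m_u$ of $G_2'$ (plus a distinct new vertex), while $\psi$ operates on both $m_1$ and $m_2$. Hence $\psi$ shares a left vertex with each of $\phi_1$ and $\phi_2$, so $S$ is connected and the canonical partition has a single block $W_1(\tuple)=I_{NS}(\tuple)$. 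There is no real obstacle here; the entire argument is a bookkeeping exercise once the preceding lemma pins down the rigid structure of type $3$ tuples.
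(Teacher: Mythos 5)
Your proposal is correct and follows essentially the same route as the paper: the paper's proof simply points to the structure of type $3$ tuples (Figure~\ref{fig: type3}), namely that $G_2'$ has exactly two multiedges, the switching to $G_1=G_1'$ operates on both of them (hence is of type B), and the two simple-path switchings from $G_2'$ to $G_2$ each pair one multiedge vertex with a new distinct left vertex, which is precisely the bookkeeping you spell out. Your explicit verification that the three switchings form a connected overlap pattern, forcing a single block in the canonical partition, is exactly the content the paper leaves implicit in the figure.
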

\begin{proof}
The structure of $I_{NS}(\tuple)$ for type $3$ is visualized in Figure~\ref{fig: type3}.
\end{proof}

\subsection{Definition of a connection}

Let us start with statements which provide a path between two bipartite irregular graphs. 
The first statement will be used for graphs having at least $2$ left vertices; the second one -- with at least $4$ left vertices;
both satisfying some additional assumptions.
The statements will be used to estimate {\it complexity} (i.e.\ number of distinct realizations) of the set of
non-standard left vertices $I_{NS}$. A crucial observation is that presence of common neighbors between left vertices
reduces the complexity due to the fact that, in our setting, $d$ is less than a small power of $n$ whence
most pairs of left vertices are at distance greater than $2$. The statements below will provide certain
guarantees regarding existence of common neighbors.

\begin{lemma}\label{l: 2095203598305298}
Let $m\geq 2$, and let $\tilde G_1\neq\tilde G_2$ be two simple bipartite
graphs on $[m]\sqcup [n]$, such that 
the degree of every left vertex of each of the graphs is $d\geq 3$, and such that
$\deg_{\tilde G_1}(j^r)=\deg_{\tilde G_2}(j^r)$ for all $j\leq n$.
Assume further that the total number of pairs $(i,j)\in[m]\times[n]$ such that $(i,j)$ is an edge
in exactly one of the two graphs, is bounded above by $k$. Then there is a path $\tilde P$ (formed by simple switchings) 
on the set of all simple bipartite graphs on $[m]\sqcup[n]$
with left vertex degrees $d$ and right vertex degrees $(\deg_{\tilde G_1}(j^r))_{j=1}^n$, satisfying all of the following properties:
\begin{enumerate}
\item $\tilde P$ starts at $\tilde G_1$ and ends at $\tilde G_2$;
\item The length of $\tilde P$ is at most $k+2$;
\item Assume additionally that $m\geq 3$. Then
for every $t\leq\length{\tilde P}$, and the pair $\{i_1,i_2\}$ of left vertices of $\tilde P[t-1]$ operated on by the
simple switching transforming the graph to $\tilde P[t]$, at least one of the following should hold:
\begin{itemize}

\item $i_1$ and $i_2$ have no common neighbors in $\tilde P[t-1]$;

\item there is a left vertex $i\in[m]\setminus\{i_1,i_2\}$ having at least one common neighbor with
some vertex in $[m]\setminus\{i\}$ in $\tilde P[t-1]$.

\end{itemize}

\end{enumerate}

\end{lemma}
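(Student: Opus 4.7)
I would prove the lemma by induction on the size $|E(\tilde G_1)\triangle E(\tilde G_2)|\leq k$ of the symmetric difference, using the classical alternating-cycle decomposition. Color each edge in $E(\tilde G_1)\setminus E(\tilde G_2)$ red and each in $E(\tilde G_2)\setminus E(\tilde G_1)$ blue; the equality of left degrees (both are $d$) and of right degree sequences across the two graphs forces the red and blue degrees at every vertex to coincide, so the symmetric difference decomposes into a disjoint union of cycles alternating red and blue.

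The inductive step is to exhibit, at each stage, a single simple switching that strictly decreases the symmetric difference by at least $2$ while keeping the prescribed bipartite degree sequence. Pick a shortest alternating cycle $C$ in the current symmetric difference. If $|C|=4$, with vertices $i_1,j_1,i_2,j_2$, then the switching $\langle i_1,i_2,j_1,j_2\rangle$ is valid --- the ``absent'' entries $(i_1,j_2)$ and $(i_2,j_1)$ are blue and hence not edges of the current graph --- and eliminates all four of $C$'s edges from the symmetric difference. If $|C|\geq 6$, I would apply a switching on two non-adjacent red edges of $C$, following the classical splitting/detour argument used in connectivity proofs for bipartite switch chains: either the switching cleanly splits $C$ into two smaller alternating cycles (the symmetric difference drops by $2$), or an obstructing chord is first removed by one preliminary switching and the split is then carried out. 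Since each step reduces the symmetric difference by at least $2$, the resulting path has length at most $\lceil k/2\rceil+1\leq k+2$.

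For $m\geq 3$, property~(3) must be checked at every step. The key observation is that it can fail at step $t$, on vertices $\{i_1,i_2\}$, \emph{only} in the very restrictive configuration where $\{i_1,i_2\}$ is the unique pair of left vertices of $\tilde P[t-1]$ sharing any right neighbor; otherwise any other common-neighbor pair contains a vertex $i\in[m]\setminus\{i_1,i_2\}$ which witnesses the second alternative of the condition. To handle this rare case I would use the slack in the length bound in two ways. First, whenever several alternating cycles coexist one may reorder the resolution and process a cycle whose switching does not operate on $\{i_1,i_2\}$, which typically perturbs the intersection structure of the left neighborhoods enough to produce a second common-neighbor pair. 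Second, if no such reordering is available, I would insert at most one preparatory switching on vertices in $[m]\setminus\{i_1,i_2\}$ that introduces a common neighbor for some pair there, then proceed with the main scheme. The hard part will be showing that such a preparatory switching is always available when needed --- this will use $m\geq 3$, $d\geq 3$, and a short combinatorial argument exploiting the abundance of ``free'' edge pairs in the absence of multi-common-neighbors --- and that at most two such interventions are ever required on the entire path, so that the total length stays within the $k+2$ budget.
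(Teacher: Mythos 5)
Your alternating-cycle skeleton is a legitimate alternative route to properties (1) and (2): since all left degrees equal $d$ and the right degree sequences agree, the symmetric difference does decompose into alternating cycles, a $4$-cycle is resolved by one admissible switching, and a longer cycle can be split after clearing an obstructing chord with one extra switching --- this is essentially the same two-step detour the paper uses in Stage~1, subcase~(b), and the accounting (at least two units of symmetric difference removed per at most two switchings) stays within the $k+2$ budget.

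The genuine gap is property (3), which is the entire reason this lemma is not routine. Your characterization of the bad configuration is correct (the pair $\{i_1,i_2\}$ about to be switched is the \emph{only} pair of left vertices sharing a right neighbor), but none of the claims you use to escape it is established. First, resolving another cycle first does not in general create a new common-neighbor pair: in the bad configuration every right vertex adjacent to a left vertex outside $\{i_1,i_2\}$ has no other left neighbor, and a $4$-cycle switch on such rows merely permutes these private neighbors, so the neighborhoods outside $\{i_1,i_2\}$ remain pairwise disjoint; ``typically perturbs the intersection structure'' is therefore not an argument. Second, the existence of a suitable preparatory switching, together with the fact that it neither violates (3) itself nor produces a configuration requiring further repair, is precisely the crux, and you explicitly leave it unproved; the assertion that at most two interventions are ever needed on the whole path is likewise unsupported. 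Third, such an intervention generally \emph{increases} the symmetric difference, so the $k+2$ length bound is not closed by your accounting either. The paper avoids all of this by building the witness into the iteration: in Stage~1 it only switches on a left vertex whose row still disagrees with the target while a common-neighbor pair among the other rows is kept alive as a witness for (3), and the single terminal configuration --- all rows agree except the unique common-neighbor pair $\{i',i''\}$ --- is handled in Stage~2 by a relocate--transform--restore trick (temporarily move one edge incident to $\{i',i''\}$ onto a third row, finish the two-row transformation, then undo), which costs exactly the two extra switchings allowed by $k+2$. To salvage your scheme you would need an analogous invariant (e.g., always resolve a cycle disjoint from a surviving witness pair while one exists) plus a complete treatment of the terminal case, rather than an appeal to rarity.
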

\begin{proof}
The case $m=2$, as well as the case when
all rows of $\Adj(\tilde G_1)$ are disjoint, are straightforward and can be checked directly,
so we will restrict our attention to the case when $m\geq 3$ {\bf and}
not all rows of $\Adj(\tilde G_1)$ are disjoint.

We will describe an algorithm which produces a path with the required properties.
Denote the set of simple bipartite graphs on $[m]\sqcup [n]$
with left vertex degrees $d$ and right vertex degrees $(\deg_{\tilde G_1}(j^r))_{j=1}^n$
by $M$.
For any graph $G$ in $M$, let $\disc(G)$ be the total number of pairs $(i,j)\in[m]\times[n]$ such that
$\mult_G(i,j)\neq\mult_{\tilde G_2}(i,j)$. The algorithm produces a sequence of graphs $G^0,G^1,\dots$
in two stages as follows.
We let $G^0:=\tilde G_1$
and set $s:=1$ (the length count).

\medskip

{\bf Stage 1.}

\medskip

{\bf Iteration step of Stage $1$.}
We are given a graph $G^{s-1}$. 
If
\begin{equation}\label{eq: 094620487-5832-09}
\begin{split}
&\mbox{there is a triple of distinct left vertices $\{i_1,i^{(2)},i^{(3)}\}$ such that $i^{(2)}$ and $i^{(3)}$ have}\\
&\mbox{a common neighbor in $G^{s-1}$ and}\;\;
\row_{i_1}(\Adj(G^{s-1}))\neq\row_{i_1}(\Adj(\tilde G_2))
\end{split}
\end{equation}
then we proceed as follows.
We choose right vertices $j_2\neq j_1$ such that $\mult_{G^{s-1}}(i_1,j_1)=1$ and $\mult_{G^{s-1}}(i_1,j_2)=0$
whereas $\mult_{\tilde G_2}(i_1,j_1)=0$ and $\mult_{\tilde G_2}(i_1,j_2)=1$ (existence of such
$j_1,j_2$ follows from the fact that $G^{s-1}$ and $\tilde G_2$ have the same left degree $d$).
Next, since $G^{s-1}$ and $\tilde G_2$
have the same right degrees sequence, there is necessarily an index $i_2\neq i_1$ such that $\mult_{G^{s-1}}(i_2,j_2)=1$ but
$\mult_{\tilde G_2}(i_2,j_2)=0$.
At this point we consider two subcases:

\begin{itemize}

\item {\bf Subcase (a):} $\mult_{G^{s-1}}(i_2,j_1)=0$. Then we produce $G^s$ from $G^{s-1}$ by applying the simple switching
on $(i_1,i_2,j_1,j_2)$. We then set $s:=s+1$ and go to the next iteration.

\item {\bf Subcase (b):} $\mult_{G^{s-1}}(i_2,j_1)=1$. In this subcase the 
switching operation on $(i_1,i_2,j_1,j_2)$
for the graph $G^{s-1}$ is impossible, and we have to invoke an additional argument.
Observe that in this situation there is necessarily an index $i_3\notin\{i_1,i_2\}$ such that
$\mult_{\tilde G_2}(i_3,j_1)=1$ but $\mult_{G^{s-1}}(i_3,j_1)=0$. Further, since both left vertices $i_3$ and $i_2$
have $d$ neighbors in $G^{s-1}$, there must exist an index $j_3\in[n]$ such that
$\mult_{G^{s-1}}(i_3,j_3)=1$ and $\mult_{G^{s-1}}(i_2,j_3)=0$.
Then we generate $G^s$ from $G^{s-1}$ by applying the simple switching on $(i_2,i_3,j_1,j_3)$,
and produce $G^{s+1}$ from $G^s$ via the simple switching on $(i_1,i_2,j_1,j_2)$ (see Figure~\ref{fig: two steps}).
We then set $s:=s+2$ and go to the next iteration.

\end{itemize}

\smallskip

If condition \eqref{eq: 094620487-5832-09} is not satisfied then we complete Stage $1$.

\medskip

As the output of Stage $1$, we obtain a graph $G^{s-1}$ which does not satisfy condition \eqref{eq: 094620487-5832-09}.
Termination of Stage $1$ is guaranteed by the following simple observation: whenever $G^{\tilde s}$ is produced from $G^{\tilde s-1}$
via subcase (a) then we have $\disc(G^{\tilde s})\leq \disc(G^{\tilde s-1})-2$, while in subcase (b),
we get $\disc(G^{\tilde s+1})\leq \disc(G^{\tilde s-1})-2$.
Observe also that at every transition step $G^{\tilde s-1}\to G^{\tilde s}$ of Stage $1$, there is a left vertex $i$ not operated
on by the simple switching leading from $G^{\tilde s-1}$ to $G^{\tilde s}$, which has a common neighbor with $[m]\setminus\{i\}$
in $G^{\tilde s-1}$. 
If $G^{s-1}=\tilde G_2$ then the path construction is complete. Otherwise, we go to Stage $2$ of the algorithm.

\medskip

{\bf Stage 2.}
We are given a graph $G^{s-1}$ which does not satisfy \eqref{eq: 094620487-5832-09}.
In view of our initial assumption (not all rows of $\Adj(\tilde G_1)$ are disjoint, hence
not all rows of $\Adj(G^{s-1})$ are disjoint), there is a pair of left vertices $i'\neq i''$ having
a common neighbor in $G^{s-1}$ such that
$$
\row_{i}(\Adj(G^{s-1}))=\row_{i}(\Adj(\tilde G_2))\quad\mbox{for all $i\notin\{i',i''\}$}.
$$
Note that $\row_{i'}(\Adj(G^{s-1}))\neq\row_{i'}(\Adj(\tilde G_2))$
and $\row_{i''}(\Adj(G^{s-1}))\neq\row_{i''}(\Adj(\tilde G_2))$.
Assume for a moment that there is a right vertex $j$ adjacent to some left vertex $i\notin\{i',i''\}$ in $G^{s-1}$,
together with some vertex in $\{i',i''\}$. For concreteness, we can suppose that $j$ is adjacent to $i$ and $i'$.
But since $\row_{i''}(\Adj(G^{s-1}))\neq\row_{i''}(\Adj(\tilde G_2))$, this would imply that \eqref{eq: 094620487-5832-09} is satisfied.
Thus, there is no such right vertex $j$.

%

Then necessarily for each $i\notin\{i',i''\}$,
$$
\supp\row_{i}(\Adj(G^{s-1}))\cap\big(\supp\row_{i'}(\Adj(G^{s-1}))\cup\supp\row_{i''}(\Adj(G^{s-1}))\big)=\emptyset,
$$
and there is a right vertex $\tilde j$ adjacent to $i',i''$ (and only $i',i''$) in $G^{s-1}$.
Pick arbitrary left vertex $i\notin \{i',i''\}$ and arbitrary $\hat j$ with $\mult_{G^{s-1}}(i,\hat j)=1$.
Then $\mult_{G^{s-1}}(i',\hat j)=0$, $\mult_{G^{s-1}}(i,\tilde j)=0$,
and so the simple switching on $(i,i',\hat j,\tilde j)$ is admissible. Let $G^s$ be the graph
obtained from $G^{s-1}$ via this switching. Now, consider a restriction of $G^s$ to the vertex set $\{i',i''\}\sqcup ([n]\setminus\{\tilde j,\hat j\})$,
and observe that both left and right degrees of $G^s[\{i',i''\}\sqcup ([n]\setminus\{\tilde j,\hat j\})]$
and $\tilde G_2[\{i',i''\}\sqcup ([n]\setminus\{\tilde j,\hat j\})]$ are the same.
We shall apply iteratively simple switching operations restricted to $\{i',i''\}\sqcup ([n]\setminus\{\tilde j,\hat j\})$
to obtain a graph $G^{\tilde s}$ from $G^s$ such that
$$
G^{\tilde s}[\{i',i''\}\sqcup ([n]\setminus\{\tilde j,\hat j\})]=
\tilde G_2[\{i',i''\}\sqcup ([n]\setminus\{\tilde j,\hat j\})],
$$
whereas
$$
\Adj(G^{\tilde s})_{uv}=
\Adj(G^{s})_{uv},\quad (u,v)\notin \{i',i''\}\times([n]\setminus\{\tilde j,\hat j\}).
$$
Since the transformed graphs have two left vertices, both of degrees $d-1$, the switching operations are straightforward;
moreover, in view of the above remarks for Stage $1$, we can guarantee that $\tilde s-1\leq k$.
Further, it is not difficult to see that the only four locations where the entries of $\Adj(G^{\tilde s})$
and $\Adj(\tilde G_2)$ do not agree are $(i,\hat j)$, $(i,\tilde j)$, $(i',\hat j)$, $(i',\tilde j)$.
Applying the simple switching operation on these entries, we obtain the graph $G^{\tilde s+1}=\tilde G_2$ (see Figure~\ref{fig: betacase}).
Thus, the total length of the path does not exceed $k+2$. Note also that in view of our construction procedure,
at every step there existed a left vertex not participating in the simple switching which has a common neighbor with
its complement in $[m]$.
\end{proof}

\begin{figure}[h]
\caption{Proof of Lemma~\ref{l: 2095203598305298}, Stage $1$, Subcase (b).}
\centering
\includegraphics[width=0.85\textwidth]{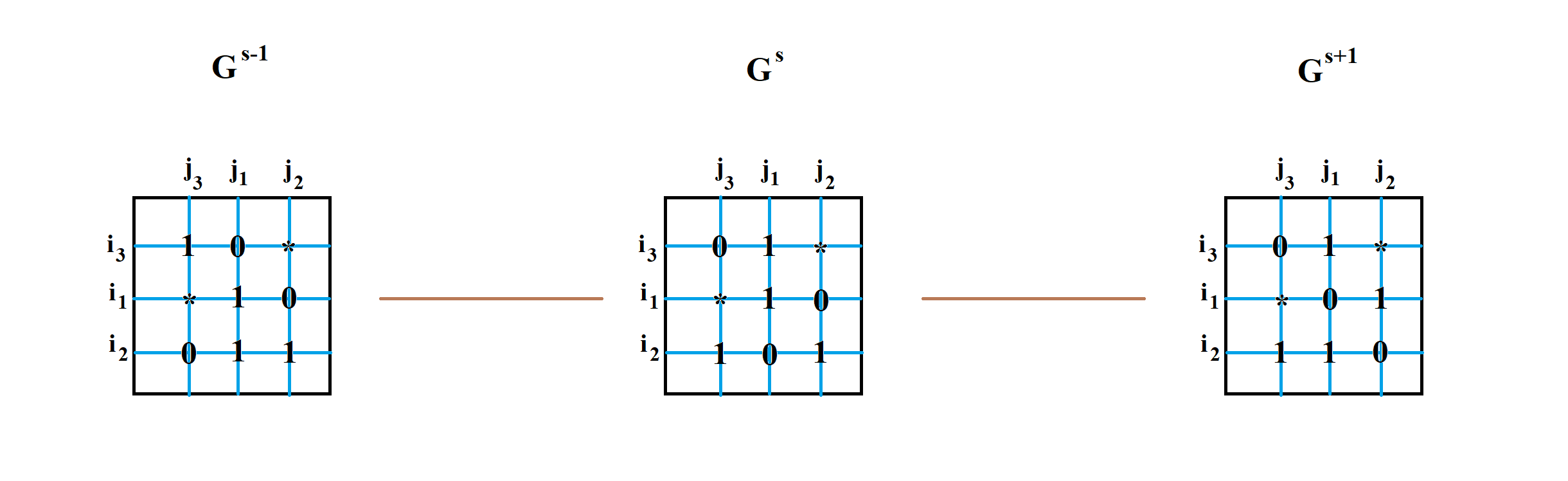}
\label{fig: two steps}
\end{figure}

\begin{figure}[h]
\caption{Proof of Lemma~\ref{l: 2095203598305298}, Stage $2$.
The graph $G^s$ is obtained from $G^{s-1}$ by ``relocating'' $1$ within the graphs' adjacency matrix
from position $(i',\tilde j)$ to $(i,\tilde j)$. Then, necessary switchings are performed on rows $i'$ and $i''$, leaving the
quadruple of elements at the intersection of rows $i',i''$ and columns $\tilde j,\hat j$ intact. As the
last step, the correct positions of elements in the square are ``restored''.}
\centering
\includegraphics[width=0.85\textwidth]{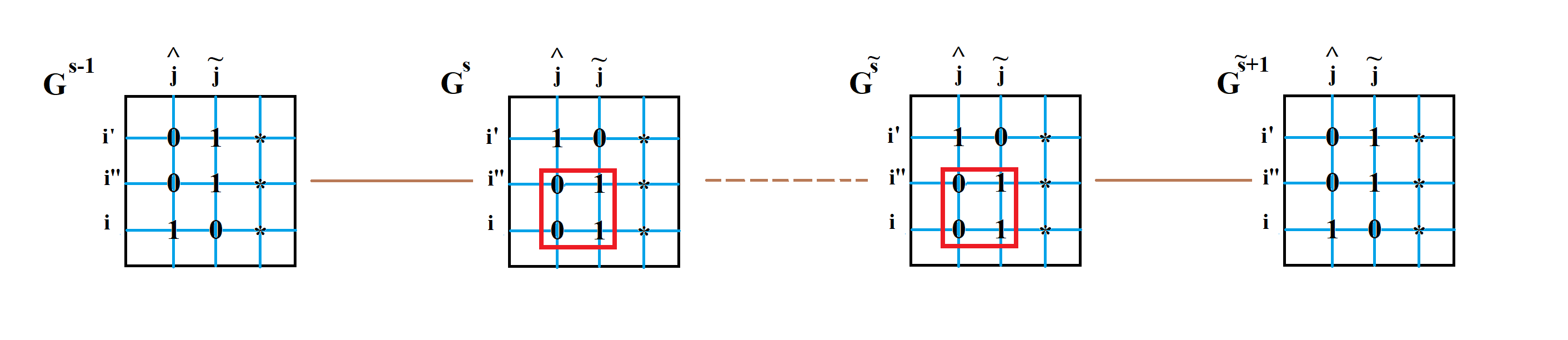}
\label{fig: betacase}
\end{figure}

\medskip

\begin{lemma}\label{l: 2095203598305298 bis}
Let $m\geq 4$, and let $\tilde G_1\neq\tilde G_2$ be two simple bipartite
graphs on $[m]\sqcup [n]$, such that 
the degree of every left vertex of each of the graphs is $d\geq 3$, and such that
$\deg_{\tilde G_1}(j^r)=\deg_{\tilde G_2}(j^r)$ for all $j\leq n$.
Assume that the total number of pairs $(i,j)\in[m]\times[n]$ such that $(i,j)$ is an edge
in exactly one of the two graphs, is bounded above by $k$.
Assume additionally that for each $h=1,2$, there is a couple of distinct left vertices
$i^{(1)}_h,i^{(2)}_h$ such that $i^{(1)}_h$ has a common neighbor with $[m]\setminus\{i^{(1)}_h,i^{(2)}_h\}$ in $\tilde G_h$,
and $i^{(2)}_h$ has a common neighbor with $[m]\setminus\{i^{(2)}_h\}$
in $\tilde G_h$.
Then there is a path $\tilde P$ (formed by simple switchings) 
on the set of all simple bipartite graphs on $[m]\sqcup[n]$
with left vertex degrees $d$ and right vertex degrees $(\deg_{\tilde G_1}(j^r))_{j=1}^n$, satisfying all of the following properties:
\begin{enumerate}
\item $\tilde P$ starts at $\tilde G_1$ and ends at $\tilde G_2$;
\item The length of $\tilde P$ is at most $C_{\text{\tiny\ref{l: 2095203598305298 bis}}}k$;

\item\label{prop: 08756104872-497} For every $t\leq\length{\tilde P}$, and the pair
$\{i_1,i_2\}$ of left vertices of $\tilde P[t-1]$ operated on by the
simple switching transforming the graph to $\tilde P[t]$, there exists a pair of distinct left vertices $\{i_t',i_t''\}$ disjoint from $\{i_1,i_2\}$
such that $i_t'$ has a common neighbor with $[m]\setminus\{i_t'\}$ and $i_t''$ has a common neighbor with
$[m]\setminus\{i_t',i_t''\}$ in the graph $\tilde P[t-1]$.

\end{enumerate}

Here, $C_{\text{\tiny\ref{l: 2095203598305298 bis}}}=3/2$.

\end{lemma}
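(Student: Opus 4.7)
The plan is to adapt the two-stage construction of Lemma~\ref{l: 2095203598305298} while enforcing, at every step, the stronger nested common-neighbor invariant stated in property~(\ref{prop: 08756104872-497}). The hypotheses on $\tilde G_1$ supply the initial witness pair $\{i_1^{(1)}, i_1^{(2)}\}$, and the main task is to propagate the invariant along the path being built; the fact that $m \geq 4$ (rather than $m \geq 3$) leaves two spare left vertices outside the pair currently operated on, which is exactly where the witnesses will live.

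First, I would run a modified Stage $1$. As in Lemma~\ref{l: 2095203598305298}, whenever some row $i_1$ of $G^{s-1}$ disagrees with $\tilde G_2$, pick $j_1, j_2, i_2$ as before and apply the switching on $(i_1, i_2, j_1, j_2)$ (subcase~(a)) or the two-switching detour through $(i_2, i_3, j_1, j_3)$ and then $(i_1, i_2, j_1, j_2)$ (subcase~(b)). The additional requirement is that, in $G^{s-1}$, one can choose two distinct left vertices $i_t', i_t''$ in $[m]\setminus\{i_1, i_2\}$ with $i_t'$ sharing a neighbor with $[m]\setminus\{i_t'\}$ and $i_t''$ sharing a neighbor with $[m]\setminus\{i_t', i_t''\}$. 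If such witnesses already exist, the step is executed immediately; otherwise, I interpose a single preparatory switching whose sole purpose is to install such witnesses outside $\{i_1, i_2\}$ without altering rows $i_1, i_2$. Existence of such a repair is argued inductively: the witnesses existed at the previous step, only one (or two) switching(s) have occurred since, each altering at most four entries of the adjacency matrix, so at most a bounded number of common-neighbor pairs have been destroyed; the flexibility coming from $d\geq 3$ and $m\geq 4$ then allows to re-create a nested witness pair by a single local move.

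Next, I would invoke Stage $2$ of Lemma~\ref{l: 2095203598305298} once the remaining discrepancy is concentrated in two rows $\{i', i''\}$ that share a common neighbor. Here the witness property is essentially free: since $m - 2 \geq 2$, the remaining left vertices inherit the common-neighbor structure maintained through Stage~$1$, and the explicit sequence of switchings of Stage~$2$ respects this structure at every intermediate graph. For the length count, each productive Stage~$1$ step reduces $\disc(\cdot)$ by at least $2$ at the cost of at most one preparatory switching, giving at most $3$ switchings per $2$ units of discrepancy; Stage~$2$ adds at most a constant plus a linear-in-$k$ contribution with smaller slope. Summing the two stages yields the bound $\length{\tilde P}\leq C_{\text{\tiny\ref{l: 2095203598305298 bis}}}\, k$ with $C_{\text{\tiny\ref{l: 2095203598305298 bis}}} = 3/2$ (the small-$k$ cases being absorbed into the constant after direct inspection, using that the statement is vacuous for $\tilde G_1 = \tilde G_2$).

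The main obstacle is verifying that a single preparatory switching always suffices to restore the nested witness configuration, i.e.\ that we never need to spend more than one auxiliary switching per main step. This reduces to a careful bookkeeping of how the ``common-neighbor graph'' on the left vertex set $[m]$ evolves under a simple switching: since a switching changes only four matrix entries, at most $O(d)$ pairs of rows can gain or lose common neighbors. Combining this perturbation estimate with the fact that the seed witnesses of $\tilde G_1$ and $\tilde G_2$ are present at the two endpoints, one can show by a short case analysis, depending on whether the main switching operates on a witness vertex, that the nested configuration is always either preserved or re-installable by a single additional switching; this is the technical heart of the proof.
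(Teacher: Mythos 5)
There is a genuine gap, and it sits exactly where you locate "the technical heart": the claim that a \emph{single preparatory switching} can always re-install the nested witness pair, and that Stage~2 of Lemma~\ref{l: 2095203598305298} can be reused with the witness property "essentially free". Neither holds. First, a preparatory switching that installs witnesses is a genuine modification of the graph: if it touches rows that already agree with $\tilde G_2$ it increases $\disc(\cdot)$ by $2$, so a "prep + main" pair may give zero net progress and your termination/length accounting collapses; if instead you insist the prep move not increase $\disc$, you have no argument that such a move exists. The paper's proof avoids this by a three-step conjugation (auxiliary switching, then the main switching, then the \emph{inverse} auxiliary switching), so the auxiliary move is undone, every iteration strictly decreases $\disc$ by at least $2$, and the $3k/2$ bound follows; your proposal never undoes the prep move and never proves it is harmless. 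Second, the perturbation bookkeeping ("a switching destroys at most $O(d)$ common-neighbor pairs") does not address the real obstruction: the difficulty is not that witnesses get perturbed along the way, but that the current graph may intrinsically have almost no common-neighbor structure outside the pair $\{i_1,i_2\}$ being switched (e.g.\ a single vertex $i_3$ sharing neighbors with $i_1$ and $i_2$, and no other common-neighbor pairs at all). In that regime one cannot conjure witnesses outside $\{i_1,i_2\}$ by one local move, and the paper instead invokes the hypothesis that $\tilde G_2$ itself carries a nested witness pair to pin down the structure of the target rows and choose the three-step switchings accordingly; in your write-up that hypothesis is only mentioned in passing and does no work, yet without it property~(\ref{prop: 08756104872-497}) can be impossible to maintain near the end of the path.

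The reuse of Stage~2 is the clearest failure point. Stage~2 of Lemma~\ref{l: 2095203598305298} is entered precisely when condition \eqref{eq: 094620487-5832-09} fails, which forces the following configuration: only two rows $i',i''$ disagree with $\tilde G_2$, the pair $\{i',i''\}$ is the \emph{only} pair of left vertices with a common neighbor, and every other left vertex has support disjoint from everyone else's. The switchings of Stage~2 operate on $i'$ or $i''$ together with some third vertex $i$; any witness pair for property~(\ref{prop: 08756104872-497}) must then be disjoint from $\{i,i'\}$ (say), and in particular the second witness would need a common neighbor with a vertex outside $\{i',i''\}$ --- which does not exist in this configuration. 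So Stage~2 as it stands violates property~(\ref{prop: 08756104872-497}), which is exactly why the paper proves Lemma~\ref{l: 2095203598305298 bis} by a separate single-stage iteration with a finer case analysis (its subcases where the assertions \ref{item: 98149872-49}--\ref{item: 0560957-984} fail) rather than by patching the two-stage argument. To repair your proposal you would need to (i) replace "one preparatory switching" by a do--main--undo scheme with an explicit argument that the auxiliary switching exists and is reversible, and (ii) replace the Stage~2 shortcut by an argument that genuinely uses the assumed witness structure of $\tilde G_2$, as the paper does.
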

\begin{proof}
The proof strategy is similar to that of Lemma~\ref{l: 2095203598305298} but involves more cases.
As in the previous proof, we denote the set of simple bipartite graphs on $[m]\sqcup [n]$
with left vertex degrees $d$ and right vertex degrees $(\deg_{\tilde G_1}(j^r))_{j=1}^n$
by $M$, and for any graph $G$ in $M$, let $\disc(G)$ be the total number of pairs $(i,j)\in[m]\times[n]$ such that
$\mult_G(i,j)\neq\mult_{\tilde G_2}(i,j)$.

We let $G^0:=\tilde G_1$
and $s:=1$.

\medskip

{\bf Iteration step.}
We are given a graph $G^{s-1}$ having the following property:
\begin{equation}\label{eq: 094620487-5832-09 bis}
\begin{split}
&\mbox{there is a couple of distinct left vertices
$i^{(1)}_s,i^{(2)}_s$ such that}\\
&\mbox{$i^{(1)}_s$ has a common neighbor with $[m]\setminus\{i^{(1)}_s,i^{(2)}_s\}$, and}\\
&\mbox{$i^{(2)}_s$ has a common neighbor with $[m]\setminus\{i^{(2)}_s\}$ in $G^{s-1}$.}
\end{split}
\end{equation}
If $G^{s-1}=\tilde G_2$ then we are done. Thus, assume that $G^{s-1}\neq\tilde G_2$.
Then there is a left vertex $i_1$ and two right vertices
$j_2\neq j_1$ such that $\mult_{G^{s-1}}(i_1,j_1)=1$ and $\mult_{G^{s-1}}(i_1,j_2)=0$
while $\mult_{\tilde G_2}(i_1,j_1)=0$ and $\mult_{\tilde G_2}(i_1,j_2)=1$.
Since $G^{s-1}$ and $\tilde G_2$
have the same right degrees sequence, there is necessarily a left index $i_2\neq i_1$ such that $\mult_{G^{s-1}}(i_2,j_2)=1$ but
$\mult_{\tilde G_2}(i_2,j_2)=0$.
We shall consider several subcases.

\begin{enumerate}

\item\label{item: 2098560498174987} Assume that $\mult_{G^{s-1}}(i_2,j_1)=0$.

\begin{enumerate}

\item\label{item: 98149872-49} Assume there are two distinct left vertices $i'_s,i''_s\in[m]\setminus\{i_1,i_2\}$ such that
$i_s''$ has a common neighbor with $[m]\setminus \{i_s',i_s''\}$ and $i_s'$ has a common neighbor
with $[m]\setminus \{i_s'\}$ in $G^{s-1}$. In this case, we perform the simple switching on
$(i_1,i_2,j_1,j_2)$ to produce the graph $G^s$.
Observe that the switching preserves the property of vertices $i_s',i_s''$ in the graph $G^s$
(i.e.\ $i_s''$ has a common neighbor with $[m]\setminus \{i_s',i_s''\}$ and $i_s'$ has a common neighbor
with $[m]\setminus \{i_s'\}$ in $G^{s}$),
and that $\disc(G^s)\leq \disc(G^{s-1})-2$. Set $s:=s+1$ and go to the next iteration.

\bigskip We observe that in all subcases below which assume that the assertion in \ref{item: 98149872-49} is not true,
there must exist a left vertex $i_3\notin\{i_1,i_2\}$ which has a common neighbor with $[m]\setminus\{i_3\}$,
such that no vertex $v$ in $[m]\setminus\{i_1,i_2,i_3\}$ has a common neighbor with $\{i_1,i_2\}$.

\bigskip

\item\label{item: 4296520970598} Assume the assertion in \ref{item: 98149872-49} is not true,
and the vertices $i_1$ and $i_2$ have a common neighbor $j_3$ in $G^{s-1}$. 
Pick any left vertex $i_4\in[m]\setminus\{i_1,i_2,i_3\}$. Then
necessarily $i_4$ and $\{i_1,i_2\}$ do not have common neighbors (because otherwise we fall into case \ref{item: 98149872-49}).
Let $j_4$ be any right vertex such that $\mult_{G^{s-1}}(i_4,j_4)=1$. Then necessarily $\mult_{G^{s-1}}(i_1,j_4)=0$.
Now, we produce the graph $G^s$ from $G^{s-1}$ with a switching on $(i_1,i_4,j_3,j_4)$,
then obtain $G^{s+1}$ from $G^s$ by a switching on $(i_1,i_2,j_1,j_2)$, and finally ``return'' the configuration of $i_4$--th neighborhood
by switching on $(i_1,i_4,j_4,j_3)$ to obtain $G^{s+2}$ from $G^{s+1}$.
It can be easily checked that $\disc(G^{s+2})\leq \disc(G^{s-1})-2$, and that the graph $G^{s+2}$ satisfies condition
\eqref{eq: 094620487-5832-09 bis}. Moreover,
for each transition $G^{s-1}\to G^s$, $G^s\to G^{s+1}$, $G^{s+1}\to G^{s+2}$,
the property \ref{prop: 08756104872-497} is satisfied (see Figure~\ref{fig: switching2}).
Set $s:=s+3$ and go to the next step.

\bigskip For future notice, let us explore what the negation of 
\ref{item: 98149872-49} and \ref{item: 4296520970598} implies.
Let $i^{(1)}_s,i^{(2)}_s$ be the couple of vertices from \eqref{eq: 094620487-5832-09 bis}.
Note that since we are not in subcase \ref{item: 98149872-49}, at least one of $i^{(1)}_s,i^{(2)}_s$ must belong to $\{i_1,i_2\}$,
that is, either $i_1$ or $i_2$ has a common neighbor with some vertex in $[m]\setminus\{i_1,i_2\}$
(at this moment, we applied a negation of \ref{item: 4296520970598}). This vertex in $[m]\setminus\{i_1,i_2\}$
having a common neighbor with $\{i_1,i_2\}$,
must be unique, and it must be $i_3$, because, again, otherwise we would be in Subcase~\ref{item: 98149872-49}.
No vertex in $[m]\setminus\{i_1,i_2,i_3\}$ has any common neighbors with its complement in $[m]$,
but, as \eqref{eq: 094620487-5832-09 bis} must still hold and $i_1$ and $i_2$ do not have common neighbors,
we discover the following necessary conditions on $G^{s-1}$: $i_3$ has common neighbors with both $i_1$ and $i_2$;
$i_1$ and $i_2$ do not have any common neighbors;
each vertex $v$ in $[m]\setminus\{i_1,i_2,i_3\}$ does not have any common neighbors with $[m]\setminus\{v\}$ in $G^{s-1}$.

\bigskip

\item\label{item: 0560957-984} Assume the assertions in \ref{item: 98149872-49} and \ref{item: 4296520970598} are not true,
and there is a vertex $u\in [m]\setminus\{i_1,i_2,i_3\}$ such that $\row_u(\Adj(G^{s-1}))\neq \row_u(\Adj(\tilde G_2))$.
Let $j'$ and $j''$ be two right vertices so that $\mult_{G^{s-1}}(u,j')=1$ and $\mult_{G^{s-1}}(u,j'')=0$
while $\mult_{\tilde G_2}(u,j')=0$ and $\mult_{\tilde G_2}(u,j'')=1$.
Since $G^{s-1}$ and $\tilde G_2$
have the same right degrees sequence, there is necessarily a left index $q\neq u$ such that $\mult_{G^{s-1}}(q,j'')=1$ but
$\mult_{\tilde G_2}(q,j'')=0$. Note also that in view of our assumption on $u$ we necessarily have
$\mult_{G^{s-1}}(q,j')=0$. Then we simply apply Subcase~\ref{item: 98149872-49} up to vertex relabelling,
namely we perform the simple switching on
$(u,q,j',j'')$ to produce graph $G^s$ from $G^{s-1}$. It is easy to see, using the above assumptions on $\{i_1,i_2,i_3\}$
that the required conditions on the switching and the graph $G^s$ are fulfilled.
Set $s:=s+1$ and go to the next iteration.

\bigskip Let us note that negation of \ref{item: 98149872-49},
\ref{item: 4296520970598} and \ref{item: 0560957-984} implies that the rows $\row_u(\Adj(G^{s-1}))$, $u\in [m]\setminus\{i_1,i_2,i_3\}$,
coincide with respective rows of $\Adj(\tilde G_2)$, and that each $u$ in that set does not have any common
neighbors in graph $\tilde G_2$. At this stage, we apply the crucial property specified in the lemma:
{\it ``there are
$i^{(1)}_2,i^{(2)}_2$ such that $i^{(1)}_2$ has a common neighbor with $[m]\setminus\{i^{(1)}_2,i^{(2)}_2\}$,
and $i^{(2)}_2$ has a common neighbor with $[m]\setminus\{i^{(2)}_2\}$
in $\tilde G_2$''.} It, together with the above, implies that each of the vertices $i_h$, $h=1,2,3$,
must have a common neighbor with $\{i_1,i_2,i_3\}\setminus\{i_h\}$ in the graph $\tilde G_2$
(and the degree of every right vertex adjacent to $\{i_1,i_2,i_3\}$ in $\tilde G_2$ is at most $2$).
We shall use this observation in the Subcases~\ref{item: 209870491410498} and~\ref{item: 298752098709} below.

\bigskip

\item\label{item: 087562-4=3204-487} Assume the assertions in \ref{item: 98149872-49},
\ref{item: 4296520970598} and \ref{item: 0560957-984} are not true, and both right vertices $j_1$ and $j_2$
have degree one (i.e.\ adjacent to only one left vertex, $i_1$ and $i_2$, respectively, in $G^{s-1}$). 
In this subcase, there exist two right vertices $j_4$ and $j_5$ not in $\{j_1,j_2\}$
such that $j_4$ is adjacent to $i_1$ and $i_3$, and $j_5$ is adjacent to $i_2$ and $i_3$.
We pick any left vertex $i_4$ and a right vertex $j_6$ such that $\mult_{G^{s-1}}(i_4,j_6)=1$
(note that necessarily $i_4$ does not have any common neighbors with $i_1$, $i_2$ or $i_3$).
The strategy here is very similar to Subcase~\ref{item: 4296520970598} and consists in applying three
switching operations in such a way that the required conditions are not violated by switchings and the graphs $G^s,G^{s+1},G^{s+2}$.
The three consecutive switchings are $(i_3,i_4,j_5,j_6)$, $(i_1,i_2,j_1,j_2)$, $(i_3,i_4,j_6,j_5)$ and the process is reflected in Figure~\ref{fig: switching2 00}.
We set $s:=s+3$ and go to the next step.

\item\label{item: 209560284760971} Assume the assertions in \ref{item: 98149872-49},
\ref{item: 4296520970598} and \ref{item: 0560957-984} are not true, and both $j_1$ and $j_2$ have degree $2$
in $G^{s-1}$. Pick a pair $(i_4,j_4)$ such that $i_4\in [m]\setminus\{i_1,i_2,i_3\}$,
and $\mult_{G^{s-1}}(i_4,j_4)=1$. The three consecutive switchings are $(i_3,i_4,j_2,j_4)$, $(i_1,i_2,j_1,j_2)$, $(i_3,i_4,j_4,j_2)$ 
and the  switching process is reflected in Figure~\ref{fig: switching2 11}.
We set $s:=s+3$ and go to the next step.

\item\label{item: 209870491410498} Assume the assertions in \ref{item: 98149872-49},
\ref{item: 4296520970598} and \ref{item: 0560957-984} are not true, $j_1$ has degree $1$ and $j_2$ has degree $2$
in $G^{s-1}$. 
According to the above remark, $i_2$ must have a common neighbor with $\{i_1,i_3\}$ in $\tilde G_2$,
and this common neighbor cannot be $j_2$ since $\mult_{\tilde G_2}(i_2,j_2)=0$. 
Thus, there must exist a right vertex $j_4$ of degree two which is adjacent to $i_2$
in $\tilde G_2$. In the graph $G^{s-1}$, $j_4$ can be either adjacent to both $i_2,i_3$ or be adjacent to both $i_1,i_3$.
In the former sub-subcase, we perform the three-step switching as illustrated in Figure~\ref{fig: switching2 01} (\chng{there, we use
an additional edge $(i_4,j_5)$ for the switching process to complete}).
In the latter sub-subcase, our goal shifts from switching $\langle i_1,i_2,j_1,j_2\rangle$
to switching $\langle i_2,i_1,j_2,j_4\rangle$, again using the three-step process, which is identical,
up to the vertex relabelling, to the one described in \ref{item: 209560284760971}. Observe that as a result of this procedure
we will decrease $\disc(\cdot)$ by at least two since under the assumption that $j_4$ is adjacent to both $i_1,i_3$,
we have $\mult_{G^{s-1}}(i_2,j_4)=0$ while $\mult_{\tilde G_2}(i_2,j_4)=1$.
We set $s:=s+3$ and go to the next step.

\item\label{item: 298752098709} Assume the assertions in \ref{item: 98149872-49},
\ref{item: 4296520970598} and \ref{item: 0560957-984} are not true, $j_1$ has degree $2$ and $j_2$ has degree $1$
in $G^{s-1}$. This subcase is identical to the previous one, up to vertex relabelling.

\end{enumerate}

\item Assume that $\mult_{G^{s-1}}(i_2,j_1)=1$.
In this case there is necessarily an index $i_3\notin\{i_1,i_2\}$ such that
$\mult_{\tilde G_2}(i_3,j_1)=1$ but $\mult_{G^{s-1}}(i_3,j_1)=0$. Further, since both left vertices $i_3$ and $i_2$
have $d$ neighbors in $G^{s-1}$, there must exist an index $j_3\in[n]$ such that
$\mult_{G^{s-1}}(i_3,j_3)=1$ and $\mult_{G^{s-1}}(i_2,j_3)=0$.
Again, we consider two subcases.

\begin{enumerate}

\item Assume that there is a vertex $i_4\in[m]\setminus\{i_1,i_2,i_3\}$ which has a common neighbor with
$[m]\setminus \{i_4\}$ in $G^{s-1}$. In this case we produce $G^s$ from $G^{s-1}$ by a simple switching on $(i_2,i_3,j_1,j_3)$
and $G^{s+1}$ from $G^s$ by a switching on $(i_1,i_2,j_1,j_2)$.
It is not difficult to check that $\disc(G^{s+1})\leq \disc(G^{s-1})-2$ (see also Lemma~\ref{l: 2095203598305298}, Subcase (b)).
Moreover, the graph $G^{s+1}$ satisfies condition
\eqref{eq: 094620487-5832-09 bis} with the two indices $i_3$ and $i_4$, and 
for each transition $G^{s-1}\to G^s$, $G^s\to G^{s+1}$,
the property \ref{prop: 08756104872-497} is satisfied. We set $s:=s+2$ and go to the next iteration.

\item Assume that there is {\bf no} left vertex $i_4\in[m]\setminus\{i_1,i_2,i_3\}$ which has a common neighbor with
$[m]\setminus \{i_4\}$ in $G^{s-1}$. 
Note that there is a right vertex $j_4$ such that $\mult_{G^{s-1}}(i_2,j_4)=0$ but
$\mult_{\tilde G_2}(i_2,j_4)=1$ (the vertex may coincide with $j_3$ or may be different from $j_3$).
Accordingly, there is a left vertex $i_5$ such that $\mult_{G^{s-1}}(i_5,j_4)=1$ but
$\mult_{\tilde G_2}(i_5,j_4)=0$. 
If $\mult_{G^{s-1}}(i_5,j_2)=0$ then we find ourselves in the setting of Case~\ref{item: 2098560498174987},
up to the vertex relabelling, i.e.\ we apply Case~\ref{item: 2098560498174987} setting
to the quadruple $(i_2,i_5,j_2,j_4)$, and go to the next iteration step.
Otherwise, if $\mult_{G^{s-1}}(i_5,j_2)=1$ then necessarily $i_5=i_3$ because
$[m]\setminus\{i_1,i_2,i_3\}$ do not have any common neighbors with $\{i_1,i_2,i_3\}$ in $G^{s-1}$.
We then essentially apply the three-step switching described in Subcase~\ref{item: 209560284760971},
where $i_2$ and $i_3$ swap roles: see Figure~\ref{fig: switching2 11}.
Set $s:=s+3$ and go to the next iteration step.

\end{enumerate}

\end{enumerate}

Note that at every iteration step the value of $\disc(\cdot)$ decreases at least by $2$, and that
each iteration step adds at most $3$ graphs $G^{\cdot}$ to the sequence.
Thus, the total sequence length is bounded above by $3k/2$.

\end{proof}

\begin{figure}[h]
\caption{Proof of Lemma~\ref{l: 2095203598305298 bis}, Subcase~\ref{item: 4296520970598}.}
\centering
\includegraphics[width=0.95\textwidth]{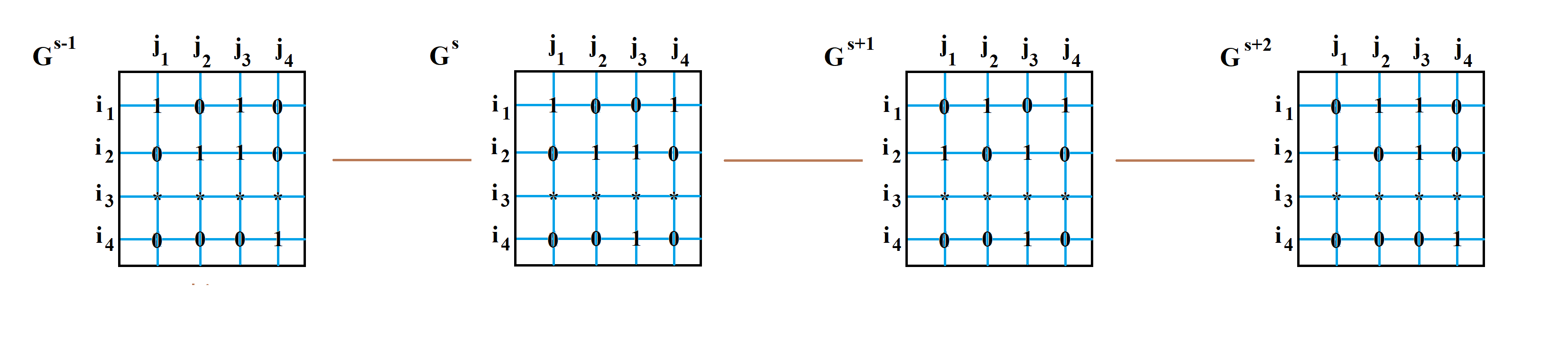}
\label{fig: switching2}
\end{figure}

\begin{figure}[h]
\caption{Proof of Lemma~\ref{l: 2095203598305298 bis}, Subcase~\ref{item: 087562-4=3204-487}.}
\centering
\includegraphics[width=0.95\textwidth]{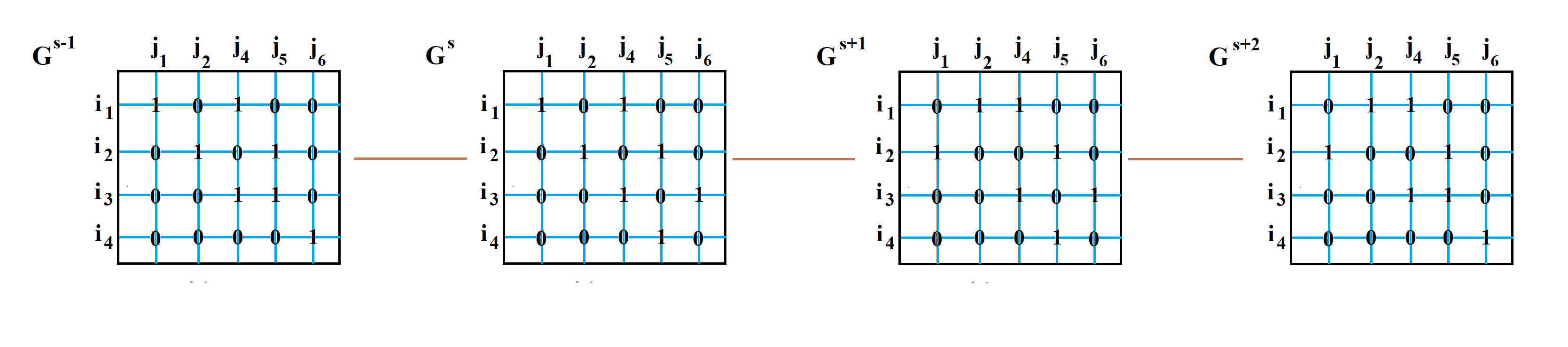}
\label{fig: switching2 00}
\end{figure}

\begin{figure}[h]
\caption{Proof of Lemma~\ref{l: 2095203598305298 bis}, Subcase~\ref{item: 209560284760971}.}
\centering
\includegraphics[width=0.95\textwidth]{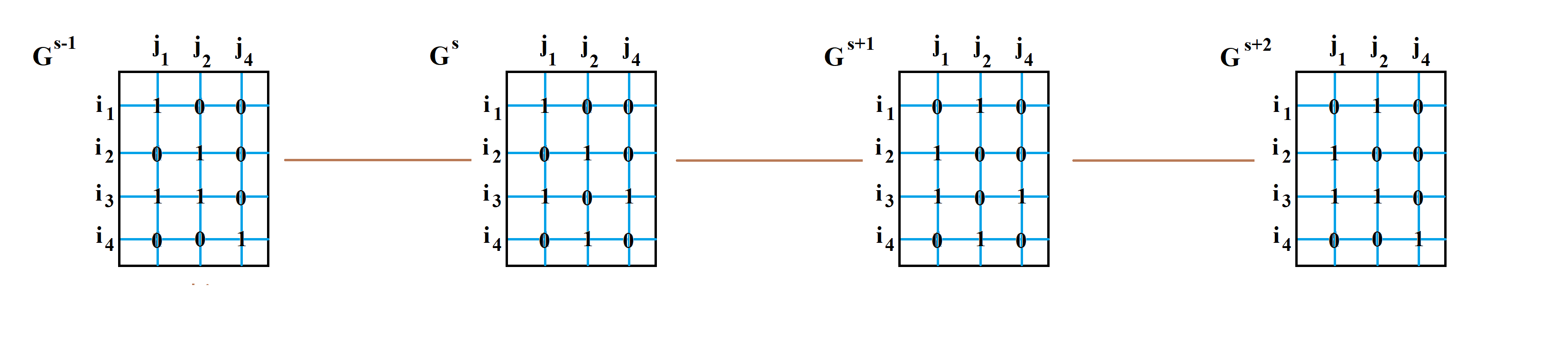}
\label{fig: switching2 11}
\end{figure}

\begin{figure}[h]
\caption{Proof of Lemma~\ref{l: 2095203598305298 bis}, an illustration to Subcase~\ref{item: 209870491410498}.}
\centering
\includegraphics[width=0.95\textwidth]{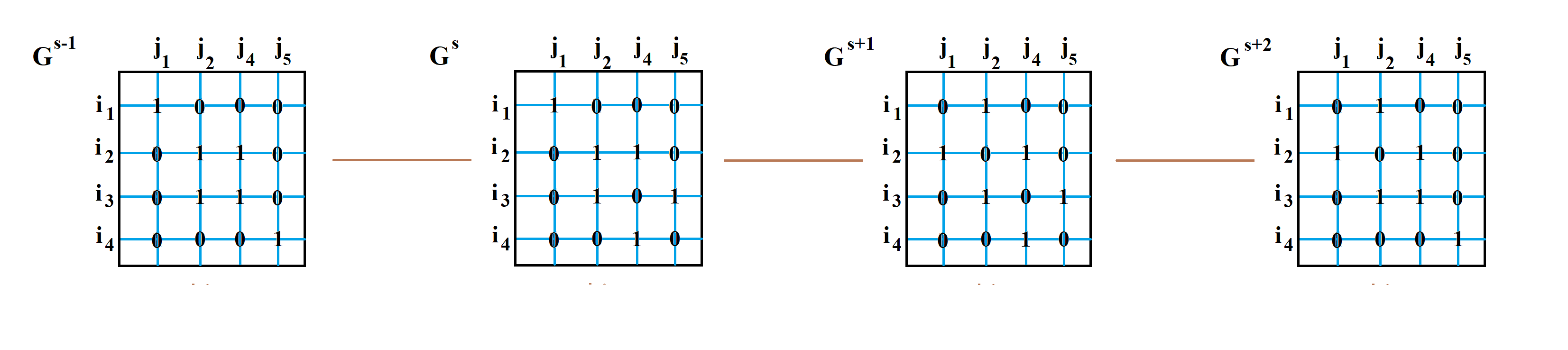}
\label{fig: switching2 01}
\end{figure}

\medskip

Now, we are in position to construct a canonical path between the simple graphs.
Let $\tuple=(G_1,G_1',G_2,G_2')$ be an admissible tuple.
\begin{defi}
A {\it connection} (with respect to $\tuple$) is a path $P\subset \BipGSet_n(d)$ starting at $G_1$ and ending at $G_2$ constructed as follows:
\begin{itemize}

\item We set $P[0]:=G_1$.

\item For each integer $1\leq t\leq 2|E_S(\tuple)|$, the simple graph $P[t]$ is obtained from $P[t-1]$ as follows.
Let $i$ be the $\lceil t/2\rceil$--th smallest index with $(i,j)\in E_S(\tuple)$ for some $j$.
Denote $i_1:=\ione(i,\tuple)$, $i_2:=\itwo(i,\tuple)$, $j_1:=\jone(i,\tuple)$, $j_2:=\jtwo(i,\tuple)$.
Then, if $t$ is odd, $P[t]$ is obtained by switching the edges $(i_1,j)$ and $(i_2,j_2)$ of $P[t-1]$.
Otherwise, if $t$ is even, $P[t]$ is obtained from $P[t-1]$ by switching the edges $(i_1,j_2)$ and $(i,j_1)$
(see Figure~\ref{fig: m-standard}). Once $P[2|E_S(\tuple)|]$ is constructed, set $t_0:=2|E_S(\tuple)|$ and go to the next stage of
the construction.

\item If $G_1'$ and $G_2'$ are adjacent and the switching transforming $G_1'$ into $G_2'$
(say, performed on edges $(v,w)$ and $(\tilde v,\tilde w)$) is of type A, we define
$P[2|E_S(\tuple)|+1]$ by performing a simple switching on the edges
$(v,w)$ and $(\tilde v,\tilde w)$ of $P[2|E_S(\tuple)|]$, and set $t_0:=2|E_S(\tuple)|+1$.

\item
If $I_{NS}(\tuple)$ is empty, the above construction gives the graph $G_2=P[t_0]$. Otherwise,
if $I_{NS}(\tuple)$ is non-empty, we produce $G_2$ from $P[t_0]$ as follows.
First, take the canonical partition $W_1\sqcup W_2\sqcup\dots\sqcup W_h$ of $I_{NS}(\tuple)$,
set $G^{(0)}:=P[t_0]$,
and for each $1\leq u\leq h$ let $G^{(u)}$ be the graph on $[n]\sqcup[n]$ whose adjacency matrix satisfies
$\row_i(\Adj(G^{(u)}))=\row_i(\Adj(G^{(u-1)}))$, $i\in [n]\setminus W_u$, and
$\row_i(\Adj(G^{(u)}))=\row_i(\Adj(G_2))$, $i\in W_u$.
For each $u\leq h$, we apply
\begin{itemize}
\item in the case when $|W_u|\in\{2,3,4\}$ --- Lemma~\ref{l: 2095203598305298};
\item in the case $|W_u|\geq 5$ and $W_u$ comprises type B switching between $G_1'$ and $G_2'$ --- Lemma~\ref{l: 2095203598305298};
\item in the case $|W_u|\geq 5$ and $W_u$ does not comprise type B switching --- Lemma~\ref{l: 2095203598305298 bis}
\end{itemize}
to subgraphs
$G^{(u-1)}[W_u\sqcup[n]]$ and $G^{(u)}[W_u\sqcup[n]]$
to construct a path $\tilde P^{(u)}$
connecting $G^{(u-1)}$ and $G^{(u)}$ and satisfying all the properties listed in the lemmas.
Then we complete the construction of $P$ by sequentially connecting $P[t_0]$
with $G^{(1)}$, $G^{(2)},\dots$, $G^{(h)}=G_2$ using the paths $\tilde P^{(u)}$.

\end{itemize}
\end{defi}

It can be shown that the construction described above is valid, i.e.\ the prescribed simple switching operations at each stage
can be performed, and that the ending vertex of $P$ is indeed $G_2$.
In particular, in the case when $|W_u|\geq 5$ and $W_u$ does not comprise type B switching, the above subgraphs
$G^{(u-1)}[W_u\sqcup[n]]$ and $G^{(u)}[W_u\sqcup[n]]$ do satisfy the conditions of Lemma~\ref{l: 2095203598305298 bis}:
\begin{lemma}
Let $G^{(u-1)},G^{(u)}$ be graphs from the construction procedure above (for some $u$), and assume that
$|W_u|\geq 5$ and $W_u$ does not comprise type B switching. Then the graphs
$\tilde G_1:=G^{(u-1)}[W_u\sqcup[n]]$ and $\tilde G_2:=G^{(u)}[W_u\sqcup[n]]$
satisfy conditions of Lemma~\ref{l: 2095203598305298 bis} with $k\leq 4|W_u|$.
\end{lemma}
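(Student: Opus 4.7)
The plan is to verify each hypothesis of Lemma~\ref{l: 2095203598305298 bis} in turn for $\tilde G_1 := G^{(u-1)}[W_u\sqcup[n]]$, $\tilde G_2 := G^{(u)}[W_u\sqcup[n]]$. First, note that the simple switchings producing $P[t_0]$ from $G_1$ operate only on vertices in $I_{ST}(\tuple)\cup I_A(\tuple)$, which is disjoint from $I_{NS}(\tuple)\supset W_u$, so $\row_i(\Adj(P[t_0]))=\row_i(\Adj(G_1))$ for $i\in W_u$. The later transitions $G^{(0)}\to G^{(1)}\to\cdots\to G^{(u-1)}$ modify rows only in $W_1\sqcup\cdots\sqcup W_{u-1}$, which is disjoint from $W_u$, hence $\row_i(\Adj(\tilde G_1))=\row_i(\Adj(G_1))$ for all $i\in W_u$; similarly $\row_i(\Adj(\tilde G_2))=\row_i(\Adj(G_2))$. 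Consequently every $i\in W_u$ has left degree exactly $d\geq 3$ in both $\tilde G_h$, and since $G^{(u-1)}$ and $G^{(u)}$ are both $d$--regular and agree on rows outside $W_u$, the right degree sequences of $\tilde G_1$ and $\tilde G_2$ coincide. (If $\tilde G_1=\tilde G_2$ then the invocation of Lemma~\ref{l: 2095203598305298 bis} is unnecessary, so we may assume they are distinct.)

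For the discrepancy bound, fix any $i\in W_u$. Because $W_u$ does not comprise a type $B$ switching, the left vertices used in the switching connecting $G_1'$ and $G_2'$ (if any) lie outside $W_u$, hence $\row_i(\Adj(G_1'))=\row_i(\Adj(G_2'))$. Moreover, by Definition~\ref{def: simple path}, $i$ appears in at most one switching of the simple path from $G_h'$ to $G_h$ (either as the endpoint $i_t$ of a preexisting multiedge or as a companion $i_t'$, but not both, since the two sets of indices are disjoint). Each such switching modifies exactly two entries of $\row_i$, so $\row_i(\Adj(G_h))$ differs from $\row_i(\Adj(G_h'))$ in at most two positions, and the triangle inequality yields at most four positions where $\row_i(\Adj(G_1))$ and $\row_i(\Adj(G_2))$ disagree. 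Summing over $i\in W_u$ gives $k\leq 4|W_u|$.

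The main step is the common--neighbor structural hypothesis. By Lemma~\ref{l: 05610948704987}, applied with $Y\cap W_u=\emptyset$, we have $M_u^h\geq (|W_u|-1)/2\geq 2$ for $h=1,2$, so there are at least two multiedges of $G_h'$ incident to $W_u$; enumerate them $(a_s^{(h)},b_s^{(h)})$ for $1\leq s\leq M_u^h$, and let $a_s^{(h)\prime}\in W_u$ be the companion vertex in the unique switching of the simple path from $G_h'$ to $G_h$ resolving the $s$--th multiedge. Definition~\ref{def: simple path} guarantees that all vertices in $\{a_s^{(h)}\}\cup\{a_s^{(h)\prime}\}$ are pairwise distinct, and after the switching $a_s^{(h)}$ and $a_s^{(h)\prime}$ share the neighbor $b_s^{(h)}$ in $G_h$, hence in $\tilde G_h$. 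Setting $i_h^{(1)}:=a_1^{(h)}$ and $i_h^{(2)}:=a_2^{(h)}$, the vertex $i_h^{(1)}$ has the common neighbor $b_1^{(h)}$ with $a_1^{(h)\prime}\in W_u\setminus\{i_h^{(1)},i_h^{(2)}\}$ (since $a_1^{(h)\prime}\notin\{a_1^{(h)},a_2^{(h)}\}$), and $i_h^{(2)}$ has the common neighbor $b_2^{(h)}$ with $a_2^{(h)\prime}\in W_u\setminus\{i_h^{(2)}\}$, as required. This is the step where the hypothesis $|W_u|\geq 5$ (hence $M_u^h\geq 2$) is essential, and the crucial input is the disjointness of the original and companion vertices built into the definition of a simple path.
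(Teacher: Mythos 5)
Your proof is correct and follows essentially the same route as the paper's: you identify the rows of $\tilde G_h$ on $W_u$ with the corresponding rows of $G_h$, obtain $k\leq 4|W_u|$ from the fact that each left vertex of $W_u$ is used by at most one switching in each simple path, and verify the common-neighbor hypothesis of Lemma~\ref{l: 2095203598305298 bis} by exhibiting two multiedges of $G_h'$ incident to $W_u$ (via Lemma~\ref{l: 05610948704987}, where the paper argues this directly from $|W_u|\geq 5$) together with their resolving switchings, exactly as in the paper. The one step you assert without justification is the $d$--regularity of the intermediate graphs $G^{(u-1)}$ and $G^{(u)}$, which is precisely where the paper's observation is needed — namely that every switching in the concatenated path from $G_1$ through $G_1',G_2'$ to $G_2$ operates on left vertices that are either both inside or both outside $W_q$ for each $q$, so the rows of $G_1$ and of $G_2$ indexed by $W_q$ have equal column sums; once that sentence is added, your verification of the equal right-degree sequences (and hence the whole argument) is complete.
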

\begin{proof}
Obviously, both $\tilde G_1$ and $\tilde G_2$ have the same right degrees sequences because
for every switching within the connection from $G_1$ to $G_2$, either both of the left vertices it operates
on are contained in $W_u$ or none. The condition on $k$ comes from the next simple observation:
any left vertex in $W_u$ can be used by at most one simple switching within the simple path from $G_1'$ to $G_1$,
and by at most one switching within the simple path from $G_2'$ to $G_2$.
Thus, within the corresponding rows of the adjacency matrices of $\tilde G_1$ and $\tilde G_2$, at most four elements can differ.

It remains to check the crucial condition on existence of common neighbors.
The assumption that $|W_u|\geq 5$, implies that there are at least two distinct left vertices $v',v''\in W_u$
and right vertices $z',z''$ such that $(v',z')$ and $(v'',z'')$ are \chng{both} multiedges in $G_1'$.
Further, by the definition of a simple path, the corresponding simple switching operations $\langle v',g_1',z',x_1'\rangle$
and $\langle v'',g_1'',z'',x_1''\rangle$ within the simple path leading from $G_1'$ to $G_1$, must satisfy $g_1'\neq g_1''$
and $g_1',g_1''\notin\{v',v''\}$. In particular, $v'$ must have a common neighbor with $W_u\setminus \{v',v''\}$
and $v''$ must have a common neighbor with $W_u\setminus \{v''\}$ in graph $G_1$, hence in graph $\tilde G_1$.
The same argument works for $\tilde G_2$.
\end{proof}

The main steps of the
construction procedure are reflected in Figure~\ref{fig: connection}.

Note that the path described above may not be uniquely defined whenever $I_{NS}(\tuple)$ is non-empty
(because Lemmas~\ref{l: 2095203598305298} and~\ref{l: 2095203598305298 bis} confirm existence
but not uniqueness of respective subpaths).
In this case we fix any admissible path for the given $4$--tuple $\tuple$ satisfying the above conditions.
Often, to emphasize dependence on the $4$--tuple, we will denote the connection for $\tuple$
by $P_\tuple$.

\begin{rem}\label{rem: 02985720958709}
The length of $P_\tuple$ is always at least $2|E_S(\tuple)|$, and is equal to $2|E_S(\tuple)|$ when $G_1'=G_2'$
and $I_{NS}(\tuple)$ is empty. In general, the length of $P_\tuple$ can be bounded from above by
$2|E_S(\tuple)|+C_{\text{\tiny\ref{l: 2095203598305298 bis}}}(8|I_{NS}(\tuple)|+4)$,
in view of Lemmas~\ref{l: 2095203598305298},~\ref{l: 2095203598305298 bis}, and~\ref{l: 208762058709283}.
\end{rem}

\begin{figure}[h]
\caption{A schematic depiction of a connection for a tuple $\tuple=(G_1,G_1',G_2,G_2')$
in the case when the graphs $G_1'$ and $G_2'$ are connected by a type $A$ switching.
The graphs are represented by their adjacency matrices.
The first $2|E_S(\tuple)|$ steps of the connection deal with rows associated with $m$--standard edges (shown in light blue).
The graphs $P_\tuple[2|E_S(\tuple)|]$ and $P_\tuple[2|E_S(\tuple)|+1]$ are related by a switching of type $A$ (corresponding
couple of rows is colored in dark blue and are shown as ``adjacent'' for simplicity).
The last $\length{P_\tuple}-2|E_S(\tuple)|-1$ steps deal with the set $I_{NS}(\tuple)$ which is in turn represented via
the canonical partition (grey).
The uncolored rows of adjacency matrices coincide with those of $G_2$.}
\centering
\includegraphics[width=0.95\textwidth]{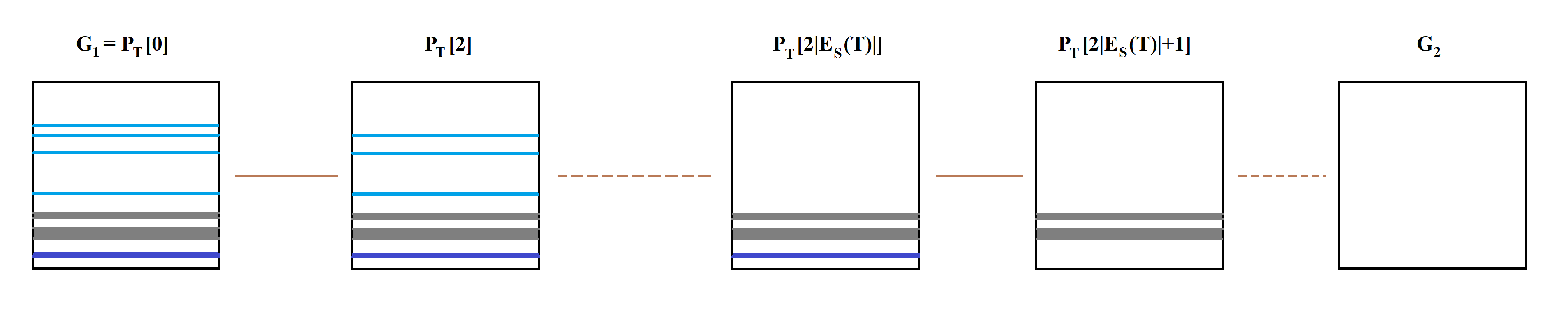}
\label{fig: connection}
\end{figure}

\begin{defi}
For each $t\leq \length{P_\tuple}$, we say that the couple $(P_\tuple[t-1],P_\tuple[t])$ is
\begin{itemize}

\item {\it $m$--standard} if $t\leq 2|E_S(\tuple)|$;

\item {\it of type A} if $G_1'$ and $G_2'$ are connected by a type $A$ switching, and that is the simple switching operation taking $P_\tuple[t-1]$
to $P_\tuple[t]$ (note that necessarily $t=2|E_S(\tuple)|+1$ in this case);

%
%

\item {\it $NS$--couple} if the left vertices participating in the switching from $P_\tuple[t-1]$
to $P_\tuple[t]$, belong to $I_{NS}(\tuple)$.

\end{itemize}
\end{defi}

\subsection{Complexity of $I_{NS}(\tuple)$}

The goal of this subsection is to count the number of possible distinct realizations of the set $I_{NS}(\tuple)$
such that the corresponding connection $P_\tuple$ contains a given pair of adjacent simple graphs $(\Gamma,\tilde\Gamma)$.
This estimate will then be used to bound the total number of tuples $\tuple$ with $P_\tuple$ containing the couple $(\Gamma,\tilde\Gamma)$.
We start with two auxiliary lemmas.

\begin{lemma}\label{l: 98572-987-49}
Let $\tuple=(G_1,G_1',G_2,G_2')$ be an admissible $4$--tuple, and let $W_q\neq\emptyset$
be the $q$--th element of the canonical partition of $I_{NS}(\tuple)$.
Then necessarily there is a right vertex $j\in[n]$ having at least two neighbors in $W_q$ in each of the graphs $G_1$ and $G_2$.
\end{lemma}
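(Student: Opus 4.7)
My plan is to locate a single simple switching whose two left vertices both lie in $W_q$, and to extract the desired right vertex from that switching.

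The first step is to argue that $W_q$ necessarily contains such a switching coming from one of the simple paths $G_1'\to G_1$ or $G_2'\to G_2$. Given any $v\in W_q\subset I_{NS}(\tuple)=(I(\tuple)\setminus I_{ST}(\tuple))\cup \tilde I$, if $v\in I(\tuple)\setminus I_{ST}(\tuple)$ then by definition $v$ participates in a simple path switching, and the other left vertex of that switching must lie in $W_q$ by the $\tuple$--admissibility of the canonical partition. Otherwise $v\in \tilde I$, so $\tuple$ has a type B switching present; by the definition of type B, at least one of its two left vertices lies in $I(\tuple)$, and since property~\ref{p: standard-3} of $m$--standard edges forces $\tilde I\cap I_{ST}(\tuple)=\emptyset$, that vertex actually lies in $I(\tuple)\setminus I_{ST}(\tuple)$ and, being in $\tilde I$, belongs to $W_q$ by admissibility, reducing to the previous case.

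Fix a simple path switching $S=\langle i,i',j,j'\rangle$ with $i,i'\in W_q$; without loss of generality it comes from $G_1'\to G_1$, so $(i,j)$ is a multiplicity--two edge of $G_1'$. Definition~\ref{def: simple path} together with Lemma~\ref{l: 2958720598750} ensures that each multiedge vertex of $G_1'$ is used by exactly one switching of the simple path and that the partners are pairwise distinct and disjoint from the multiedge vertices; therefore no other switching in the path $G_1'\to G_1$ modifies any of the entries $(i,j)$, $(i',j)$, $(i,j')$ of the adjacency matrix, and in $G_1$ the right vertex $j$ is adjacent to both $i$ and $i'$, supplying two $W_q$--neighbors in $G_1$. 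If moreover $(i,j)$ is a multiplicity--two edge of $G_2'$, the simple path $G_2'\to G_2$ resolves it by a unique switching $\langle i,i'',j,j''\rangle$ whose partner $i''$ is in $W_q$ by admissibility, and the same uniqueness argument shows that $j$ is adjacent to $i$ and $i''$ in $G_2$, finishing the proof.

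The remaining case --- the main technical point --- is when $(i,j)$ has multiplicity $1$ in $G_2'$. Since multiplicities in $\categ$ are capped at $2$ and a single switching alters them by $\pm 1$, the type B switching between $G_1'$ and $G_2'$ must operate on $(i,j)$; writing this switching as $\langle a,b,c,d\rangle$, we may take $i=a$ and $j=c$, so that $b\in W_q$. In $G_2'$ one has $(a,c)=1$ and $(b,c)\ge 1$ (the type B switching creates or increments $(b,c)$), so $c$ is adjacent to both $a$ and $b$ in $G_2'$. The plan is to track these two edges through the simple path $G_2'\to G_2$: inspecting Definition~\ref{def: simple path}, the edge $(a,c)$ can be destroyed only by a switching in which $a$ is a partner, $\langle i_t,a,j_t,c\rangle$, and that switching simultaneously creates the edge $(i_t,c)$ with $i_t\in W_q$ by admissibility; the simple--path constraints $(a,j_t)=(i_t,c)=0$ in $G_2'$, together with $(b,c)\ge 1$, also force $i_t\neq b$. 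An analogous analysis treats $(b,c)$, and in the sub-case where $(b,c)=2$ in $G_2'$ the switching resolving that multiedge automatically produces an edge $(b'',c)$ with $b''\in W_q$. Enumerating the few sub-cases, two distinct elements of $W_q$ always remain adjacent to $c$ in $G_2$, so $j=c$ is the sought right vertex. The case in which $S$ comes from $G_2'\to G_2$ is symmetric; the main obstacle throughout is precisely this last analysis, ensuring that even when the type B switching destroys the natural ``bi-multiedge'' alignment of $(i,j)$ across $G_1'$ and $G_2'$, the vertex $c$ still keeps two neighbors in $W_q$ in $G_2$.
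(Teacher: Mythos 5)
Your proof is correct, but it takes a genuinely different and considerably longer route than the paper's. You verify the conclusion separately in $G_1$ and $G_2$: you first extract from $W_q$ a simple-path switching whose two left vertices both lie in $W_q$ (via admissibility, and via the third admissibility bullet plus $\tilde I\cap I_{ST}(\tuple)=\emptyset$ in the type B case), which immediately gives the right vertex $j$ two $W_q$--neighbors in one of the two graphs; you then transfer the statement to the other graph by a case analysis on whether the relevant multiedge of $G_1'$ survives in $G_2'$, and in the delicate case where the type B switching destroys it you track the edges $(a,c)$ and $(b,c)$ through the simple path from $G_2'$ to $G_2$. The paper sidesteps all of this with a single observation: since the canonical partition is $\tuple$--admissible, every switching along the concatenated path $G_1\to G_1'\to G_2'\to G_2$ operates on left vertices that are either both in $W_q$ or both outside it (a type A switching has both vertices outside $I_{NS}(\tuple)$, a type B switching has $\tilde I$ inside one block), and any such switching preserves the degree of every right vertex in the induced subgraph on $W_q\sqcup[n]$; hence $G_1[W_q\sqcup[n]]$ and $G_2[W_q\sqcup[n]]$ have the same right degree sequence and it suffices to check the claim in one of the two graphs --- which is essentially your first step, applied to a multiedge incident to $W_q$. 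Your route buys an explicit description of which vertices of $W_q$ remain adjacent to $j$, at the price of the case analysis; be aware that your closing ``enumerating the few sub-cases'' is compressed: to close it one also needs that at most one of $(a,c),(b,c)$ can be destroyed along the path (the partner right vertices $j_s'$ are pairwise distinct and avoid multiedge right vertices, so a multiedge at $c$ excludes partner edges at $c$ altogether), and that the replacement edge $(i_t,c)$ cannot be removed later (its left vertex is a multiedge vertex, hence never a partner vertex). These facts do follow from Definition~\ref{def: simple path}, so your argument is complete in substance, but the paper's degree-preservation trick makes the whole tracking argument unnecessary.
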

\begin{proof}
Note that, by the definition of the canonical partition, the path $P$ from $G_1$ to $G_2$ in $\MultBipGSet_n(d)$
obtained by concatenation of simple paths from $G_1'$ to $G_1$ and $G_2'$ to $G_2$, and of the simple switching connecting
$G_1'$ and $G_2'$ (if any), has the property that for any $t\leq \length{P}$, the simple switching from $P[t-1]$
to $P[t]$ operates on left vertices which either are both contained in $W_q$ or are both {\it not} contained in $W_q$.
Thus, necessarily $G_1[W_q\sqcup[n]]$ has the same right degrees sequence as $G_2[W_q\sqcup[n]]$,
so it is enough to check the statement for one of the graphs $G_1$ or $G_2$. But then the result follows
immediately since each $W_q$ must contain at least one left vertex incident to a multiedge either in $G_1'$ or $G_2'$,
so the corresponding right vertex $j$ has the required properties.
\end{proof}

\begin{lemma}\label{l: 29570295873059873}
Let $\tuple=(G_1,G_1',G_2,G_2')$ be admissible, and let $t\geq 1$ be such that
$(P_\tuple[t-1],P_\tuple[t])$ is $NS$--couple in $P_\tuple$.
Let $W_q=W_q(\tuple)$ be the element of the canonical partition of $I_{NS}(\tuple)$ which contains left vertices $\{i_1,i_2\}$
operated on by the simple switching
leading from $P_\tuple[t-1]$ to $P_\tuple[t]$, and assume that $|W_q|\geq 3$.
Then necessarily there is a left vertex
$i_3\in W_q\setminus\{i_1,i_2\}$ which has a common neighbor with $W_q\setminus\{i_3\}$ in $P_\tuple[t-1]$.
\end{lemma}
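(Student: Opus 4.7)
The plan is to combine the step-wise structure of the subpath $\tilde P^{(q)}$ of $P_\tuple$ that processes $W_q$ (governed by Lemma~\ref{l: 2095203598305298} or Lemma~\ref{l: 2095203598305298 bis}) with the global information about $W_q$ provided by Lemma~\ref{l: 98572-987-49}. The main auxiliary observation, which I would establish first, is the following invariance: if both left vertices operated on by a simple switching lie in $W_q$, then for every right vertex $j$ the number of neighbors of $j$ in $W_q$ is preserved (one such neighbor is removed and another, also in $W_q$, is created). Since every switching along $\tilde P^{(q)}$ has both left endpoints in $W_q$, this count is constant throughout $\tilde P^{(q)}$.

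Next, I would identify the restriction $G^{(q-1)}[W_q\sqcup[n]]$. By construction, the rows of $W_q\subseteq I_{NS}(\tuple)$ are untouched by the $m$--standard switchings and by any type~A switching used to produce $P_\tuple[t_0]$ from $G_1$ (both avoid $I(\tuple)\supseteq I_{NS}(\tuple)$), and they are untouched again when the earlier blocks $W_1,\dots,W_{q-1}$ of the canonical partition are processed. Hence the rows of $W_q$ in $G^{(q-1)}$ coincide with those of $G_1$, so $G^{(q-1)}[W_q\sqcup[n]]=G_1[W_q\sqcup[n]]$. Invoking Lemma~\ref{l: 98572-987-49} for $G_1$, I obtain a right vertex $j^*$ with at least two neighbors in $W_q$ in $G_1$, hence in $G^{(q-1)}[W_q\sqcup[n]]$; by the invariance, $j^*$ still has at least two $W_q$--neighbors in $P_\tuple[t-1]$.

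To conclude, I would split into cases according to which lemma generated $\tilde P^{(q)}$. If $|W_q|\geq 5$ and $W_q$ does not comprise a type~B switching, Lemma~\ref{l: 2095203598305298 bis} was used, and its property~\ref{prop: 08756104872-497} directly supplies a vertex $i_t'\in W_q\setminus\{i_1,i_2\}$ with a common neighbor in $W_q\setminus\{i_t'\}$, so $i_3:=i_t'$ works. Otherwise Lemma~\ref{l: 2095203598305298} was used, and its third clause offers two alternatives at the step $P_\tuple[t-1]\to P_\tuple[t]$: either $\{i_1,i_2\}$ has no common neighbor, or some $i\in W_q\setminus\{i_1,i_2\}$ has a common neighbor with $W_q\setminus\{i\}$. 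The second alternative immediately yields $i_3:=i$.

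The only obstacle — and the main (small) technical point — is the scenario in which only the first alternative holds; this is where $j^*$ enters. Its two $W_q$--neighbors in $P_\tuple[t-1]$ cannot both belong to $\{i_1,i_2\}$, since otherwise $j^*$ would be a common neighbor of $i_1$ and $i_2$, contradicting the first alternative. Hence $j^*$ has a neighbor $i_3\in W_q\setminus\{i_1,i_2\}$, and $j^*$ itself witnesses that $i_3$ has a common neighbor (namely the other $W_q$--neighbor of $j^*$, which is distinct from $i_3$) with $W_q\setminus\{i_3\}$, finishing the argument. The hardest conceptual step I anticipate is the identification $G^{(q-1)}[W_q\sqcup[n]]=G_1[W_q\sqcup[n]]$, which transfers the global statement of Lemma~\ref{l: 98572-987-49} to the intermediate graph $P_\tuple[t-1]$; everything after it is a short case check.
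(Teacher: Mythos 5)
Your proof is correct and follows essentially the same route as the paper: combine the dichotomy guaranteed by Lemmas~\ref{l: 2095203598305298}/\ref{l: 2095203598305298 bis} with the right vertex supplied by Lemma~\ref{l: 98572-987-49}, then resolve the remaining case by noting that this vertex's two $W_q$--neighbors cannot both lie in $\{i_1,i_2\}$. Your explicit invariance argument (all switchings in $\tilde P^{(q)}$ have both left endpoints in $W_q$, so the $W_q$--degree of every right vertex is preserved) is exactly the justification the paper leaves implicit when it transfers the property of $G_1,G_2$ to $P_\tuple[t-1]$.
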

\begin{proof}
By the definition of a connection, the part of the path $P_\tuple$ which transforms the subgraph of $G_1$ on $W_q\sqcup[n]$
into the respective subgraph of $G_2$, satisfies all the conditions listed in Lemma~\ref{l: 2095203598305298}
or~\ref{l: 2095203598305298 bis}.
Specifically, at least one of the following must be true:
\begin{itemize}

\item $i_1$ and $i_2$ have no common neighbors in $P_\tuple[t-1]$;

\item there is a left vertex $i_3\in W_q\setminus\{i_1,i_2\}$ having at least one common neighbor with $W_q\setminus\{i_3\}$
in $P_\tuple[t-1]$.
\end{itemize}
Note that, in view of Lemma~\ref{l: 98572-987-49}, there is a right vertex $j$ having at least two neighbors in $W_q$
in both graphs $G_1$ and $G_2$, hence in graphs $P_\tuple[t-1]$ and $P_\tuple[t]$ as well.

In the first case,
since $i_1$ and $i_2$ have no common neighbors, $j$ must be adjacent to at least one left vertex $i_3\notin\{i_1,i_2\}$.
This left vertex satisfies the necessary conditions.

In the second case we have the required property automatically. The result follows.
\end{proof}

\medskip

In the following three lemmas, we consider the problem of estimating the number of possible realizations
for a given element of the canonical partition of $I_{NS}(\tuple)$.

\medskip

In the first two lemmas, we study the complexity of the sets $W_q(\tuple)$ under the assumption that
the switching connecting $\Gamma$ and $\tilde\Gamma$ is not ``contained'' in $W_q(\tuple)$.

\begin{lemma}\label{l: 295872095875098}
Let $(\Gamma,\tilde \Gamma)$ be a pair of adjacent graphs in $\BipGSet_n(d)$,
and $q,r_q\in\N$. Let $T$ be the collection of all admissible $4$--tuples 
$\tuple=(G_1,G_1',G_2,G_2')$ such that
\begin{itemize}
\item $P_\tuple$ contains the pair $(\Gamma,\tilde \Gamma)$, and 
\item the left vertices operated on by the simple switching leading from $\Gamma$ to
$\tilde\Gamma$ do not belong to $W_q(\tuple)$, and
\item $|W_q(\tuple)|=r_q$, and
\item $G_1'=G_2'$ or the simple switching leading from $G_1'$ to $G_2'$ operates
on left vertices that do not belong to $W_q(\tuple)$.
\end{itemize}
Then the cardinality of the set
\begin{align*}
{\bf W}:=\big\{W\subset[n]:\;\mbox{$W=W_q(\tuple)$ for some $\tuple\in T$}\big\}
\end{align*}
is bounded above by $4n^{\max(1,r_q-2)}\big(\max(1,r_q-2)d(d-1)\big)^2$.
\end{lemma}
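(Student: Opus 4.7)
The plan is to enumerate the possible realizations of $W:=W_q(\tuple)$ by exploiting common-neighbor structure of $W$ inside the fixed graph $\Gamma$. First, I would observe that since the simple switching from $\Gamma$ to $\tilde\Gamma$ does not operate on any left vertex of $W$, the rows of $\Adj(\Gamma)$ and $\Adj(\tilde\Gamma)$ indexed by $W$ agree; moreover, since the only switchings of $P_\tuple$ modifying the rows indexed by $W$ occur during the $W_q$--phase of $P_\tuple$, the $W$--rows of $\Gamma$ must coincide with the corresponding rows of either $G_1$ or $G_2$, depending on whether $(\Gamma,\tilde\Gamma)$ lies before or after that phase in $P_\tuple$. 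The extra assumption that the switching between $G_1'$ and $G_2'$ (if distinct) does not operate on $W$ is precisely what guarantees this correspondence is well defined.

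I would then invoke Lemma~\ref{l: 98572-987-49} and Lemma~\ref{l: 29570295873059873} to extract the following properties of $W$ inside $\Gamma$: (i) there is a right vertex $j^\ast\in[n]$ having at least two neighbors of $W$ among its neighbors in $\Gamma$; and (ii) for $r_q\geq 3$, there exist $i_3\in W$ and $v^\ast\in W\setminus\{i_3\}$ sharing a common right neighbor in $\Gamma$. Property (i) is immediate from Lemma~\ref{l: 98572-987-49}, which applies to both $G_1$ and $G_2$. Property (ii) is obtained by applying Lemma~\ref{l: 29570295873059873} to the first or last NS-step of the $W_q$-phase and transferring the common-neighbor property to the corresponding endpoint graph via the path-construction Lemmas~\ref{l: 2095203598305298} and~\ref{l: 2095203598305298 bis}, which guarantee the analogous structure at both endpoints of the phase.

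Now I would carry out the enumeration. For $r_q=2$, property (i) alone suffices: choosing $j^\ast\in[n]$ ($n$ choices) and two distinct ordered neighbors of $j^\ast$ in $\Gamma$ ($d(d-1)$ choices) specifies $W$, so $|\mathbf W|\leq nd(d-1)\leq 4n(d(d-1))^2$. For $r_q\geq 3$, a short case analysis on the overlap between the pair supplied by (i) and the pair $(i_3,v^\ast)$ supplied by (ii) extracts two distinct vertices $v_1,v_2\in W$ together with ``anchors'' $w_2\in W\setminus\{v_1,v_2\}$ and $w_1\in W\setminus\{v_1\}$ such that $v_2$ and $w_2$ share a common right neighbor in $\Gamma$, and $v_1$ and $w_1$ share a common right neighbor in $\Gamma$. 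Writing $W=W'\sqcup\{v_1,v_2\}$ with $|W'|=r_q-2$, I would enumerate $W$ by choosing the ordered tuple $W'\in[n]^{r_q-2}$ ($n^{r_q-2}$ choices), then the triple $(w_2,j_2,v_2)$ with $w_2\in W'$, $j_2$ a right vertex adjacent to $w_2$ in $\Gamma$, and $v_2$ another neighbor of $j_2$ distinct from $w_2$ (at most $(r_q-2)d(d-1)$ choices), and finally $(w_1,j_1,v_1)$ with $w_1\in W'\cup\{v_2\}$, $j_1$ adjacent to $w_1$, and $v_1$ another neighbor of $j_1$ distinct from $w_1$ (at most $(r_q-1)d(d-1)\leq 2(r_q-2)d(d-1)$ choices). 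Multiplying the factors and absorbing a factor of $2$ into the final constant yields the claimed bound $4n^{r_q-2}(r_q-2)^2(d(d-1))^2$.

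The main obstacle will be the case analysis in the previous paragraph producing the extensible vertices $v_1,v_2$ --- particularly in the ``degenerate'' situation when the pair from (i) coincides (as an unordered pair) with $(i_3,v^\ast)$. The key flexibility is that the pair from (i) is not uniquely determined: we may vary $j^\ast$ among all right vertices with at least two neighbors in $W$ and then pick any two of its neighbors lying in $W$. Combined with the fact that Lemma~\ref{l: 29570295873059873} explicitly relates $i_3$ to the switching pair $(i_1,i_2)$ at the chosen NS-step, this provides enough room to extract $v_1,v_2$ with the required anchor structure inside $W$.
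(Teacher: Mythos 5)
Your proposal has a genuine gap at its central step: extracting the ``two anchored vertices'' $v_1,v_2$ from properties (i) and (ii) alone. First, property (ii) is not actually available in $\Gamma$: Lemma~\ref{l: 29570295873059873} gives a common neighbor in the \emph{intermediate} graph $P_\tuple[t'-1]$ at an $NS$--step $t'$ of the $W_q$--phase, while the $W_q$--rows of $\Gamma$ coincide with those of $G_1$ or $G_2$; the ``transfer to the endpoint'' you invoke is not what Lemmas~\ref{l: 2095203598305298} and~\ref{l: 2095203598305298 bis} provide (the latter \emph{assumes} endpoint structure as a hypothesis, and the endpoint guarantee proved in the paper only applies when $|W_q|\geq 5$ with no type~B switching, so for $r_q=3,4$ nothing is transferred). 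Second, and more seriously, even granting (i) and (ii) in $\Gamma$, they are information-theoretically insufficient in the degenerate case you flag: a set $W$ in which exactly one pair of left vertices has a common neighbor satisfies both (i) and (ii), and for such a configuration your enumeration degrades to roughly $n^{r_q-1}(r_q-1)d(d-1)$ (the bound of Lemma~\ref{l: -298570596098709}), which is larger than the claimed $n^{r_q-2}\big((r_q-2)d(d-1)\big)^2$ by a factor of order $n/(r_q d^2)$. Ruling this configuration out cannot be done by ``varying $j^\ast$''; it requires going back to the multiedge structure of $W_q$, which your proposal never uses.

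That structure is exactly what the paper's proof exploits. Writing $M_q$ for the number of multiedges of $G_1'$ (equivalently $G_2'$) incident to $W_q$, Lemma~\ref{l: 05610948704987} forces $M_q\geq 2$ whenever $r_q\geq 4$, and in that case the definition of a simple path directly exhibits \emph{two distinct} vertices of $W_q\cap I_s$, each sharing the right vertex of its multiedge with $W_q\cap I_m$ in $G_1$ (resp.\ $G_2$), hence in $\Gamma$ after the $T_1/T_2$ split; this is what yields the factor $n^{r_q-2}\big((r_q-2)d(d-1)\big)^2$. When $M_q=1$ one has $|W_q|\leq 3$, and the paper handles $|W_q|=3$ by observing that the only possible obstruction to $m$--standardness is property~\ref{p: 13981749827}, which forces one vertex of $W_q$ to have common neighbors with \emph{both} others, giving the $2n\big(d(d-1)\big)^2$ term. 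Your argument needs to be rebuilt around this dichotomy (or some equivalent use of the multiedges and simple-path switchings); as written, the anchor-extraction step does not go through.
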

\begin{proof}
Note that, by the conditions on the couple $(\Gamma,\tilde \Gamma)$,
for every $\tuple\in T$, either
$$\Gamma[W_q(\tuple)\sqcup [n]]=\tilde \Gamma[W_q(\tuple)\sqcup [n]]
=G_1[W_q(\tuple)\sqcup [n]]$$ or 
$$\Gamma[W_q(\tuple)\sqcup [n]]=\tilde \Gamma[W_q(\tuple)\sqcup [n]]
=G_2[W_q(\tuple)\sqcup [n]].$$
Denote the subset of $T$ of tuples satisfying the former condition by $T_1$,
and the latter condition --- by $T_2$.

For any $\tuple\in T$, denote by $M_q(\tuple)$ 
the number of multiedges of $G_h'$ incident to $W_q(\tuple)$, $h=1,2$ (observe that the quantity is the same for both graphs $G_1'$
and $G_2'$). 
Denote by $T'$ the subset of all $\tuple\in T$ with $M_q(\tuple)>1$.

Now, let us consider several cases:
\begin{itemize}
\item {\it Treatment of $T'\cap T_1$.}
For any $\tuple\in T'\cap T_1$ we observe that, in view of the definition of a simple path and sets $T',T_1$,
$W_q(\tuple)$ contains at least two left vertices from $I_s(G_1,G_1')$
which have common neighbors with $W_q(\tuple)\cap I_m(G_1,G_1')$ in $\Gamma$. 
Thus, the set $W_q(\tuple)$ for $\tuple\in T'\cap T_1$ can be determined first by choosing $r_q-2$ left vertices from $[n]$
and then choosing two more vertices having common neighbors with the given $(r_q-2)$--set.
Clearly, the number of choices for the latter is bounded above by $((r_q-2)d(d-1))^2$, whence
$$
\big|\big\{W_q(\tuple):\;\tuple\in T'\cap T_1\big\}\big|\leq n^{r_q-2}((r_q-2)d(d-1))^2.
$$
\item {\it Treatment of $T'\cap T_2$.}
Similarly, we get
$$
\big|\big\{W_q(\tuple):\;\tuple\in T'\cap T_2\big\}\big|\leq n^{r_q-2}((r_q-2)d(d-1))^2.
$$
\item {\it Treatment of $T\setminus T'$.}
Now, consider the case when $\tuple\in T\setminus T'$. Note that in this case the set $W_q(\tuple)$
can have cardinality either $2$ or $3$. In the former case, we necessarily have
$W_q(\tuple)\cap I_s(G_1,G_1')=W_q(\tuple)\cap I_s(G_2,G_2')$, and the unique left vertex contained in this set
must have at least one common neighbor with $W_q(\tuple)\cap I_m(G_1,G_1')=W_q(\tuple)\cap I_m(G_2,G_2')$.

In the latter case, the only reason why the multiedge of $G_1'$ (and $G_2'$) incident to
$W_q(\tuple)\cap I_m(G_1,G_1')=W_q(\tuple)\cap I_m(G_2,G_2')$ is not $m$--standard is a violation of
property~\eqref{p: 13981749827} in the definition of $m$--standard edges. It is not difficult to see that
without this property, there must exist a left vertex $i\in W_q(\tuple)$ having common neighbors
with {\it both} remaining vertices of $W_q(\tuple)\setminus\{i\}$ in $\Gamma$
(see Figures~\ref{fig: non-standard}).

Taking into account these observations we obtain that
$$
\big|\big\{W_q(\tuple):\;\tuple\in T\setminus T'\big\}\big|\leq
nd(d-1)+nd(d-1)^3
\leq 2n (d(d-1))^2.
$$
\end{itemize}
The result follows by combining the estimates.
\end{proof}

\begin{lemma}\label{l: -298570596098709}
Let $(\Gamma,\tilde \Gamma)$ be a pair of adjacent graphs in $\BipGSet_n(d)$,
and $q,r_q\in\N$. Let $T$ be the collection of all admissible $4$--tuples 
$\tuple=(G_1,G_1',G_2,G_2')$ such that
\begin{itemize}
\item $P_\tuple$ contains the pair $(\Gamma,\tilde \Gamma)$, and 
\item the left vertices operated on by the simple switching leading from $\Gamma$ to
$\tilde\Gamma$ are not contained in $W_q(\tuple)$, and
\item $|W_q(\tuple)|=r_q$, and
\item the simple switching leading from $G_1'$ to $G_2'$ operates
on left vertices contained in $W_q(\tuple)$.
\end{itemize}
Then the cardinality of the set
\begin{align*}
{\bf W}:=\big\{W\subset[n]:\;\mbox{$W=W_q(\tuple)$ for some $\tuple\in T$}\big\}
\end{align*}
is bounded above by $n^{r_q-1}(r_q-1)d(d-1)$.
\end{lemma}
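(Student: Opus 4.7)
The plan is to follow the same template as in the proof of Lemma~\ref{l: 295872095875098}: first establish a structural constraint that any $W\in\mathbf{W}$ must satisfy inside $\Gamma$, then count realizations subject to that constraint. The essential difference here is that, because the type B switching between $G_1'$ and $G_2'$ now lies inside $W_q(\tuple)$, we cannot extract two independent multiedge-incidence witnesses as was done in Lemma~\ref{l: 295872095875098}; we shall only extract a single pair of left vertices in $W_q(\tuple)$ sharing a common right neighbor in $\Gamma$, which is what accounts for the weaker exponent of $n$ in the bound.

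First I will observe that, since the switching between $\Gamma$ and $\tilde\Gamma$ operates on left vertices disjoint from $W_q(\tuple)$, the restrictions $\Gamma[W_q(\tuple)\sqcup[n]]$ and $\tilde\Gamma[W_q(\tuple)\sqcup[n]]$ coincide, and moreover this common restriction equals either $G_1[W_q(\tuple)\sqcup[n]]$ or $G_2[W_q(\tuple)\sqcup[n]]$. This latter identification is the one truly delicate point, and I would verify it by tracing the construction of the connection $P_\tuple$: the $m$-standard steps operate on $I_{ST}(\tuple)$, which is disjoint from $I_{NS}(\tuple)\supset W_q(\tuple)$; no type A step occurs, since by hypothesis the $G_1'\to G_2'$ switching is of type B; and the $NS$--couples are processed one canonical-partition block at a time, so rows indexed by $W_q(\tuple)$ remain unchanged except during processing of the block $W_q(\tuple)$ itself --- a phase ruled out by our assumption on $(\Gamma,\tilde\Gamma)$. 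Applying Lemma~\ref{l: 98572-987-49} inside whichever of $G_1$ or $G_2$ matches $\Gamma$ on this sub-block then produces a right vertex $j\in[n]$ with at least two left neighbors in $W_q(\tuple)$, yielding two distinct left vertices $a,b\in W_q(\tuple)$ sharing a common right neighbor in $\Gamma$.

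The counting step is then straightforward: every $W\in\mathbf{W}$ admits a presentation $W=W'\sqcup\{v\}$ in which $v$ shares a common right neighbor (in $\Gamma$) with some element of $W'$, as witnessed by $v:=a$, $W':=W\setminus\{a\}$. I would therefore bound $|\mathbf{W}|$ by the number of such ordered pairs $(W',v)$: there are at most $\binom{n}{r_q-1}\leq n^{r_q-1}$ choices of $W'\subset[n]$; given $W'$, we pick $v$ by first selecting which element $u\in W'$ is to serve as $v$'s common-neighbor partner (at most $r_q-1$ choices), then selecting the common right vertex $j$ among the $d$ right neighbors of $u$ in $\Gamma$ ($d$ choices), and finally $v$ among the remaining $d-1$ left neighbors of $j$. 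Multiplying yields the asserted bound $n^{r_q-1}(r_q-1)d(d-1)$. Once the structural step is in place, the invocation of Lemma~\ref{l: 98572-987-49} and this decomposition counting are both routine.
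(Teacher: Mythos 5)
Your proof is correct and follows essentially the same route as the paper: invoke Lemma~\ref{l: 98572-987-49} to produce a right vertex with two neighbors in $W_q(\tuple)$, transfer this to $\Gamma$, and then count each $W\in{\bf W}$ by choosing an $(r_q-1)$--subset plus one left vertex having a common neighbor with it, giving $n^{r_q-1}(r_q-1)d(d-1)$. The only difference is that you spell out explicitly why $\Gamma$ agrees with $G_1$ or $G_2$ on the rows of $W_q(\tuple)$ (a point the paper leaves implicit in the phrase ``hence in graphs $\Gamma$ and $\tilde\Gamma$ as well''), and your justification of that step is sound.
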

\begin{proof}
Applying Lemma~\ref{l: 98572-987-49}, we immediately get that
for any $\tuple=(G_1,G_1',G_2,G_2')\in T$, there is right vertex $j=j(\tuple)$
which has at least two neighbors in $W_q(\tuple)$ in each of the two graphs $G_1$, $G_2$, hence in graphs $\Gamma$ and $\tilde \Gamma$
as well. Thus, each set from ${\bf W}$ can be identified first by choosing $(r_q-1)$--subset of $[n]$ and then
choosing a left vertex having at least one common neighbor with the given subset. Thus,
$$
|{\bf W}|\leq n^{r_q-1}(r_q-1)d(d-1).
$$
\end{proof}

\medskip

Now, we consider the situation when the simple switching connecting $\Gamma$ and $\tilde \Gamma$ is ``contained''
in $W_q(\tuple)$.
\begin{lemma}\label{l: 985709575987}
Let $(\Gamma,\tilde \Gamma)$ be a pair of adjacent graphs in $\BipGSet_n(d)$,
and $q,r_q\in\N$. Let $T$ be the collection of all admissible $4$--tuples 
$\tuple=(G_1,G_1',G_2,G_2')$ such that
\begin{itemize}
\item $(\Gamma,\tilde \Gamma)$ is a $NS$--couple in the connection $P_\tuple$,
and
\item the left vertices operated on by the simple switching leading from $\Gamma$ to
$\tilde\Gamma$ are contained in $W_q(\tuple)$, and
\item $|W_q(\tuple)|=r_q$.
\end{itemize}
Then the cardinality of the set
\begin{align*}
{\bf W}:=\big\{W\subset[n]:\;\mbox{$W=W_q(\tuple)$ for some $\tuple\in T$}\big\}
\end{align*}
is bounded above by $\max\big(1,n^{r_q-3}(r_q-1)d(d-1)\big)$.

Furthermore, assume that $r_q\geq 5$, and denote by $T'$ the subset of tuples $\tuple=(G_1,G_1',G_2,G_2')\in T$
such that either $G_1'=G_2'$ or the switching connecting $G_1'$ and $G_2'$ does not operate on 
vertices in $W_q(\tuple)$.
Then the cardinality of the set
$$
{\bf W'}:=\big\{W\subset[n]:\;\mbox{$W=W_q(\tuple)$ for some $\tuple\in T'$}\big\}
$$
can be bounded above by $n^{r_q-4}(r_q-1)(r_q-2)d^2(d-1)^2$.
\end{lemma}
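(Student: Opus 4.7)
The plan is to exploit the structural lemmas already established --- Lemma~\ref{l: 98572-987-49}, Lemma~\ref{l: 29570295873059873}, and the crucial property~\ref{prop: 08756104872-497} in Lemma~\ref{l: 2095203598305298 bis} --- to bound the number of realizations of $W_q(\tuple)$ by first identifying a few ``landmark'' vertices inside $W_q(\tuple)$ that are forced to carry common-neighbor constraints, and then counting the remaining vertices by free choice. In both parts, the starting point is the same: for $\tuple \in T$, let $\{i_1,i_2\}$ denote the two left vertices on which the simple switching from $\Gamma$ to $\tilde\Gamma$ operates. These vertices are determined by $(\Gamma,\tilde\Gamma)$ alone (they are read off from $\Adj(\Gamma)-\Adj(\tilde\Gamma)$) and are contained in $W_q(\tuple)$ by hypothesis. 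In particular, they do not depend on $\tuple$.

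For the first assertion, if $r_q=2$, then $W_q(\tuple)=\{i_1,i_2\}$ is entirely determined, so $|{\bf W}|\leq 1$, matching the claimed maximum. If $r_q\geq 3$, note that by hypothesis $(\Gamma,\tilde\Gamma)=(P_\tuple[t-1],P_\tuple[t])$ is an NS-couple with $|W_q(\tuple)|\geq 3$, so Lemma~\ref{l: 29570295873059873} applies and yields a left vertex $i_3\in W_q(\tuple)\setminus\{i_1,i_2\}$ having a common neighbor in $\Gamma$ with some vertex of $W_q(\tuple)\setminus\{i_3\}$. Count as follows: the $r_q-3$ vertices in $W_q(\tuple)\setminus\{i_1,i_2,i_3\}$ can be chosen freely in at most $n^{r_q-3}$ ways; once these are fixed, $i_3$ must share a common right neighbor in $\Gamma$ with one of the $r_q-1$ vertices of $W_q(\tuple)\setminus\{i_3\}$, and any given left vertex has at most $d(d-1)$ such ``common-neighbor partners'' (its $d$ right neighbors, each with at most $d-1$ other left neighbors). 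This gives at most $(r_q-1)d(d-1)$ choices for $i_3$, and the overall bound $n^{r_q-3}(r_q-1)d(d-1)$ follows.

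For the second assertion, assume $r_q\geq 5$ and $\tuple\in T'$. The hypothesis that either $G_1'=G_2'$ or the switching connecting $G_1'$ and $G_2'$ does not operate on $W_q(\tuple)$ is exactly the case in the construction of a connection in which the subpath on $W_q(\tuple)\sqcup[n]$ is produced by Lemma~\ref{l: 2095203598305298 bis}. Applying property~\ref{prop: 08756104872-497} of that lemma at the step where $(P_\tuple[t-1],P_\tuple[t])=(\Gamma,\tilde\Gamma)$, we obtain two additional distinct left vertices $i_3,i_4\in W_q(\tuple)\setminus\{i_1,i_2\}$ such that, in $\Gamma$, the vertex $i_3$ has a common neighbor with $W_q(\tuple)\setminus\{i_3\}$ and $i_4$ has a common neighbor with $W_q(\tuple)\setminus\{i_3,i_4\}$. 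Now pick the $r_q-4$ ``free'' vertices of $W_q(\tuple)\setminus\{i_1,i_2,i_3,i_4\}$ in at most $n^{r_q-4}$ ways; then $i_4$ must be a common-neighbor partner of one of the $r_q-2$ already-chosen vertices, giving at most $(r_q-2)d(d-1)$ choices; finally $i_3$ must be a common-neighbor partner of one of the $r_q-1$ chosen vertices, giving at most $(r_q-1)d(d-1)$ choices. Multiplying out produces the bound $n^{r_q-4}(r_q-1)(r_q-2)d^2(d-1)^2$.

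The main obstacle is not the counting itself, which is routine, but the need to extract from the earlier infrastructure exactly the right geometric constraints inside $W_q(\tuple)$. For the first part, Lemma~\ref{l: 29570295873059873} provides one extra ``linked'' vertex beyond $\{i_1,i_2\}$, reducing the exponent of $n$ by one; for the second part, the strengthened property~\ref{prop: 08756104872-497} of Lemma~\ref{l: 2095203598305298 bis} --- available precisely when $|W_u|\geq 5$ and $W_u$ does not absorb a type B switching --- provides two such extra linked vertices, improving the exponent reduction and explaining the quadratic factor in $d(d-1)$ in the second bound. Once those structural properties are invoked, the argument is simply a careful application of the $d(d-1)$ bound on common-neighbor counts.
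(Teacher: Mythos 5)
Your proof is correct and follows essentially the same route as the paper: the $r_q=2$ case is trivial, Lemma~\ref{l: 29570295873059873} supplies the one extra linked vertex for the first bound, and for $r_q\geq 5$ with the $T'$ condition the subpath on $W_q\sqcup[n]$ is built via Lemma~\ref{l: 2095203598305298 bis}, whose property~\ref{prop: 08756104872-497} supplies the two linked vertices, after which the counting (free vertices first, then the constrained vertices at $(r_q-2)d(d-1)$ and $(r_q-1)d(d-1)$ choices) matches the paper's argument exactly.
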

\begin{proof}
When $r_q=2$, the above bound for $|{\bf W}|$ is obvious, so we shall consider the case when $r_q\geq 3$.
Let $\{i_1,i_2\}$ be the pair of left vertices operated on by the simple switching transforming $\Gamma$ to $\tilde\Gamma$.
Applying Lemma~\ref{l: 29570295873059873}, we get that any subset $W\in{\bf W}$
must contain at least one left vertex $i$ (distinct from $i_1,i_2$) which has common neighbors with $W\setminus\{i\}$.
Hence, any set $W\in{\bf W}$ can be identified by first picking $(r_q-3)$--subset of $[n]\setminus\{i_1,i_2\}$
and then selecting a left vertex having at least one common neighbor with either the constructed $(r_q-3)$--subset
or $\{i_1,i_2\}$. The $d$--regularity implies that the number of choices for this last index is at most $(r_q-1)d(d-1)$,
and the first bound follows.

Now, consider the case $r_q\geq 5$. Note that in this case the total multiplicity of multiedges of each of the graphs
$G_1',G_2'$ incident to $W_q(\tuple)$ must be at least four, and the 
part of the connection is constructed according to Lemma~\ref{l: 2095203598305298 bis}. 
Hence, we get that
for each tuple $\tuple \in T'$ there are two left vertices $\{i',i''\}\subset W_q(\tuple)$ disjoint from the vertices operated on by
the switching connecting $\Gamma$ and $\tilde\Gamma$, such that $i'$ has a common neighbor with $W_q(\tuple)\setminus\{i'\}$
in $\Gamma$ and $i''$ has a common neighbor with $W_q(\tuple)\setminus\{i',i''\}$.
Hence, a set $W\in{\bf W'}$ can be identified by first picking $(r_q-4)$--subset of $[n]\setminus\{i_1,i_2\}$,
then selecting a left vertex having at least one common neighbor with either the constructed $(r_q-4)$--subset
or $\{i_1,i_2\}$, and finally picking one more left vertex having at least one common neighbor with the $(r_q-1)$--subset.
The total number of choices can be bounded above by $n^{r_q-4}(r_q-2)d(d-1)(r_q-1)d(d-1)$.
The result follows.
\end{proof}

\bigskip

Finally, we are in position to compute ``complexity'' of $I_{NS}(\tuple)$ by combining the above three lemmas.
Let us introduce additional classification on the set of admissible $4$--tuples of graphs.
\begin{defi}\label{def: connection1}
Let $k_1,k_2,r\in\N$ and $u\in\{A,B\}$.
We say that a $4$--tuple $\tuple=(G_1,G_1',G_2,G_2')$ is {\it of subtype $1$--$(k_1,k_2,u,r)$} if all of the following conditions are satisfied:
\begin{itemize}

\item $\tuple$ is of type $1$;

\item $G_1'$ is of category $k_1$ and $G_2'$ is of category $k_2$;

\item $|I_{NS}(\tuple)|=r$;

\item $G_1'$ and $G_2'$ are adjacent, and the simple switching operation transforming $G_1'$ to $G_2'$ is of type $u$.

\end{itemize}
\end{defi}

Similarly,
\begin{defi}\label{def: connection2}
We say that an admissible $4$--tuple $\tuple=(G_1,G',G_2,G')$ is {\it of subtype $2$--$(k,r)$}, $k,r\in \N$, if
\begin{itemize}

\item $G_1,G_2\in \SNeigh(G')$;

\item $G'$ is of category $k$;

\item $|I_{NS}(\tuple)|=r$.

\end{itemize}
\end{defi}

For the reader's convenience, we group together the classification of $4$--tuples used in this section in the following table.
\begin{center}
\begin{tabular}{ |p{7cm}|p{7cm}| }
\hline
{\bf Type} & {\bf Subtype\,/\, Remarks}\\
\hline
\multirow{2}{7cm}{{\bf{}Type $1$ tuples.} Tuples
$(G_1,G_1',G_2,G_2')$ such that $G_1\in \SNeigh(G_1')$, $G_2\in \SNeigh(G_2')$, the graphs $G_1'$ and $G_2'$ are adjacent in
$\MultBipGSet_n(d)$; $G_1',G_2'\in \categ([1,\cconst])$, and the pair $(G_1',G_2')$ is not perfect.} & {\bf Subtype $1$--$(k_1,k_2,A,r)$.} $G_1'$ has category $k_1$, $G_2'$ --- category $k_2$,
$|I_{NS}(\tuple)|=r$, and $G_1'$ and $G_2'$ are connected by a type $A$ switching. \\
 & {\bf Subtype $1$--$(k_1,k_2,B,r)$.} $G_1'$ has category $k_1$, $G_2'$ --- category $k_2$,
$|I_{NS}(\tuple)|=r$, and $G_1'$ and $G_2'$ are connected by a type $B$ switching. \\
\hline
{\bf{}Type $2$ tuples.} Tuples $(G_1,G',G_2,G')$ such that $G_1\in \SNeigh(G')$, $G_2\in \SNeigh(G')$, and
$G'\in \categ([1,\cconst])$. & 
{\bf Subtype $2$--$(k,r)$.} $G'$ has category $k$, and $|I_{NS}(\tuple)|=r$.\\
\hline
{\bf{}Type $3$ tuples.} Tuples $(G_1,G_1,G_2,G_2')$ such that
$G_1\in \BipGSet_n(d)$ is adjacent to $G_2'\in\categ([1,\cconst])$, and $G_2\in \SNeigh(G_2')$, $G_1\notin \SNeigh(G_2')$. & 
For every type $3$ tuple, the category of $G_2'$ is equal to $2$, and $G_1$ and $G_2'$ are connected by a type $B$ switching. \\
\hline
\end{tabular}
\end{center}

\begin{prop}[Complexity of $I_{NS}(\tuple)$]\label{p: complexity of INS}
Let $(\Gamma,\tilde \Gamma)$ be a pair of adjacent graphs in $\BipGSet_n(d)$,
and let $k_1,k_2\in\N$, $r\geq 1$, $N_{ms}\geq 0$.
Then
\begin{enumerate}

\item The number of distinct realizations of $I_{NS}(\tuple)$ over all
$4$--tuples $\tuple=(G_1,G_1',G_2,G_2')$ of subtype $1$--$(k_1,k_2,A,r)$ such that $|E_S(\tuple)|=N_{ms}$ and
$(\Gamma,\tilde \Gamma)$ is an $m$--standard couple or an $A$--couple in $P_\tuple$, is bounded above by
$$
8^r n^{k_1+k_2-2N_{ms}-1}\big(rd(d-1)\big)^{r}.
$$
Similarly, the number of distinct realizations of $I_{NS}(\tuple)$ over
tuples $\tuple=(G_1,G',G_2,G')$ of subtype $2$--$(k_1,r)$ such that $|E_S(\tuple)|=N_{ms}$ and
$(\Gamma,\tilde \Gamma)$ is an $m$--standard couple in $P_\tuple$, is bounded above by
$$
8^r n^{2k_1-2N_{ms}-1}\big(rd(d-1)\big)^{r}.
$$

\item The number of distinct realizations of $I_{NS}(\tuple)$ over all
$4$--tuples $\tuple=(G_1,G_1',G_2,G_2')$ of subtype $1$--$(k_1,k_2,B,r)$ such that $|E_S(\tuple)|=N_{ms}$ and
$(\Gamma,\tilde \Gamma)$ is an $m$--standard couple in $P_\tuple$, is bounded above by
$$
8^r\,r\, n^{k_1+k_2-2N_{ms}+1}\big(rd(d-1)\big)^{r}.
$$

\item The number of distinct realizations of $I_{NS}(\tuple)$ over all
$4$--tuples $\tuple=(G_1,G_1',G_2,G_2')$ of subtype $1$--$(k_1,k_2,A,r)$ such that $|E_S(\tuple)|=N_{ms}$ and
$(\Gamma,\tilde \Gamma)$ is an $NS$--couple in $P_\tuple$, is bounded above by
$$
(r-1)d(d-1),\quad \mbox{ if $r\leq 3$},
$$
and 
$$
8^r\,r\, n^{k_1+k_2-2N_{ms}-3}\big(rd(d-1)\big)^{r},\quad \mbox{ if $r\geq 4$}.
$$
Similarly, the number of distinct realizations of $I_{NS}(\tuple)$ over
tuples $\tuple=(G_1,G',G_2,G')$ of subtype $2$--$(k_1,r)$ such that $|E_S(\tuple)|=N_{ms}$ and
$(\Gamma,\tilde \Gamma)$ is an $NS$--couple in $P_\tuple$, is bounded above by
$
(r-1)d(d-1)
$ for $r\leq 3$ and by
$
8^r\,r\, n^{2k_1-2N_{ms}-3}\big(rd(d-1)\big)^{r}
$ for $r\geq 4$.

\item The number of distinct realizations of $I_{NS}(\tuple)$ over all
$4$--tuples $\tuple=(G_1,G_1',G_2,G_2')$ of subtype $1$--$(k_1,k_2,B,r)$ such that $|E_S(\tuple)|=N_{ms}$ and
$(\Gamma,\tilde \Gamma)$ is an $NS$--couple in $P_\tuple$, is bounded above by
$$
8^r\,r^2\, n^{k_1+k_2-2N_{ms}-1}\big(rd(d-1)\big)^{r}.
$$
\end{enumerate}
\end{prop}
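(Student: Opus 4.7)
The plan is to count realizations of $I_{NS}(\tuple)$ by building it through its canonical partition $I_{NS}(\tuple)=W_1\sqcup\cdots\sqcup W_h$ given by Lemma~\ref{l: 05610948704987}, and applying one of the three complexity lemmas to each piece. For a fixed adjacent couple $(\Gamma,\tilde\Gamma)$ and a tuple $\tuple$ contributing to the count, at most one piece (call it $W_{u_0}$) contains the left vertices of the switching from $\Gamma$ to $\tilde\Gamma$ --- this happens precisely when $(\Gamma,\tilde\Gamma)$ is an $NS$--couple in $P_\tuple$ --- and in the subtype $1$--B case at most one piece $W_{u_1}$ contains the vertices of the switching between $G_1'$ and $G_2'$. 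I will apply Lemma~\ref{l: 295872095875098} to pieces disjoint from both switchings, Lemma~\ref{l: -298570596098709} to $W_{u_1}$ in the subtype $1$--B case, and Lemma~\ref{l: 985709575987} (using its refined bound when available) to $W_{u_0}$ in the $NS$--couple case (cases $(3)$ and $(4)$). The total count of realizations of $I_{NS}(\tuple)$ is then obtained by summing the product of per-piece bounds over all ordered sequences of part sizes $(r_1,\dots,r_h)$ with $r_u\geq 2$ and $\sum_u r_u=r$, and over the possible positions of $u_0$ and $u_1$ within that composition.

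The next step is to convert the resulting exponent of $n$ from $\sum_u \alpha_u$ --- where $\alpha_u$ is one of $\max(1,r_u-2)$, $r_u-1$, $r_u-3$, or $r_u-4$ depending on which lemma applies --- into an estimate in terms of $k_1,k_2,N_{ms}$. For this I use the per-piece multiplicity bounds from Lemma~\ref{l: 05610948704987}: for any $W_u$ disjoint from the type $B$ switching (that is, $Y\cap W_u=\emptyset$) one has $\max(1,r_u-2)\leq M_u^1+M_u^2-1$ and moreover $r_u-3\leq M_u^1+M_u^2-2$ and $r_u-4\leq M_u^1+M_u^2-3$ whenever the corresponding $r_u$ is large enough; for the piece $W_{u_1}$ containing $Y$ only the weaker inequality $r_u-1\leq M_u^1+M_u^2+1$ is available. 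Summing across $u$ and using the global constraint $\sum_u M_u^h\leq k_h-N_{ms}$ (each $m$--standard edge is a distinct multiedge in both $G_1'$ and $G_2'$), these per-piece bounds collapse to $\sum_u\alpha_u\leq k_1+k_2-2N_{ms}-h+c$ for a constant $c$ dictated by the case, and combining with $h\geq 1$ (or $h\geq 2$ when $W_{u_0}\neq W_{u_1}$) produces the required offsets $-1,+1,-3,-1$ in cases $(1),(2),(3),(4)$ respectively; the subtype~$2$ assertions follow by the same scheme after substituting $M_u^1=M_u^2$.

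The remaining factors are purely combinatorial. The product $\prod_u (r_u d(d-1))^{\beta_u}$ with $\beta_u\in\{1,2\}$ and $\sum_u\beta_u\leq 2h\leq r$ is bounded above by $(rd(d-1))^r$ using the crude estimate $r_u\leq r$; the constants from the lemmas together with the number of ordered compositions of $r$ into parts of size at least $2$ (at most $2^r$) combine into the $8^r$ prefactor; and in cases $(2),(3),(4)$ a further factor of $r$ (respectively $r^2$ in case $(4)$) is incurred when summing over the at most $h\leq r/2$ choices for each of the special positions $u_0,u_1$ in the composition. The small--$r$ branch of case~$(3)$ ($r\leq 3$) is handled directly: then $h=1$ and $I_{NS}(\tuple)=W_1=W_{u_0}$, and the coarse first bound of Lemma~\ref{l: 985709575987} gives $(r-1)d(d-1)$ without any further summation.

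The main obstacle will be keeping the $-1,+1,-3$ corrections to the exponent of $n$ uniform across all partition shapes. The delicate configurations are those in which the composition is dominated by parts of size $2$, where $\max(1,r_u-2)$ becomes binding and the per-piece savings are tight, and the case $r_{u_0}\in\{2,3,4\}$ in $(3)$ and $(4)$, for which only the weaker form of Lemma~\ref{l: 985709575987} is in force; in both instances the compensation required to recover the target exponent must be extracted from the inequalities $h\geq 1$ (or $h\geq 2$) together with the observation that a small $W_{u_0}$ forces the composition to contain additional pieces, each of which contributes an extra factor of $n^{-1}$ through the $-1$ saving in $\max(1,r_u-2)\leq M_u^1+M_u^2-1$.
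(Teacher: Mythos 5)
Your overall scheme is the same as the paper's: decompose $I_{NS}(\tuple)$ into its canonical partition, apply Lemma~\ref{l: 295872095875098}, Lemma~\ref{l: -298570596098709} or Lemma~\ref{l: 985709575987} block by block, convert the exponent of $n$ via Lemma~\ref{l: 05610948704987} and $\sum_q M_q^h\le k_h-N_{ms}$, and absorb compositions and positions of the special blocks into $8^r$, $r$, $r^2$. Cases (1), (2) and (4) go through as you describe (in (4), when $u_0=u_1$ the bound $r_u\le 2+M_u^1+M_u^2$ for the block containing $Y$ gives exactly the $-1$ offset, and when $u_0\ne u_1$ the combination of the three lemmas already yields it).

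There is, however, a concrete gap in case (3) (and its subtype-$2$ analogue). Your compensation mechanism -- ``a small $W_{u_0}$ forces the composition to contain additional pieces'' -- fails precisely at the configuration $h=1$, $r=r_{u_0}=4$: a single block of size four with the type $A$ switching disjoint from it is perfectly possible, there are no other blocks, and the refined (``furthermore'') bound of Lemma~\ref{l: 985709575987} is unavailable since it requires $r_q\ge 5$. With only the weak bound, the exponent of $n$ is $r-3=1$, while your stated per-block inequality $\max(1,r_u-2)\le M_u^1+M_u^2-1$ only gives $k_1+k_2-2N_{ms}=M_{u_0}^1+M_{u_0}^2\ge 3$, i.e.\ a target exponent of at least $0$ -- short by one power of $n$. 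The paper closes this by invoking the sharper first assertion of Lemma~\ref{l: 05610948704987} in the form $\max(1,r_u-2)\le 2M_u^1-1=2M_u^2-1$: since $Y\cap W_{u_0}=\emptyset$ one has $M_{u_0}^1=M_{u_0}^2$, so $r=4$ forces $M_{u_0}^1\ge 2$ and hence $k_1+k_2-2N_{ms}\ge 4$, which restores the exponent $k_1+k_2-2N_{ms}-3\ge 1$. For $r_{u_0}\ge 5$ with $h=1$ your plan to use the refined bound (exponent $r_u-4$) does work, because a type $A$ switching (or $G_1'=G_2'$) places the tuple in the set $T'$ of that lemma. So the argument is salvageable, but the $r=4$, single-block subcase needs the evenness/structure of $M_u^1=M_u^2$ rather than the symmetric inequality you rely on, and this step is missing from your write-up.
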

\begin{proof}\hspace{0cm}\\
\begin{enumerate}
\item Fix any sequence of non-negative integers $(r_1,r_2,\dots)$, such that $r_{q}=0$ implies $r_{q+1}=0$ for all $q\geq 1$,
and such that $\sum_{q=1}^\infty r_q=r$. First, we shall estimate the number of realizations of sets $I_{NS}(\tuple)$
whose canonical partitions $(W_q(\tuple))_{q=1}^\infty$ satisfy $|W_q(\tuple)|=r_q$, $q\geq 1$.
Applying Lemma~\ref{l: 295872095875098}, we get that the number of realizations of each non-empty $W_q$
is bounded above by $4n^{\max(1,r_q-2)}\big(\max(1,r_q-2)d(d-1)\big)^2$ (regardless whether
we consider type $1$ or type $2$).
Taking the product, we obtain that the number of realizations of $I_{NS}(\tuple)$ with the cardinalities of the elements
of the canonical partition encoded by the sequence $(r_q)_{q=1}^\infty$, is bounded above by
$$
4^r n^{\sum_{q\geq 1,\,r_q\neq 0}\max(1,r_q-2)}\big(rd(d-1)\big)^{r},
$$
where we have used that the number of non-zero elements of the sequence $(r_q)$ is at most $r/2$.
Observe that, \chng{in notation of Lemma~\ref{l: 05610948704987},
$$
\sum_{q\geq 1,\,W_q(\tuple)\neq\emptyset} M^h_q=k_h-N_{ms},\quad h=1,2.
$$
Thus, in view of Lemma~\ref{l: 05610948704987},}
for every $\tuple$ of subtype $1$--$(k_1,k_2,A,r)$ satisfying the conditions
in the first part of the proposition we have
$$\sum_{q\geq 1,\,W_q(\tuple)\neq\emptyset}\max(1,|W_q(\tuple)|-2)\leq k_1+k_2-2N_{ms}-1\quad\mbox{ whenever $r\neq 0$.}$$
Similarly, for every $\tuple$ of subtype $2$--$(k_1,r)$ satisfying the conditions
in the first part of the proposition we have
$\sum_{q\geq 1,\,W_q(\tuple)\neq\emptyset}\max(1,|W_q(\tuple)|-2)\leq 2k_1-2N_{ms}-1$, whenever $r\neq 0$.
Further, the total number of admissible realizations of $(r_1,r_2,\dots)$ can be (roughly)
bounded from above by $2^r$; thus the number of admissible realizations of $I_{NS}(\tuple)$ for type $1$ case
is bounded from above by
$$
8^r n^{k_1+k_2-2N_{ms}-1}\big(rd(d-1)\big)^{r},
$$
with the straightforward modification for subtype $2$--$(k_1,r)$ tuples.

\item As in the previous case, we fix a sequence of non-negative integers $(r_1,r_2,\dots)$, and,
additionally, fix a number $u\leq r$ which will serve as a label for the element of the canonical partition
containing the left vertices operated on by the simple switching connecting $G_1'$ and $G_2'$.
Applying Lemma~\ref{l: 295872095875098} and Lemma~\ref{l: -298570596098709},
we get that the total number of realizations of $I_{NS}(\tuple)$ satisfying $|W_q(\tuple)|=r_q$, $q\geq 1$,
and $W_u(\tuple)$ containing the left vertices used to switch from $G_1'$ to $G_2'$,
is bounded above by
\begin{align*}
n^{r_u-1}(r_u-1)d(d-1)
\prod\limits_{q\neq u,\,r_q\neq 0}\Big(4n^{\max(1,r_q-2)}\big(\max(1,r_q-2)d(d-1)\big)^2\Big).
\end{align*}
Invoking Lemma~\ref{l: 05610948704987} (\chng{the ``in particular'' part of the lemma, to be precise}), we get that
$$
r_u+\sum_{q\neq u,\,r_q\neq0}\max(1,r_q-2)\leq k_1+k_2-2N_{ms}+2
$$
(the worst-case estimate corresponds to the situation when $r_u=r_1=r$),
whence the previous expression can be bounded from above by
$$
4^r\big(rd(d-1)\big)^{r}n^{k_1+k_2-2N_{ms}+1}.
$$
The result follows by summing over all possible choices of $(r_q)_{q=1}^\infty$ and $u\leq r$.

\item Again, we fix a sequence $(r_1,r_2,\dots)$ with $\sum_{q=1}^\infty r_q=r$, and,
additionally, fix a number $u\leq r$ which identifies the element of the canonical partition
containing the left vertices $\{i_1,i_2\}$ operated on by the simple switching connecting $\Gamma$ and $\tilde \Gamma$.

First, consider the case $r\leq 3$.
Note that in this case $I_{NS}(\tuple)$ consists of a single set $W_1$, which comprises exactly one multiedge. 
Applying Lemma~\ref{l: 985709575987}, we get upper bound $(r-1)d(d-1)$ for the complexity,
both for subtype $1$--$(k_1,k_2,A,r)$ tuples and for subtype $2$--$(k_1,r)$ tuples.


Now, assume that $r\geq 4$. We have to consider two subcases then.

\begin{itemize}

\item There is $q\neq u$ with $r_q\neq 0$. Then, applying Lemmas~\ref{l: 295872095875098} and~\ref{l: 985709575987}
as above, we get that the number of possible realizations is bounded above by
\begin{align*}
&n^{\max(1,r_u-2)-1}(r_u-1)d(d-1)\cdot\\
&\prod\limits_{q\neq u,\,r_q\neq 0} \big(4n^{\max(1,r_q-2)}\big(\max(1,r_q-2)d(d-1)\big)^2\big)\\
&\leq 4^r\big(rd(d-1)\big)^{r}n^{k_1+k_2-2N_{ms}-3},
\end{align*}
where we also used Lemma~\ref{l: 05610948704987} (\chng{the first assertion $\max(1,r_x-2)\leq 2M_x^1-1=2M_x^2-1$, to be precise}),
and this time took into account that there are factors
corresponding to $q\neq u,\,r_q\neq 0$,
with straightforward modification for subtype $2$--$(k_1,r)$.

\item There is no $q\neq u$ with $r_q\neq 0$. Then necessarily $r_u\geq 4$.
If $r_u=4$ then using Lemma~\ref{l: 985709575987} we bound the complexity by
$$
n^{r_u-3}(r_u-1)d(d-1)=3nd(d-1)\leq 3d(d-1)n^{k_1+k_2-2N_{ms}-3},
$$
where we use the simple observation that $k_1+k_2-2N_{ms}\geq 4$ under the assumption $r>3$.
If $r_u\geq 5$ then we apply the ``furthermore'' part of Lemma~\ref{l: 985709575987}
to get the upper bound
$$
n^{r_u-4}(r_u-1)(r_u-2)d^2(d-1)^2\leq r^2 d^2 (d-1)^2 n^{k_1+k_2-2N_{ms}-3},
$$
where the last inequality follows by appying Lemma~\ref{l: 05610948704987}.
Again, a direct modification of the argument gives $r^2 d^2 (d-1)^2 n^{2k_1-2N_{ms}-3}$
as the upper bound in subtype $2$--$(k_1,r)$ case.
\end{itemize}

Summing over $(r_q)_{q=1}^\infty$ and $u\leq r$, we get the estimate.

\item For the $4$--th case, we condition first on cardinalities $(r_q)_{q=1}^\infty$ of the elements of the
canonical partition and on two indices $u$ and $v$, where $u$ identifies the element of the partition of $I_{NS}(\tuple)$
containing the left vertices used to switch between $\Gamma$ and $\tilde\Gamma$, and
$v$ identifies the element of the canonical partition
containing the left vertices operated on by the simple switching connecting $G_1'$ and $G_2'$.
When $u\neq v$, we can bound from above the number of realizations of $I_{NS}(\tuple)$ by combining
Lemmas~\ref{l: 985709575987},~\ref{l: 295872095875098}, and~\ref{l: -298570596098709}, by
\begin{align*}
&n^{\max(1,r_u-2)-1}(r_u-1)d(d-1)\;
n^{r_v-1}(r_v-1)d(d-1)\cdot\\
&\prod\limits_{q\neq u,v,\,r_q\neq 0} \big(4n^{\max(1,r_q-2)}\big(\max(1,r_q-2)d(d-1)\big)^2\big),
\end{align*}
which in turn can be bounded with help of Lemma~\ref{l: 05610948704987} by
$$
4^r\big(rd(d-1)\big)^{r}n^{k_1+k_2-2N_{ms}-1}.
$$
Further, when $u= v$ then we use a combination of Lemmas~\ref{l: 295872095875098} and~\ref{l: 985709575987}
to obtain upper bound
$$
n^{\max(0,r_u-3)}(r_u-1)d(d-1)\prod\limits_{q\neq u,\,r_q\neq 0} \big(4n^{\max(1,r_q-2)}\big(\max(1,r_q-2)d(d-1)\big)^2\big).
$$
When either $r_u\geq 3$ or $\{q\neq u,\,r_q\neq 0\}\neq \emptyset$, this expression is clearly bounded from above,
using Lemma~\ref{l: 05610948704987}, by
$$
4^r\big(rd(d-1)\big)^{r}n^{k_1+k_2-2N_{ms}-1}.
$$
On the other hand, when $r_u=r=2$, the couple $(\Gamma,\tilde \Gamma)$ defines the set $I_{NS}(\tuple)$
uniquely, so, using that $k_1+k_2-2N_{ms}-1\geq 0$, we can trivially bound the number of choices by the above expression.
Summing over all choices of $v,u$ and $(r_q)$ then gives the result.
\end{enumerate}
\end{proof}

\subsection{Complexity of the data set for $m$--standard edges}

In this subsection, we consider the problem of ``recovering'' the information about $m$--standard
edges of an admissible tuple, by observing a couple of graphs in the corresponding connection.
For convenience, let us introduce a data structure $\mdata(\tuple)$ associated with every admissible
$\tuple=(G_1,G_1',G_2,G_2')$.
The data structure $\mdata(\tuple)$ is defined as a collection of $5$-tuples:
$$
\mdata(\tuple):=\big\{\big((i,j),\ione(i,\tuple),\itwo(i,\tuple),\jone(i,\tuple),\jtwo(i,\tuple)\big):\;(i,j)\in E_S(\tuple)\big\}.
$$

\begin{prop}[Complexity of $\mdata(\tuple)$]\label{p: mdata counting}
Let $(\Gamma,\tilde \Gamma)$ be a pair of adjacent graphs in $\BipGSet_n(d)$,
and let $N_{ms}>0$.
Then
\begin{enumerate}

\item The number of distinct realizations of $\mdata(\tuple)$ over all admissible
$\tuple=(G_1,G_1',G_2,G_2')$ such that $|E_S(\tuple)|=N_{ms}$ and
$(\Gamma,\tilde \Gamma)$ is an $m$--standard couple in $P_\tuple$, is bounded above by
$$
4{n-1 \choose N_{ms}-1}
\big(d^2(d-1)^2n\big)^{N_{ms}-1}d^2.
$$

\item The number of distinct realizations of $\mdata(\tuple)$ over all admissible
$\tuple=(G_1,G_1',G_2,G_2')$ such that $|E_S(\tuple)|=N_{ms}$ and
$(\Gamma,\tilde \Gamma)$ is an $NS$--couple 
in $P_\tuple$, is bounded above by
$$
{n\choose N_{ms}}\big(d^2(d-1)^2 n\big)^{N_{ms}}.
$$

\item The number of distinct realizations of $\mdata(\tuple)$ over all
$4$--tuples $\tuple=(G_1,G_1',G_2,G_2')$ of type $1$ such that $|E_S(\tuple)|=N_{ms}$ and
$(\Gamma,\tilde \Gamma)$ is an 
$A$--couple
in $P_\tuple$, is bounded above by
$$
2d^2 {n-1\choose N_{ms}-1}\big(d^2(d-1)^2 n\big)^{N_{ms}}.
$$

\end{enumerate}

\end{prop}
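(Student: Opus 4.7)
The proof proceeds via direct enumeration, exploiting the information that the pair $(\Gamma, \tilde\Gamma)$ reveals about $\mdata(\tuple)$ according to which portion of the connection $P_\tuple$ it occupies. In each case, we first extract the entries of $\mdata(\tuple)$ that are forced by the switching operation linking $\Gamma$ to $\tilde\Gamma$, and then bound the remaining freedom via the degree constraints imposed by $\Gamma$.

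For part (1), the hypothesis means that $(\Gamma, \tilde\Gamma) = (P_\tuple[t-1], P_\tuple[t])$ for some $1 \leq t \leq 2N_{ms}$, and this step processes a unique $m$--standard edge $(i_0, j_0) \in E_S(\tuple)$. By the definition of a connection, the switching taking $\Gamma$ to $\tilde\Gamma$ is $\langle \ione(i_0,\tuple), \itwo(i_0,\tuple), j_0, \jtwo(i_0,\tuple)\rangle$ when $t$ is odd and $\langle \ione(i_0,\tuple), i_0, \jtwo(i_0,\tuple), \jone(i_0,\tuple)\rangle$ when $t$ is even. Thus, reading the quadruple off $(\Gamma, \tilde\Gamma)$ pins down four of the six entries of the $5$--tuple associated with $(i_0,j_0)$, up to an ambiguity of at most $4$ arising from the choice of parity of $t$ and the two admissible ways to match left/right vertices to their labels. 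The two remaining entries (either $i_0, \jone$ or $j_0, \itwo$) are each constrained to be neighbors of already identified vertices in $\Gamma$, giving at most $d^2$ further choices. The remaining $N_{ms}-1$ $m$--standard edges are enumerated independently: their left vertices form a subset of $[n]\setminus\{i_0\}$ of size $N_{ms}-1$ (factor $\binom{n-1}{N_{ms}-1}$), and for each such $i'$ the associated $5$--tuple is bounded by $d(d-1)^4 \leq d^2(d-1)^2 n$ via degree counting ($j'$ is a neighbor of $i'$ in $\Gamma$, each auxiliary vertex is further constrained by $d$--regularity). Multiplying the factors yields the stated bound.

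Part (2) is simpler: if $(\Gamma, \tilde\Gamma)$ is an $NS$--couple then the switching operates on left vertices in $I_{NS}(\tuple)$, which is disjoint from $I_{ST}(\tuple)$, so the pair reveals no information about any entry of $\mdata(\tuple)$. We then enumerate all $N_{ms}$ standard edges directly --- $\binom{n}{N_{ms}}$ choices for the set of left vertices and at most $d^2(d-1)^2 n$ per edge for the remaining entries, as in Part (1). For part (3), $(\Gamma, \tilde\Gamma)$ is the type A step of $P_\tuple$ (at position $2N_{ms}+1$), so the switching $\langle v, v', w, w'\rangle$ connecting $G_1'$ to $G_2'$ is read off directly, up to a factor of at most $2$ for matching left/right roles. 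Admissibility of a type $1$ tuple forces $(G_1', G_2')$ to be non-perfect, and for a type A switching this means that at least one of $\{v,v',w,w'\}$ is adjacent in $\Gamma$ to a left vertex $i_0$ incident to a multiedge of $G_1'$ or $G_2'$; such $i_0$ lies at distance at most $2$ from a known switching vertex, giving at most $d^2$ choices. When $i_0 \in I_{ST}(\tuple)$, this identifies one of the $N_{ms}$ standard-edge left vertices, and the remaining standard edges are enumerated as in Part (2) with $\binom{n-1}{N_{ms}-1}$ choices for the left-vertex set and $(d^2(d-1)^2 n)^{N_{ms}}$ for the per-edge data; when $i_0 \in I_{NS}(\tuple)$, the cruder Part (2) bound $\binom{n}{N_{ms}}(d^2(d-1)^2)^{N_{ms}}$ is readily dominated by the claimed expression.

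The main technical obstacle is Part (3), where one must unpack carefully the definition of ``non-perfect'' in combination with the type A condition to deduce the neighbor-of-neighbor structure relating $\{v,v',w,w'\}$ to a multiedge-incident vertex $i_0$, and then handle both subcases ($i_0 \in I_{ST}$ and $i_0 \in I_{NS}$) within a single uniform bound. Parts (1) and (2) amount to careful bookkeeping: in (1) tracking what information the switching between $\Gamma$ and $\tilde\Gamma$ reveals about the ``active'' standard edge up to a constant-size ambiguity, and in (2) observing that no information is revealed.
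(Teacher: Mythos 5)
Your overall strategy---extract what the observed couple $(\Gamma,\tilde\Gamma)$ reveals, then enumerate the remaining entries of $\mdata(\tuple)$ by choosing left vertices and completing each pattern by degree counting---is the same as the paper's, and parts (1) and (2) essentially reproduce the paper's argument. One step in part (1) is, however, incorrectly justified (though ultimately harmless): you claim that, given the left vertex of a remaining $m$--standard edge, its $5$--tuple has at most $d(d-1)^4$ realizations because ``each auxiliary vertex is further constrained by $d$--regularity''. This is false: in the intermediate graph $\Gamma$, exactly one of the pairs $(\itwo,\jtwo)$ (if the connection has not yet processed that edge) or $(\ione,\jone)$ (if it has) is just some edge of $\Gamma$ with no proximity constraint to the standard edge, contributing a factor of order $nd$ rather than $(d-1)^2$; this free edge is precisely why the factor $n$ appears in the stated bound. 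Since you then relax to $d^2(d-1)^2 n$ per edge, which is the correct count and the one the paper uses, the final bounds in (1) and (2) survive, but the intermediate claim should be removed.

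The genuine gap is in part (3), in the case you label $i_0\in I_{NS}(\tuple)$. Non-perfectness of $(G_1',G_2')$ only guarantees that the switching vertices are near \emph{some} multiedge, not near an $m$--standard one, and when the witnessing multiedge is non-standard your fallback is the part (2) bound $\binom{n}{N_{ms}}\big(d^2(d-1)^2 n\big)^{N_{ms}}$, which you assert is ``readily dominated'' by the target $2d^2\binom{n-1}{N_{ms}-1}\big(d^2(d-1)^2 n\big)^{N_{ms}}$. It is not: the ratio of the former to the latter is $\frac{n}{2d^2 N_{ms}}$, and since $N_{ms}\leq\cconst=d^2\lfloor \log n\rfloor$ this is at least of order $n/(2d^4\log n)\gg 1$ under \eqref{eq: assumption-d}. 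The entire point of part (3) is to gain this factor of roughly $n/(d^2 N_{ms})$ over part (2). The paper obtains it by arguing that for a type $A$ switching of a non-perfect pair one of three configurations involving an $m$--\emph{standard} edge must occur near the switching vertices, and by verifying that the relevant incidences are still visible in $\Gamma$ (using that a type $A$ switching avoids $I(\tuple)$ and that the corresponding edges are untouched along the connection); this places an $m$--standard edge inside the set $U$ of at most $2d^2$ left vertices at distance at most two from the switching, which is exactly what buys the saving of the factor $n$. Your proposal proves no such localization of a \emph{standard} edge and offers no substitute for it, so part (3) is not established as written.
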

\begin{proof}\hspace{0cm}\\
\begin{enumerate}

\item
Suppose that $\tilde \Gamma$ is obtained from $\Gamma$ by a simple switching on edges $(v,w)$ and $(\tilde v,\tilde w)$.
If $(\Gamma,\tilde \Gamma)=(P_\tuple[t-1],P_\tuple[t])$ for odd $t$
then one of the right vertices $w$ or $\tilde w$ must be incident to the corresponding $m$--standard edge,
and thus there are at most $2(d-1)$ choices for the corresponding $m$--standard edge. Once that multiple edge
of $G_1'$ (and $G_2'$)
is determined, we have at most $d-1$ possible choices for the simple switching taking $\tilde\Gamma$ to $P_\tuple[t+1]$,
so overall number of the choices for $P_\tuple[t+1]$ and the corresponding $m$--standard edge can be bounded by $2d^2$.
Similarly, it is not difficult to check that when $t$ is even, the complexity of the choices for $P_\tuple[t-2]$ and 
the corresponding $m$--standard edge of $G_1'$ and $G_2'$ is at most $2d^2$ as well (see Figure~\ref{fig: m-standard}).
Note that this operation uniquely identifies the $5$--tuple associated with that $m$--standard edge.

Let $i_\Gamma$ be the left vertex incident to the $m$--standard edge in $G_1'$ and $G_2'$ associated with $(\Gamma,\tilde \Gamma)$.
Then $(N_{ms}-1)$ $5$--tuples associated with the remaining $m$--standard edges
can be uniquely encoded
with a parameter taking at most
\begin{align*}
{n-1 \choose N_{ms}-1}
\big(d^2(d-1)^2n\big)^{N_{ms}-1}
\end{align*}
distinct values.
Here, the multiple ${n-1 \choose N_{ms}-1}$ is responsible for choosing the left vertices for the remaining $N_{ms}-1$
standard edges (the order of that choice is not important). 
Further, having chosen an $m$--standard edge $(i,j)$, there are $(d-1)^2$ choices for two edges,
one incident to $i^\ell$ and the other to $j^r$, and $nd$ choices for the remaining ``free'' edge completing the pattern in
Figure~\ref{fig: m-standard}. 
The required estimate follows (the additional factor $2$ comes from the separate treatment of even and odd indices at the beginning
of the argument. 

\item Treatment of this case essentially repeats the second part of the above argument, with some simplifications.
The total number of choices of $N_{ms}$ $m$--standard edges is clearly bounded above by ${n\choose N_{ms}}d^{N_{ms}}$.
For every edge, we complete the pattern (i.e.\ determine $\ione,\itwo,\jone,\jtwo$) by picking one of at most
$(d-1)^2 nd$ combinations. Thus, we obtain the upper bound
$$
{n\choose N_{ms}}\big(d^2(d-1)^2 n\big)^{N_{ms}}.
$$

\item We shall use the definition of $A$--switching in combination with the fact that
the type $1$ tuples correspond to non-perfect pairs of multigraphs (see Definition~\ref{def: perfect pair}).
Let $i',i''$ and $j',j''$ be the two left and right vertices, respectively,
operated on by the switching connecting $\Gamma$ and $\tilde\Gamma$. Then one of the following must be true:
\begin{itemize}
\item $\{j',j''\}$ is incident to an $m$--standard edge (in $G_1',G_2'$);
\item For some $h\in\{1,2\}$, $\{i',i''\}$ is adjacent to a right vertex incident to an $m$--standard edge in graph $G_h'$;
\item For some $h\in\{1,2\}$, $\{j',j''\}$ is adjacent to a left vertex incident to an $m$--standard edge in graph $G_h'$.
\end{itemize}
Note that in the first case, $\{j',j''\}$ must be incident to an $m$--standard edge in both $\Gamma$ and $\tilde \Gamma$.
In the second case, $\{i',i''\}$ is adjacent to a right vertex incident to an $m$--standard edge in both $\Gamma$ and $\tilde \Gamma$,
because otherwise we would have $\{i',i''\}\cap I(\tuple)\neq\emptyset$ which contradicts to the assumption that
$G_1'$ and $G_2'$ are connected by $A$-- (and not $B$--) switching. In the third case, 
$\{j',j''\}$ is adjacent to a left vertex incident to an $m$--standard edge in both $\Gamma$ and $\tilde \Gamma$
because the edges $\{(i,v):\;(i,j)\in E_S(\tuple),\;v\in \neigh_{G_1'}(i^\ell)\setminus\{j\}\}$ are not affected by the switching
operations and are also edges in graphs $G_1,G_2',G_2$ and any graph in the connection $P_\tuple$.
To summarize,
if we denote by $U$ the collection of all left vertices which are either adjacent to $j'$ or $j''$,
or at distance two from $i'$ or $i''$ in graph $\Gamma$ then necessarily $U$ is incident to an $m$--standard edge.
Clearly, the cardinality of $U$ is at most $2d+2d(d-1)=2d^2$. Thus,
the complexity of $\mdata(\tuple)$ can be estimated as follows:
first, we choose an $m$--standard edge incident to $U$ in $\Gamma$ (there are at most $2d^3$ choices).
Then we determine corresponding $\ione,\jone,\itwo,\jtwo$ (at most $(d-1)^2 nd$ choices, see Figure~\ref{fig: m-standard}).
After that, we identify the part of the data structure associated with the remaining $(N_{ms}-1)$ $m$--standard
edges; we have at most
$$
{n-1\choose N_{ms}-1}\big(d^2(d-1)^2 n\big)^{N_{ms}-1}
$$
choices for that. The estimate follows.

\end{enumerate}
\end{proof}

\subsection{Complexity of the set of $4$--tuples}


\begin{prop}[Type $1$ counting]\label{prop: number-tuples1}
Let $(\Gamma,\tilde \Gamma)$ be a pair of adjacent graphs in $\BipGSet_n(d)$,
and let $k_1,k_2\geq 1$, $r\in\N_0$, and $u\in \{A,B\}$.
Then the number of $4$--tuples $\tuple=(G_1,G_1',G_2,G_2')$ of subtype $1$--$(k_1,k_2,u,r)$ such that
$\Gamma=P_\tuple[t-1]$ and $\tilde \Gamma=P_\tuple[t]$ for some $t\geq 1$, is bounded above by
$$
\frac{C_r (k_1+k_2)^{r+2}}{\lfloor k_1/2+k_2/2\rfloor !}
\, n^{k_1+k_2-1}d^{3+r+C_{\text{\tiny{\ref{l: 2095203598305298 bis}}}}(16r+8)+ k_1+k_2}
(d-1)^{r+k_1+k_2},
$$
where $C_r>0$ may only depend on $r$.
\end{prop}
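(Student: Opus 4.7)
The argument will proceed by a case analysis on the role played by the fixed pair $(\Gamma,\tilde\Gamma)$ within the connection $P_\tuple$. Since $(\Gamma,\tilde\Gamma)$ is adjacent and appears at some position in $P_\tuple$, it must be either an $m$-standard couple, an $A$-couple (possible only when $u=A$), or an $NS$-couple; the total count splits as the sum of the three contributions. Inside each of these three subcases we further introduce an auxiliary summation over $N_{ms}:=|E_S(\tuple)|$; note that necessarily $0\leq N_{ms}\leq \min(k_1,k_2)$ and, by Lemma~\ref{l: 05610948704987} together with the observation that each non-$m$-standard multiedge of $G_h'$ must contribute at least one vertex to $I_{NS}(\tuple)$, we have $k_1+k_2-2N_{ms}\leq Cr$ for some absolute constant. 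This controls the effective range of the summation.

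For a fixed case and fixed $N_{ms}$ the plan is to reconstruct $\tuple=(G_1,G_1',G_2,G_2')$ stage by stage, carrying a complexity bound at each stage. First, the data $\mdata(\tuple)$ is chosen using Proposition~\ref{p: mdata counting} (with the appropriate item depending on whether $(\Gamma,\tilde\Gamma)$ is $m$-standard, $A$ or $NS$). Second, $I_{NS}(\tuple)$ is chosen using Proposition~\ref{p: complexity of INS}. Third, we must specify the $k_1-N_{ms}$ (respectively, $k_2-N_{ms}$) non-standard multiple edges of $G_1'$ (respectively, $G_2'$): their left vertices lie in $I_{NS}(\tuple)$ but their right vertices and the corresponding simple-path pairs $(i_s',j_s')$ still require selection, contributing combined factors of order $n^{k_1+k_2-2N_{ms}}$ and of order $d(d-1)$ per simple switching. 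Fourth, in the subcase $u=B$, we record the $B$-switching between $G_1'$ and $G_2'$ (which is almost determined by the canonical partition element of $I_{NS}(\tuple)$ containing it). Fifth, the remaining NS-switchings of the two simple paths, whose total number is bounded using Remark~\ref{rem: 02985720958709} by roughly $C_{\text{\tiny\ref{l: 2095203598305298 bis}}}(8r+4)$ per side, account for an additional factor of $d^{C_{\text{\tiny\ref{l: 2095203598305298 bis}}}(16r+8)}$; the base is $d$ since for each switching the relevant pair of multi-edges is essentially fixed by the surrounding data, leaving only the choice of a ``free'' vertex.

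The division by $\lfloor k_1/2+k_2/2\rfloor!$ accounts for the fact that the multiedges of $G_1'$ (and $G_2'$) form an unordered set, while the sequential construction of the simple paths induces a canonical ordering. Since roughly half of the multiedges of each $G_h'$ contribute at the same (free) factor, the correct Stirling-type division yields the factor $\lfloor k_1/2+k_2/2\rfloor!^{-1}$. The polynomial factor $(k_1+k_2)^{r+2}$ comes partly from summing over $N_{ms}$ and partly from the $r^2$-type prefactors appearing in Proposition~\ref{p: complexity of INS}; the explicit dependence on $r$ in the coefficient $C_r$ collects the $8^r r^{O(1)}$ terms that arise.

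The main obstacle lies in the bookkeeping: one must match the exponents of $n$ contributed by $\mdata(\tuple)$, $I_{NS}(\tuple)$ and the non-standard multiedges so that they sum exactly to $k_1+k_2-1$, and ensure that no degree of freedom is counted twice through the overlap of these data. In particular, one needs to track carefully whether a given NS-vertex carries a multiedge of $G_1'$, of $G_2'$, of both, or only serves as a pair in the simple switching, since this affects both the $I_{NS}$ counting in Proposition~\ref{p: complexity of INS} and the multi-edge counting. The $u=B$ case is the most delicate, as the two vertices of the $B$-switching interact non-trivially with the canonical partition and with the complexity bound from Lemma~\ref{l: -298570596098709}; this is where the additional factor $r$ (and thus an extra $(k_1+k_2)$ in the final bound) is absorbed.
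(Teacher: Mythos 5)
Your overall scaffolding (splitting according to whether $(\Gamma,\tilde\Gamma)$ is an $m$--standard, $A$-- or $NS$--couple, fixing $N_{ms}=|E_S(\tuple)|$, and then composing the complexity bounds of Proposition~\ref{p: mdata counting} and Proposition~\ref{p: complexity of INS}) is the same as the paper's, but your third stage contains a genuine gap that breaks the exponent of $n$. You charge a factor of order $n^{k_1+k_2-2N_{ms}}$ to choose the right vertices of the non-standard multiedges and the partner pairs $(i_s',j_s')$. In the correct accounting this stage costs \emph{no} powers of $n$ at all: once $I_{NS}(\tuple)$ is fixed, every switching of the simple paths that operates on $I_{NS}(\tuple)$ has both of its left vertices in $I_{NS}(\tuple)$ (at most $r(r-1)$ choices) and its right vertices forced to be neighbours of those left vertices in graphs that are reconstructible from $\Gamma$ and the already identified portion of the connection (at most $d(d-1)$ choices); likewise the $NS$--couples of $P_\tuple$ themselves are recovered step by step from $(\Gamma,\tilde\Gamma)$ at cost $(rd)^2$ each. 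This is exactly the content of the paper's estimate \eqref{eq: 027619284760498}, which gives $\big((rd)^2\big)^{C_{\text{\tiny\ref{l: 2095203598305298 bis}}}(8r+4)}\big(rd(d-1)(r-1)\big)^{k_1+k_2-2N_{ms}}$, i.e.\ only powers of $r$ and $d$. With your extra $n^{k_1+k_2-2N_{ms}}$ the exponents cannot ``sum exactly to $k_1+k_2-1$'' as you assert: e.g.\ in the $m$--standard, $u=A$ case one gets $(2N_{ms}-2)+2+(k_1+k_2-2N_{ms}-1)+(k_1+k_2-2N_{ms})=k_1+k_2-1+(k_1+k_2-2N_{ms})$, an excess of up to $n^{2r}$, which is fatal in the application (Lemma~\ref{lem: upper bound-small r-2}), where the bound $n^{k_1+k_2-1}$ is needed to cancel against $(nd)^{-(k_1+k_2)}$.

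Two smaller points: in the case where $(\Gamma,\tilde\Gamma)$ is $m$--standard and $u=A$ you never account for the choice of the $A$--switching between $G_1'$ and $G_2'$ (the paper pays $(nd)^2$ for it, and this factor is actually needed to reach the exponent $k_1+k_2-1$ given that the $I_{NS}$ complexity only supplies $n^{k_1+k_2-2N_{ms}-1}$); and your explanation of the factor $\lfloor k_1/2+k_2/2\rfloor!^{-1}$ via ``unordered multiedges versus canonical ordering'' is not how it arises — it comes from the binomial coefficient ${n-1\choose N_{ms}-1}$ (equivalently $1/(N_{ms}-1)!$) in the complexity of $\mdata(\tuple)$, combined with the constraint $k_1+k_2-2N_{ms}\leq 2r$ which forces $N_{ms}\geq \lfloor k_1/2+k_2/2\rfloor-O(r)$ and produces the $(k_1+k_2)^{r+2}$ correction when converting $1/(N_{ms}-1)!$ into $1/\lfloor k_1/2+k_2/2\rfloor!$.
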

\begin{proof}
Let us first fix the number of $m$--standard edges, i.e.\ fix an integer $N_{ms}$
and estimate the number of tuples satisfying the conditions of the proposition under the additional
assumption that $|E_S(\tuple)|=N_{ms}$.

Let us make a remark that, conditioned on $I_{NS}(\tuple)=I$ (for a fixed subset $I\subset[n]$ of size $r$),
the $NS$--couples in $P_\tuple$, the corresponding multiple edges in $G_1'$ and $G_2'$, and the simple switchings
on simple paths from $G_h'$ to $G_h$, $h=1,2$, operating on $I_{NS}(\tuple)=I$,
can all be {\it uniquely} identified with a parameter
taking at most
\begin{equation}\label{eq: 027619284760498}
\big((rd)^2\big)^{C_{\text{\tiny\ref{l: 2095203598305298 bis}}}(8r+4)}\big(rd\,(d-1)(r-1)\big)^{k_1+k_2-2N_{ms}}
\end{equation}
values.
Here, the multiple $\big((rd)^2\big)^{C_{\text{\tiny\ref{l: 2095203598305298 bis}}}(8r+4)}$
corresponds to identification of the $NS$--couples in $P_\tuple$
(where we have applied 
Lemmas~\ref{l: 208762058709283},~\ref{l: 2095203598305298},~\ref{l: 2095203598305298 bis}),
and the multiple $(rd\,(d-1)(r-1))^{k_1+k_2-2N_{ms}}$ corresponds to recovering the simple paths leading from $G_h'$ to $G_h$, $h=1,2$. 
In what follows, we will use that $k_1+k_2-2N_{ms}\leq 2r$ (see Lemma~\ref{l: 2-9572059827098}). 

We shall consider three cases corresponding to the type of the couple $(\Gamma,\tilde \Gamma)$ in the connection $P_\tuple$.

\begin{itemize}

\item Assume first that $(\Gamma,\tilde \Gamma)$ is $m$--standard.
In this case, the complexity of the data structure $\mdata$, according to Proposition~\ref{p: mdata counting},
is
$$
4{n-1 \choose N_{ms}-1}
\big(d^2(d-1)^2n\big)^{N_{ms}-1}d^2.
$$
Further, we consider several subcases.
\begin{itemize}

\item {\it $u=A$ and $r\neq 0$, i.e.\ there are non-standard multiedges.}
There are at most $(nd)^2$ choices for the simple switching between $G_1'$ and $G_2'$. 
Further, applying corresponding part of Proposition~\ref{p: complexity of INS},
we get that the complexity of $I_{NS}(\tuple)$ is bounded above by
$$
8^r n^{k_1+k_2-2N_{ms}-1}\big(rd(d-1)\big)^{r}.
$$
Taking into account \eqref{eq: 027619284760498},
we get the combined upper bound
\begin{align*}
4&{n-1 \choose N_{ms}-1}
\big(d^2(d-1)^2n\big)^{N_{ms}-1}d^2\cdot (nd)^2\cdot
8^r n^{k_1+k_2-2N_{ms}-1}\\
&\cdot \big(rd(d-1)\big)^{r}\big((rd)^2\big)^{C_{\text{\tiny\ref{l: 2095203598305298 bis}}}(8r+4)}(rd\,(d-1)r)^{k_1+k_2-2N_{ms}},
\end{align*}
which can be in turn bounded above by
$$
\frac{C_r}{(N_{ms}-1)!}\,n^{k_1+k_2-1}d^{2+r+C_{\text{\tiny\ref{l: 2095203598305298 bis}}}(16r+8)+k_1+k_2}(d-1)^{r-2+k_1+k_2},
$$
for some $C_r>0$ depending only on $r$. 

\item {\it $u=B$ and $r\neq 0$, i.e.\ there are non-standard multiedges.}
The only difference from the above subcase is that we do not introduce the additional multiple $(nd)^2$ to account for the simple
switching connecting $G_1'$ and $G_2'$ and use the bound
$$
8^r\,r\, n^{k_1+k_2-2N_{ms}+1}\big(rd(d-1)\big)^{r}
$$
for the complexity of $I_{NS}(\tuple)$.
Overall, we obtain the upper bound
\begin{align*}
4&{n-1 \choose N_{ms}-1}
\big(d^2(d-1)^2n\big)^{N_{ms}-1}d^2\cdot
8^r\,r\, n^{k_1+k_2-2N_{ms}+1}\big(rd(d-1)\big)^{r}\\
&\cdot \big((rd)^2\big)^{C_{\text{\tiny\ref{l: 2095203598305298 bis}}}(8r+4)}(rd\,(d-1)r)^{k_1+k_2-2N_{ms}},
\end{align*}
which gives, after simplification, the estimate
$$
\frac{C_r}{(N_{ms}-1)!}\,n^{k_1+k_2-1}d^{r+C_{\text{\tiny\ref{l: 2095203598305298 bis}}}(16r+8)+k_1+k_2}(d-1)^{r-2+k_1+k_2}.
$$

\item {\it $r=0$, i.e.\ there are no non-standard multiedges, and $2N_{ms}=k_1+k_2$.} 
As the data structure $\mdata$ has already been defined,
we can uniquely identify the set $U$ of left vertices which are at distance
at most one from some $m$--standard edge in $G_1'$ (and $G_2'$). The switching between $G_1'$ and $G_2'$
must operate on a vertex from $U$ because the couple $(G_1',G_2')$ is not perfect (see Definition~\ref{def: perfect pair}).
The cardinality of $U$ is at most $N_{ms}+N_{ms}(d-1)=d N_{ms}$. Thus,
the switching connecting $G_1'$ and $G_2'$ can be identified with a parameter taking at most $d^2 N_{ms}\cdot dn$ values.
Thus, we can estimate the overall complexity by
$$
\frac{C N_{ms}}{(N_{ms}-1)!}\,n^{k_1+k_2-1}d^{k_1+k_2+3}(d-1)^{k_1+k_2-2}.
$$

\end{itemize}

\item Assume that $(\Gamma,\tilde \Gamma)$ is the $A$--couple.
We have two subcases:

\begin{itemize}

\item {\it $r\neq 0$, i.e.\ there are non-standard multiedges.}
The complexity of $I_{NS}(\tuple)$ is bounded, using Proposition~\ref{p: complexity of INS}, by
$$
8^r n^{k_1+k_2-2N_{ms}-1}\big(rd(d-1)\big)^{r}.
$$
The complexity of $\mdata(\tuple)$ is given by Proposition~\ref{p: mdata counting}:
$$
\chng{2d^2 {n-1\choose N_{ms}-1}\big(d^2(d-1)^2 n\big)^{N_{ms}}.
}
$$
Combining the estimate with \eqref{eq: 027619284760498},
we get a bound
$$
\chng{\frac{C_r}{(N_{ms}-1)!}\,n^{k_1+k_2-2}d^{2+r+C_{\text{\tiny\ref{l: 2095203598305298 bis}}}(16r+8)
+k_1+k_2}(d-1)^{r+k_1+k_2}.}
$$

\item {\it $r=0$.} 
Applying Proposition~\ref{p: mdata counting}, we get the complexity bound for $\mdata$:
$$
2d^2 {n-1\choose N_{ms}-1}\big(d^2(d-1)^2 n\big)^{N_{ms}},
$$
which can be further simplified to
$$
\frac{C}{(N_{ms}-1)!}\,n^{k_1+k_2-1}d^{k_1+k_2+2}(d-1)^{k_1+k_2}.
$$

\end{itemize}

\item Finally, consider the case when $(\Gamma,\tilde \Gamma)$ is an $NS$--couple.
By Proposition~\ref{p: mdata counting}, the complexity of $\mdata$ is at most
$$
{n\choose N_{ms}}\big(d^2(d-1)^2 n\big)^{N_{ms}}.
$$
We shall consider the following subcases:

\begin{itemize}

\item $u=B$. Then, applying Proposition~\ref{p: complexity of INS}, we get the complexity bound
$$
8^r\,r^2\, n^{k_1+k_2-2N_{ms}-1}\big(rd(d-1)\big)^{r}
$$
for $I_{NS}$.
Thus, in view of \eqref{eq: 027619284760498}, the total complexity is bounded above by
\begin{align*}
&{n\choose N_{ms}}\big(d^2(d-1)^2 n\big)^{N_{ms}}\cdot
8^r\,r^2\, n^{k_1+k_2-2N_{ms}-1}\big(rd(d-1)\big)^{r}\\
&\cdot
\big((rd)^2\big)^{C_{\text{\tiny\ref{l: 2095203598305298 bis}}}(8r+4)}(rd\,(d-1)r)^{k_1+k_2-2N_{ms}},
\end{align*}
which can be simplified to
$$
\frac{C_r}{N_{ms}!}\, n^{k_1+k_2-1}d^{r+C_{\text{\tiny\ref{l: 2095203598305298 bis}}}(16r+8)+ k_1+k_2}
(d-1)^{r+k_1+k_2}.
$$

\item {\it $u=A$ and $r\geq 4$.} Applying Proposition~\ref{p: complexity of INS}, we get the complexity bound
$$
8^r\,r\, n^{k_1+k_2-2N_{ms}-3}\big(rd(d-1)\big)^{r}
$$
for $I_{NS}$. Combining this with an estimate $(nd)^2$ for the number of choices of the $A$--switching
and \eqref{eq: 027619284760498}, we get upper bound
$$
\frac{C_r}{N_{ms}!}\, n^{k_1+k_2-1}d^{2+r+C_{\text{\tiny\ref{l: 2095203598305298 bis}}}(16r+8)+ k_1+k_2}
(d-1)^{r+k_1+k_2}.
$$

\item {\it $u=A$ and $r<4$.} Note that necessarily $2N_{ms}+2=k_1+k_2$. 
From Proposition~\ref{p: complexity of INS}, we get that the complexity of $I_{NS}$ is at most
$$
2d(d-1).
$$
The complexity of the $A$--switching can be estimated by $nd\cdot N_{ms}\cdot d^2$, using the assumption that
the couple of multigraphs is not perfect.
Together with \eqref{eq: 027619284760498}, this gives upper bound
\begin{align*}
&{n\choose N_{ms}}\big(d^2(d-1)^2 n\big)^{N_{ms}}\cdot nd\cdot N_{ms}\cdot d^2\cdot
2d(d-1)\\
&\cdot
\big((rd)^2\big)^{C_{\text{\tiny\ref{l: 2095203598305298 bis}}}(8r+4)}(rd\,(d-1)r)^{k_1+k_2-2N_{ms}},
\end{align*}
simplifying to the upper bound
$$
\frac{C_r}{(N_{ms}-1)!}\, n^{k_1+k_2-1}d^{3+r+C_{\text{\tiny{\ref{l: 2095203598305298 bis}}}}(16r+8)+ k_1+k_2}(d-1)^{r+k_1+k_2}.
$$

\end{itemize}
\end{itemize}

Combining all the above bounds, we get the following: the number of
$4$--tuples $\tuple=(G_1,G_1',G_2,G_2')$ of subtype $1$--$(k_1,k_2,u,r)$ such that
$\Gamma=P_\tuple[t-1]$ and $\tilde \Gamma=P_\tuple[t]$ for some $t\geq 1$, and $|E_S(\tuple)|=N_{ms}$,
is bounded above by
$$
\frac{C_r N_{ms}}{(N_{ms}-1)!}\, n^{k_1+k_2-1}d^{3+r+C_{\text{\tiny{\ref{l: 2095203598305298 bis}}}}(16r+8)+ k_1+k_2}
(d-1)^{r+k_1+k_2}
$$
for some $C_r>0$ depending only on $r$.

Now, we observe that the parameter $N_{ms}$ can take only a constant number of values:
$2N_{ms}\leq k_1+k_2$, whereas $k_1+k_2-2N_{ms}\leq 2r$ (see Lemma~\ref{l: 2-9572059827098}).
Thus, for any admissible $N_{ms}$ we have
\begin{align*}
&\frac{C_r N_{ms}}{(N_{ms}-1)!}\, n^{k_1+k_2-1}d^{3+r+C_{\text{\tiny{\ref{l: 2095203598305298 bis}}}}(16r+8)+ k_1+k_2}
(d-1)^{r+k_1+k_2}\\
&\leq
\frac{C_r' (k_1+k_2)^{r+2}}{\lfloor k_1/2+k_2/2\rfloor !}
\, n^{k_1+k_2-1}d^{3+r+C_{\text{\tiny{\ref{l: 2095203598305298 bis}}}}(16r+8)+ k_1+k_2}
(d-1)^{r+k_1+k_2}.
\end{align*}
The result follows.

\end{proof}

\begin{prop}[Type $2$ counting]\label{prop: number-tuples}
Let $(\Gamma,\tilde \Gamma)$ be a pair of adjacent graphs in $\BipGSet_n(d)$,
and let $k\geq 1$, $r\in\N_0$.
Then the number of $4$--tuples $\tuple=(G_1,G',G_2,G')$ of subtype $2$--$(k,r)$ such that
$\Gamma=P_\tuple[t-1]$ and $\tilde \Gamma=P_\tuple[t]$ for some $t\geq 1$, is bounded above by
$$
\frac{C_r k^{r+1}}{k !}
\, n^{2k-2}d^{r+C_{\text{\tiny{\ref{l: 2095203598305298 bis}}}}(16r+8)+ 2k}
(d-1)^{r+2k},
$$
where $C_r>0$ may only depend on $r$.
\end{prop}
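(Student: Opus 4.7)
The plan is to mirror the proof of Proposition~\ref{prop: number-tuples1}, exploiting the simplifications that arise from the equality $G_1'=G_2'=G'$ and $k_1=k_2=k$. I would fix the number of $m$--standard edges $N_{ms}$ (which is constrained to satisfy $2N_{ms}\leq 2k$ and $2k-2N_{ms}\leq 2r$ by Lemma~\ref{l: 2-9572059827098}, hence takes only $O_r(1)$ values), and then separately estimate the complexities of the three pieces of data that specify an admissible tuple: the set $I_{NS}(\tuple)$, the data structure $\mdata(\tuple)$, and the simple switchings operating on $I_{NS}(\tuple)$ within the two simple paths from $G'$ to $G_1$ and from $G'$ to $G_2$.

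Since $G_1'=G_2'$, there is no switching connecting $G_1'$ and $G_2'$, so the $A$/$B$ classification is irrelevant and no $A$--couple can appear in $P_\tuple$. Thus only two cases for $(\Gamma,\tilde\Gamma)$ need be handled: either it is $m$--standard or it is an $NS$--couple. In either case, conditioned on $I_{NS}(\tuple)=I$ with $|I|=r$, the collection of simple switchings on $I$ appearing in the simple paths from $G'$ to $G_1$ and from $G'$ to $G_2$ can be reconstructed from a parameter taking at most
$$
\big((rd)^2\big)^{C_{\text{\tiny\ref{l: 2095203598305298 bis}}}(8r+4)}\big(rd(d-1)(r-1)\big)^{2k-2N_{ms}}
$$
values, in direct analogy with \eqref{eq: 027619284760498}.

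In the $m$--standard case, I would combine the bound $4\binom{n-1}{N_{ms}-1}(d^2(d-1)^2 n)^{N_{ms}-1}d^2$ from Proposition~\ref{p: mdata counting}(1) for $\mdata(\tuple)$ with the subtype-$2$ bound $8^r n^{2k-2N_{ms}-1}(rd(d-1))^r$ from Proposition~\ref{p: complexity of INS}(1) for $I_{NS}(\tuple)$. After multiplying in the switching estimate above and simplifying, this yields an upper bound of the form
$$
\frac{C_r}{(N_{ms}-1)!}\,n^{2k-2}\,d^{r+C_{\text{\tiny\ref{l: 2095203598305298 bis}}}(16r+8)+2k}(d-1)^{r+2k}.
$$
In the $NS$--couple case, I would instead use Proposition~\ref{p: mdata counting}(2) for $\mdata(\tuple)$ together with the subtype-$2$ statements of Proposition~\ref{p: complexity of INS}(3), splitting on $r\leq 3$ versus $r\geq 4$; the resulting estimate has the same shape. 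The savings of one factor of $n$ and of $d^3$ relative to the type-$1$ bound come precisely from not needing to enumerate a switching between $G_1'$ and $G_2'$, and from the absence of the non-perfect-pair book-keeping that appeared in the type-$1$ $A$-switching count.

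Finally, since $N_{ms}$ ranges over only $O_r(1)$ values (all at least $k-r$), summing over $N_{ms}$ and absorbing the polynomial-in-$k$ factors into $k^{r+1}/k!$ produces the claimed bound. I expect the main obstacle to be purely combinatorial bookkeeping: tracking the exponents of $n$ and $d$ across each sub-estimate and verifying that the advertised savings materialize in every subcase. The structural simplification $G_1'=G_2'$ makes this substantially less delicate than the analogous step in the type-$1$ argument, since no $A$-switching case, no perfect/non-perfect dichotomy at the multigraph level, and no cross-term between the two simple paths and the connecting switching need be separately treated.
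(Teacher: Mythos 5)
Your proposal follows essentially the same route as the paper's proof: fix $N_{ms}$ (constrained by $N_{ms}\leq k$ and $k-N_{ms}\leq r$), bound the complexity of the switchings operating on $I_{NS}(\tuple)$ conditioned on the realization of $I_{NS}(\tuple)$, treat the two cases where $(\Gamma,\tilde\Gamma)$ is $m$--standard or an $NS$--couple via the subtype--$2$ parts of Propositions~\ref{p: mdata counting} and~\ref{p: complexity of INS}, and sum over the $O_r(1)$ admissible values of $N_{ms}$. The only (harmless) differences are that the paper handles the trivial $r=0$ subcase of the $m$--standard branch separately (bounding the count by the complexity of $\mdata(\tuple)$ alone) and keeps slightly sharper intermediate exponents ($n^{2k-3}$ and $(d-1)^{r-2+2k}$), both of which are absorbed into the same final bound.
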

\begin{proof}
As in the previous proof, we fix the number of $m$--standard edges $N_{ms}$
and estimate the number of tuples satisfying the conditions of the proposition under the assumption that $|E_S(\tuple)|=N_{ms}$.

Again, we note that, conditioned on $I_{NS}(\tuple)=I$ (for a fixed subset $I\subset[n]$ of size $r$),
the $NS$--couples in $P_\tuple$, the corresponding multiple edges in $G'$ and the simple switchings
on simple paths from $G'$ to $G_h$, $h=1,2$, operating on $I_{NS}(\tuple)=I$,
can all be {\it uniquely} identified with a parameter
taking at most
\begin{equation}\label{eq: 027619284760498 bis}
\big((rd)^2\big)^{C_{\text{\tiny\ref{l: 2095203598305298 bis}}}(8r+4)}(r^2d\,(d-1))^{2k-2N_{ms}}
\end{equation}
values. In what follows, we will use that $k-N_{ms}\leq r$ (see Lemma~\ref{l: 2-9572059827098}). 

We shall consider two cases corresponding to the type of the couple $(\Gamma,\tilde \Gamma)$ in the connection $P_\tuple$.

\begin{itemize}

\item Assume that $(\Gamma,\tilde \Gamma)$ is $m$--standard.
The complexity of the data structure $\mdata$, according to Proposition~\ref{p: mdata counting},
is
$$
4{n-1 \choose N_{ms}-1}
\big(d^2(d-1)^2n\big)^{N_{ms}-1}d^2.
$$
Further, we consider two subcases.
\begin{itemize}

\item {\it $r\neq 0$, i.e.\ there are non-standard multiedges.}
Applying corresponding part of Proposition~\ref{p: complexity of INS},
we get that the complexity of $I_{NS}(\tuple)$ is bounded above by
$$
8^r n^{2k-2N_{ms}-1}\big(rd(d-1)\big)^{r}.
$$
Taking into account \eqref{eq: 027619284760498 bis},
we get the combined upper bound
\begin{align*}
4&{n-1 \choose N_{ms}-1}
\big(d^2(d-1)^2n\big)^{N_{ms}-1}d^2\cdot 
8^r n^{2k-2N_{ms}-1}\big(rd(d-1)\big)^{r}\\
&\cdot \big((rd)^2\big)^{C_{\text{\tiny\ref{l: 2095203598305298 bis}}}(8r+4)}(rd\,(d-1)r)^{2k-2N_{ms}},
\end{align*}
which can be in turn bounded above by
$$
\frac{C_r}{(N_{ms}-1)!}\,n^{2k-3}d^{r+C_{\text{\tiny\ref{l: 2095203598305298 bis}}}(16r+8)+2k}(d-1)^{r-2+2k},
$$
for some $C_r>0$ depending only on $r$.

\item {\it $r=0$, i.e.\ there are no non-standard multiedges.}
In this subcase, 
we just estimate the overall complexity by complexity of $\mdata$ i.e.\ by
$$
4{n-1 \choose N_{ms}-1}
\big(d^2(d-1)^2n\big)^{N_{ms}-1}d^2\leq \frac{8}{(k-1)!}\,n^{2k-2} d^{2k}(d-1)^{2k-2}.
$$

\end{itemize}

\item $(\Gamma,\tilde \Gamma)$ is an $NS$--couple.
By Proposition~\ref{p: mdata counting}, the complexity of $\mdata$ is at most
$$
{n\choose N_{ms}}\big(d^2(d-1)^2 n\big)^{N_{ms}}.
$$
Applying Proposition~\ref{p: complexity of INS}, we get the complexity bound
$$
8^r\,r\, n^{2k-2N_{ms}-2}\big(rd(d-1)\big)^{r}
$$
for $I_{NS}$ (\chng{where we combined the two subcases $r\leq 3$ and $r\geq 4$ to treat them simultaneously}). Combining this with \eqref{eq: 027619284760498 bis}, we get upper bound
$$
\frac{C_r}{N_{ms}!}\, n^{2k-2}d^{r+C_{\text{\tiny\ref{l: 2095203598305298 bis}}}(16r+8)+ 2k}
(d-1)^{r+2k}.
$$
%
%

\end{itemize}

Finally, as in the previous proposition, we observe
that the parameter $N_{ms}$ can take only a constant number of values:
$N_{ms}\leq k$, and $k-N_{ms}\leq r$ (see Lemma~\ref{l: 2-9572059827098}).
Thus, for any admissible $N_{ms}$ we have
\begin{align*}
&\frac{C_r}{(N_{ms}-1)!}\, n^{2k-2}d^{r+C_{\text{\tiny{\ref{l: 2095203598305298 bis}}}}(16r+8)+ 2k}
(d-1)^{r+2k}\\
&\leq
\frac{C_r' k^{r+1}}{k !}
\, n^{2k-2}d^{r+C_{\text{\tiny{\ref{l: 2095203598305298 bis}}}}(16r+8)+ 2k}
(d-1)^{r+2k}.
\end{align*}
The result follows.

\end{proof}

\begin{prop}[Type $3$ counting]\label{prop: type3 count}
Let $(\Gamma,\tilde \Gamma)$ be a pair of adjacent graphs in $\BipGSet_n(d)$.
Then the number of $4$--tuples $\tuple=(G_1,G_1,G_2,G_2')$ of type $3$ such that
$\Gamma=P_\tuple[t-1]$ and $\tilde \Gamma=P_\tuple[t]$ for some $t\geq 1$, is bounded above by
$$
C_{\text{\tiny\ref{prop: type3 count}}}d^{20}n,
$$
where $C_{\text{\tiny\ref{prop: type3 count}}}>0$ is a universal constant.
\end{prop}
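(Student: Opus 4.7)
The plan is to reconstruct every type $3$ tuple $\tuple=(G_1,G_1,G_2,G_2')$ whose connection $P_\tuple$ passes through $(\Gamma,\tilde\Gamma)$ by a bounded-complexity procedure. Recall from the structural lemma preceding Definition~\ref{def: standard} that every such tuple satisfies $G_2'\in\categ(2)$ and $|I_{NS}(\tuple)|=4$, with $W_1:=I_{NS}(\tuple)$ constituting a single block of the canonical partition; moreover $E_S(\tuple)=\emptyset$, so that every simple switching in $P_\tuple$ operates on an ordered pair of left vertices lying in $W_1$. In particular, $G_1$, $\Gamma$, $\tilde\Gamma$, $G_2$, and $G_2'$ all agree on rows indexed outside $W_1$.

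Since $(\Gamma,\tilde\Gamma)$ is an $NS$--couple of $P_\tuple$ whose two left vertices lie in $W_1$, Lemma~\ref{l: 985709575987} applied with $r_q=4$ bounds the number of possible realizations of $W_1$ by $3nd(d-1)$. To bound the length of the connection, write $W_1=\{a_1,a_2,a_1',a_2'\}$ with $a_1,a_2$ incident to the multiedges of $G_2'$, and directly compare how $\Phi_{\mathrm{main}}$ (which takes $G_2'$ to $G_1$ by operating on both multiedges) and the two simple switchings taking $G_2'$ to $G_2$ modify the four rows in $W_1$; a short bookkeeping shows that $G_1$ and $G_2$ disagree on at most $8$ entries on rows indexed by $W_1$. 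Since $|W_1|=4$, the connection construction invokes Lemma~\ref{l: 2095203598305298}, whose length bound $k+2$ yields $\length{P_\tuple}\leq 10$.

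Once $W_1$ and the position $t\in\{1,\dots,10\}$ of $(\Gamma,\tilde\Gamma)$ in $P_\tuple$ are fixed, each of the remaining at most $9$ switchings of $P_\tuple$ is determined by an ordered pair of left vertices in $W_1$ (at most $|W_1|(|W_1|-1)=12$ options) and an ordered pair of right vertices, each a neighbor of the corresponding left vertex in the current graph (at most $d^2$ options); this specifies both endpoints $G_1=P_\tuple[0]$ and $G_2=P_\tuple[\length{P_\tuple}]$ and contributes a factor at most $(12d^2)^9$. The multigraph $G_2'$ is then recovered from $(G_1,G_2,W_1)$ by choosing the pair $\{a_1,a_2\}\subset W_1$ of left vertices incident to the multiedges of $G_2'$, with at most $\binom{4}{2}\cdot 2=12$ total choices (the factor $2$ absorbing any tie-breaking ambiguity in the pairing of $\{a_1,a_2\}$ with $W_1\setminus\{a_1,a_2\}$); the right vertices $b_1,b_2$ are then read off as the unique positions on rows $a_1,a_2$ where $G_1$ has entry $1$ and $G_2$ has entry $0$. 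Multiplying all the contributions gives
\[
3nd(d-1)\cdot 10\cdot (12d^2)^9\cdot 12\;\leq\; C_{\text{\tiny\ref{prop: type3 count}}}\,d^{20}n,
\]
as claimed. The main delicate point in this program is the length bound $\length{P_\tuple}\leq 10$: were $\length{P_\tuple}$ to grow with $d$, the exponent of $d$ in the final estimate would blow up. The point is that because $|W_1|=4$ and the number of disagreeing entries between $G_1$ and $G_2$ on those four rows is bounded by an absolute constant, Lemma~\ref{l: 2095203598305298} delivers a connection length independent of $d$, which is precisely what confines the exponent of $d$ to $20$.
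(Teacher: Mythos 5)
Your proposal is correct and follows essentially the same route as the paper's proof: a $O(nd^2)$ bound on the realizations of the four-vertex set (the paper derives it directly by the common-neighbor argument that underlies Lemma~\ref{l: 985709575987}, which you simply cite with $r_q=4$), the length bound $\length{P_\tuple}\leq 10$ via Lemma~\ref{l: 2095203598305298}, reconstruction of the path with $O(d^2)$ choices per switching over at most $9$ steps, and an $O(1)$-choice recovery of $G_2'$. The only blemish is cosmetic: in your read-off the column where $G_1=1$, $G_2=0$ on row $a_1$ is $b_2$ (not $b_1$), so the multiedges are obtained by the crossed pairing, but this at most doubles the count and is absorbed in your constant.
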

\begin{proof}
First, applying Lemma~\ref{l: 2095203598305298} and the definition of type $3$ tuples (see Figure~\ref{fig: type3}),
we get that any connection for a tuple of type $3$ has length at most $10$.

Fix any $t\leq 10$. We will bound from above the number of tuples $\tuple$ such that
$\Gamma=P_\tuple[t-1]$ and $\tilde \Gamma=P_\tuple[t]$.
Let $\{i_1,i_2\}$ be the couple of left vertices operated on by the switching taking $\Gamma$ to $\tilde\Gamma$.
Note that the total number of left vertices operated on by the connection for any type $3$ tuple, is four.
We start by bounding the number of choices of the other two left vertices $\{i_3,i_4\}$.
Note that, according to Lemma~\ref{l: 2095203598305298}, at least one of the following must be true:
\begin{itemize}
\item $i_1$ and $i_2$ have no common neighbors in graph $P_\tuple[t-1]$;
\item there is a left vertex $i\in\{i_3,i_4\}$ having at least one common neighbor with $\{i_1,i_2,i_3,i_4\}\setminus\{i\}$
in $P_\tuple[t-1]$.

\end{itemize}

At the same time, we know (see Figure~\ref{fig: type3}) that there are at least two right vertices $j_1,j_2$ such that
$j_h$ has at least two neighbors in $\{i_1,i_2,i_3,i_4\}$, $h=1,2$, in $P_\tuple[t-1]$,
so the second condition above must hold {\it in any case}.
Thus, the choice of $\{i_3,i_4\}$ can be accomplished by choosing one ``free'' left vertex $i_3$ and then
choosing $i_4$ having a common neighbor with $\{i_1,i_2,i_3\}$. This gives 
$3d(d-1)n$ possible realizations.

Having chosen $\{i_1,i_2,i_3,i_4\}$, we need to reconstruct the tuple $\tuple$ by applying simple switching operations
within the set. Note that at each step there are at most $16d^2$ choices for a simple switching operation.
Since $\length{P_\tuple}\leq 10$,
we need at most $9$ operations to reconstruct both $G_1$ and $G_2$.
Once this is done, there is only a constant (independent of $d$) number of possible choices for a pair
of edges in $G_1$ and $G_2$ which become multiedges in $G_2'$ (see Figure~\ref{fig: type3}). This will identify $\tuple$ uniquely.
Thus, we get that the total number of choices for $\tuple$ is bounded above by
$$
O\big(3d(d-1)n\cdot (16d^2)^{9}\big).
$$
The result follows.
\end{proof}

\section{Upper bound on the Dirichlet form of $\tilde f$}\label{sec: Dirichlet}

The goal of this section is to compare the Dirichlet form of a function $f$ on $( \BipGSet_n(d), \Psimple, Q_u)$ to that of its extension on $(\ConfBipGSet_n(d), \Pconfig, Q_c)$ which we constructed in Definition~\ref{def: extension}. Since the Dirichlet form is invariant under translations, we can (and will) assume without loss of generality that $f$ is centered. 
We  
decompose the set of pairs of adjacent multigraphs into several categories. Let us define
\begin{align*}
&\Gamma_1:= \{(G_1',G_2')\in   \BipGSet_n(d)\times   \BipGSet_n(d):\, G_1'\sim G_2'\},\\
 &\Gamma_2:= \{(G_1',G_2')\in  \BipGSet_n(d)\times  \categ([1,2]):\, G_1'\sim G_2' \},\\
 &\Gamma_3:= \{(G_1',G_2'):\;(G_1',G_2')\in  \Perfmatch(k) \mbox{ for some $1\leq k\leq \cconst$}\},\\
 &\Gamma_4:= \{(G_1',G_2')\in  \categ([1,\cconst])\times  \categ([1,\cconst]):\, G_1'\sim G_2'\}\setminus \Gamma_3,\\
 &\Gamma_5:= \{(G_1',G_2')\in   \categ([1,\cconst])\times  \Ugly(\cconst):\, G_1'\sim G_2'\},\\
 &\Gamma_6:=\{(G_1',G_2')\in  \Ugly(\cconst)\times \Ugly(\cconst):\, G_1'\sim G_2'\},
\end{align*}
where $\Ugly(\cconst)$ and $\Perfmatch(k)$ were introduced in Definitions~\ref{def: category} and \ref{def: perfect pair}, respectively. 
Note that given $G_1\in  \BipGSet_n(d)$, any multigraph $G_2\in \ConfBipGSet_n(d)\setminus  \BipGSet_n(d)$ adjacent to $G_1$ necessarily belongs to $\categ([1,2])$. Therefore, the above partition, up to changing the order of graphs, covers all pairs of adjacent elements from $\ConfBipGSet_n(d)$. 

We start by estimating the part of the Dirichlet form of $\tilde f$ involving simple graphs. Recall that $\tilde f(G)$ is only random when $G\in \categ([1,\cconst])$. Therefore, the expectation in 
the second part of the lemma below refers to the randomness of $\tilde f(G_2')$ when $(G_1',G_2')\in \Gamma_2$. 
\begin{lemma}\label{lem: Dirichlet-gamma1}
Let $f:  \BipGSet_n(d)\to \R$ be of mean zero and $\tilde f$ be its randomized extension from Definition~\ref{def: extension}. 
Then
$$
\sum_{(G_1',G_2')\in \Gamma_1} \Pconfig(G_1')Q_c(G_1',G_2') \big(\tilde f(G_1')-\tilde f(G_2')\big)^2
\leq 20\Dir_{\Psimple}(f,f), 
$$
and 
\begin{align*}
\Exp\sum_{(G_1',G_2')\in \Gamma_2} &\Pconfig(G_1')Q_c(G_1',G_2') \big(\tilde f(G_1')-\tilde f(G_2')\big)^2\\
&\leq \Dir_{\Psimple}(f,f)+2\sum_{(G_1',G_2')\in  \BipGSet_n(d) \times  \categ(2)}  \frac{\Pconfig(G_1')Q_c(G_1',G_2')}{|\SNeigh(G_2')|}  \sum_{G\in \SNeigh(G_2')} \big(  f(G_1')- f(G)\big)^2\\
&+\frac{16}{(nd-1)} \sum_{G_2'\in \categ([1,2])}  \frac{\Pconfig(G_2')}{|\SNeigh(G_2')|^2} \sum_{G,G'\in \SNeigh(G_2')} \big( f(G)- f(G')\big)^2.
\end{align*}
\end{lemma}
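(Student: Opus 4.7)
The proof is a direct computation based on the explicit form of $\tilde f$ from Definition~\ref{def: extension}; I would handle the $\Gamma_1$ and $\Gamma_2$ sums separately.

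For $\Gamma_1$, both endpoints are simple, so $Q_c(G_1',G_2')=Q_u(G_1',G_2')$ and $\Pconfig(G_1')=p\,\Psimple(G_1')$ with $p:=\Pconfig(\BipGSet_n(d))\leq 1$. Thus the $\Gamma_1$ sum reduces to $p\sum_{G_1'\sim G_2'\in \BipGSet_n(d)}\Psimple(G_1')Q_u(G_1',G_2')(\tilde f(G_1')-\tilde f(G_2'))^2$. When condition \eqref{eq: condition-blow up-def} holds, $\tilde f=f$ on $\BipGSet_n(d)$ and the bound becomes $2p\,\Dir_{\Psimple}(f,f)\leq 2\Dir_{\Psimple}(f,f)$. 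When it fails, $\tilde f$ is rescaled by $1/\sqrt p$ on $\BipGSet_n(d)\setminus\Expset(\zconst)$; I would split pairs into three groups depending on membership in $\Expset(\zconst)$. Both-in and both-out pairs each give $\leq 2\Dir_{\Psimple}(f,f)$ (the rescaling is homogeneous within each group, so the factor $1/p$ cancels against the external $p$). Cross pairs are controlled via $(\tilde f(G_1')-\tilde f(G_2'))^2\leq 2(f(G_1')-f(G_2'))^2/p+2(1-\sqrt p)^2 f(G_2')^2/p$ (taking $G_2'$ to be the good endpoint), where the first contribution yields another $\Dir_{\Psimple}(f,f)$ term after multiplication by $p$, while the second is controlled by combining the $nd^5$ bound on bad neighbors of a good graph from Lemma~\ref{lem: size-bad graphs}(iii), the rarity of bad graphs from Lemma~\ref{lem: size-bad graphs}(iv), and the concentration estimate of Lemma~\ref{l: 972-987-9587} that forces the $f^2$ mass onto $\Expset(\zconst)^c$.

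For $\Gamma_2$, since $G_2'\in\categ([1,2])$ we have $\tilde f(G_2')=h(G_2')+\xi_{G_2'}\sqrt{w(G_2')}$ with $\xi_{G_2'}$ a standard Gaussian; independence from $\tilde f(G_1')$ (which depends only on the simple graph $G_1'$) yields $\Exp(\tilde f(G_1')-\tilde f(G_2'))^2=(\tilde f(G_1')-h(G_2'))^2+w(G_2')$. The noise term $w(G_2')=\frac{1}{2|\SNeigh(G_2')|^2}\sum_{G,G'\in\SNeigh(G_2')}(f(G)-f(G'))^2$ plugs directly into the third term of the bound, after accounting for the weight $\Pconfig(G_1')Q_c(G_1',G_2')$ and using $|\SNeigh(G_2')|\leq nd$ when $G_2'\in\categ(1)$ and the fact that at most one simple graph is adjacent to a given $G_2'\in\categ(2)$. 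For the mean term $(\tilde f(G_1')-h(G_2'))^2$, I split on the category of $G_2'$. If $G_2'\in\categ(1)$, the adjacency between a simple $G_1'$ and $G_2'$ is realized by a single switching destroying the unique multiedge of $G_2'$, i.e.\ a simple path of length one starting at $G_2'$, so $G_1'\in\SNeigh(G_2')$; Cauchy-Schwarz then gives $(f(G_1')-h(G_2'))^2\leq\frac{1}{|\SNeigh(G_2')|}\sum_{G\in\SNeigh(G_2')}(f(G_1')-f(G))^2$, and summing over $G_1'\in\SNeigh(G_2')$ with the common weight $\Pconfig(G_1')Q_c=4\Pconfig(G_2')/(nd(nd-1))$ (by reversibility) produces $|\SNeigh(G_2')|w(G_2')$, which feeds back into the third term of the bound. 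If $G_2'\in\categ(2)$, then $G_1'\notin\SNeigh(G_2')$ (the switching between them operates on exactly the two multiedges of $G_2'$, forbidden by Definition~\ref{def: simple path}) and the same Cauchy-Schwarz inequality produces the second (middle) term of the bound.

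The main obstacle is the cross-pair analysis in the $\Gamma_1$ sum when condition \eqref{eq: condition-blow up-def} fails: the naive expansion yields an $f^2$ contribution on $\Expset(\zconst)^c$ that does not obviously reduce to $\Dir_{\Psimple}(f,f)$. Its resolution relies on the joint use of the three ingredients above --- the scarcity of cross edges and of bad graphs from Lemma~\ref{lem: size-bad graphs}(iii)-(iv), together with the $f^2$-concentration on $\Expset(\zconst)^c$ guaranteed by Lemma~\ref{l: 972-987-9587} --- which reduce the error to an absolute constant multiple of $\Dir_{\Psimple}(f,f)$, absorbed into the factor 20.
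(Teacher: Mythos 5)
Your overall skeleton is the right one (case split on \eqref{eq: condition-blow up-def}, the algebraic splitting of $\Gamma_1$ according to membership in $\Expset(\zconst)$, the Gaussian second-moment identity $\Exp(\tilde f(G_1')-\tilde f(G_2'))^2=(\tilde f(G_1')-h(G_2'))^2+w(G_2')$ for $\Gamma_2$, and Cauchy--Schwarz against $s$--neighborhood averages), but the key mechanism is missing at exactly the point you flag as the main obstacle. The ingredients you list --- Lemma~\ref{lem: size-bad graphs}(iii), Lemma~\ref{lem: size-bad graphs}(iv) and Lemma~\ref{l: 972-987-9587} --- cannot close the cross-pair estimate. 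With your decomposition the error term is (up to constants) $\sum_{(G_1',G_2')}\Psimple(G_2')Q_c(G_2',G_1')f(G_2')^2$ with $G_2'\in\Expset(\zconst)$; part (iii) reduces it to $\frac{Cd^4}{nd}\sum_{G\in\Expset(\zconst)}\Psimple(G)f(G)^2$ and Lemma~\ref{l: 972-987-9587} transfers this to $\frac{Cd^4}{2^6 nd}\sum_{G\notin\Expset(\zconst)}\Psimple(G)f(G)^2$, but the chain stops there: part (iv) is only a bound on $\Psimple(\Expset(\zconst)^c)$ and says nothing about the $f^2$-mass carried by those graphs, on which $f$ can be arbitrarily large. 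The only remaining generic bound is by $\Var_{\Psimple}(f)$, which is both circular (it is what the Poincar\'e inequality is meant to control) and incompatible with the pure bound $20\,\Dir_{\Psimple}(f,f)$ asserted in the lemma. The missing idea is Lemma~\ref{lem: size-bad graphs}(ii): every $G$ with $\antiexp(G)>\zconst$ has at least $\tfrac12\zconst(nd-2d^4)$ neighbors of strictly smaller $\antiexp$. Combined with (iii) and Lemma~\ref{l: 972-987-9587}, and the smallness of $\frac{4nd^5}{\zconst(nd-2d^4)}$, this yields the paper's inequality \eqref{eq: Dirichlet-bad graphs}, which converts the weighted $f^2$-mass over $\antiexp$-decreasing pairs into squared differences, i.e.\ into $\Dir_{\Psimple}(f,f)$; the cross pairs (bad, good) are a subset of such pairs, so the error is absorbed. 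Whether you hang the $f^2$ on the good endpoint (your split) or on the bad endpoint (the paper's inequality \eqref{eq: dir-1}), this descent mechanism is unavoidable, and it is absent from your proposal.

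A second instance of the same omission occurs in your $\Gamma_2$ argument: you bound the mean term as $(f(G_1')-h(G_2'))^2$, but the quantity appearing is $(\tilde f(G_1')-h(G_2'))^2$, and when \eqref{eq: condition-blow up-def} fails and $G_1'\in\BipGSet_n(d)\setminus\Expset(\zconst)$ one has $\tilde f(G_1')=\Pconfig(\BipGSet_n(d))^{-1/2}f(G_1')\neq f(G_1')$, a rescaling by a factor of order $e^{(d-1)^2/4}$. The lemma's right-hand side is stated with $f(G_1')$, so this discrepancy must be handled; the paper does so by splitting off an extra term of the form $6\sum\Psimple(G_1')Q_c(G_1',G_2')f(G_1')^2$ over $(\BipGSet_n(d)\setminus\Expset(\zconst))\times\categ([1,2])$, bounding the number of adjacent multigraphs by $nd^3$, and again absorbing the result via Lemma~\ref{lem: size-bad graphs}(ii) and \eqref{eq: Dirichlet-bad graphs}. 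So both halves of your argument ultimately require the descent estimate (ii) and the resulting inequality \eqref{eq: Dirichlet-bad graphs}; without them the proof does not go through. The remaining steps of your $\Gamma_2$ analysis (the variance identity for $w(G_2')$, the observation that a simple neighbor of a $\categ(1)$ graph lies in its $s$--neighborhood while a simple neighbor of a $\categ(2)$ graph does not, and the resulting attribution to the middle and last terms) are correct and match the paper.
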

\begin{proof}
 For $(G_1',G_2')\in \Gamma_2$, we can write 
\begin{align*}
\Exp \big(\tilde f(G_1')-\tilde f(G_2')\big)^2&= \Big(\tilde f(G_1')- \frac{1}{|\SNeigh(G_2')|}  \sum_{G\in \SNeigh(G_2')}f(G)\Big)^2 \\
&\quad + \frac{1}{|\SNeigh(G_2')|}  \sum_{G\in \SNeigh(G_2')} \Big(f(G)-\frac{1}{|\SNeigh(G_2')|}  \sum_{G'\in \SNeigh(G_2')}f(G')\Big)^2.
\end{align*}
Using the Cauchy--Schwarz inequality and the reversibility, we get
\begin{align}
\alpha_2:&=\Exp\sum_{(G_1',G_2')\in \Gamma_2} \Pconfig(G_1')Q_c(G_1',G_2') \big(\tilde f(G_1')-\tilde f(G_2')\big)^2\nonumber\\
&\leq \sum_{(G_1',G_2')\in \Gamma_2} \frac{\Pconfig(G_1')Q_c(G_1',G_2') }{|\SNeigh(G_2')|}  \sum_{G_2\in \SNeigh(G_2')} \big( \tilde f(G_1')- f(G_2)\big)^2\nonumber\\
&\quad+  \sum_{(G_1',G_2')\in \Gamma_2} \frac{\Pconfig(G_2')Q_c(G_2',G_1') }{|\SNeigh(G_2')|^2} \sum_{G,G'\in \SNeigh(G_2')} \big( f(G)- f(G')\big)^2.\nonumber
\end{align}
Note that given $(G_1',G_2')\in \Gamma_2$, the category number of $G_2'$ is at most $2$ and thus 
$$
Q_c(G_2',G_1')\leq \frac{8}{nd(nd-1)},
$$
where we used that $G_2'$ has at most $2$ multiedges of multiplicity $2$ each. Moreover, any such $G_2'$ has at most $nd$ adjacent simple graphs $G_1'$. 
Therefore, we have
\begin{align*}
\alpha_2&\leq \sum_{(G_1',G_2')\in  \BipGSet_n(d) \times  \categ([1,2])}  \frac{\Pconfig(G_1')Q_c(G_1',G_2')}{|\SNeigh(G_2')|}  \sum_{G\in \SNeigh(G_2')} \big( \tilde f(G_1')- f(G)\big)^2\\
&\quad+\frac{8}{(nd-1)} \sum_{G_2'\in \categ([1,2])}  \frac{\Pconfig(G_2')}{|\SNeigh(G_2')|^2} \sum_{G,G'\in \SNeigh(G_2')} \big( f(G)- f(G')\big)^2.
\end{align*}
Note that if $(G_1',G_2')\in  \BipGSet_n(d)\times  \categ(1)$, and the two graphs are adjacent
then necessarily $G_1'$ belongs to the $s$-neighborhood of $G_2'$. 
Moreover, in such a case, using Lemma~\ref{l: 2-9857-2985} we have 
\begin{equation}\label{eq: dir-simple}
\frac{2}{nd-1} \frac{\Pconfig(G_2')}{|\SNeigh(G_2')|}\leq \Pconfig(G_1') Q_c(G_1',G_2')\leq \frac{4}{nd-1} \frac{\Pconfig(G_2')}{|\SNeigh(G_2')|},
\end{equation}
where we also used that $G_2'$ has exactly one multiedge of multiplicity $2$ and thus $\Pconfig(G_1') = 2\Pconfig(G_2')$. 
Therefore, we deduce that 
\begin{equation}\label{eq: dirichlet-gamma2}
\begin{split}
\alpha_2&\leq \sum_{(G_1',G_2')\in  \BipGSet_n(d) \times  \categ(2)}  \frac{\Pconfig(G_1')Q_c(G_1',G_2')}{|\SNeigh(G_2')|}  \sum_{G\in \SNeigh(G_2')} \big( \tilde f(G_1')- f(G)\big)^2\\
&\quad +\frac{4}{(nd-1)}\sum_{G_2'\in \categ(1)}\frac{\Pconfig(G_2')}{|\SNeigh(G_2')|^2} \sum_{G,G'\in \SNeigh(G_2')} \big( \tilde f(G)- f(G')\big)^2\\
&\quad+\frac{8}{(nd-1)} \sum_{G_2'\in \categ([1,2])}  \frac{\Pconfig(G_2')}{|\SNeigh(G_2')|^2} \sum_{G,G'\in \SNeigh(G_2')} \big( f(G)- f(G')\big)^2.
\end{split}
\end{equation}
Note that if \eqref{eq: condition-blow up-def} is satisfied then $\tilde f=f$ on $ \BipGSet_n(d)$ and the statement of the lemma follows from the above.
 Thus, for the remainder of the proof we will suppose that \eqref{eq: condition-blow up-def} is violated which, together with the mean zero assumption on $f$ and Lemma~\ref{l: 972-987-9587}, implies
$$
\sum_{G\in \Expset(\zconst)} \Psimple(G)f(G)^2 \leq \frac{1}{2^6} \sum_{G\in \BipGSet_n(d)\setminus\Expset(\zconst)} \Psimple(G)f(G)^2.
$$
Using this together with Lemma~\ref{lem: size-bad graphs}, we can write
\begin{align*}
 &\sum_{\underset{\antiexp(G_1')>\antiexp(G_2')}{(G_1',G_2')\in \BipGSet_n(d)\times\BipGSet_n(d)}  } \Psimple(G_1')Q_c(G_1',G_2')  f(G_2')^2
\\
&\hspace{1cm}= \frac{2}{nd(nd-1)}\sum_{G_2'\in \BipGSet_n(d)} \Psimple(G_2') f(G_2')^2 \vert\{G_1'\in  \BipGSet_n(d):\, G_1'\sim G_2',\, \antiexp(G_1')>\antiexp(G_2')\}\vert\\ 
 &\hspace{1cm}\leq \frac{2nd^5}{nd(nd-1)} \sum_{G_2'\in \BipGSet_n(d)} \Psimple(G_2') f(G_2')^2\\
 &\hspace{1cm}\leq \frac{4nd^5}{nd(nd-1)}\sum_{G_1'\in \BipGSet_n(d)\setminus\Expset(\zconst)} \Psimple(G_1')f(G_1')^2\\
 &\hspace{1cm}\leq  \frac{4nd^5}{\zconst(nd-2d^4)} \sum_{\underset{\antiexp(G_1')>\antiexp(G_2')}{(G_1',G_2')\in \BipGSet_n(d)\times\BipGSet_n(d)}  }  \Psimple(G_1')Q_c(G_1',G_2') f(G_1')^2,
 \end{align*}
 where in the last line we used that, by Lemma~\ref{lem: size-bad graphs}, there are at least $\frac12 \zconst(nd-2d^4)$ adjacent graphs $G_2'$ to a given graph $G_1'\in \BipGSet_n(d)\setminus\Expset(\zconst)$ with $\antiexp(G_1')>\antiexp(G_2')$. By the choice of $\zconst$ in \eqref{eq: parameters} and the assumption on $d$ in \eqref{eq: assumption-d}, 
 this implies that \\
\begin{equation}\label{eq: Dirichlet-bad graphs}
  \sum_{\underset{\antiexp(G_1')>\antiexp(G_2')}{G_1',G_2'\in \BipGSet_n(d)}  } \Psimple(G_1')Q_c(G_1',G_2') \big(f(G_1')-f(G_2')\big)^2
  \geq \frac12   \sum_{\underset{\antiexp(G_1')>\antiexp(G_2')}{G_1',G_2'\in \BipGSet_n(d)}  } \Psimple(G_1')Q_c(G_1',G_2') f(G_1')^2,
 \end{equation}
 where we made use of Cauchy-Schwarz inequality. 
At the level of intuition, the above relation must hold true since violation of \eqref{eq: condition-blow up-def}
essentially means that the function $f$ tends to take greater values on the set $\BipGSet_n(d)\setminus\Expset(\zconst)$
than on the set $\Expset(\zconst)$.
Now, using reversibility, we write 
\begin{align*}
\alpha_1:&= \sum_{(G_1',G_2')\in \Gamma_1} \Pconfig(G_1')Q_c(G_1',G_2') \big(\tilde f(G_1')-\tilde f(G_2')\big)^2\\
&= \sum_{(G_1',G_2')\in \BipGSet_n(d)\setminus\Expset(\zconst)\times\BipGSet_n(d)\setminus\Expset(\zconst)  }\Pconfig(G_1')Q_c(G_1',G_2') \big(\tilde f(G_1')-\tilde f(G_2')\big)^2\\
&\hspace{1cm}+ \sum_{(G_1',G_2')\in\Expset(\zconst)\times\Expset(\zconst)  } \Pconfig(G_1')Q_c(G_1',G_2') \big(\tilde f(G_1')-\tilde f(G_2')\big)^2\\
&\hspace{1cm}+2 \sum_{(G_1',G_2')\in \BipGSet_n(d)\setminus\Expset(\zconst) \times\Expset(\zconst)} \Pconfig(G_1')Q_c(G_1',G_2') \big(\tilde f(G_1')-\tilde f(G_2')\big)^2.
\end{align*}
Note that $Q_c$ and $Q_u$ coincide on $\BipGSet_n(d)\times \BipGSet_n(d)$. Moreover, using the definition of $\tilde f$ and that $\Pconfig(G_1')= \Psimple(G_1')\Pconfig\big(\BipGSet_n(d)\big)\leq \frac12\Psimple(G_1')$, $G_1'\in \BipGSet_n(d)$, we get 
\begin{align*}
\alpha_1&\leq 4\Dir_{\Psimple}(f,f) + 4 \sum_{(G_1',G_2')\in \BipGSet_n(d)\setminus\Expset(\zconst) \times\Expset(\zconst)   } \Psimple(G_1')Q_c(G_1',G_2')  f(G_1')^2, 
\end{align*}
where we made use of the inequality 
\begin{equation}\label{eq: dir-1}
\Bigg(\frac{f(G_1')}{\sqrt{\Pconfig\big(\BipGSet_n(d)\big)}}- f(G_2')\Bigg)^2\leq 
\frac{2}{\Pconfig(\BipGSet_n(d))} f(G_1')^2+ 2 \big( f(G_1')- f(G_2')\big)^2.
\end{equation}
It remains to apply \eqref{eq: Dirichlet-bad graphs} to obtain the required bound for the sum over $\Gamma_1$. 
To get the bound on $\Gamma_2$, we combine  \eqref{eq: dir-1} with \eqref{eq: dirichlet-gamma2} to write 
\begin{align*}
\alpha_2&\leq 
2\sum_{(G_1',G_2')\in  \BipGSet_n(d) \times  \categ(2)}
\frac{\Pconfig(G_1')Q_c(G_1',G_2')}{|\SNeigh(G_2')|}  \sum_{G\in \SNeigh(G_2')}
\big( f(G_1')- f(G)\big)^2\\
&\hspace{1cm}+\frac{2}{\Pconfig(\BipGSet_n(d))} \sum_{(G_1',G_2')\in  \BipGSet_n(d)\setminus \Expset(\zconst) \times  \categ(2)}
\sum_{G\in \SNeigh(G_2')}
\frac{\Pconfig(G_1')Q_c(G_1',G_2')}{|\SNeigh(G_2')|}\,f(G_1')^2\\
&\hspace{1cm}+\frac{8}{(nd-1)}\sum_{G_2'\in \categ(1)}\frac{\Pconfig(G_2')}{|\SNeigh(G_2')|^2} \sum_{G,G'\in \SNeigh(G_2')} \big(
f(G)- f(G')\big)^2\\
&\hspace{1cm} +\frac{4}{(nd-1)}\sum_{G_2'\in \categ(1)}\frac{\Pconfig(G_2')}{|\SNeigh(G_2')|} \sum_{G\in\SNeigh(G_2')\setminus \Expset(\zconst)} 
\frac{2}{\Pconfig(\BipGSet_n(d))} f(G)^2\\
&\hspace{1cm}+\frac{8}{(nd-1)} \sum_{G_2'\in \categ([1,2])}  \frac{\Pconfig(G_2')}{|\SNeigh(G_2')|^2} \sum_{G,G'\in \SNeigh(G_2')} \big( f(G)- f(G')\big)^2\\
&\leq
6\sum_{(G_1',G_2')\in  \BipGSet_n(d)\setminus \Expset(\zconst)   \times\categ([1,2])} \Psimple(G_1')Q_c(G_1',G_2')f(G_1')^2 \\
&\hspace{1cm}+2\sum_{(G_1',G_2')\in  \BipGSet_n(d) \times  \categ(2)}  \frac{\Pconfig(G_1')Q_c(G_1',G_2')}{|\SNeigh(G_2')|}  \sum_{G\in \SNeigh(G_2')} \big(  f(G_1')- f(G)\big)^2\\
&\hspace{1cm}+\frac{16}{(nd-1)} \sum_{G_2'\in \categ([1,2])}  \frac{\Pconfig(G_2')}{|\SNeigh(G_2')|^2} \sum_{G,G'\in \SNeigh(G_2')} \big( f(G)- f(G')\big)^2,
\end{align*}
where we also made use of \eqref{eq: dir-simple}. 
To bound the first term above, note that given $G_1'\in\BipGSet_n(d)\setminus \Expset(\zconst)$, there are at most $nd^3$ multigraphs in $\categ([1,2])$ adjacent to it. Therefore, 
\begin{align*}
&\sum_{(G_1',G_2')\in  \BipGSet_n(d)\setminus \Expset(\zconst)   \times\categ([1,2])} \Psimple(G_1')Q_c(G_1',G_2')f(G_1')^2
\\
&\hspace{1cm}\leq \frac{2nd^3}{nd(nd-1)} \sum_{G_1'\in  \BipGSet_n(d)\setminus \Expset(\zconst) }  \Psimple(G_1')f(G_1')^2\\
&\hspace{1cm}\leq \frac{2nd^3}{\zconst(nd-2d^4)} \sum_{\underset{\antiexp(G_2')<\antiexp(G_1')}{(G_1',G_2')\in  \BipGSet_n(d)\setminus \Expset(\zconst)\times\BipGSet_n(d)}}  \Psimple(G_1')Q_c(G_1',G_2')f(G_1')^2,
\end{align*}
where we made use of Lemma~\ref{lem: size-bad graphs} in the last line. It remains to use \eqref{eq: Dirichlet-bad graphs},  \eqref{eq: parameters} and \eqref{eq: assumption-d} to finish the proof. 
\end{proof}

The next lemma is a preparatory step for sets $\Gamma_3,\Gamma_4$ and $\Gamma_5$,
where we apply standard functional inequalities to estimate $\Exp (\tilde f(G_1')-\tilde f(G_2'))^2$,
with the couple $(G_1',G_2')$ belonging to an appropriate $\Gamma_i$.
In regard to sets $\Gamma_3,\Gamma_4$, we will apply the crucial property of the Gaussian field used in our randomized extension,
namely, that components of the field for adjacent graphs are highly correlated.
\begin{lemma}\label{lem: pre-upper-bound-dir}
Let $f:  \BipGSet_n(d)\to \R$ be of mean zero and let $\tilde f$ be its randomized extension defined previously. 
The following assertions hold:
\begin{itemize}
\item[i.] If $(G_1',G_2')\in  \Gamma_3$ and $\psi_{G_1',G_2'}: \SNeigh(G_1')\to \SNeigh(G_2')$ denotes the bijection from Proposition~\ref{prop: bijection}, then 
\begin{align*}
\Exp \big(\tilde f(G_1')-\tilde f(G_2')\big)^2& \leq \frac{5}{|\SNeigh(G_1')|} \sum_{G\in \SNeigh(G_1')} \big( f(G)- f\circ \psi_{G_1',G_2'}(G)\big)^2 \\
 &\hspace{1cm}+\frac{8}{\gconst |\SNeigh(G_1')|^2} \sum_{G,G'\in \SNeigh(G_1')} \big(f(G)-f(G')\big)^2 \\
 &\hspace{1cm}+\frac{8}{\gconst |\SNeigh(G_2')|^2}\sum_{G,G'\in \SNeigh(G_2')} \big(f(G)-f(G')\big)^2.
\end{align*} 
\item[ii.] If $(G_1',G_2')\in  \Gamma_4$, then 
\begin{align*}
\Exp \big(\tilde f(G_1')-\tilde f(G_2')\big)^2& \leq \frac{5}{|\SNeigh(G_1')||\SNeigh(G_2')|} \sum_{(G_1,G_2)\in \SNeigh(G_1')\times \SNeigh(G_2')} \big( f(G_1)- f(G_2)\big)^2 \\
 &\hspace{1cm}+\frac{8}{\gconst |\SNeigh(G_1')|^2} \sum_{G,G'\in \SNeigh(G_1')} \big(f(G)-f(G')\big)^2 \\
 &\hspace{1cm}+\frac{8}{\gconst |\SNeigh(G_2')|^2}\sum_{G,G'\in \SNeigh(G_2')} \big(f(G)-f(G')\big)^2.
\end{align*}
\item[iii.] If $(G_1',G_2')\in \Gamma_5$, then 
\begin{align*}
\Exp &\big(\tilde f(G_1')-\tilde f(G_2')\big)^2\\
&\leq \frac{1}{|\SNeigh(G_1')|}  \sum_{G\in \SNeigh(G_1')} f(G)^2+  \frac{1}{|\SNeigh(G_1')|^2} \sum_{G,G'\in \SNeigh(G_1')} \big( f(G)- f(G')\big)^2.
\end{align*}
\end{itemize}
\end{lemma}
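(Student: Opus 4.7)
The plan is to exploit the decomposition $\tilde f(G') = h(G') + \xi_{G'}\sqrt{w(G')}$ for every $G' \in \categ([1,\cconst])$. Since $\xi_{G'}$ is mean-zero, the independence between $(h,w)$ (deterministic) and the Gaussian field gives, for any $(G_1',G_2')$ with both graphs in $\categ([1,\cconst])$,
\begin{equation*}
\Exp\big(\tilde f(G_1') - \tilde f(G_2')\big)^2 = \big(h(G_1') - h(G_2')\big)^2 + w(G_1') + w(G_2') - 2\rho\sqrt{w(G_1')w(G_2')},
\end{equation*}
where $\rho := \cov(\xi_{G_1'},\xi_{G_2'})$. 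Rewriting $w(G_1') + w(G_2') - 2\rho\sqrt{w(G_1')w(G_2')} = \big(\sqrt{w(G_1')}-\sqrt{w(G_2')}\big)^2 + 2(1-\rho)\sqrt{w(G_1')w(G_2')}$ and invoking Lemma~\ref{l: 0841847-085-469746} to bound $1-\rho \leq 4/\gconst$, followed by $2\sqrt{w(G_1')w(G_2')}\leq w(G_1')+w(G_2')$, reduces the task for items (i) and (ii) to controlling the mean-difference $(h(G_1')-h(G_2'))^2$ and the standard-deviation-difference $(\sqrt{w(G_1')}-\sqrt{w(G_2')})^2$.

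For item (i), the bijection $\psi=\psi_{G_1',G_2'}$ from Proposition~\ref{prop: bijection} lets me write $h(G_1') - h(G_2') = |\SNeigh(G_1')|^{-1} \sum_{G} (f(G) - f\circ\psi(G))$, and Jensen's inequality gives the desired pointwise $\ell^2$ bound. For the standard-deviation difference, I regard $\sqrt{w(G_1')}$ and $\sqrt{w(G_2')}$ as the $L^2$-norms of $f - h(G_1')$ and $f\circ\psi - h(G_2')$, respectively, both computed with the uniform probability measure on $\SNeigh(G_1')$. The triangle inequality then yields
\begin{equation*}
\big|\sqrt{w(G_1')}-\sqrt{w(G_2')}\big| \leq \big\|(f - f\circ\psi) - (h(G_1') - h(G_2'))\big\|_{L^2} \leq \big\|f - f\circ\psi\big\|_{L^2},
\end{equation*}
using that $h(G_1') - h(G_2')$ is the mean of $f - f\circ\psi$. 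Combining with $w(G_h') = \frac{1}{2|\SNeigh(G_h')|^2}\sum_{G,G'}(f(G)-f(G'))^2$ yields the stated bound. Item (ii) proceeds analogously, only that in the absence of a bijection I work on the product space $\SNeigh(G_1') \times \SNeigh(G_2')$ with the uniform measure, representing $h(G_h')$ as a conditional expectation and $\sqrt{w(G_h')}$ as an $L^2$-norm there; the triangle inequality applied to the functions $(G_1,G_2)\mapsto f(G_1)$ and $(G_1,G_2)\mapsto f(G_2)$ replaces the bijective matching.

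For item (iii), the mean-zero assumption on $f$ together with the last clause of Definition~\ref{def: extension} gives $\tilde f(G_2') = 0$ (since $G_2' \in \Ugly(\cconst)$). Then $\Exp\big(\tilde f(G_1')\big)^2 = h(G_1')^2 + w(G_1')$; bounding $h(G_1')^2 \leq |\SNeigh(G_1')|^{-1}\sum_G f(G)^2$ by Jensen and $w(G_1') \leq |\SNeigh(G_1')|^{-2}\sum_{G,G'}(f(G)-f(G'))^2$ by expanding the variance as a double sum produces the desired inequality. The only slightly delicate point in the whole argument is the $L^2$-norm bound on $|\sqrt{w(G_1')}-\sqrt{w(G_2')}|$; once that is recognized as the triangle inequality for $L^2$-seminorms (centering the $L^2$-function by its mean is automatic since the norm of a centered function is the standard deviation), the rest is mechanical. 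No step presents a fundamental obstacle since the structural work --- the bijection for perfect pairs and the correlation estimate for the Gaussian field --- has already been carried out.
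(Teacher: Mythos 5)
Your proposal is correct and follows essentially the same route as the paper: the same expansion of $\Exp\big(\tilde f(G_1')-\tilde f(G_2')\big)^2$ via the centered, unit-variance Gaussian field with covariance at least $1-4/\gconst$ on adjacent graphs, the bijection $\psi_{G_1',G_2'}$ for $\Gamma_3$, the product space $\SNeigh(G_1')\times\SNeigh(G_2')$ for $\Gamma_4$, and the direct computation with $\tilde f(G_2')=\Exp_{\Psimple}f=0$ for $\Gamma_5$. Your only deviation is cosmetic: by noting that centering by the mean only decreases the $L^2$-norm you bound $\big|\sqrt{w(G_1')}-\sqrt{w(G_2')}\big|$ directly by $\|f-f\circ\psi\|_{L^2}$ (resp.\ its product-space analogue), and you use the exact variance identity for $w$, which gives constants slightly better than the stated $5$ and $8$ and hence implies the lemma.
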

\begin{proof}\hspace{0cm}\\
\begin{itemize}
\item[i.] Let $(G_1',G_2')\in \Gamma_3$. 
Using the definition of $\tilde f$ and the second property in Proposition~\ref{prop: 29870536509385}, we have 
\begin{align*}
\Exp &\big(\tilde f(G_1')-\tilde f(G_2')\big)^2\\
&=  \big(h(G_1')-h(G_2')\big)^2+w(G_1')+w(G_2')-2\sqrt{w(G_1')}\,\sqrt{w(G_2')}\,\Exp(\xi_{G_1'}\xi_{G_2'})
\\
&\leq \big(h(G_1')-h(G_2')\big)^2 + \Big(\sqrt{w(G_1')}-\sqrt{w(G_2')}\Big)^2 +\frac{8}{\gconst} \sqrt{w(G_1')w(G_2')}.
\end{align*}
Note that $w(G_i')= \frac{1}{|\SNeigh(G_i')|} \Big\Vert (f(G')-h(G_i'))_{G'\in \SNeigh(G_i')} \Big\Vert_2^2$ for $i=1,2$, and by the Cauchy-Schwarz inequality we have 
\begin{equation}\label{eq: cauchy-schwarz}
w(G_i')\leq \frac{1}{|\SNeigh(G_i')|^2}\sum_{G,G'\in \SNeigh(G_i')} \Big(f(G)-f(G')\Big)^2.
\end{equation}
Using the triangle inequality and that $|\SNeigh(G_1')|=|\SNeigh(G_2')|$ (recall there is a bijective mapping between
the $s$--neighborhoods), we have
\begin{align*}
\Big(&\sqrt{w(G_1')}-\sqrt{w(G_2')}\Big)^2\\
&\leq \frac{1}{|\SNeigh(G_1')|} \sum_{G'\in \SNeigh(G_1')} \big( f(G')- f\circ \psi_{G_1',G_2'}(G')+ h(G_1')-h(G_2')\big)^2\\
&\leq 2\big(h(G_1')-h(G_2')\big)^2+ \frac{2}{|\SNeigh(G_1')|} \sum_{G'\in \SNeigh(G_1')} \big( f(G')- f\circ \psi_{G_1',G_2'}(G')\big)^2.
\end{align*}
Now, writing $h(G_1')-h(G_2')=  \frac{1}{|\SNeigh(G_1')|} \sum_{G'\in \SNeigh(G_1')} \big(f(G')-f\circ\psi_{G_1',G_2'}(G')\big)$ and using
the Cauchy--Schwarz inequality, we deduce  
\begin{align*}
\Exp &\big(\tilde f(G_1')-\tilde f(G_2')\big)^2 \\
&\leq \frac{5}{|\SNeigh(G_1')|} \sum_{G'\in \SNeigh(G_1')} \big( f(G')- f\circ \psi_{G_1',G_2'}(G')\big)^2 +\frac{8}{\gconst}\sqrt{w(G_1')w(G_2')}.
\end{align*}
It remains to use that $ \sqrt{w(G_1')w(G_2')}\leq \frac12 \big(w(G_1')+w(G_2')\big)$, and use \eqref{eq: cauchy-schwarz} to finish the proof. 
\item[ii.] For $(G_1',G_2')\in \Gamma_4$, the proof follows similar lines to the one above. One only needs to note that 
$$w(G_i')= \frac{1}{|\SNeigh(G_1')| |\SNeigh(G_2')|} \Big\Vert (f(G)-h(G_i'))_{(G,G')\in \SNeigh(G_i')\times \SNeigh(G_{3-i}')} \Big\Vert_2^2,$$ for $i=1,2$, and use the triangle inequality 
to get 
\begin{align*}
&\Big(\sqrt{w(G_1')}-\sqrt{w(G_2')}\Big)^2\\
&\leq \frac{1}{|\SNeigh(G_1')| |\SNeigh(G_2')|}
\sum_{(G_1,G_2)\in \SNeigh(G_1')\times \SNeigh(G_2')} \big( f(G_1)- f(G_2)- h(G_1')+h(G_2')\big)^2\\
&\\
&\leq 2\big(h(G_1')-h(G_2')\big)^2\\
&\quad + \frac{2}{|\SNeigh(G_1')||\SNeigh(G_2')|} \sum_{(G_1,G_2)\in \SNeigh(G_1')\times \SNeigh(G_2')} \big( f(G_1)- f(G_2)\big)^2.
\end{align*}
Writing $h(G_1')-h(G_2')=  \frac{1}{|\SNeigh(G_1')||\SNeigh(G_2')|} \sum_{(G_1,G_2)\in \SNeigh(G_1')\times \SNeigh(G_2')} \big(f(G_1)-f(G_2)\big)$ and using the Cauchy--Schwarz inequality, we deduce 
\begin{align*}
\Exp &\big(\tilde f(G_1')-\tilde f(G_2')\big)^2 \\
&\leq \frac{5}{|\SNeigh(G_1')||\SNeigh(G_2')|} \sum_{(G_1,G_2)\in \SNeigh(G_1')\times \SNeigh(G_2')} \big( f(G_1)- f(G_2)\big)^2 \\
&\quad +\frac{8}{\gconst} \sqrt{w(G_1')w(G_2')},
\end{align*}
where we used the definition of $\tilde f$ and the second property in Proposition~\ref{prop: 29870536509385}. The final step of the proof is identical to the previous case. 
\item[iii.] Let $(G_1',G_2')\in \Gamma_5$. By the definition of $\tilde f$, we can write 
$$
\Exp \big(\tilde f(G_1')-\tilde f(G_2')\big)^2= h(G_1')^2 + w(G_1'). 
$$
It remains to use the Cauchy--Schwarz inequality as before. 
\end{itemize}
\end{proof}

The next lemma uses the above calculations to provide a preliminary bound for the Dirichlet form of the extension. 

\begin{lemma}\label{lem: upper-bound-dir-good}
Let $f: \BipGSet_n(d)\to \R$ satisfy $\Exp_{\Psimple} f=0$, and
let $\tilde f: \ConfBipGSet_n(d)\to \R$ be its randomized extension. Then
\begin{align*}
\Exp\, \Dir_{\Pconfig}(\tilde f, \tilde f)&\leq C\Dir_{\Psimple}(f,f) + C\frac{d}{ n^2}\Var_{\Psimple}(f) \\
&\hspace{1cm}+C\sum_{(G_1',G_2')\in  \Gamma_4} \frac{\Pconfig(G_1')Q_c(G_1',G_2')}{|\SNeigh(G_1')|\cdot |\SNeigh(G_2')|}  \sum_{(G,G')\in \SNeigh(G_1')\times \SNeigh(G_2')} \big( f(G)- f(G')\big)^2\\
&\hspace{1cm}+ \frac{Cd}{n}\sum_{G_1'\in  \categ([1,\cconst])}\frac{\Pconfig (G_1')}{  |\SNeigh(G_1')|^2}  \sum_{G,G'\in \SNeigh(G_1')} \big(f(G)-f(G')\big)^2\\ 
&\hspace{1cm}+C\sum_{(G_1',G_2')\in  \BipGSet_n(d) \times  \categ(2)}
\frac{\Pconfig(G_1')Q_c(G_1',G_2')}{|\SNeigh(G_2')|}  \sum_{G\in \SNeigh(G_2')} \big( f(G_1')- f(G)\big)^2,
\end{align*}
where $C>0$ is a universal constant. 
\end{lemma}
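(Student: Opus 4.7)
The plan is to expand the full Dirichlet form of $\tilde f$ into six pieces according to the decomposition
$$
\Dir_{\Pconfig}(\tilde f,\tilde f)=\frac{1}{2}\sum_{i=1}^{6}\sum_{(G_1',G_2')\in\Gamma_i}\Pconfig(G_1')Q_c(G_1',G_2')\bigl(\tilde f(G_1')-\tilde f(G_2')\bigr)^2,
$$
(taking expectations with respect to the Gaussian field) and then bound each piece separately. The pieces over $\Gamma_1$ and $\Gamma_2$ are delivered verbatim by Lemma~\ref{lem: Dirichlet-gamma1}, producing the $\Dir_{\Psimple}(f,f)$ term and the fifth term on the right-hand side. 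The piece over $\Gamma_6$ vanishes because $\tilde f\equiv \Exp_{\Psimple} f=0$ on $\Ugly(\cconst)$.

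For the pieces over $\Gamma_3,\Gamma_4,\Gamma_5$, I would invoke Lemma~\ref{lem: pre-upper-bound-dir}, which already splits each squared increment $(\tilde f(G_1')-\tilde f(G_2'))^2$ into a ``principal'' averaged piece plus two ``$w$-fluctuation'' pieces weighted by $1/\gconst$. The three $w$-fluctuation contributions coming from $\Gamma_3,\Gamma_4,\Gamma_5$ are directly absorbed into the fourth term of the statement once we observe that the total outgoing rate $\sum_{G_2'\sim G_1'}Q_c(G_1',G_2')$ is $O(1)$ and that $1/\gconst=O(d/n)$. The principal piece for $\Gamma_4$ matches the third term of the conclusion verbatim. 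The principal piece for $\Gamma_3$ must be compared with $\Dir_{\Psimple}(f,f)$: here one swaps the order of summation so that each adjacent couple of simple graphs $(G,G')$ collects a weight coming from those perfect pairs $(G_1',G_2')\in\Perfmatch(k)$ for which $G\in\SNeigh(G_1')$ and $\psi_{G_1',G_2'}(G)=G'$. Proposition~\ref{prop: matching perfect pairs} forces $G_2'$ to be unique given $(G_1',G,G')$, Lemma~\ref{l: 98562098724} bounds the number of admissible $G_1'\in\categ(k)$ by $\frac{(nd)^k}{k!}(d-1)^{2k}$, while Lemma~\ref{l: 2-9857-2985} gives $|\SNeigh(G_1')|\geq (nd)^k/2$. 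Combining these with the exact formula $\Pconfig(G_1')=\Pconfig(\BipGSet_n(d))\Psimple(\cdot)/2^k$ from Lemma~\ref{lem: nice-multigraph-prob and transition}, the total weight on $(G,G')$ becomes
$$
\Psimple(G)\,\Pconfig(\BipGSet_n(d))\cdot\frac{4}{nd(nd-1)}\sum_{k=1}^{\cconst}\frac{(d-1)^{2k}}{2^k k!}\leq \Psimple(G)\cdot\frac{C}{nd(nd-1)},
$$
where the critical cancellation $\Pconfig(\BipGSet_n(d))\cdot e^{(d-1)^2/2}\leq 2$ from \eqref{eq: prob-simple} makes the sum universally bounded in $d$; this produces the $C\,\Dir_{\Psimple}(f,f)$ contribution.

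For the $\Gamma_5$ principal piece, Lemma~\ref{lem: pre-upper-bound-dir}(iii) leaves a term of the form $\frac{1}{|\SNeigh(G_1')|}\sum_{G\in\SNeigh(G_1')}f(G)^2$ weighted by $\Pconfig(G_1')\,Q_c(G_1',G_2')$. Swapping summations again, the coefficient of each $f(G)^2$ involves summing over $k$, over $G_1'\in\categ(k)$ with $G\in\SNeigh(G_1')$, and then over $G_2'\in\Ugly(\cconst)$ adjacent to $G_1'$. Lemma~\ref{lem: nice-multigraph-prob and transition} bounds the last count by $O(kd^3)$ for $k\leq\cconst-2$ and the same exponential cancellation as in the $\Gamma_3$ analysis collapses the $k$-sum to $O((d-1)^2)$, producing a term of the form $O(d/n^2)\,\sum_G\Psimple(G)f(G)^2=O(d/n^2)\Var_{\Psimple}(f)$ (after absorbing polynomial factors of $d$ into the universal constant in the regime $d\leq n^c$, and using $\Exp_{\Psimple}f=0$). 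The contribution of categories $k\in\{\cconst-1,\cconst\}$, where an extra $(nd-2k)(d-1)^2$ term appears in Lemma~\ref{lem: nice-multigraph-prob and transition}, is negligible because the decay $(d-1)^{2\cconst}/\cconst!$ is superexponentially small relative to $e^{(d-1)^2/2}$.

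The main obstacle is the $\Gamma_3$ congestion calculation: although Proposition~\ref{prop: bijection} and Proposition~\ref{prop: matching perfect pairs} provide a clean one-to-one correspondence between $s$--neighborhoods of perfect pairs, the number of multigraphs $G_1'\in\categ(k)$ containing a given simple graph in their $s$--neighborhood grows like $(d-1)^{2k}(nd)^k/k!$, which is enormous. The only reason the final bound is linear in $\Dir_{\Psimple}(f,f)$ is the precise cancellation with $\Pconfig(\BipGSet_n(d))\sim e^{-(d-1)^2/2}$; ensuring this cancellation really materialises — in particular, that the truncation at level $\cconst$ does not destroy it — is the delicate part of the argument.
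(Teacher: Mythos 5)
Your proposal follows essentially the same route as the paper: the same decomposition over $\Gamma_1,\dots,\Gamma_6$ (with $\Gamma_6$ trivial since $\tilde f$ is constant on $\Ugly(\cconst)$), Lemma~\ref{lem: Dirichlet-gamma1} for $\Gamma_1,\Gamma_2$, Lemma~\ref{lem: pre-upper-bound-dir} for $\Gamma_3,\Gamma_4,\Gamma_5$, and for the $\Gamma_3$ congestion exactly the paper's combination of Proposition~\ref{prop: matching perfect pairs}, Lemma~\ref{l: 98562098724}, Lemma~\ref{l: 2-9857-2985} and Lemma~\ref{lem: nice-multigraph-prob and transition}, with the cancellation of $\sum_k (d-1)^{2k}/(2^k k!)$ against $\Pconfig(\BipGSet_n(d))$ via \eqref{eq: prob-simple}. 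In substance this is correct.

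Two local points need repair. First, your absorption of the $\Gamma_5$ ``fluctuation'' term into the fourth term of the statement is justified by ``the total outgoing rate is $O(1)$ and $1/\gconst=O(d/n)$'', but Lemma~\ref{lem: pre-upper-bound-dir}(iii) carries no $1/\gconst$ factor, so an $O(1)$ bound on the rate only yields the fourth term \emph{without} its $d/n$ prefactor. What you actually need, and what the paper uses, is that the total transition rate from any $G_1'\in\categ([1,\cconst])$ into $\Ugly(\cconst)$ is $O(d/n)$: by Lemma~\ref{lem: nice-multigraph-prob and transition} there are at most $4kd^3$ such neighbours when $k\leq\cconst-2$ (and at most $\approx nd^3$ when $k\in\{\cconst-1,\cconst\}$), each with rate $O(1/(nd)^2)$, which together with the choice of $\cconst$ in \eqref{eq: parameters} gives the needed $O(d/n)$. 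Second, for the $\Gamma_5$ principal piece you cannot literally ``absorb polynomial factors of $d$ into the universal constant'': the computation produces a prefactor of order $d(d-1)^2/n^2$ rather than $d/n^2$. This extra $d^2$ is harmless for every downstream use (Theorem~\ref{th: main comparison dirichlet} and the proof of Theorem~\ref{th: poincare} only need $nd$ times this term to be $o(1)$ under \eqref{eq: assumption-d}), so it is not a substantive gap, but you should either track the factor explicitly or state the lemma with the weaker power of $d$.
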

\begin{proof}
Using the chain reversibility, we start by writing 
\begin{align*}
\Exp\,\Dir_{\Pconfig}(\tilde f, \tilde f)
&=  \sum_{(G_1',G_2')\in \Gamma_2\cup \Gamma_5} \Pconfig(G_1')Q_c(G_1',G_2') \Exp\big(\tilde f(G_1')-\tilde f(G_2')\big)^2\\
&+\frac12 \sum_{(G_1',G_2')\in \Gamma_1\cup \Gamma_3\cup \Gamma_4} \Pconfig(G_1')Q_c(G_1',G_2') \Exp\big(\tilde f(G_1')-\tilde f(G_2')\big)^2.
\end{align*}
For $i=1,\ldots,5$, we denote by $\alpha_i$ the corresponding sum over $\Gamma_i$ in the right hand side above. 
Note that a bound on $\alpha_1$ and $\alpha_2$ was provided in Lemma~\ref{lem: Dirichlet-gamma1}. We will estimate each of the remaining $\alpha_i$ separately. 

Given $(G_1',G_2')\in \Gamma_3$, let $\psi_{G_1',G_2'}: \SNeigh(G_1')\to \SNeigh(G_2')$ be the bijection from Propostion~\ref{prop: bijection}. 
Recall from Lemma~\ref{lem: nice-multigraph-prob and transition} that if $G_1'\in \categ(k)$ then 
$$
\Pconfig(G_1')= \frac{(d!)^{2n}}{(nd)! 2^{k}}= \Pconfig\big(\BipGSet_n(d)\big)\frac{\Psimple(G)}{2^k},
$$
for any $G\in \BipGSet_n(d)$. 
Moreover, it follows from the definition of $\Perfmatch(k)$ that $Q_c(G_1',G_2')= \big(nd(nd-1)/2\big)^{-1}$ for every $(G_1',G_2')\in \Perfmatch(k)$, $1\leq k\leq \cconst$.  
Following Lemma~\ref{lem: pre-upper-bound-dir} and using Lemma~\ref{l: 2-9857-2985}, 
we first estimate 
\begin{align*}
\sum_{(G_1',G_2')\in \Gamma_3} &\frac{5\Pconfig (G_1') Q_c(G_1',G_2')}{|\SNeigh(G_1')|} \sum_{G'\in \SNeigh(G_1')} \big( f(G')- f\circ \psi_{G_1',G_2'}(G')\big)^2\\
&\leq 10\Pconfig\big(\BipGSet_n(d)\big) \sum_{(G,G')\in  \Gamma_1}  \Psimple(G) Q_u(G,G')  \big(f(G)-f(G')\big)^2 \sum_{k=1}^{\cconst}\frac{\gamma_k(G,G')}{2^{k} (nd)^k}, 
\end{align*}
where $\gamma_k(G,G')$ denotes the number of pairs $(G_1',G_2')\in \Perfmatch(k)$ such that $G\in \SNeigh(G_1')$ and $G'=\psi_{G_1',G_2'}(G)\in \SNeigh(G_2')$. 
Combining Proposition~\ref{prop: matching perfect pairs} and Lemma~\ref{l: 98562098724}, we get that $\gamma_k(G,G')\leq \frac{(d-1)^{2k}}{k!} (nd)^k$ and hence 
\begin{align*}
\sum_{(G_1',G_2')\in \Gamma_3} &\frac{5\Pconfig (G_1') Q_c(G_1',G_2')}{|\SNeigh(G_1')|} \sum_{G'\in \SNeigh(G_1')} \big( f(G')- f\circ \psi_{G_1',G_2'}(G')\big)^2\\
&\leq 10\Pconfig\big(\BipGSet_n(d)\big) \sum_{(G,G')\in  \Gamma_1}  \Psimple(G) Q_u(G,G')  \big(f(G)-f(G')\big)^2 \sum_{k=1}^{\cconst}\frac{(d-1)^{2k}}{2^{k} k!}\\
&\leq 40 \Dir_{\Psimple}(f,f),
\end{align*}
where we made use of \eqref{eq: prob-simple}. 
Note that for a given $G_1'$ we have $\sum_{G_2'\sim G_1'} Q_c(G_1',G_2')\leq 1$. Using this with Lemma~\ref{lem: pre-upper-bound-dir}, we obtain from the above 
$$
\alpha_3\leq 
40\, \Dir_{\Psimple}(f,f)+\frac{16}{\gconst}\sum_{G_1'\in\categ([1,\cconst])}\frac{\Pconfig (G_1')}{|\SNeigh(G_1')|^2}  \sum_{G,G'\in \SNeigh(G_1')} \big(f(G)-f(G')\big)^2.
$$

To estimate the sum over $\Gamma_4$, we use again Lemma~\ref{lem: pre-upper-bound-dir}  with the fact that $\sum_{G_2'\sim G_1'} Q_c(G_1',G_2')\leq 1$ for a given $G_1'$, and write 
\begin{align*}
 \alpha_4&\leq  5\sum_{(G_1',G_2')\in \Gamma_4} \frac{\Pconfig(G_1')Q_c(G_1',G_2')}{|\SNeigh(G_1')|\cdot |\SNeigh(G_2')|}  \sum_{(G,G')\in \SNeigh(G_1')\times \SNeigh(G_2')} \big( f(G)- f(G')\big)^2\\
&+\frac{16}{\gconst} \sum_{G_1'\in \categ([1,\cconst])}\frac{\Pconfig (G_1')}{  |\SNeigh(G_1')|^2}  \sum_{G,G'\in \SNeigh(G_1')} \big(f(G)-f(G')\big)^2.
\end{align*}

Finally, we estimate $\alpha_5$. 
Take any $G_1'\in \categ(k)$, for some $1\leq k\leq \cconst$. 
Recall that by Lemma~\ref{lem: nice-multigraph-prob and transition}, if $k\leq \cconst-2$, there are at most $4k d^3$ multigraphs $G_2'\in\Ugly(\cconst)$ adjacent to $G_1'$ while if $k\in [\cconst-1,\cconst]$, then there are at most $d^3(4k+n)$ such adjacent multigraphs. 
Thus, using Lemma~\ref{lem: nice-multigraph-prob and transition} and  Lemma~\ref{lem: pre-upper-bound-dir}, we get 
\begin{align*} 
\alpha_5&\leq \sum_{(G_1',G_2')\in \Gamma_5}\mbox{$\frac{2\Pconfig(G_1')}{nd(nd-1)|\SNeigh(G_1')|}$}\bigg(  \sum_{G\in \SNeigh(G_1')} f(G)^2+  \frac{1}{|\SNeigh(G_1')|} \sum_{G,G'\in \SNeigh(G_1')} \big( f(G)- f(G')\big)^2\bigg)\\
&\leq\frac{8d^3 \Pconfig\big(\BipGSet_n(d)\big)}{ nd(nd-1)} \sum_{k\in [1, \cconst-2]}\frac{ k}{2^{k}}\sum_{G_1'\in \categ(k)}   \sum_{G\in \SNeigh(G_1')} \frac{\Psimple(G)}{|\SNeigh(G_1')|}  f(G)^2\\
&+\frac{2d^3 \Pconfig\big(\BipGSet_n(d)\big)}{ nd(nd-1)} \sum_{k\in [\cconst-1, \cconst]}\frac{ (4k+n)}{2^{k}}\sum_{G_1'\in \categ(k)}   \sum_{G\in \SNeigh(G_1')} \frac{\Psimple(G)}{|\SNeigh(G_1')|}  f(G)^2\\
&+\frac{2d^3(n+4\cconst)}{nd(nd-1)}\sum_{G_1'\in \categ([1, \cconst])}  \frac{\Pconfig(G_1')}{|\SNeigh(G_1')|^2} \sum_{G,G'\in \SNeigh(G_1')}   \big( f(G)- f(G')\big)^2\\
&\leq  \frac{16d^3 \Pconfig\big(\BipGSet_n(d)\big)}{ nd(nd-1)}  \sum_{G\in \BipGSet_n(d)} \Psimple(G)  f(G)^2 \sum_{k=1}^{\cconst-2} \frac{(d-1)^{2k}}{2^k(k-1)!}\\
&+\frac{4d^3 \Pconfig\big(\BipGSet_n(d)\big)}{ nd(nd-1)}  \sum_{G\in \BipGSet_n(d)} \Psimple(G)  f(G)^2 \sum_{k\in [\cconst-1, \cconst]}\frac{(4k+n)(d-1)^{2k}}{2^k k!}\\
&+\frac{4d^2}{(nd-1)}\sum_{G_1'\in \categ([1, \cconst])}  \frac{\Pconfig(G_1')}{|\SNeigh(G_1')|^2} \sum_{G,G'\in \SNeigh(G_1')}   \big( f(G)- f(G')\big)^2,
\end{align*}
where we applied Lemmas~\ref{l: 2-9857-2985} and \ref{l: 98562098724}, and used that $4\cconst\leq n$ to get the last inequality. 
Finally, note that by the choice of $\cconst$ in \eqref{eq: parameters}, we have $\sum_{k\in [\cconst-1, \cconst]}\frac{(4k+n)(d-1)^{2k}}{2^k k!}\leq 1$. 
In view of this, we finish the proof after using \eqref{eq: prob-simple}, the assumption on $d$ in \eqref{eq: assumption-d} and the choice of $\gconst$ in \eqref{eq: parameters}, and putting together the above estimates.
\end{proof}

In the sequel, we will bound each of the last three terms in the right hand side of the expression in
Lemma~\ref{lem: upper-bound-dir-good}. To this aim, we will make use of the connections constructed (see Section~\ref{sec: connections}) 
Note that those three terms correspond to three different types of connections (tuples): the ones connecting simple graphs belonging to the $s$-neighborhood of two adjacent multigraphs, those connecting two simple graphs within the same $s$-neighborhood, and the one 
between a simple graph and the $s$-neighborhood of an adjacent multigraph.  
We start with the first type and refer the reader to Definition~\ref{def: connection1}. 
\begin{lemma}[Type $1$ tuples with small $|I_{NS}|$]\label{lem: upper bound-small r-2}
Let $f: \BipGSet_n(d)\to \R$ satisfy $\Exp_{\Psimple} f=0$ and $r_0$ be as in \eqref{eq: parameters}. 
Then we have 
$$
\sum_{\underset{\underset{u\in \{A,B\}}{0\leq r\leq r_0}}{1\leq k_1,k_2\leq \cconst}}  \sum_{\underset{\text{of subtype $1$--$(k_1,k_2,u,r)$}}{(G_1,G_1',G_2,G_2')}} \frac{\Pconfig(G_1')Q_c(G_1',G_2')}{|\SNeigh(G_1')|\cdot |\SNeigh(G_2')|}  \big(f(G_1)-f(G_2)\big)^2\leq C \Dir_{\Psimple}(f,f),
$$
where $C>0$ is a universal constant. 
\end{lemma}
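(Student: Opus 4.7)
The plan is to apply a canonical-path/Cauchy--Schwarz argument using the connections $P_\tuple$ built in Section~\ref{sec: connections}, and then swap the order of summation so that the bound is reduced to a per-edge congestion estimate on the simple graph. For each type $1$ tuple $\tuple=(G_1,G_1',G_2,G_2')$, by Cauchy--Schwarz
$$
\big(f(G_1)-f(G_2)\big)^2\leq \length{P_\tuple}\sum_{t=1}^{\length{P_\tuple}}\big(f(P_\tuple[t])-f(P_\tuple[t-1])\big)^2.
$$
Denoting $W(\tuple):=\Pconfig(G_1')Q_c(G_1',G_2')/(|\SNeigh(G_1')|\cdot|\SNeigh(G_2')|)$, substitution and swap yield
$$
\text{LHS}\leq \sum_{(\Gamma,\tilde\Gamma)\text{ adj.\ in }\BipGSet_n(d)}\big(f(\Gamma)-f(\tilde\Gamma)\big)^2\sum_{\tuple:(\Gamma,\tilde\Gamma)\in P_\tuple}W(\tuple)\,\length{P_\tuple},
$$
and since $\Dir_{\Psimple}(f,f)=\frac{1}{nd(nd-1)|\BipGSet_n(d)|}\sum_{\text{adj}}(f(\Gamma)-f(\tilde\Gamma))^2$, it suffices to show that the inner sum is $\leq C/(|\BipGSet_n(d)|nd(nd-1))$ uniformly in $(\Gamma,\tilde\Gamma)$.

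The key technical step is a sharp uniform bound on $W(\tuple)$ for subtype $1$--$(k_1,k_2,u,r)$ using reversibility. Writing $\Pconfig(G_h')=\Pconfig(\BipGSet_n(d))\,\Psimple(\Gamma)/2^{k_h}$ and using $Q_c(G_1',G_2')=2\mult_{G_1'}(i,j)\mult_{G_1'}(i',j')/(nd(nd-1))$, the detailed-balance identity $\Pconfig(G_1')Q_c(G_1',G_2')=\Pconfig(G_2')Q_c(G_2',G_1')$ implies
$$
\frac{\mult_{G_1'}(i,j)\mult_{G_1'}(i',j')}{2^{k_1}}=\frac{\mult_{G_2'}(i,j')\mult_{G_2'}(i',j)}{2^{k_2}}\leq \frac{4}{2^{\max(k_1,k_2)}}\leq \frac{4}{2^{(k_1+k_2)/2}}.
$$
Combined with $|\SNeigh(G_h')|\geq (nd)^{k_h}/2$ from Lemma~\ref{l: 2-9857-2985}, this gives
$$
W(\tuple)\leq \frac{C\,\Pconfig(\BipGSet_n(d))\,\Psimple(\Gamma)}{2^{(k_1+k_2)/2}(nd)^{k_1+k_2}\,nd(nd-1)}.
$$
This $2^{-(k_1+k_2)/2}$ gain, rather than the naive $2^{-k_1}$, is what will compensate the blow-up of the counting bound for large $k_1+k_2$.

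Next, applying the length bound $\length{P_\tuple}\leq 2|E_S(\tuple)|+C(r+1)=O(k_1+k_2+r)$ from Remark~\ref{rem: 02985720958709} and the complexity bound from Proposition~\ref{prop: number-tuples1}, the contribution of the subtype $1$--$(k_1,k_2,u,r)$ to the inner sum is at most
$$
\frac{\Psimple(\Gamma)}{nd(nd-1)}\cdot \frac{C_r\,\Pconfig(\BipGSet_n(d))\,(k_1+k_2)^{r+3}\,d^{O_{r_0}(1)}(d-1)^{r+k_1+k_2}}{2^{(k_1+k_2)/2}\lfloor (k_1+k_2)/2\rfloor!\cdot n},
$$
where the $n^{k_1+k_2-1}/(nd)^{k_1+k_2}$ from the complexity versus the $(nd)^{-k_1-k_2}$ in $W$ gives a $1/n$ factor. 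Setting $s=k_1+k_2$ and using the identity $\sum_{m\geq 0}((d-1)^2/2)^m/m!=e^{(d-1)^2/2}$, one obtains
$$
\sum_{s\geq 2}\frac{s^{r+3}(d-1)^s}{2^{s/2}\lfloor s/2\rfloor!}\leq P_r(d)\,e^{(d-1)^2/2}
$$
for some polynomial $P_r$ of degree $O(r)$. Multiplying by $\Pconfig(\BipGSet_n(d))\leq 2e^{-(d-1)^2/2}$ (by \eqref{eq: prob-simple}) cancels exactly the $e^{(d-1)^2/2}$, leaving a bound polynomial in $d$. Summing over $u\in\{A,B\}$ and $r\in\{0,\dots,r_0\}$ (finitely many values) the total is $d^{O_{r_0}(1)}/n$, which is $o(1)$ by \eqref{eq: assumption-d}, hence $\leq C$.

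The main obstacle is precisely the delicate cancellation in the last step: without exploiting reversibility to obtain the $2^{-\max(k_1,k_2)}$ factor, only the naive $2^{-k_1}$ bound from $\Pconfig(G_1')$ is available, and the resulting series $\sum_s (d-1)^s/\lfloor s/2\rfloor!\sim e^{(d-1)^2}$ cannot be tamed by the Gaussian factor $e^{-(d-1)^2/2}$ for $d\gg \sqrt{\log n}$. All remaining difficulty is bookkeeping to ensure every exponent of $d$ is bounded by $O_{r_0}(1)$ (using $r\leq r_0=40$ and $C_{\text{\tiny\ref{l: 2095203598305298 bis}}}=3/2$), so that \eqref{eq: assumption-d} suffices.
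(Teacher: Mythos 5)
Your proposal is correct and follows essentially the same route as the paper: Cauchy--Schwarz along the connections $P_\tuple$, a swap of summation, the congestion bound of Proposition~\ref{prop: number-tuples1} together with the $s$--neighborhood cardinalities of Lemma~\ref{l: 2-9857-2985}, and the cancellation of $\sum_k (d-1)^{2k}/(2^k k!)\le e^{(d-1)^2/2}$ against $\Pconfig(\BipGSet_n(d))\le 2e^{-(d-1)^2/2}$. The only (cosmetic) difference is that you obtain the factor $2^{-(k_1+k_2)/2}$ via detailed balance, whereas the paper simply uses $\Pconfig(G_1')Q_c(G_1',G_2')\lesssim 2^{-k_1}$ together with the observation that adjacent multigraphs have category numbers differing by at most $2$, so the reversibility trick you single out as essential is in fact not needed.
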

\begin{proof}
Let $P_\tuple$ be the  connection corresponding to a tuple $\tuple=(G_1,G_1',G_2,G_2')$ and $L_\tuple$ its length. 
Then we can write 
\begin{align*}
\alpha:&=\sum_{\underset{\underset{u\in \{A,B\}}{0\leq r\leq r_0}}{1\leq k_1,k_2\leq \cconst}}  \sum_{\underset{\text{of subtype $1$--$(k_1,k_2,u,r)$}}{(G_1,G_1',G_2,G_2') }} \frac{\Pconfig(G_1')Q_c(G_1',G_2')}{|\SNeigh(G_1')|\cdot |\SNeigh(G_2')|}  \big(f(G_1)-f(G_2)\big)^2 \\
&=\sum_{\underset{\underset{u\in \{A,B\}}{0\leq r\leq r_0}}{1\leq k_1,k_2\leq \cconst}}  \sum_{\underset{\text{of subtype $1$--$(k_1,k_2,u,r)$}}{\tuple=(G_1,G_1',G_2,G_2') }}   \frac{\Pconfig(G_1')Q_c(G_1',G_2')}{|\SNeigh(G_1')|\cdot |\SNeigh(G_2')|}   \big(f(P_\tuple[0])-f(P_\tuple[L_\tuple])\big)^2.
\end{align*}
Using the Cauchy--Schwarz inequality, we get 
$$
\alpha
\leq\sum_{\underset{\underset{u\in \{A,B\}}{0\leq r\leq r_0}}{1\leq k_1,k_2\leq \cconst}}  \sum_{\underset{\text{of subtype $1$--$(k_1,k_2,u,r)$}}{\tuple=(G_1,G_1',G_2,G_2') }}   \frac{L_\tuple\, \Pconfig(G_1')Q_c(G_1',G_2')}{|\SNeigh(G_1')|\cdot |\SNeigh(G_2')|} \sum_{t=1}^{L_\tuple}\big(f(P_\tuple[t])-f(P_\tuple[t-1])\big)^2. 
$$
Recall that the category numbers of two adjacent multigraphs differ by at most $2$  and that, in view of Remark~\ref{rem: 02985720958709},
$L_\tuple\leq 2\max(k_1,k_2)+C_{\text{\tiny\ref{l: 2095203598305298 bis}}}(8r+4)$.
Moreover, note that 
$$
\Pconfig(G_1')Q_c(G_1',G_2')\leq \frac{8(d!)^{2n}}{2^{k_1}(nd)!\, nd(nd-1)}. 
$$
Using this  together with the estimates of Lemma~\ref{l: 2-9857-2985} and assuming that
$n$ is large enough, we deduce that for some universal constant $C>0$,
\begin{align*}
\alpha
&\leq C \sum_{\underset{\underset{u\in \{A,B\}}{0\leq r\leq r_0}}{\underset{\vert k_1-k_2\vert \leq 2}{1\leq k_1,k_2\leq \cconst} }} \, \sum_{\underset{\text{of subtype $1$--$(k_1,k_2,u,r)$}}{\tuple }} \frac{k_1 (d!)^{2n}}{2^{k_1}(nd)!\, nd(nd-1) (nd)^{k_1+k_2}}\sum_{t=1}^{L_\tuple}  \big(f(P_\tuple[t])-f(P_\tuple[t-1])\big)^2\\
&\leq C \sum_{\underset{\underset{u\in \{A,B\}}{0\leq r\leq r_0}}{\underset{\vert k_1-k_2\vert \leq 2}{1\leq k_1,k_2\leq \cconst} }} \frac{k_1 \Pconfig\big(\BipGSet_n(d)\big)}{2^{k_1}(nd)^{k_1+k_2}} \sum_{\substack{H,H'\in \BipGSet_n(d)\\ H\sim H'}} \Psimple(H)Q_u(H,H') \big(f(H)-f(H')\big)^2 \gamma_{k_1,k_2,u,r}(H,H'),
\end{align*}
where $\gamma_{k_1,k_2,u,r}(H,H')$ is the number of tuples $\tuple=(G_1,G_1',G_2,G_2')$ of subtype $1$--$(k_1,k_2,u,r)$ such that $(H,H')$ is a pair of adjacent graphs in $P_\tuple$. Applying Proposition~\ref{prop: number-tuples1}, we deduce 
\begin{align*}
\alpha
&\leq C'\, \Pconfig\big(\BipGSet_n(d)\big) \Dir_{\Psimple}(f,f) \sum_{\underset{\underset{u\in \{A,B\}}{0\leq r\leq r_0}}{1\leq k_1,k_2\leq \cconst, \vert k_1-k_2\vert \leq 2 }} \frac{d^{3+r+C_{\text{\tiny{\ref{l: 2095203598305298 bis}}}}(16r+8)}(k_1+k_2)^{r+3}}{n 2^{k_1} \lfloor k_1/2+k_2/2\rfloor !}
\, (d-1)^{r+k_1+k_2}\\
&\leq C''\, \frac{d^{5+2r_0+C_{\text{\tiny{\ref{l: 2095203598305298 bis}}}}(16r_0+8)}(2\cconst)^{r_0+3}}{n} \Pconfig\big(\BipGSet_n(d)\big) \Dir_{\Psimple}(f,f) \sum_{k_1=1}^{\cconst}  \frac{(d-1)^{2k_1}}{2^{k_1} k_1!}\\
&\leq \widetilde C\,  \frac{d^{5+2r_0+C_{\text{\tiny{\ref{l: 2095203598305298 bis}}}}(16r_0+8)}(2\cconst)^{r_0+3}}{n} \Dir_{\Psimple}(f,f),
\end{align*}
for some appropriate universal constants $C''>0$ and $\widetilde C>0$. Applying \eqref{eq: parameters} and \eqref{eq: assumption-d}, we finish the proof. 
\end{proof}

Next, we consider the sum over type $2$ tuples with not many non-standard edges (see Definition~\ref{def: connection2}). 

\begin{lemma}[Type $2$ tuples with small $|I_{NS}|$]\label{lem: upper bound-small r-1}
Let $f: \BipGSet_n(d)\to \R$ satisfy $\Exp_{\Psimple} f=0$ and $r_0$ be as in \eqref{eq: parameters}. 
Then, we have 
$$
 \frac{d}{n}\sum_{\underset{0\leq r\leq r_0}{1\leq k\leq \cconst}}  \sum_{\underset{\text{of subtype $2$--$(k,r)$}}{(G_1,G_1',G_2,G_1') }}   \frac{\Pconfig (G_1')}{ |\SNeigh(G_1')|^2}  \big(f(G_1)-f(G_2)\big)^2\leq C\Dir_{\Psimple}(f,f),
$$ 
where $C>0$ is a universal constant. 
\end{lemma}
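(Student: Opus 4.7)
The plan is to mirror the argument used for Lemma~\ref{lem: upper bound-small r-2} but now invoking Proposition~\ref{prop: number-tuples} in place of Proposition~\ref{prop: number-tuples1}. First, for any tuple $\tuple=(G_1,G_1',G_2,G_1')$ of subtype $2$--$(k,r)$ with connection $P_\tuple$ of length $L_\tuple$, I would write
\[
(f(G_1)-f(G_2))^2=(f(P_\tuple[0])-f(P_\tuple[L_\tuple]))^2\leq L_\tuple\sum_{t=1}^{L_\tuple}(f(P_\tuple[t])-f(P_\tuple[t-1]))^2,
\]
by Cauchy--Schwarz, and use Remark~\ref{rem: 02985720958709} to bound $L_\tuple\leq 2k+C_{\text{\tiny\ref{l: 2095203598305298 bis}}}(8r+4)\leq C\,k$ (since $r\leq r_0$ is bounded by a constant). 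Substituting the explicit formula $\Pconfig(G_1')=(d!)^{2n}/((nd)!\,2^{k})$ from Lemma~\ref{lem: nice-multigraph-prob and transition} and the lower bound $|\SNeigh(G_1')|\geq (nd)^k/2$ from Lemma~\ref{l: 2-9857-2985} produces a prefactor of roughly $k\,(d!)^{2n}/(2^k(nd)!(nd)^{2k})$ per term.

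Next, I would exchange the order of summation, so the right-hand side becomes a sum over adjacent pairs $(H,H')\in\BipGSet_n(d)\times\BipGSet_n(d)$ of $(f(H)-f(H'))^2$ weighted by the number $\gamma_{k,r}(H,H')$ of subtype $2$--$(k,r)$ tuples whose connection passes through $(H,H')$. Here Proposition~\ref{prop: number-tuples} gives the key bound
\[
\gamma_{k,r}(H,H')\leq \frac{C_r\,k^{r+1}}{k!}\,n^{2k-2}\,d^{r+C_{\text{\tiny\ref{l: 2095203598305298 bis}}}(16r+8)+2k}(d-1)^{r+2k}.
\]
Combining this with the prefactor from the previous step and the identity $\Psimple(H)Q_u(H,H')=\frac{2}{nd(nd-1)}\Psimple(H)$, I would extract $\Dir_{\Psimple}(f,f)$ and be left with an expression of the form
\[
\frac{d}{n}\cdot C\,\Pconfig(\BipGSet_n(d))\,\Dir_{\Psimple}(f,f)\sum_{k=1}^{\cconst}\sum_{r=0}^{r_0}\frac{k^{r+2}\,d^{r+C_{\text{\tiny\ref{l: 2095203598305298 bis}}}(16r+8)+2k}(d-1)^{r+2k}}{2^k\,k!\,n^2\,(nd)^{2k}\cdot nd(nd-1)}\cdot n^{2k-2},
\]
which after simplification collapses to a geometric-type sum $\sum_{k\geq 1}(d-1)^{2k}/(2^k k!)\leq e^{(d-1)^2/2}$, cancelling $\Pconfig(\BipGSet_n(d))\sim e^{-(d-1)^2/2}$ from \eqref{eq: prob-simple}. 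The remaining polynomial factor $d^{O(r_0)}\cconst^{r_0+O(1)}/n^{3}$ is $o(1)$ under \eqref{eq: parameters} and \eqref{eq: assumption-d}, so the whole expression is $\leq C\,\Dir_{\Psimple}(f,f)$.

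The main step that needs care is the exponent bookkeeping: the extra $\frac{d}{n}$ prefactor on the left-hand side of the lemma must be verified to combine with the powers of $n$, $d$ and $k$ appearing in Proposition~\ref{prop: number-tuples} so that the sum over $k$ and $r$ converges uniformly and yields a constant bound. The proof of Lemma~\ref{lem: upper bound-small r-2} indicates that the analogous estimate worked with a $1/n$ factor coming from $\Psimple(H)/\Pconfig(G_1')$; here the additional $d/n$ from the statement compensates for the fact that the Type~$2$ sum does not carry a transition probability $Q_c(G_1',G_2')\leq 2/(nd(nd-1))$ like the Type~$1$ case does, so the numerics should balance. Apart from this accounting, every ingredient (length bound, complexity of tuples, cardinality of $s$--neighborhoods) is already in place.
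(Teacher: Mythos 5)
Your route is the paper's own proof of this lemma, step for step: Cauchy--Schwarz along the connection $P_\tuple$, the length bound of Remark~\ref{rem: 02985720958709}, the formula $\Pconfig(G_1')=(d!)^{2n}/((nd)!\,2^k)$ together with $|\SNeigh(G_1')|\geq (nd)^k/2$ from Lemma~\ref{l: 2-9857-2985}, an exchange of summation over adjacent pairs $(H,H')$, and Proposition~\ref{prop: number-tuples} to control $\gamma_{k,r}(H,H')$. The one substantive problem is the exponent bookkeeping in your displayed bound --- exactly the step you flagged. Writing $(d!)^{2n}/(nd)!=\Pconfig\big(\BipGSet_n(d)\big)\,\Psimple(H)$ and then replacing $\Psimple(H)$ by $\Psimple(H)Q_u(H,H')\cdot nd(nd-1)/2$ puts the factor $nd(nd-1)$ in the \emph{numerator}, not the denominator, and the extra $n^2$ in your denominator has no source. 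With the correct accounting the powers of $n$ cancel exactly:
$$
\frac{nd(nd-1)}{2^k (nd)^{2k}}\,\gamma_{k,r}(H,H')\;\leq\; \frac{C_r\,k^{r+1}}{2^k\,k!}\,d^{\,2+r+C_{\text{\tiny\ref{l: 2095203598305298 bis}}}(16r+8)}(d-1)^{r+2k},
$$
with no negative power of $n$ remaining. Summing the factor $(d-1)^{2k}/(2^k k!)$ over $k$ and cancelling $e^{\pm(d-1)^2/2}$ via \eqref{eq: prob-simple}, the sum without the $d/n$ prefactor is bounded by $C\,\cconst^{r_0+2}d^{\,2+2r_0+C_{\text{\tiny\ref{l: 2095203598305298 bis}}}(16r_0+8)}\,\Dir_{\Psimple}(f,f)$; it is then precisely the $d/n$ prefactor in the statement, combined with \eqref{eq: parameters} and \eqref{eq: assumption-d} ($\cconst=d^2\lfloor\log n\rfloor$, $d\leq n^{1/1143}$), that brings this down to $C\,\Dir_{\Psimple}(f,f)$. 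Your claimed spare factor of order $1/n^{3}$ does not exist, so an inequality chain based on your display as written would not be a legitimate upper bound; once the accounting is corrected as above, the remaining ingredients of your argument (the length bound $L_\tuple\leq Ck$ for $r\leq r_0$, the tuple-counting estimate, the geometric sum in $k$) go through and coincide with the paper's proof.
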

\begin{proof}
For a tuple $\tuple=(G_1,G_1',G_2,G_1')$ of subtype $2$--$(k,r)$, let $P_\tuple$ be the corresponding connection and $L_\tuple$ its length. 
Then we can write 
\begin{align*}
\alpha:&=\sum_{\underset{0\leq r\leq r_0}{1\leq k\leq \cconst}}  \sum_{\underset{\text{of subtype $2$--$(k,r)$}}{(G_1,G_1',G_2,G_1') }}  \frac{\Pconfig (G_1')}{|\SNeigh(G_1')|^2}  \big(f(G_1)-f(G_2)\big)^2 \\
&=\sum_{\underset{0\leq r\leq r_0}{1\leq k\leq \cconst}}  \sum_{\underset{\text{of subtype $2$--$(k,r)$}}{(G_1,G_1',G_2,G_1') }}  \frac{\Pconfig (G_1')}{ |\SNeigh(G_1')|^2}   \big(f(P_\tuple[0])-f(P_\tuple[L_\tuple])\big)^2. 
\end{align*}
Using the Cauchy--Schwarz inequality, we get 
$$
\alpha
\leq\sum_{\underset{0\leq r\leq r_0}{1\leq k\leq \cconst}}  \sum_{\underset{\text{of subtype $2$--$(k,r)$}}{(G_1,G_1',G_2,G_1') }}  \frac{L_{\tuple}\, \Pconfig (G_1')}{ |\SNeigh(G_1')|^2}\sum_{t=1}^{L_\tuple}  \big(f(P_\tuple[t])-f(P_\tuple[t-1])\big)^2. 
$$
Now note that $\Pconfig (G_1')= \frac{(d!)^{2n}}{(nd)! 2^{k}}$ and recall that, in view of Remark~\ref{rem: 02985720958709},
$L_\tuple\leq 2k+C_{\text{\tiny\ref{l: 2095203598305298 bis}}}(8r+4)$. Using this together with Lemma~\ref{l: 2-9857-2985} and assuming that $n$ is large enough, we get for some universal constant $C$ that 
\begin{align*}
\alpha&\leq C\,\cconst\sum_{\underset{0\leq r\leq r_0}{1\leq k\leq \cconst}}  \sum_{\underset{\text{of subtype $2$--$(k,r)$}}{(G_1,G_1',G_2,G_1') }} \frac{ (d!)^{2n}}{2^k (nd)! (nd)^{2k}}\sum_{t=1}^{L_\tuple}  \big(f(P_\tuple[t])-f(P_\tuple[t-1])\big)^2\\
&\leq C\,\cconst\Pconfig\big(\BipGSet_n(d)\big)\sum_{H,H'\in \BipGSet_n(d): H\sim H'} \Psimple(H)Q_u(H,H') \big(f(H)-f(H')\big)^2 \sum_{\underset{0\leq r\leq r_0}{1\leq k\leq \cconst}}  \frac{\gamma_{k,r}(H,H')}{2^k (nd)^{2k-2}},
\end{align*}
where $\gamma_{k,r}(H,H')$ denotes the number of tuples $\tuple=(G_1,G_1',G_2,G_1')$ of subtype $2$--$(k, r)$ such that $(H,H')$ is a pair of adjacent graphs in $P_\tuple$. 
Using Proposition~\ref{prop: number-tuples}, we obtain 
\begin{align*}
\alpha&\leq 
C'\,\cconst\,\Pconfig\big(\BipGSet_n(d)\big)\sum_{H,H'\in \BipGSet_n(d): H\sim H'} \Psimple(H)Q_u(H,H') \big(f(H)-f(H')\big)^2\times\\
&\hspace{1cm}\sum_{\underset{0\leq r\leq r_0}{1\leq k\leq \cconst}}
\frac{k^{r+1}}{2^k\,k !}
\, d^{2+r+C_{\text{\tiny{\ref{l: 2095203598305298 bis}}}}(16r+8)}
(d-1)^{r+2k},
\end{align*}
whence
$$
\alpha\leq C''\,\cconst^{r_0+2}d^{2+2r_0+C_{\text{\tiny{\ref{l: 2095203598305298 bis}}}}(16r_0+8)}\Dir_{\Psimple}(f,f),
$$
for some universal constant $C''$. The proof follows from the choice of parameters in \eqref{eq: parameters} and \eqref{eq: assumption-d}, and the assumption that $n$ is large enough. 
\end{proof}

The next lemma is needed to deal with tuples of type $1$ and $2$ with many non-standard edges. 

\begin{lemma}[Type $1$ and $2$ tuples with large $|I_{NS}|$]\label{lem: multigraph with large r-2}
Let $f: \BipGSet_n(d)\to \R$ satisfy $\Exp_{\Psimple} f=0$ and $r_0$ be as in \eqref{eq: parameters}. Then, we have
$$
\sum_{\underset{\underset{u\in \{A,B\}}{r> r_0}}{1\leq k_1,k_2\leq \cconst}}  \sum_{\underset{\text{of subtype $1$--$(k_1,k_2,u,r)$}}{(G_1,G_1',G_2,G_2') }} \frac{\Pconfig(G_1')Q_c(G_1',G_2')}{|\SNeigh(G_1')|\cdot |\SNeigh(G_2')|}\big(f(G_1)-f(G_2)\big)^2 \leq \frac{d}{n^2} \Var_{\Psimple}(f),
$$
and 
$$
 \frac{d}{n}\sum_{\underset{r> r_0}{1\leq k\leq \cconst}}  \sum_{\underset{\text{of subtype $2$--$(k,r)$}}{(G_1,G_1',G_2,G_1') }}  \frac{\Pconfig(G_1')}{|\SNeigh(G_1')|^2}\big(f(G_1)-f(G_2)\big)^2 \leq \frac{d}{n^2} \Var_{\Psimple}(f).
 $$
\end{lemma}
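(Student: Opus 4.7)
Both inequalities follow from the same crude strategy; I describe the first one (the second is identical after Proposition~\ref{prop: number-tuples} replaces Proposition~\ref{prop: number-tuples1}, with the extra $d/n$ prefactor in the type~2 statement absorbed into the final estimate).

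First, I would abandon the connection-based decomposition of $(f(G_1)-f(G_2))^2$ used in Lemmas~\ref{lem: upper bound-small r-2} and~\ref{lem: upper bound-small r-1}---it becomes wasteful once the connection $P_\tuple$ is long---and instead apply the crude pointwise bound $(f(G_1)-f(G_2))^2\le 2f(G_1)^2+2f(G_2)^2$. Combined with the $(G_1,G_1')\leftrightarrow (G_2,G_2')$ symmetry of the weight $\Pconfig(G_1')Q_c(G_1',G_2')/(|\SNeigh(G_1')|\,|\SNeigh(G_2')|)$, which follows from reversibility of $Q_c$, the LHS of the first inequality is bounded above by
$$
4\sum_{G\in\BipGSet_n(d)}f(G)^2\,M(G),\qquad M(G):=\sum_{\tuple:\,G_1=G}\frac{\Pconfig(G_1')Q_c(G_1',G_2')}{|\SNeigh(G_1')|\,|\SNeigh(G_2')|},
$$
where the inner sum ranges over subtype $1$--$(k_1,k_2,u,r)$ tuples with $r>r_0$. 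Since $f$ has zero mean, $\sum_G\Psimple(G)f(G)^2=\Var_{\Psimple}(f)$, so it suffices to prove the pointwise bound $M(G)\le d\,\Psimple(G)/(16n^2)$ uniformly in $G$.

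Next, I would count tuples contributing to $M(G)$ through the endpoint $G=P_\tuple[0]$. Because $r>r_0\ge 1$ forces $\length{P_\tuple}\ge 1$, the pair $(G,P_\tuple[1])$ is an ordered pair of adjacent simple graphs; summing the bound supplied by Proposition~\ref{prop: number-tuples1} over the at most $nd(nd-1)$ possible choices of $P_\tuple[1]$ produces an upper bound on the number of subtype $1$--$(k_1,k_2,u,r)$ tuples with $P_\tuple[0]=G$. The per-tuple weight can be uniformly bounded by
$$
\frac{\Pconfig(G_1')Q_c(G_1',G_2')}{|\SNeigh(G_1')|\,|\SNeigh(G_2')|}\le\frac{16\,\Psimple(G)\,\Pconfig(\BipGSet_n(d))}{2^{k_1}(nd)^{k_1+k_2+2}},
$$
using $\Pconfig(G_1')=\Psimple(G)\Pconfig(\BipGSet_n(d))/2^{k_1}$, the lower bound $|\SNeigh(G_h')|\ge (nd)^{k_h}/2$ from Lemma~\ref{l: 2-9857-2985}, and $Q_c\le 16/(nd)^2$. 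Invoking Lemma~\ref{l: 05610948704987}, which forces $k_1+k_2\ge r/3$ whenever $|I_{NS}|=r$, reduces the pointwise bound to an absolute-constant estimate on a double series of the form
$$
\frac{\Pconfig(\BipGSet_n(d))}{n}\sum_{r>r_0}\sum_{\substack{k_1,k_2\ge 1\\ k_1+k_2\ge r/3}}\frac{C_r\,(k_1+k_2)^{r+2}\,d^{O(r)+k_1+k_2}}{2^{k_1}\lfloor (k_1+k_2)/2\rfloor!}.
$$

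The main obstacle is the final quantitative check. The counting bound of Proposition~\ref{prop: number-tuples1} carries a factor of size $d^{3+r+C_{\text{\tiny\ref{l: 2095203598305298 bis}}}(16r+8)+k_1+k_2}$, so the inner terms grow like $d^{O(r)}$; these must be dominated by the super-exponential factorial decay $1/\lfloor(k_1+k_2)/2\rfloor!\le 1/\lfloor r/6\rfloor!$ available on the critical range $k_1+k_2\ge r/3$. For moderately large $d$ the additional exponential suppression $\Pconfig(\BipGSet_n(d))\le 2e^{-(d-1)^2/2}$ from~\eqref{eq: prob-simple} provides ample room, while for small $d$ the factorial suffices on its own once $r>r_0$. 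The precise parameter choice $r_0=40$ together with the restriction $d\le n^{1/1143}$ from~\eqref{eq: assumption-d} is calibrated so that this balance holds with room to spare, yielding the target bound $d/(16n^2)$. The type~2 bound is handled identically, with Proposition~\ref{prop: number-tuples} in place of Proposition~\ref{prop: number-tuples1} and the constraint $k\ge r/3$ again from Lemma~\ref{l: 05610948704987}; the prefactor $d/n$ in the statement is absorbed by the corresponding $1/n$ in the per-tuple weight bound.
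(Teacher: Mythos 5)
Your opening reduction is sound and in fact coincides with the paper's first step: bounding $(f(G_1)-f(G_2))^2\le 2f(G_1)^2+2f(G_2)^2$, using reversibility/symmetry, and reducing to a pointwise estimate on $M(G)=\sum_{\tuple:\,G_1=G}\Pconfig(G_1')Q_c(G_1',G_2')/(|\SNeigh(G_1')||\SNeigh(G_2')|)$. The gap is in how you count the tuples with $G_1=G$. Proposition~\ref{prop: number-tuples1} bounds the number of subtype $1$--$(k_1,k_2,u,r)$ tuples whose connection passes through a \emph{fixed} adjacent pair, and that bound is of order $n^{k_1+k_2-1}$ times $d$-- and $r$--dependent factors; after you sum over the $\le (nd)^2$ choices of $P_\tuple[1]$ and multiply by the per-tuple weight $\asymp \Psimple(G)\Pconfig(\BipGSet_n(d))\,2^{-k_1}(nd)^{-(k_1+k_2+2)}$, the powers of $n$ give only $n^{-1}$, never $n^{-2}$. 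So even if every $r$--dependent factor were an absolute constant, your route would end at $M(G)\lesssim \Psimple(G)/n$, i.e.\ a bound $\frac{C}{n}\Var_{\Psimple}(f)$ for the left-hand side; this is strictly weaker than the claimed $\frac{d}{n^2}\Var_{\Psimple}(f)$ and is useless downstream, because in the proof of Theorem~\ref{th: poincare} this term gets multiplied by the Poincar\'e constant $O(nd)$ of the configuration chain and must be $o(\Var_{\Psimple}(f))$; $nd\cdot\frac{C}{n}\Var_{\Psimple}(f)=Cd\,\Var_{\Psimple}(f)$ cannot be absorbed. Moreover, your final quantitative check does not go through: the bound of Proposition~\ref{prop: number-tuples1} \emph{grows} with $r$ (unspecified $C_r$, a factor $d^{3+r+C_{\text{\tiny\ref{l: 2095203598305298 bis}}}(16r+8)}$ and $(k_1+k_2)^{r+2}\le(2\cconst)^{r+2}$), while the two compensations you invoke are not available: the factor $e^{-(d-1)^2/2}$ from \eqref{eq: prob-simple} and the factorial $\lfloor(k_1+k_2)/2\rfloor!$ are already exactly consumed in cancelling $\sum_{k_1,k_2}(d-1)^{k_1+k_2}/2^{k_1}\approx e^{(d-1)^2/2}$ coming from the choices of $G_1'$, and on the range $r_0<r\lesssim (d-1)^2$ the constraint $k_1+k_2\gtrsim 2r/3$ from Lemma~\ref{l: 05610948704987} yields no tail decay at all; even for $d=3$ one has $\sum_{r>r_0}\cconst^{\,r+2}/\lfloor r/6\rfloor!$ of order $e^{c(\log n)^6}$, since $r$ ranges up to order $\cconst$. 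So the series you are left with is not $O(d/n)$; it is enormous.

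What is missing is a counting argument that produces decay \emph{in $r$} and an extra power of $n$, and this is exactly what the paper's proof supplies by a fresh enumeration rather than by recycling Proposition~\ref{prop: number-tuples1}. For fixed $G_1$ one chooses $G_1'$ with $G_1\in\SNeigh(G_1')$ (at most $\binom{nd}{k_1}(d-1)^{2k_1}$ ways, Lemma~\ref{l: 98562098724}), then $G_2'\sim G_1'$ (at most $nd(nd-1)/2$ ways), and then counts the possible $G_2\in\SNeigh(G_2')$ by analysing the switchings of the simple path $G_2'\to G_2$ acting on non-standard multiedges: all but at most $4+|Q|/2$ of them admit only $O(k_1d+d^2)$ choices instead of $nd$. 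This yields the factor $\big((k_1d+d^2+2d)/(nd)\big)^{(k_2-N_{ms})/2-4}$, which decays geometrically in $r$ (via $k_2-N_{ms}\gtrsim r/3$) with ratio $(\cconst/n)^{1/6}$; it simultaneously makes the sum over $r>r_0$ converge and delivers the gain $\approx n^{-(r_0+1)/6+19/4}\le n^{-2.08}$, which is where the target $d/n^2$ and the calibration $r_0=40$, $d\le n^{1/1143}$ actually come from. Without an ingredient of this kind your proposal cannot reach the stated bound.
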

\begin{proof}
We start by applying the Cauchy--Schwarz inequality and reversibility to write 
\begin{align*}
\alpha:&=\sum_{\underset{\underset{u\in \{A,B\}}{r> r_0}}{1\leq k_1,k_2\leq \cconst}}  \sum_{\underset{\text{of subtype $1$--$(k_1,k_2,u,r)$}}{(G_1,G_1',G_2,G_2') }} \frac{\Pconfig(G_1')Q_c(G_1',G_2')}{|\SNeigh(G_1')|\cdot |\SNeigh(G_2')|}\big(f(G_1)-f(G_2)\big)^2\\
&\leq 4 \sum_{\underset{\underset{u\in \{A,B\}}{r> r_0}}{1\leq k_1,k_2\leq \cconst}}  \sum_{\underset{\text{of subtype $1$--$(k_1,k_2,u,r)$}}{(G_1,G_1',G_2,G_2') }} \frac{\Pconfig(G_1')Q_c(G_1',G_2')}{|\SNeigh(G_1')|\cdot |\SNeigh(G_2')|} f(G_1)^2\\
&\leq  \frac{128\Pconfig\big(\BipGSet_n(d)\big)}{nd(nd-1)} \sum_{G_1\in \BipGSet_n(d)} \Psimple(G_1) f(G_1)^2\sum_{\underset{\underset{u\in \{A,B\}}{r> r_0}}{1\leq k_1,k_2\leq \cconst}}  \sum_{\underset{\text{of subtype $1$--$(k_1,k_2,u,r)$}}{\underset{(G_1,G_1',G_2,G_2') }{G_1',G_2',G_2 \text{ such that}}}} \frac{1}{2^{k_1}(nd)^{k_1+k_2}},
\end{align*}
where we used Lemmas~\ref{lem: nice-multigraph-prob and transition} and \ref{l: 2-9857-2985}, and that $Q_c(G_1',G_2')\leq \frac{8}{nd(nd-1)}$. 
To prove the first part of the lemma, it is enough to provide an upper bound for 
$$
\alpha_{G_1}:=  \frac{128\Pconfig\big(\BipGSet_n(d)\big)}{nd(nd-1)} \sum_{\underset{\underset{u\in \{A,B\}}{r> r_0}}{1\leq k_1,k_2\leq \cconst}}  \sum_{\underset{\text{of subtype $1$--$(k_1,k_2,u,r)$}}{\underset{(G_1,G_1',G_2,G_2') }{G_1',G_2',G_2 \text{ such that}}}} \frac{1}{2^{k_1}(nd)^{k_1+k_2}},
$$
for every fixed $G_1\in \BipGSet_n(d)$. Fix for a moment $1\leq k_1,k_2\leq \cconst$ (with $\vert k_1-k_2\vert\leq 2$), $r>r_0$ and $u\in \{A,B\}$. Note that the number of $m$-standard edges of any tuple $(G_1,G_1',G_2,G_2')$ of subtype $1$--$(k_1,k_2,u,r)$ lies between $k_1-r$ and
$(k_1+k_2+1)/2-r/3$.
Indeed, according to Lemma~\ref{l: 05610948704987}, 
for every element $W_q$ of the canonical partition of $I_{NS}$ we have $|W_q|\leq M^1_q+M^2_q+1\leq \frac{3}{2}(M^1_q+M^2_q)$
whenever the switching between $G_1'$ and $G_2'$ does not operate on $W_q$, and $|W_q|\leq M^1_q+M^2_q+2\leq \frac{3}{2}(M^1_q+M^2_q)
+3/2$ otherwise,
where $M^1_q$ and $M^2_q$
are the number of non-standard multiedges incident to $W_q$ in graphs $G_1'$ and $G_2'$, respectively.
Thus, $r\leq \frac{3}{2}(k_1+k_2-2N_{ms})+3/2$ implying the range for $N_{ms}$.

Further, it follows from Lemma~\ref{l: 98562098724} that there are at most ${nd \choose k_1} (d-1)^{2k_1}$ choices for graphs $G_1'\in \categ(k_1)$ such that $G_1\in \SNeigh(G_1')$. 
Moreover, there are at most $nd(nd-1)/2$ neighboring multigraphs $G_2'\in \categ(k_2)$ for a given $G_1'$.  
We now fix such $G_1',G_2'$ and $N_{ms}\in [k_1-r,(k_1+k_2+1)/2-r/3]$, and estimate the number of graphs $G_2$ with $G_2\in \SNeigh(G_2')$ and such that $(G_1,G_1',G_2,G_2')$ is of subtype $1$--$(k_1,k_2,u,r)$ and has $N_{ms}$ standard edges.
Following the usual approach within this paper, we will do this by estimating the complexity of a data structure sufficient to identify $G_2$ uniquely.

First, we need to identify the $m$--standard multiedges associated with the tuple. The graph $G_1$, hence its multiedges, have already been defined, so it is only needed to mark those of the $k_1$ multiedges which are $m$--standard.
There are clearly ${k_1\choose N_{ms}}$ choices for these $m$--standard edges and at most $(nd)^{N_{ms}}$ choices for the corresponding switchings in the simple path leading from $G_2'$ to some graph $G_2\in \SNeigh(G_2')$.

Now, assume the collection of $m$--standard edges associated with the tuple is fixed.
To identify $G_2$ uniquely, it remains to define
those switchings generating the simple path from $G_2'$ to $G_2$ which operate on non-standard multiedges of $G_2'$.
Set
$$Q:=\big\{(i',j'):\;(i',j')\mbox{ is a multiedge in $G_2'$ which is not $m$--standard}\big\}.$$
Note that any non-standard multiedge must violate at least one of the properties
listed in Definition~\ref{def: standard}.
Pick any non-standard multiedge $(i,j)\in Q$ of $G_2'$,
and let $\langle u,u',v,v'\rangle$ be the simple switching operation taking $G_1'$ to $G_2'$.
There are two cases.
\begin{enumerate}
\item\label{item: 40983049871049} Either $i\in\{u,u'\}$;
\item Or $i\notin\{u,u'\}$. Note that in this case necessarily $(i,j)$ is a (non-standard) multiedge in $G_1'$ as well.
Let $\langle i,\ione(i),j,\jone(i)\rangle$ be the simple switching operation from the simple path leading from $G_1'$ to $G_1$,
operating on $(i,j)$. We have four possible subcases.
\begin{enumerate}
\item\label{item: 4296194850491} $\ione(i)\in \{u,u'\}$.
\item\label{item: 0397160498274-97} $\ione(i)\notin \{u,u'\}$ and for the simple switching operation in the simple path from
$G_2'$ to $G_2$ operating on $(i,j)$, {\bf both} left vertices it operates on are in $I(G_1,G_1')$.
It is immediate that the number of choices for such switching is at most $k_1d$
(because the set $I(G_1,G_1')$ is already fixed and contains precisely $k_1$ left vertices which are not incident to multiedges in $G_1'$).
\item\label{item: 932641094283058409} The above two do not hold and $\ione(i)\in I(G_2,G_2')$. In this subcase, there is necessarily a simple switching
within the path from $G_2'$ to $G_2$ which operates on $\ione(i)$.
\item\label{item: 09561071-4-9} The first two subcases do not hold and $\ione(i)\notin I(G_2,G_2')\cup\{u,u'\}$. In this setting, necessarily one of the
Properties \ref{p: 13981749827} or \ref{p: standard-3} in Definition~\ref{def: standard} is violated for $(i,j)$
(note that Property~\ref{p: standard-2} cannot be violated because otherwise we would find ourselves in \ref{item: 0397160498274-97}).
There are at most $2d$ choices for the switching violating Property~\ref{p: standard-3}.
Finally, there are at most $d^2$ choices for the switching violating Property~\ref{p: 13981749827} and satisfying all the other properties
(see Figure~\ref{fig: non-standard}).
\end{enumerate}
\end{enumerate}
To summarize, for each edge $(i,j)\in Q$ we can bound the complexity of corresponding switching
in the simple path leading from $G_2'$ to $G_2$,
by $k_1d+d^2+2d$
unless \ref{item: 40983049871049}, \ref{item: 4296194850491} or \ref{item: 932641094283058409} holds.
There are at most $4$ edges $(i,j)\in Q$ such that \ref{item: 40983049871049} or \ref{item: 4296194850491} hold, and complexity of each of the corresponding switchings is at most $nd$.
So, we can proceed as follows: first, choose 
a subset $Z_1\subset Q$ of cardinality at most $4$ which comprises the edges satisfying either \ref{item: 40983049871049} or \ref{item: 4296194850491} above. Next, choose a subset $Z_2\subset Q\setminus Z_1$ of those edges which satisfy \ref{item: 932641094283058409}.
Then $Q\setminus (Z_1\cup Z_2)$ will comprise the edges satisfying either \ref{item: 0397160498274-97} or \ref{item: 09561071-4-9}.
We will find an upper bound on the complexity of each switching associated to an edge from $Z_2$, by $nd$.
Thus, recalling that $|Q|=k_2-N_{ms}$, the complexity of the choice of non-standard switchings is at most
$$
\sum_{z_1=0}^4 \sum_{z_2} {k_2-N_{ms} \choose z_1}{k_2-N_{ms}-z_1 \choose z_2}(nd)^{z_1+z_2}
\big(k_1d+d^2+2d\big)^{k_2-N_{ms}-z_1-z_2},
$$
where $z_1$ and $z_2$ are cardinalities of $Z_1$ and $Z_2$, respectively. To make the above estimate useful, we need to define
the range of the variable $z_2$.
To do that, observe that for every $(i,j)$ from $Z_2$ there is necessarily an edge $(i'_{ij},j'_{ij})\in Q$
such that $\itwo(i'_{ij})=\ione(i)$, where we define \chng{$\itwo(i'_{ij})$ as the unique left vertex which, together with $i'_{ij}$,
is operated on by a switching in a simple path from $G_2'$ to $G_2$.} 
Note further that necessarily this edge $(i'_{ij},j'_{ij})$
satisfies one of the conditions \ref{item: 40983049871049}, \ref{item: 4296194850491} or \ref{item: 0397160498274-97}\footnote{\chng{Indeed, assume that the edge
$(i'_{ij},j'_{ij})$ does not satisfy condition \ref{item: 40983049871049}. Then necessarily
$(i'_{ij},j'_{ij})$ is a multiedge in $G_1'$, so that $i'_{ij}\in I(G_1,G_1')$, whereas, by the choice of 
$(i'_{ij},j'_{ij})$, $\itwo(i'_{ij})=\ione(i)\in I(G_1,G_1')$. Thus, either \ref{item: 4296194850491} or \ref{item: 0397160498274-97} must hold.}},
i.e.\ belongs to $Q\setminus Z_2$.
Thus, we obtain that $|Z_2|\leq |Q\setminus Z_2|$, whence $z_2\leq |Q|/2$. The complexity of the choice of non-standard edges
can then be bounded by
$$
\sum_{z_1=0}^4 \sum_{z_2=0}^{\lfloor (k_2-N_{ms})/2\rfloor} {k_2-N_{ms} \choose z_1}{k_2-N_{ms}-z_1 \choose z_2}(nd)^{z_1+z_2}
\big(k_1d+d^2+2d\big)^{k_2-N_{ms}-z_1-z_2},
$$
which is in turn bounded by
$$
3(k_2-N_{ms})^4\,2^{k_2-N_{ms}}\,(nd)^{4+(k_2-N_{ms})/2}\,\big(k_1d+d^2+2d\big)^{(k_2-N_{ms})/2-4}.
$$

%

We deduce that there are at most 
$$
3{k_1\choose N_{ms}} (nd)^{N_{ms}} 
(k_2-N_{ms})^4\,2^{k_2-N_{ms}}\,(nd)^{4+(k_2-N_{ms})/2}\,\big(k_1d+d^2+2d\big)^{(k_2-N_{ms})/2-4},
$$
choices for a graph $G_2\in \SNeigh(G_2')$ with the desired properties. Putting the estimates together,
we obtain
\begin{align*}
\alpha_{G_1}\leq 
&C'\,\Pconfig\big(\BipGSet_n(d)\big)\sum_{\underset{\underset{u\in \{A,B\}}{r> r_0}}{1\leq k_1,k_2\leq \cconst,\,|k_1-k_2|\leq 2}}
\frac{(d-1)^{2k_1}}{2^{k_1}k_1!}\times\\
&\sum_{N_{ms}\in [k_1-r,(k_1+k_2+1)/2-r/3]}
k_1^{k_1-N_{ms}}  
(k_2-N_{ms})^4\,2^{k_2-N_{ms}}\,\frac{(k_1d+d^2+2d)^{(k_2-N_{ms})/2-4}}{(nd)^{(k_2-N_{ms})/2-4}},
\end{align*}
and using \eqref{eq: prob-simple}, the definition of $r_0$ and that $\min(k_1-N_{ms},k_2-N_{ms})\geq (r_0+1)/3-3/2$, we can obtain the (rough) bound 
\begin{align*}
\alpha_{G_1}&\leq 
C''\,\Pconfig\big(\BipGSet_n(d)\big)\sum_{\substack{1\leq k_1,k_2\leq \cconst\\ |k_1-k_2|\leq 2}}
\frac{(d-1)^{2k_1}}{2^{k_1}k_1!}
k_1^{(r_0+1)/3+1/2}  
\,\bigg(\frac{k_1d+d^2+2d}{nd}\bigg)^{(r_0+1)/6-19/4}\\
&\leq C'''\,\cconst^{(r_0+1)/3+1/2}\bigg(\frac{\cconst }{n}\bigg)^{(r_0+1)/6-19/4}\leq C'''\,\frac{\cconst^{16.25} }{n^{2.083}},
\end{align*}
for some universal constants $C'',C'''>0$. It remains to use \eqref{eq: parameters} and \eqref{eq: assumption-d} to get the first part of the lemma.

We now turn to the second part of the lemma. Similarly, we use the Cauchy--Schwarz inequality and reversibility to write 
\begin{align*}
\beta&:= \frac{d}{n} \sum_{\underset{r> r_0}{1\leq k\leq \cconst}}  \sum_{\underset{\text{of subtype $2$--$(k,r)$}}{(G_1,G_1',G_2,G_1') }}  \frac{\Pconfig(G_1')}{|\SNeigh(G_1')|^2}
\big(f(G_1)-f(G_2)\big)^2\\
&\leq  \frac{4d}{n}\sum_{\underset{r> r_0}{1\leq k\leq \cconst}}  \sum_{\underset{\text{of subtype $2$--$(k,r)$}}{(G_1,G_1',G_2,G_1') }}  \frac{\Pconfig(G_1')}{|\SNeigh(G_1')|^2}f(G_1)^2\\
&\leq \frac{16d\Pconfig\big(\BipGSet_n(d)\big)}{n} \sum_{G_1\in \BipGSet_n(d)} \Psimple(G_1) f(G_1)^2 \sum_{\underset{r> r_0}{1\leq k\leq \cconst}}  \sum_{\underset{\text{of subtype $2$--$(k,r)$}}{\underset{(G_1,G_1',G_2,G_1')}{G'_1,G_2 \text{ such that}}}} \frac{1}{2^k(nd)^{2k}},
\end{align*}
where we used Lemmas~\ref{lem: nice-multigraph-prob and transition} and \ref{l: 2-9857-2985} to get the last inequality. We will prove that 
$$
\beta_{G_1}:= \frac{16d\Pconfig\big(\BipGSet_n(d)\big)}{n}\sum_{\underset{r> r_0}{1\leq k\leq \cconst}}\,  \sum_{\underset{\text{of subtype $2$--$(k,r)$}}{\underset{(G_1,G_1',G_2,G_1')}{\, G'_1,G_2 \text{ such that}}}} \frac{1}{2^k(nd)^{2k}}\leq \frac{d}{n^2},
$$
for every $G_1\in \BipGSet_n(d)$. Since the argument is very similar to the one above, we will skip some details.
Fix $1\leq k\leq \cconst$ and $r>r_0$.
Note that the number of $m$-standard edges of any tuple $(G_1,G_1',G_2,G_1')$ of subtype $2$--$(k,r)$ lies between $k-r$
and $k+1/2-r/3$ (and that we necessarily have $k\geq r/2$; see Lemma~\ref{l: 05610948704987}). 
It follows from Lemma~\ref{l: 98562098724} that there are at most ${nd \choose k} (d-1)^{2k}$ choices for $G_1'\in \categ(k)$ such that $G_1\in \SNeigh(G_1')$. 
We now fix such $G_1'$ and $N_{ms}\in [k-r,k+1/2-r/3]$ and estimate the number of graphs $G_2\in \SNeigh(G_1')$ such that $(G_1,G_1',G_2,G_1')$ is of subtype $2$--$(k,r)$ and has $N_{ms}$ standard edges.
There are clearly ${k\choose N_{ms}}$ choices for these $m$--standard edges and at most $(nd)^{N_{ms}}$ choices for the corresponding switchings in the simple paths leading to some graph $G_2\in \SNeigh(G_1')$.
Further, repeating the above argument, we get that complexity of non-standard switchings is at most
$$
(k-N_{ms})^4\,2^{k-N_{ms}}\,(nd)^{4+(k-N_{ms})/2}\,\big(kd+d^2+2d\big)^{(k-N_{ms})/2-4}.
$$
We deduce that there are at most 
$$
{k\choose N_{ms}} (nd)^{N_{ms}}(k-N_{ms})^4\,2^{k-N_{ms}}\,(nd)^{4+(k-N_{ms})/2}\,\big(kd+d^2+2d\big)^{(k-N_{ms})/2-4}
$$
choices for a graph $G_2\in \SNeigh(G_1')$ with the desired properties. 
Putting the above estimates together and using \eqref{eq: prob-simple}, we deduce that 
\begin{align*}
\beta_{G_1}&\leq \frac{C'd}{n}
\sum_{\underset{r> r_0}{1\leq k\leq \cconst}}\,
\sum_{N_{ms}\in [k-r,k+1/2-r/3]}
{k\choose N_{ms}} (k-N_{ms})^4\,2^{k-N_{ms}}\,\frac{(kd+d^2+2d)^{(k-N_{ms})/2-4}}{(nd)^{(k-N_{ms})/2-4}}\\
&\leq\chng{
\frac{C'_1d}{n}
\sum_{1\leq k\leq \cconst}\,
k^{r_0/3-1/6}\bigg(\frac{k+d+2}{n}\bigg)^{(r_0+1)/6-4.25}}\\
&\leq\chng{
\frac{C'_2 d}{n}\,\cconst^{1+r_0/3-1/6} \bigg(\frac{\cconst}{n}\bigg)^{(r_0+1)/6-4.25}\leq \frac{C'_2d\,\cconst^{16.75}}{n^{3.58}}}, 
\end{align*}
where $C',C'_1,C_2'>0$ are universal constants. Finally, using \eqref{eq: parameters} and \eqref{eq: assumption-d}, we finish the proof. 
\end{proof}

We now estimate the sum over type $3$ tuples. 

\begin{lemma}[Type $3$ tuples]\label{lem: dir-type 3}
There exists a universal constant $C>0$ such that 
$$
\sum_{\underset{\text{ of type $3$}}{(G_1,G_1,G_2,G_2')}} \frac{\Pconfig(G_1)Q_c(G_1,G_2')}{|\SNeigh(G_2')|}  \big( f(G_1)- f(G_2)\big)^2
\leq C \, \Dir_{\Psimple}(f,f).
$$
\end{lemma}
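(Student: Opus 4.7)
The plan is to follow the same template used for type~$1$ and type~$2$ tuples in Lemmas~\ref{lem: upper bound-small r-2} and~\ref{lem: upper bound-small r-1}, relying on Cauchy--Schwarz along the connection and then counting tuples with help of Proposition~\ref{prop: type3 count}. Concretely, for every type~$3$ tuple $\tuple=(G_1,G_1,G_2,G_2')$, let $P_\tuple$ be the associated connection and $L_\tuple$ its length. The preliminary lemma stated just after the definition of admissible tuples forces $G_2'\in\categ(2)$ with both of its multiedges being the ones operated on by the switching $G_1\to G_2'$; combined with Lemma~\ref{l: 2095203598305298} this forces $L_\tuple\le 10$. Hence
$$
\big(f(G_1)-f(G_2)\big)^2\le 10\sum_{t=1}^{L_\tuple}\big(f(P_\tuple[t])-f(P_\tuple[t-1])\big)^2.
$$

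Next, I would evaluate the prefactor. Using reversibility with $G_2'\in\categ(2)$, we have
$\Pconfig(G_1)\,Q_c(G_1,G_2')=\Pconfig(G_2')\,Q_c(G_2',G_1)$ with
$\Pconfig(G_2')=\Pconfig(\BipGSet_n(d))\,\Psimple(H)/4$ (for any $H\in\BipGSet_n(d)$) and
$Q_c(G_2',G_1)=8/(nd(nd-1))$, and Lemma~\ref{l: 2-9857-2985} gives
$|\SNeigh(G_2')|\ge (nd)^2/2$. Therefore
$$
\frac{\Pconfig(G_1)\,Q_c(G_1,G_2')}{|\SNeigh(G_2')|}
\le \frac{C\,\Pconfig(\BipGSet_n(d))}{|\BipGSet_n(d)|\,(nd)^3(nd-1)},
$$
which is a constant independent of the particular tuple.

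Now I would swap the order of summation, writing the left--hand side as
$$
\sum_{\substack{(H,H')\in \BipGSet_n(d)^2\\ H\sim H'}} \big(f(H)-f(H')\big)^2
\sum_{\substack{\tuple\text{ of type }3\\ (H,H')\in P_\tuple}}\frac{10\,\Pconfig(G_1)\,Q_c(G_1,G_2')}{|\SNeigh(G_2')|}.
$$
By Proposition~\ref{prop: type3 count}, for every fixed adjacent pair $(H,H')$ the number of type~$3$ tuples whose connection contains $(H,H')$ is at most $C_{\text{\tiny\ref{prop: type3 count}}}\,d^{20}n$. Plugging this in and using
$\Psimple(H)\,Q_u(H,H')=2/(|\BipGSet_n(d)|\,nd(nd-1))$ together with $\Pconfig(\BipGSet_n(d))\le 1$, the whole expression is bounded by
$$
\frac{C'\,d^{20}n\,\Pconfig(\BipGSet_n(d))}{(nd)^2}\cdot\Dir_{\Psimple}(f,f)\;\le\;\frac{C''\,d^{18}}{n}\,\Dir_{\Psimple}(f,f).
$$

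Under the assumption $d\le n^{1/1143}$ from \eqref{eq: assumption-d}, the factor $d^{18}/n$ is bounded by a universal constant (in fact, it tends to $0$), which yields the desired estimate. The proof is essentially straightforward compared with Lemmas~\ref{lem: upper bound-small r-2} and~\ref{lem: upper bound-small r-1}; the only nontrivial ingredient is the counting bound of Proposition~\ref{prop: type3 count}, whose exponent in $d$ is harmless thanks to our restriction on $d$. There is no hidden subtlety: the bounded length of the connection and the fixed category of $G_2'$ eliminate the sums over $k$ and $r$ which dominated the earlier arguments.
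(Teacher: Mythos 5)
Your proposal is correct and follows essentially the same route as the paper: Cauchy--Schwarz along the connection (whose length is bounded by a universal constant, indeed at most $10$), an explicit bound on the prefactor $\Pconfig(G_1)Q_c(G_1,G_2')/|\SNeigh(G_2')|$ via Lemma~\ref{lem: nice-multigraph-prob and transition} and Lemma~\ref{l: 2-9857-2985}, a swap of summation, and the counting bound $C d^{20}n$ of Proposition~\ref{prop: type3 count}, with the resulting factor $d^{18}/n$ absorbed by the assumption \eqref{eq: assumption-d}. The only cosmetic difference is that you derive the prefactor through reversibility while the paper computes it directly; the argument is otherwise identical.
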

\begin{proof}
For a tuple $\tuple=(G_1,G_1,G_2,G_1')$ of type $3$, let $P_\tuple$ be the corresponding connection and $L_\tuple$ its length. 
We start by using the Cauchy--Schwarz inequality to write
\begin{align*}
\alpha:&=\sum_{\underset{\text{ of type $3$}}{(G_1,G_1,G_2,G_2')}} \frac{\Pconfig(G_1)Q_c(G_1,G_2')}{|\SNeigh(G_2')|}  \big( f(G_1)- f(G_2)\big)^2\\
&\leq \sum_{\underset{\text{ of type $3$}}{\tuple=(G_1,G_1,G_2,G_2')}} \frac{L_\tuple \Pconfig(G_1)Q_c(G_1,G_2')}{|\SNeigh(G_2')|}  \sum_{t=1}^{L_\tuple} \big(f(P_\tuple[t])-f(P_\tuple[t-1])\big)^2\\
&\leq C\sum_{\underset{\text{ of type $3$}}{\tuple=(G_1,G_1,G_2,G_2')}} \frac{ \Pconfig(G_1)Q_c(G_1,G_2')}{|\SNeigh(G_2')|}  \sum_{t=1}^{L_\tuple} \big(f(P_\tuple[t])-f(P_\tuple[t-1])\big)^2,
\end{align*}
where we used that $L_{\tuple}$ is bounded above by a universal constant $C>0$. 
Since $G_2'$ in the above sum is of category $2$, then using Lemma~\ref{l: 2-9857-2985} together with the definition of $Q_c$ and $\Pconfig$, we get for some universal constant $C'$ 
$$
\alpha\leq \frac{C'}{(nd)^2} \sum_{H,H'\in \BipGSet_n(d): H\sim H'} \Psimple(H)Q_u(H,H') \big(f(H)-f(H')\big)^2 \gamma(H,H'),
$$
where $\gamma(H,H')$ denotes the number of tuples $\tuple=(G_1,G_1,G_2,G_2')$ of type $3$ such that $(H,H')$ is a pair of adjacent graphs in $P_\tuple$.  
Using the estimate of Proposition~\ref{prop: type3 count} and the assumption on $d$ in \eqref{eq: assumption-d}, we finish the proof. 
\end{proof}

Combining Lemmas~\ref{lem: upper-bound-dir-good}, ~\ref{lem: upper bound-small r-2},~\ref{lem: upper bound-small r-1},~\ref{lem: multigraph with large r-2} and~\ref{lem: dir-type 3}, we are now ready to state the main result of this section:

\begin{theor}[Comparison of Dirichlet forms]\label{th: main comparison dirichlet}
Let $f: \BipGSet_n(d)\to \R$ and let $\tilde f: \ConfBipGSet_n(d)\to \R$ be its extension. 
Then, we have 
$$
\Exp \Dir_{\Pconfig}(\tilde f, \tilde f)\leq C\,\Dir_{\Psimple}(f,f)+C\,\frac{d}{n^2} \Var_{\Psimple}(f),
$$
for some universal constant $C>0$. 
\end{theor}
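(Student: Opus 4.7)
The plan is to directly combine the preparatory lemmas established throughout this section, since the heavy lifting has already been done. First, I would reduce to the case $\Exp_{\Psimple} f = 0$ by invoking translation invariance: both $\Dir_{\Pconfig}(\tilde f,\tilde f)$ and $\Dir_{\Psimple}(f,f)$ are unchanged if one adds a constant to $f$ (noting that the extension definition is compatible with such shifts once the random Gaussian part is centered), and the variance is also translation invariant.

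Next, I would apply Lemma~\ref{lem: upper-bound-dir-good}, which gives a decomposition of $\Exp \Dir_{\Pconfig}(\tilde f,\tilde f)$ as the sum of $C\,\Dir_{\Psimple}(f,f)$, $C\,\frac{d}{n^2}\Var_{\Psimple}(f)$ (both of which already appear on the right-hand side of the target inequality), plus three residual terms. Each residual term is a weighted sum of squared differences $(f(G_1)-f(G_2))^2$ over pairs of simple graphs $(G_1,G_2)$ lying in $s$--neighborhoods of adjacent (or identical) multigraphs, and each one matches precisely the definition of one of the three types of admissible tuples from Section~\ref{sec: connections}. Specifically, the residual term over $\Gamma_4$ corresponds to type $1$ tuples, the term over $\categ([1,\cconst])$ with $G_1'=G_2'$ to type $2$ tuples, and the term over $\BipGSet_n(d)\times\categ(2)$ to type $3$ tuples.

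For each residual term of types $1$ and $2$, the next step is to partition the sum according to the cardinality $|I_{NS}(\tuple)|$ using the threshold $r_0$ from \eqref{eq: parameters}. The ``small $r$'' contributions ($r\leq r_0$) are controlled by Lemma~\ref{lem: upper bound-small r-2} (type $1$) and Lemma~\ref{lem: upper bound-small r-1} (type $2$), yielding an $O(\Dir_{\Psimple}(f,f))$ upper bound. The ``large $r$'' contributions ($r>r_0$) are controlled by Lemma~\ref{lem: multigraph with large r-2}, which produces the $O\!\big(\tfrac{d}{n^2}\Var_{\Psimple}(f)\big)$ term. For type $3$ tuples, the connection length is bounded by a universal constant, so Lemma~\ref{lem: dir-type 3} gives an $O(\Dir_{\Psimple}(f,f))$ bound outright, with no need to split by $|I_{NS}|$.

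The main obstacle is not in this final combination step, but rather in the lemmas invoked: balancing the trade-off in the ``large $r$'' regime (where the number of available connections grows rapidly in $r$ and must be offset by the geometric decay from powers of $d/n$ coming from the complexity estimates in Propositions~\ref{prop: number-tuples1} and~\ref{prop: number-tuples}, together with the constraints on $d$ from \eqref{eq: assumption-d}). Once those lemmas are in place, the proof reduces to arithmetic: collect constants, verify that the threshold $r_0=40$ and the exponent in \eqref{eq: assumption-d} are compatible, and conclude.
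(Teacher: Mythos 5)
Your proposal is correct and matches the paper's own argument: the theorem is obtained exactly by centering $f$ (the extension commutes with constant shifts), applying Lemma~\ref{lem: upper-bound-dir-good}, and then absorbing the three residual terms via Lemmas~\ref{lem: upper bound-small r-2}, \ref{lem: upper bound-small r-1}, \ref{lem: multigraph with large r-2} (splitting types $1$ and $2$ at $r_0$) and Lemma~\ref{lem: dir-type 3} for type $3$ tuples. Your identification of the residual terms with the three tuple types is also the intended one, so there is nothing to add.
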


\section{Lower bounds on the variance of $\tilde f$}\label{sec: variance}

The goal of this section is to compare the variance of a function defined on $\BipGSet_n(d)$ with that of its extension constructed
in Section~\ref{sec: construction}. Observe that whenever $\tilde f$ is the extension of a function $f$,
we have that $\tilde f+R$ is the extension of $f+R$ for any fixed number $R$;
that is, a map assigning to every function $f$ on $\BipGSet_n(d)$ its extension on $\ConfBipGSet_n(d)$
the way we have defined it, commutes with constant shift operator. Therefore, without loss of generality
we can (and will) assume throughout the section that $\Exp_{\Psimple}\,f=0$.

The next lemma relates the fluctuations of $\tilde f$ to those of $f$. 
\begin{lemma}\label{lem: variation-extension}
Let $G_1,G_2$ be any two distinct graphs in $\categ([1,\cconst])$.
Then
$$
(\tilde f(G_1)-\tilde f(G_2))^2\geq
\frac{\eta_{G_1,G_2}}{3|\SNeigh(G_1)|\,|\SNeigh(G_2)|}\sum_{G'\in \SNeigh(G_1),\,G''\in \SNeigh(G_2)}
(f(G')-f(G''))^2,
$$
where $\eta_{G_1,G_2}$ is defined as in Proposition~\ref{prop: 29870536509385}.
\end{lemma}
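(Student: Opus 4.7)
The plan is to unfold the definition of $\tilde f$ on $\categ([1,\cconst])$ and exploit the sign/magnitude conditions packaged inside $\eta_{G_1,G_2}$. Recall that
$$\tilde f(G_i) - \tilde f(G_{3-i}) = \bigl(h(G_1)-h(G_2)\bigr) + \xi_{G_1}\sqrt{w(G_1)} - \xi_{G_2}\sqrt{w(G_2)}.$$
When $\eta_{G_1,G_2}=0$ the right-hand side of the claimed inequality vanishes, so we only need to consider the case $\eta_{G_1,G_2}=1$. The definition of $\eta_{G_1,G_2}$ from Proposition~\ref{prop: 29870536509385} asserts that the three summands $h(G_1)-h(G_2)$, $\xi_{G_1}\sqrt{w(G_1)}$, and $-\xi_{G_2}\sqrt{w(G_2)}$ all share a common sign (agreeing with that of $h(G_1)-h(G_2)$), and that $|\xi_{G_1}|,|\xi_{G_2}|\geq 1$. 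Consequently the magnitudes add constructively, giving
$$|\tilde f(G_1) - \tilde f(G_2)| \;\geq\; |h(G_1)-h(G_2)| + \sqrt{w(G_1)} + \sqrt{w(G_2)}.$$

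The next step is the elementary inequality $(a+b+c)^2 \geq a^2+b^2+c^2$ for non-negative $a,b,c$, which upon squaring the previous display yields
$$\bigl(\tilde f(G_1)-\tilde f(G_2)\bigr)^2 \;\geq\; \bigl(h(G_1)-h(G_2)\bigr)^2 + w(G_1) + w(G_2).$$

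Finally, I would bound the right-hand side of the claimed inequality from above by the same quantity. For every pair $(G',G'')\in \SNeigh(G_1)\times \SNeigh(G_2)$, write
$$f(G')-f(G'') = \bigl(f(G')-h(G_1)\bigr) + \bigl(h(G_1)-h(G_2)\bigr) + \bigl(h(G_2)-f(G'')\bigr)$$
and apply the convexity bound $(x+y+z)^2 \leq 3(x^2+y^2+z^2)$. Averaging over $G'\in\SNeigh(G_1)$ and $G''\in\SNeigh(G_2)$ and using the very definition of $w(G_i)$, one obtains
$$\frac{1}{|\SNeigh(G_1)|\,|\SNeigh(G_2)|}\sum_{G',G''}(f(G')-f(G''))^2 \;\leq\; 3\bigl(w(G_1) + (h(G_1)-h(G_2))^2 + w(G_2)\bigr).$$
Combining the last two displays gives the claimed factor $1/3$, completing the proof.

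There is no real obstacle here; the only subtle point is verifying that the joint sign condition encoded by $\eta_{G_1,G_2}$ is exactly what is needed for the three pieces of $\tilde f(G_1)-\tilde f(G_2)$ to combine additively rather than cancel. Once that is observed, the rest is two lines of Cauchy--Schwarz-type manipulation.
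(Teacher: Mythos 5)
Your proposal is correct and follows essentially the same route as the paper: both use the sign and magnitude conditions in $\eta_{G_1,G_2}$ to lower-bound $(\tilde f(G_1)-\tilde f(G_2))^2$ by $(h(G_1)-h(G_2))^2+w(G_1)+w(G_2)$, and then the three-term convexity bound $(x+y+z)^2\leq 3(x^2+y^2+z^2)$ averaged over $\SNeigh(G_1)\times\SNeigh(G_2)$. The only cosmetic difference is that the paper expands the square and drops the non-negative cross terms, while you add the aligned magnitudes first; these are the same estimate.
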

\begin{proof}
By the definition of $\tilde f$, we have
\begin{align*}
\tilde f(G_1)-\tilde f(G_2)
&=\xi_{G_1}\bigg(\frac{1}{|\SNeigh(G_1)|}
\sum_{G'\in \SNeigh(G_1)}(f(G')-h(G_1))^2\bigg)^{1/2}\\
&-\xi_{G_2}\bigg(\frac{1}{|\SNeigh(G_2)|}
\sum_{G''\in \SNeigh(G_2)}(f(G'')-h(G_2))^2\bigg)^{1/2}
+h(G_1)-h(G_2),
\end{align*}
where $h(G_i)$ is the arithmetic average of $f$ over the $s$--neighborhood of $G_i$, $i=1,2$.

Recall from Proposition~\ref{prop: 29870536509385} that  $\eta_{G_1,G_2}$ is the indicator of the event that both $\xi_{G_1}(h(G_1)-h(G_2))$ and $-\xi_{G_2}(h(G_1)-h(G_2))$
are non-negative, 
$\xi_{G_1}$ and $\xi_{G_2}$ have opposite signs,
and both $\xi_{G_1}$ and $\xi_{G_2}$ are at least $1$ by the absolute value.
Note that whenever the event holds, the expressions
$$
\xi_{G_1}\Big(\frac{1}{|\SNeigh(G_1)|}
\sum_{G'\in \SNeigh(G_1)}(f(G')-h(G_1))^2\Big)^{1/2},\; -\xi_{G_2}\Big(\frac{1}{|\SNeigh(G_2)|}
\sum_{G''\in \SNeigh(G_2)}(f(G'')-h(G_2))^2\Big)^{1/2}
$$
are either both non-negative or both non-positive, so their product is non-negative.
Then, expanding the square and using the definition of $\eta_{G_1,G_2}$, we can write 
\begin{align*}
(\tilde f(G_1)-\tilde f(G_2))^2
&\geq \eta_{G_1,G_2}\bigg(
(h(G_1)-h(G_2))^2\\
&\hspace{1cm}+\frac{1}{|\SNeigh(G_1)|}
\sum_{G'\in \SNeigh(G_1)}(f(G')-h(G_1))^2\\
&\hspace{1cm}+\frac{1}{|\SNeigh(G_2)|}
\sum_{G''\in \SNeigh(G_2)}(f(G'')-h(G_2))^2
\bigg)\\
&=\frac{\eta_{G_1,G_2}}{|\SNeigh(G_1)|\,|\SNeigh(G_2)|}
\sum_{G'\in \SNeigh(G_1),G''\in \SNeigh(G_2)}\bigg((h(G_1)-h(G_2))^2\\
&\hspace{1cm}+(f(G')-h(G_1))^2+(f(G'')-h(G_2))^2
\bigg)\\
&\geq \frac{\eta_{G_1,G_2}}{3|\SNeigh(G_1)|\,|\SNeigh(G_2)|}
\sum_{G'\in \SNeigh(G_1),G''\in \SNeigh(G_2)}(f(G')-f(G''))^2,
\end{align*}
where we used the Cauchy--Schwarz inequality in the last line. 
\end{proof}

We now use the above calculation to estimate a part of the variance of the extension.

\begin{lemma}\label{lem: variance-multigraphs-extension}
There exists a universal constant $c>0$ such that with probability at least $c$ with respect to the randomness of $\tilde f$,
\begin{align*}
 \sum_{G_1,G_2\in \categ([1,\cconst])} &\Pconfig(G_1)\Pconfig(G_2) \big(\tilde f(G_1)-\tilde f(G_2)\big)^2 \\
&\hspace{2cm}\geq 
 c\sum_{\underset{\tilde d(G',G'')\geq \gconst +8\cconst }{G',G''\in \Expset(\zconst)}}\Psimple(G')\Psimple(G'') (f(G')-f(G''))^2,
\end{align*}
where $\tilde d(G',G''):=\vert\{(i,j)\in [n]\times [n]:\, \Adj(G')_{ij}\neq \Adj(G'')_{ij}\}\vert$
and where the set $\Expset(\zconst)$ was defined in Definition~\ref{def: measure of ae}. 
\end{lemma}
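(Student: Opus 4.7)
The plan is to combine Lemma~\ref{lem: variation-extension} with the Gaussian-field construction of Proposition~\ref{prop: 29870536509385} and then convert the resulting sum over multigraph couples into a sum over couples of simple graphs by exchanging the order of summation.

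First I would apply Lemma~\ref{lem: variation-extension} to every pair $(G_1,G_2)\in\categ([1,\cconst])^2$ with $\tilde d(G_1,G_2)\geq \gconst$, i.e.\ every pair lying in $\mathcal M_{\cconst,\gconst}$. This gives the pointwise lower bound
\[
 \bigl(\tilde f(G_1)-\tilde f(G_2)\bigr)^2\geq \frac{\eta_{G_1,G_2}}{3|\SNeigh(G_1)|\,|\SNeigh(G_2)|}\sum_{G'\in\SNeigh(G_1),\,G''\in\SNeigh(G_2)}\bigl(f(G')-f(G'')\bigr)^2,
\]
valid on the intersected index set. Weighting by $\Pconfig(G_1)\Pconfig(G_2)$, summing over $\mathcal M_{\cconst,\gconst}$, and invoking the third assertion of Proposition~\ref{prop: 29870536509385} produces --- with probability at least $c$ over the randomness of the Gaussian field --- the deterministic lower bound
\[
 \sum_{G_1,G_2\in\categ([1,\cconst])}\Pconfig(G_1)\Pconfig(G_2)\bigl(\tilde f(G_1)-\tilde f(G_2)\bigr)^2\geq c'\!\!\sum_{(G_1,G_2)\in\mathcal M_{\cconst,\gconst}}\frac{\Pconfig(G_1)\Pconfig(G_2)}{|\SNeigh(G_1)|\,|\SNeigh(G_2)|}\!\!\sum_{\substack{G'\in\SNeigh(G_1)\\ G''\in\SNeigh(G_2)}}\bigl(f(G')-f(G'')\bigr)^2.
\]

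Next, I would exchange the order of summation on the right-hand side. The key geometric observation is that whenever $G'\in\SNeigh(G_1)$ with $G_1\in\categ(k_1)$, at most $4k_1\leq 4\cconst$ entries of $\Adj(G_1)$ and $\Adj(G')$ differ, since a simple path consists of $k_1$ switchings and each modifies four entries. Consequently $\tilde d(G_1,G')\leq 4\cconst$ and $\tilde d(G_2,G'')\leq 4\cconst$, so whenever $G',G''\in\BipGSet_n(d)$ satisfy $\tilde d(G',G'')\geq \gconst+8\cconst$, the triangle inequality forces $\tilde d(G_1,G_2)\geq \gconst$ automatically, i.e.\ $(G_1,G_2)\in\mathcal M_{\cconst,\gconst}$. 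This lets me drop the $\mathcal M_{\cconst,\gconst}$ constraint after restricting $(G',G'')$ and rewrite the right-hand side as
\[
 \sum_{\substack{G',G''\in\BipGSet_n(d)\\ \tilde d(G',G'')\geq \gconst+8\cconst}}\bigl(f(G')-f(G'')\bigr)^2\;\underbrace{\sum_{\substack{G_1,G_2\in\categ([1,\cconst])\\ G'\in\SNeigh(G_1),\,G''\in\SNeigh(G_2)}}\frac{\Pconfig(G_1)\Pconfig(G_2)}{|\SNeigh(G_1)|\,|\SNeigh(G_2)|}}_{=:N(G',G'')}.
\]

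The last step is to bound $N(G',G'')$ from below whenever $G',G''\in\Expset(\zconst)$. This is where I would use the lower estimates of Lemmas~\ref{l: 2-9857-2985} and~\ref{l: 98562098724} together with Lemma~\ref{lem: nice-multigraph-prob and transition}. For $G'\in\Expset(\zconst)$ the number of $G_1\in\categ(k)$ with $G'\in\SNeigh(G_1)$ is at least $\tfrac{(nd)^k(d-1)^{2k}}{2\,k!}$, while $|\SNeigh(G_1)|\leq (nd)^k$ and $\Pconfig(G_1)=\Pconfig(\BipGSet_n(d))\,\Psimple(G')/2^k$. Summing over $1\leq k\leq\cconst$ therefore yields
\[
 \sum_{k=1}^{\cconst}\frac{(d-1)^{2k}}{2^{k+1}\,k!}\,\Pconfig(\BipGSet_n(d))\,\Psimple(G')\geq c''\,\Psimple(G'),
\]
because the series is comparable to $e^{(d-1)^2/2}-1$, which is bounded below by a constant for $d\geq 3$, and $\Pconfig(\BipGSet_n(d))$ is comparable to $e^{-(d-1)^2/2}$ by \eqref{eq: prob-simple}. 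Multiplying the bound for the $G_1$--sum by the analogous bound for the $G_2$--sum produces $N(G',G'')\geq c'''\,\Psimple(G')\Psimple(G'')$, which is exactly what we need.

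The main obstacle is a bookkeeping one: one must carefully verify that the Gaussian-field probability event from Proposition~\ref{prop: 29870536509385} survives being combined with Lemma~\ref{lem: variation-extension} (which is deterministic once $\eta_{G_1,G_2}$ is fixed), and that the inclusion $\tilde d(G',G'')\geq\gconst+8\cconst\Rightarrow (G_1,G_2)\in\mathcal M_{\cconst,\gconst}$ is sharp enough so that the restriction to $\Expset(\zconst)\times\Expset(\zconst)$ (needed for Lemma~\ref{l: 98562098724}) does not eliminate the bulk of the right-hand side. Beyond this, all computations reduce to the counting estimates already assembled in Section~\ref{sec: neighborhood} and should be routine.
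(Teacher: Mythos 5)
Your proposal is correct and follows essentially the same route as the paper's proof: combining Lemma~\ref{lem: variation-extension} with the third assertion of Proposition~\ref{prop: 29870536509385}, exchanging the order of summation, using $\tilde d(G,G_1)=4k\leq 4\cconst$ for $G\in\SNeigh(G_1)$ to reduce the constraint $\tilde d(G',G'')\geq\gconst+8\cconst$ to membership in $\mathcal M_{\cconst,\gconst}$, and then lower-bounding the inner weight via Lemmas~\ref{lem: nice-multigraph-prob and transition}, \ref{l: 2-9857-2985} and \ref{l: 98562098724} so that $\Pconfig(\BipGSet_n(d))^2\sum_{k,\ell}\frac{(d-1)^{2(k+\ell)}}{2^{k+\ell}k!\,\ell!}$ is of constant order. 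The only cosmetic remark is that the relevant point in the last step is that the truncated exponential series is comparable to $e^{(d-1)^2/2}$ itself (not merely bounded below by a constant), so that its product with $\Pconfig(\BipGSet_n(d))\asymp e^{-(d-1)^2/2}$ is of order one, which is what you in fact use.
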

\begin{proof}
Denote by $\mathcal M_{\cconst,\gconst}$ the set of pairs $(G_1,G_2)\in \categ([1,\cconst])\times \categ([1,\cconst])$ such that $\tilde d(G_1,G_2)\geq \gconst$. 
Combining Lemma~\ref{lem: variation-extension} together with Proposition~\ref{prop: 29870536509385}, we get
that with probability at least $c$ (for some universal constant $c>0$), 
\begin{align*}
\alpha&:=  \sum_{G_1,G_2\in \categ([1,\cconst])} \Pconfig(G_1)\Pconfig(G_2) \big(\tilde f(G_1)-\tilde f(G_2)\big)^2\\
&\geq c\sum_{G_1,G_2\in \mathcal M_{\cconst,\gconst}} \frac{\Pconfig(G_1)\Pconfig(G_2)}{\vert\SNeigh(G_1)\vert \vert \SNeigh(G_2)\vert}\sum_{G'\in \SNeigh(G_1), G''\in \SNeigh(G_2)} (f(G')-f(G''))^2\\
&=c\sum_{G',G''\in \BipGSet_n(d)}(f(G')-f(G''))^2 \sum_{\underset{G'\in \SNeigh(G_1), G''\in \SNeigh(G_2)}{G_1,G_2\in \mathcal M_{\cconst,\gconst}:}} \frac{\Pconfig(G_1)\Pconfig(G_2)}{\vert\SNeigh(G_1)\vert \vert \SNeigh(G_2)\vert}.
\end{align*}
Note that by the definition of the simple path, we have $\tilde d(G,G_1)=4k$ for any $G_1\in \categ(k)$, $k\in [1,\cconst]$, and any $G\in \SNeigh(G_1)$. Therefore, if $G', G''\in \BipGSet_n(d)$ are such that $\tilde d(G',G'')\geq \gconst +8\cconst$, then necessarily any $G_1,G_2\in \categ([1,\cconst])$ with $G'\in \SNeigh(G_1)$ and $G''\in \SNeigh(G_2)$ satisfy $\tilde d(G_1,G_2)\geq \gconst $. Thus, we get that with probability
at least $c$,
\begin{align*}
\alpha&\geq c\sum_{\underset{\tilde d(G',G'')\geq \gconst +8\cconst}{G',G''\in \BipGSet_n(d)}}(f(G')-f(G''))^2 \sum_{\underset{G'\in \SNeigh(G_1), G''\in \SNeigh(G_2)}{G_1,G_2\in \categ([1,\cconst]}} \frac{\Pconfig(G_1)\Pconfig(G_2)}{\vert\SNeigh(G_1)\vert \vert \SNeigh(G_2)\vert}\\
&\geq c \sum_{\underset{\tilde d(G',G'')\geq \gconst +8\cconst}{G',G''\in \Expset(\zconst)}}(f(G')-f(G''))^2 \sum_{k,\ell=1}^{\cconst}\sum_{\underset{G'\in \SNeigh(G_1), G''\in \SNeigh(G_2)}{\underset{G_2\in \categ(\ell)}{G_1\in \categ(k)}}} \frac{\Pconfig(G_1)\Pconfig(G_2)}{\vert\SNeigh(G_1)\vert \vert \SNeigh(G_2)\vert}. 
\end{align*}
Using Lemmas~\ref{lem: nice-multigraph-prob and transition}, \ref{l: 2-9857-2985} and \ref{l: 98562098724}, we obtain with the same probability
\begin{align*}
\alpha &\geq 
c \sum_{\underset{\tilde d(G',G'')\geq \gconst +8\cconst}{G',G''\in \Expset(\zconst)}}(f(G')-f(G''))^2 \sum_{k,\ell=1}^{\cconst}\sum_{\underset{G'\in \SNeigh(G_1), G''\in \SNeigh(G_2)}{\underset{G_2\in \categ(\ell)}{G_1\in \categ(k)}}} \frac{ 
\Pconfig\big(\BipGSet_n(d)\big)^2\Psimple(G')\Psimple(G'')
}{2^{k+\ell}\,(nd)^{k+\ell}}\\
&\geq
\frac{c}{4}\, \Pconfig\big(\BipGSet_n(d)\big)^2 \sum_{\underset{\tilde d(G',G'')\geq \gconst +8\cconst}{G',G''\in \Expset(\zconst)}}\Psimple(G')\Psimple(G'') (f(G')-f(G''))^2 \sum_{k,\ell=1}^{\cconst} \frac{(d-1)^{2(k+\ell)}}{k!\ell! 2^{k+\ell}}.
\end{align*}
which combined with  \eqref{eq: prob-simple} and our choice for the parameter $\cconst$ (see \eqref{eq: parameters}), finishes the proof.
\end{proof}

Next, we estimate another part of the variance of the extension. 

\begin{lemma}\label{lem: variance-multigraphs-extension2}
There exists a universal constant $c>0$ such that with probability at least $c$ with respect to the randomness of $\tilde f$,
\begin{align*}
 \sum_{\underset{G_2\in \categ([1,\cconst])}{G_1\in \BipGSet_n(d)}} &\Pconfig(G_1)\Pconfig(G_2) \big(\tilde f(G_1)-\tilde f(G_2)\big)^2 \\
 &\geq c\,\Pconfig\big(\BipGSet_n(d)\big)\sum_{\underset{G'\in \Expset(\zconst)}{G_1\in \BipGSet_n(d)}} \Psimple(G_1)\Psimple(G') \big(\tilde f(G_1)-f(G')\big)^2.
\end{align*}
\end{lemma}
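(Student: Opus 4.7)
The plan is to mimic the approach of Lemma~\ref{lem: variance-multigraphs-extension}, but in the simpler setting where $\tilde f(G_1)$ is deterministic (does not depend on the Gaussian field) for $G_1\in\BipGSet_n(d)$. The main idea is: for each pair $(G_1,G_2)\in \BipGSet_n(d)\times \categ([1,\cconst])$, there is a constant--probability event on which $(\tilde f(G_1)-\tilde f(G_2))^2$ dominates the average of $(\tilde f(G_1)-f(G'))^2$ over $G'\in \SNeigh(G_2)$; after summing and rearranging we will obtain the claimed bound, with the extra factor $\Pconfig(\BipGSet_n(d))$ in the RHS coming precisely from the enumeration identity $\sum_{k\geq 1}(d-1)^{2k}/(2^{k+1}k!)\asymp \Pconfig(\BipGSet_n(d))^{-1}$ via~\eqref{eq: prob-simple}.

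For fixed $G_1\in \BipGSet_n(d)$ and $G_2\in \categ([1,\cconst])$, define $\eta'_{G_1,G_2}$ to be the indicator of the event that $|\xi_{G_2}|\geq 1$ and that $\xi_{G_2}(\tilde f(G_1)-h(G_2))\leq 0$. Since $\xi_{G_2}$ is a standard Gaussian, $\Exp\,\eta'_{G_1,G_2}\geq c_0$ for some universal $c_0>0$, regardless of the sign of $\tilde f(G_1)-h(G_2)$. On the event $\{\eta'_{G_1,G_2}=1\}$, expanding $\tilde f(G_2)=h(G_2)+\xi_{G_2}\sqrt{w(G_2)}$ gives
\begin{align*}
(\tilde f(G_1)-\tilde f(G_2))^2&=(\tilde f(G_1)-h(G_2))^2-2\xi_{G_2}\sqrt{w(G_2)}(\tilde f(G_1)-h(G_2))+\xi_{G_2}^2 w(G_2)\\
&\geq (\tilde f(G_1)-h(G_2))^2+w(G_2)\\
&=\frac{1}{|\SNeigh(G_2)|}\sum_{G'\in \SNeigh(G_2)}(\tilde f(G_1)-f(G'))^2,
\end{align*}
where the last identity follows from $h(G_2)$ being the arithmetic mean of $f$ on $\SNeigh(G_2)$. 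Multiplying by $\Pconfig(G_1)\Pconfig(G_2)$ and summing over all admissible $(G_1,G_2)$ yields
$$
\text{LHS}\geq \sum_{G_1,G_2}\eta'_{G_1,G_2}\,\Pconfig(G_1)\Pconfig(G_2)\,\frac{1}{|\SNeigh(G_2)|}\sum_{G'\in \SNeigh(G_2)}(\tilde f(G_1)-f(G'))^2=:X.
$$

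Let $A_{G_1,G_2}:=\Pconfig(G_1)\Pconfig(G_2)\frac{1}{|\SNeigh(G_2)|}\sum_{G'\in \SNeigh(G_2)}(\tilde f(G_1)-f(G'))^2$ and $A:=\sum_{G_1,G_2}A_{G_1,G_2}$. Then $0\leq X\leq A$ a.s.\ while $\Exp\,X\geq c_0 A$, so a reverse Markov inequality gives $X\geq \tfrac{c_0}{2}A$ with probability at least $c_0/2$. To conclude, we lower bound $A$ by restricting to $G'\in \Expset(\zconst)$, exchanging the order of summation, and applying the lower bound on $|\{G_2\in \categ(k):\,G'\in \SNeigh(G_2)\}|$ from Lemma~\ref{l: 98562098724}, the upper bound on $|\SNeigh(G_2)|$ from Lemma~\ref{l: 2-9857-2985}, and the formula $\Pconfig(G_2)=\Pconfig(\BipGSet_n(d))/(2^k|\BipGSet_n(d)|)$ from Lemma~\ref{lem: nice-multigraph-prob and transition}. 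This produces
$$
\sum_{G_2\in \categ([1,\cconst]),\,G'\in \SNeigh(G_2)}\frac{\Pconfig(G_2)}{|\SNeigh(G_2)|}\geq \Psimple(G')\,\Pconfig(\BipGSet_n(d))\sum_{k=1}^{\cconst}\frac{(d-1)^{2k}}{2^{k+1}k!}\geq c\,\Psimple(G'),
$$
where the last inequality uses that for $d\geq 3$ the truncated exponential sum satisfies $\sum_{k=1}^{\cconst}(d-1)^{2k}/(2^{k+1}k!)\asymp \Pconfig(\BipGSet_n(d))^{-1}$ by \eqref{eq: prob-simple} and the choice of $\cconst$ in \eqref{eq: parameters}. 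Combining with $\Pconfig(G_1)=\Pconfig(\BipGSet_n(d))\Psimple(G_1)$ for $G_1\in \BipGSet_n(d)$ delivers the claimed bound. The only non-routine point is verifying that the two factors of $\Pconfig(\BipGSet_n(d))$ do not accumulate --- this is precisely what the enumeration identity above guarantees, and it is the reason the restriction $d\geq 3$ enters the argument.
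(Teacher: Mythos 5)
Your proof is correct and follows essentially the same route as the paper's: a magnitude-and-sign event on $\xi_{G_2}$ that makes the cross term harmless, a reverse Markov inequality to pass from the expectation to a constant-probability bound, and the same enumeration step combining Lemma~\ref{lem: nice-multigraph-prob and transition}, Lemma~\ref{l: 2-9857-2985}, Lemma~\ref{l: 98562098724} and \eqref{eq: prob-simple} so that one factor of $\Pconfig\big(\BipGSet_n(d)\big)$ is absorbed by the truncated exponential sum. The only (immaterial) differences are that your indicator uses the per-pair sign of $\tilde f(G_1)-h(G_2)$ where the paper conditions on the sign of the $\Pconfig$-aggregated quantity $\sum_{G_1}\Pconfig(G_1)\big(\tilde f(G_1)-h(G_2)\big)$, and that you use the exact bias--variance identity in place of the paper's Cauchy--Schwarz step, which only improves the constant.
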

\begin{proof}
Given $(G_1,G_2)\in \BipGSet_n(d)\times  \categ([1,\cconst])$, we have 
$$
\big(\tilde f(G_1)-\tilde f(G_2)\big)^2= \big(\tilde f(G_1)-h(G_2)\big)^2+ \xi_{G_2}^2 w(G_2)-2\xi_{G_2}  \big(\tilde f(G_1)-h(G_2)\big)\sqrt{w(G_2)}. 
$$
We remark that $\tilde f(G_1)$ is non-random and is entirely determined by the structure of $f$.
Denote by $\tilde \eta_{G_1,G_2}$ the indicator of the event that $$
\vert \xi_{G_2}\vert \geq 1\quad \text{ and }\quad \xi_{G_2}\sum_{G_1\in  \BipGSet_n(d)} \Pconfig(G_1) \big(\tilde f(G_1)-h(G_2)\big)\leq 0.
$$ 
With this definition, we can write 
\begin{align*}
\alpha&:= \sum_{\underset{G_2\in \categ([1,\cconst])}{G_1\in \BipGSet_n(d)}} \Pconfig(G_1)\Pconfig(G_2) \big(\tilde f(G_1)-\tilde f(G_2)\big)^2\\
 & \geq 
  \sum_{\underset{G_2\in \categ([1,\cconst])}{G_1\in \BipGSet_n(d)}} \tilde \eta_{G_1,G_2}\frac{\Pconfig(G_1)\Pconfig(G_2)}{\vert \SNeigh(G_2)\vert} \sum_{G'\in \SNeigh(G_2)} \Big(\big(\tilde f(G_1)-h(G_2)\big)^2 +  \big( f(G')-h(G_2)\big)^2\Big)\\
  &\geq \beta:= \frac{1}{2}\sum_{\underset{G_2\in \categ([1,\cconst])}{G_1\in \BipGSet_n(d)}} \tilde \eta_{G_1,G_2}\frac{\Pconfig(G_1)\Pconfig(G_2)}{\vert \SNeigh(G_2)\vert} \sum_{G'\in \SNeigh(G_2)} \big(\tilde f(G_1)-f(G')\big)^2,
\end{align*}
where we made use of the Cauchy--Schwarz inequality in the last line. 
Note that 
$$
\beta\leq \frac{1}{2}\sum_{\underset{G_2\in \categ([1,\cconst])}{G_1\in \BipGSet_n(d)}} \frac{\Pconfig(G_1)\Pconfig(G_2)}{\vert \SNeigh(G_2)\vert} \sum_{G'\in \SNeigh(G_2)} \big(\tilde f(G_1)-f(G')\big)^2,
$$
and 
$$
\Exp \beta \geq c\sum_{\underset{G_2\in \categ([1,\cconst])}{G_1\in \BipGSet_n(d)}} \frac{\Pconfig(G_1)\Pconfig(G_2)}{\vert \SNeigh(G_2)\vert} \sum_{G'\in \SNeigh(G_2)} \big(\tilde f(G_1)-f(G')\big)^2, 
$$
for some universal constant $c$. Therefore, using a reverse Markov inequality, we get that with probability at least $c'$ 
\begin{align*}
\alpha&\geq c' \sum_{\underset{G_2\in \categ([1,\cconst])}{G_1\in \BipGSet_n(d)}} \frac{\Pconfig(G_1)\Pconfig(G_2)}{\vert \SNeigh(G_2)\vert} \sum_{G'\in \SNeigh(G_2)} \big(\tilde f(G_1)-f(G')\big)^2\\
&\geq c'\sum_{\underset{G'\in \Expset(\zconst)}{G_1\in \BipGSet_n(d)}}\big(\tilde f(G_1)-f(G')\big)^2 \sum_{\underset{G'\in \SNeigh(G_2)}{G_2\in \categ([1,\cconst]):}} \frac{\Pconfig(G_1)\Pconfig(G_2)}{\vert \SNeigh(G_2)\vert},
\end{align*}
for some universal constant $c'>0$. Using Lemmas~\ref{lem: nice-multigraph-prob and transition}, \ref{l: 2-9857-2985} and \ref{l: 98562098724}, we get that with the same probability,
\begin{align*}
\alpha&\geq
c'\sum_{\underset{G'\in \Expset(\zconst)}{G_1\in \BipGSet_n(d)}}\big(\tilde f(G_1)-f(G')\big)^2 
\sum_{k=1}^{\cconst}\sum_{\underset{G'\in \SNeigh(G_2)}{G_2\in \categ(k):}} \frac{\Pconfig(G_1)\,\Pconfig(\BipGSet_n(d))}{2^k|\BipGSet_n(d)| (nd)^k}\\
&\geq \frac{c'}{2}\,\Pconfig\big(\BipGSet_n(d)\big)^2 \sum_{\underset{G'\in \Expset(\zconst)}{G_1\in \BipGSet_n(d)}}\Psimple(G_1) \Psimple(G')\big(\tilde f(G_1)-f(G')\big)^2 \sum_{k=1}^{\cconst} \frac{(d-1)^{2k}}{2^k k!}.
\end{align*}
It remains to use \eqref{eq: prob-simple}, together with our assumption on $\cconst$, to finish the proof. 
\end{proof}

We are now ready to prove the main statement of this section which provides the desired comparison between the variance of $f$ and its extension. 
\begin{theor}[Variance comparison]\label{th: variance comparison}
Let $f:\, \BipGSet_n(d)\to \R$ be a function on $\BipGSet_n(d)$ and let $\tilde f$ its extension. 
Then, with probability at least $c$ with respect to the randomness of $\tilde f$, we have
$$
\Var_{\Pconfig}(\tilde f)\geq c\,\Var_{\Psimple}(f),
$$
where $c>0$ is a universal constant.
\end{theor}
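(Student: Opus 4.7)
The plan is to split into two cases depending on whether condition~\eqref{eq: condition-blow up-def} from Definition~\ref{def: extension} holds for $f$. Before doing so, I observe that the extension map $f\mapsto \tilde f$ commutes with constant shifts, so without loss of generality I may assume $\Exp_{\Psimple}f=0$. Throughout I use the representation $\Var_{\Pconfig}(\tilde f)=\frac{1}{2}\sum_{G_1,G_2}\Pconfig(G_1)\Pconfig(G_2)(\tilde f(G_1)-\tilde f(G_2))^2$ and lower bound the variance by restricting the sum to one of two favorable regimes chosen according to the case.

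In Case~$1$, when \eqref{eq: condition-blow up-def} holds, I restrict the variance sum to $G_1,G_2\in\categ([1,\cconst])$ and apply Lemma~\ref{lem: variance-multigraphs-extension}, which produces (with probability at least $c$) a lower bound of the form $\frac{c}{2}\sum_{G',G''\in\Expset(\zconst),\,\tilde d(G',G'')\geq\gconst+8\cconst}\Psimple(G')\Psimple(G'')(f(G')-f(G''))^2$. Since \eqref{eq: condition-blow up-def} directly asserts $\sum_{G',G''\in\Expset(\zconst)}\Psimple(G')\Psimple(G'')(f(G')-f(G''))^2\geq 2^{-7}\Var_{\Psimple}(f)$, it is enough to show that the piece excluded by the constraint $\tilde d(G',G'')<\gconst+8\cconst$ is negligible. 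Using $(f(G')-f(G''))^2\leq 2(f(G')^2+f(G'')^2)$ and symmetry, the excluded part is at most $4\Var_{\Psimple}(f)\cdot\max_{G'\in\BipGSet_n(d)}\Psimple\bigl(\{G''\in\BipGSet_n(d):\tilde d(G',G'')<\gconst+8\cconst\}\bigr)$, so the task reduces to showing this maximum is smaller than, say, $2^{-10}$.

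This last step is the main technical obstacle of the proof. It follows from a direct counting estimate: a simple graph $G''$ with $\tilde d(G',G'')=2k$ is obtained from $G'$ by deleting $k$ edges and inserting $k$ non-edges, which gives the crude bound $\binom{nd}{k}\binom{n^2}{k}\leq\bigl(e^2n^3d/k^2\bigr)^{k}$. Summing over $k\leq(\gconst+8\cconst)/2=O(n/d)$ (using $d^2\log n\ll n/d$ under the assumption $d\leq n^{1/1143}$), the worst term is at $k\sim n/(2d)$ and evaluates to $(4e^2 nd^3)^{n/(2d)}$. Combining with Lemma~\ref{lem: nice-multigraph-prob and transition} and Stirling, $|\BipGSet_n(d)|\gtrsim(n/(ed))^{nd}$, and the logarithm of the ratio is of order $n\log n\bigl(1/d-d/2\bigr)$, which is very negative in our regime. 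Thus the ratio tends to zero fast, well below any fixed constant for $n$ large.

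In Case~$2$, when \eqref{eq: condition-blow up-def} fails, Lemma~\ref{l: 972-987-9587} gives $\sum_{G\in\BipGSet_n(d)\setminus\Expset(\zconst)}\Psimple(G)f(G)^2\geq (1+2^{-6})^{-1}\Var_{\Psimple}(f)$, and by definition $\tilde f=f/\sqrt{\Pconfig(\BipGSet_n(d))}$ on that set. Restricting the variance sum to $G_1\in\BipGSet_n(d)$, $G_2\in\categ([1,\cconst])$ and using symmetry, I apply Lemma~\ref{lem: variance-multigraphs-extension2}, then restrict further to $G_1\in\BipGSet_n(d)\setminus\Expset(\zconst)$ and $G'\in\Expset(\zconst)$. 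Expanding $(\tilde f(G_1)-f(G'))^2$, the diagonal term contributes $\frac{\Psimple(\Expset(\zconst))}{\Pconfig(\BipGSet_n(d))}\sum_{\BipGSet_n(d)\setminus\Expset(\zconst)}\Psimple(G_1)f(G_1)^2$, which after multiplication by the factor $c\Pconfig(\BipGSet_n(d))$ from the lemma gives at least $c\Var_{\Psimple}(f)/3$. The cross term equals $\frac{2\bigl(\sum_{\Expset(\zconst)}\Psimple f\bigr)^2}{\sqrt{\Pconfig(\BipGSet_n(d))}}\geq 0$, since $\sum_{\Expset(\zconst)}\Psimple f=-\sum_{\BipGSet_n(d)\setminus\Expset(\zconst)}\Psimple f$ by $\Exp_{\Psimple}f=0$, so it only improves the estimate. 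Combining the two cases, each of which inherits the ``probability at least $c$'' guarantee from its respective lemma, yields the theorem with a universal constant.
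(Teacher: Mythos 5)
Your proof is correct and follows essentially the same route as the paper's: the same dichotomy on \eqref{eq: condition-blow up-def}, with Lemma~\ref{lem: variance-multigraphs-extension} plus a counting estimate against $|\BipGSet_n(d)|$ for the first case, and Lemma~\ref{lem: variance-multigraphs-extension2} combined with Lemma~\ref{l: 972-987-9587} for the second. The only (harmless) deviations are your counting bound $\binom{nd}{k}\binom{n^2}{k}$ for distance $2k$ in place of the paper's $\binom{n^2}{k}$ for distance $k$, and your observation that the cross term is a nonnegative square by the mean-zero normalization, which is slightly cleaner than the paper's Cauchy--Schwarz bound costing an extra $\frac{c}{4}\Var_{\Psimple}(f)$.
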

\begin{proof}
As before, we suppose that the mean of $f$ is zero.
First, assume that \eqref{eq: condition-blow up-def} is satisfied i.e. 
$$
\sum_{G_1,G_2\in \Expset(\zconst)}\Psimple(G_1)\Psimple(G_2) (f(G_1)-f(G_2))^2 
\geq \frac{1}{2^7}\Var_{\Psimple}(f).
$$
Note that using the Cauchy--Schwarz inequality, we obtain
$$
\sum_{\underset{\tilde d(G_1,G_2)< \gconst +8\cconst}{G_1,G_2\in  \BipGSet_n(d)}} \Psimple(G_1)\Psimple(G_2)\big(f(G_1)-f(G_2)\big)^2
\leq 4\sum_{\underset{\tilde d(G_1,G_2)< \gconst +8\cconst}{G_1,G_2\in  \BipGSet_n(d)}} \Psimple(G_1)\Psimple(G_2) f(G_1)^2,
$$
where $\tilde d(G_1,G_2)=\vert\{(i,j)\in [n]\times [n]:\, \Adj(G_1)_{ij}\neq \Adj(G_2)_{ij}\}\vert$. 
Now, note that given $G_1\in \BipGSet_n(d)$ and $k< \gconst +8\cconst$, there are at most ${n^2 \choose k}$ graphs $G_2\in \BipGSet_n(d)$ satisfying $\tilde d(G_1,G_2)=k$.  
Therefore, for $n$ sufficiently large, we deduce from the above that
\begin{align*}
\sum_{\underset{\tilde d(G_1,G_2)< \gconst +8\cconst}{G_1,G_2\in  \BipGSet_n(d)}} \Psimple(G_1)\Psimple(G_2)\big(f(G_1)-f(G_2)\big)^2
&\leq  4\Var_{\Psimple}(f) \sum_{k< \gconst +8\cconst} \frac{{n^2\choose k}}{\vert\BipGSet_n(d)\vert}\\
&\leq \frac{1}{2^8} \Var_{\Psimple}(f),
\end{align*}
where we made use of \eqref{eq: size-simple} and the choice of parameters in \eqref{eq: parameters} and \eqref{eq: assumption-d}. 
Putting the above estimates together, we get
$$
\sum_{\underset{\tilde d(G',G'')\geq \gconst +8\cconst}{G',G''\in \Expset(\zconst)}}\Psimple(G')\Psimple(G'') (f(G')-f(G''))^2 
\geq \frac{1}{2^8}\Var_{\Psimple}(f).
$$
With this in hand, a direct application of Lemma~\ref{lem: variance-multigraphs-extension} yields the desired result. 

Now, suppose that \eqref{eq: condition-blow up-def} is violated.
By Lemma~\ref{l: 972-987-9587}, we have
%
\begin{equation}\label{eq1: proof-lower bound variance}
\sum_{G'\in \Expset(\zconst)}\Psimple(G')f(G')^2
\leq \frac{1}{2^6}\Var_{\Psimple}(f),
\end{equation}
or, equivalently,
\begin{equation}\label{eq2: proof-lower bound variance}
\sum_{G_1\in \BipGSet_n(d)\setminus  \Expset(\zconst)}\Psimple(G_1) f(G_1)^2\geq \frac{2^6-1}{2^6}\Var_{\Psimple}(f). 
\end{equation}
Applying Lemma~\ref{lem: variance-multigraphs-extension2}, we have that with probability at least $c$ 
\begin{align*}
\Var_{\Pconfig}(\tilde f)&\geq c\,\Pconfig\big(\BipGSet_n(d)\big)\sum_{\underset{G'\in \Expset(\zconst)}{G_1\in \BipGSet_n(d)}} \Psimple(G_1)\Psimple(G') \big(\tilde f(G_1)-f(G')\big)^2\\
&\geq c\,\Pconfig\big(\BipGSet_n(d)\big)\sum_{\underset{G'\in \Expset(\zconst)}{G_1\in \BipGSet_n(d)\setminus  \Expset(\zconst)}} \Psimple(G_1)\Psimple(G') \big(\tilde f(G_1)-f(G')\big)^2,
\end{align*}
for some universal constant $c>0$. Expanding the square and using the definition of $\tilde f$, we get that with probability at least $c$ 
\begin{align*}
\Var_{\Pconfig}(\tilde f)&\geq c\,\Psimple\big(\Expset(\zconst)\big)\sum_{G_1\in \BipGSet_n(d)\setminus  \Expset(\zconst)}\Psimple(G_1) f(G_1)^2 \\
&\quad -2c\,\sqrt{\Pconfig\big(\BipGSet_n(d)\big)}\sum_{G_1\in \BipGSet_n(d)\setminus  \Expset(\zconst)}\Psimple(G_1) f(G_1) \sum_{G'\in \Expset(\zconst)} \Psimple(G')f(G')\\
&\geq c\,\frac{2^6-1}{2^6} \Psimple\big(\Expset(\zconst)\big)\Var_{\Psimple}(f) -\frac{c}{4} \Var_{\Psimple}(f),
\end{align*}
where the last inequality follows after using Cauchy-Schwarz inequality together with  \eqref{eq1: proof-lower bound variance} and \eqref{eq2: proof-lower bound variance}. 
The proof is finished by noting that $\Psimple\big(\Expset(\zconst)\big)\geq 1-2^{-8}$ for $n$ sufficiently large. 
\end{proof}

\medskip

Note that Theorem~\ref{th: poincare} can be obtained as a simple combination of Theorems~\ref{th: main comparison dirichlet} and~\ref{th: variance comparison}.
Indeed, applying Markov's inequality, we get that with probability at least $c/2$,
$$
\Var_{\Pconfig}(\tilde f)\geq c\,\Var_{\Psimple}(f)\quad\mbox{ and }\quad
\Dir_{\Pconfig}(\tilde f, \tilde f)\leq \tilde C\,\Dir_{\Psimple}(f,f)+\tilde C\,\frac{d}{n^2} \Var_{\Psimple}(f)
$$
for some universal constant $\tilde C>0$, whereas, according to Proposition~\ref{prop: poincare-logsob-config}, we have (deterministically)
$$
\Var_{\Pconfig}(\tilde f)\leq C'\,nd\,\Dir_{\Pconfig}(\tilde f, \tilde f).
$$
This immediately implies
$$
\Var_{\Psimple}(f)\leq \frac{C'}{c}\,nd\,\bigg(\tilde C\,\Dir_{\Psimple}(f,f)+\tilde C\,\frac{d}{n^2} \Var_{\Psimple}(f)\bigg),
$$
and Theorem~\ref{th: poincare} follows.

\section{Logarithmic Sobolev inequality}\label{sec: log-sob}

The goal of this section is to prove Theorem~\ref{th: log-sob}. 
As discussed in the introduction, when $d$ is constant, the two probability measures $\Psimple$ and $\Pconfig$ are comparable 
and one can potentially use the techniques developed by Diaconis and Saloff-Coste \cite{DS-Comparison,DS-Comparison2}. 
More precisely, for any $G\in \BipGSet_n(d)$, we have 
\begin{equation}\label{eq: compare-proba-last}
\Pconfig(G)=\Pconfig\big(\BipGSet_n(d)\big)\Psimple(G)\geq \frac12 e^{-\frac{(d-1)^2}{2}} \Psimple(G),
\end{equation}
where we made use of \eqref{eq: prob-simple}. 
With this comparison in hand, one can readily apply \cite[Theorem~4.1.1]{saloff} provided we associate to any $f:\, \BipGSet_n(d)\to \R$ 
an extension to $\ConfBipGSet_n(d)$ such that the corresponding Dirichlet forms are comparable
(in this case, we have to work with {\it actual} function extensions, so we cannot directly use our function $\tilde f$ since
it may differ from $f$ on a small subset of simple graphs). 
Given $f:\, \BipGSet_n(d)\to \R$, we will define $\hat{f}:\, \ConfBipGSet_n(d)\to \R$ as follows:
$$
\hat{f}(G)=\begin{cases} 
f(G) & \text{ if $G\in \BipGSet_n(d)$};\\
\tilde f(G)& \text{ if $G\in \ConfBipGSet_n(d)\setminus \BipGSet_n(d)$},
\end{cases}
$$
where $\tilde f$ is the function from Definition~\ref{def: extension}. Thus, the only difference of $\hat f$ from $\tilde f$
is that we make sure the function $f$ and its extension agree on $\BipGSet_n(d)$.
The statement of \cite[Theorem~4.1.1]{saloff} asserts that if $\Dir_{\Pconfig}(\hat{f},\hat{f})\leq A\Dir_{\Psimple}(f,f)$ (for some realization of $\hat{f}$), then combined with \eqref{eq: compare-proba-last} this implies that 
$$
\alpha_{LS}(Q_u)\leq 2Ae^{\frac{(d-1)^2}{2}}\alpha_{LS}(Q_c), 
$$
where $\alpha_{LS}(Q_u)$ and $\alpha_{LS}(Q_c)$ denote the log-Sobolev constants of the switch chain on $\BipGSet_n(d)$ and $\ConfBipGSet_n(d)$ respectively. 
Thus, Theorem~\ref{th: log-sob} follows from Proposition~\ref{prop: poincare-logsob-config} once we determine the constant $A$ in the above comparison. 
The details on obtaining the latter are indicated in the following lemma which completes the proof. 

\begin{lemma}
There exists a universal constant $C>0$ such that the following holds. 
Let $f:\, \BipGSet_n(d)\to \R$ and let $\hat{f}$ be its (random) extension defined above. 
Then, we have $\Exp\, \Dir_{\Pconfig}(\hat{f},\hat{f})\leq  C\Dir_{\Psimple}(f,f)$. 
\end{lemma}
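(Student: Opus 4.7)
Proof plan. The strategy is to revisit the analysis of Section~\ref{sec: Dirichlet} with $\hat f$ in place of $\tilde f$, exploiting that for fixed $d$, $\Pconfig\big(\BipGSet_n(d)\big)\geq\frac{1}{2}e^{-(d-1)^2/2}$ is bounded below by a positive constant (see \eqref{eq: prob-simple}) and $\cconst=O(\log n)$. Under these conditions, the rescaling of $\tilde f$ on $\BipGSet_n(d)\setminus\Expset(\zconst)$ used in Definition~\ref{def: extension} to create variance becomes unnecessary, and replacing $\tilde f$ by $\hat f$ (which removes the rescaling) only simplifies the estimates of Section~\ref{sec: Dirichlet}.

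I would decompose $\Dir_{\Pconfig}(\hat f,\hat f)$ according to the partition of adjacent pairs $\Gamma_1,\dots,\Gamma_6$ introduced in Section~\ref{sec: Dirichlet}. The $\Gamma_1$ contribution is bounded by $2\Dir_{\Psimple}(f,f)$ directly, since $\hat f=f$ on $\BipGSet_n(d)$, $Q_c=Q_u$ there, and $\Pconfig(G)\leq\Psimple(G)$. The $\Gamma_6$ contribution vanishes since $\hat f\equiv\Exp_{\Psimple}f$ on $\Ugly(\cconst)$. The contributions from $\Gamma_3$, $\Gamma_4$ and $\Gamma_5$ are bounded exactly as in Lemmas~\ref{lem: pre-upper-bound-dir} and~\ref{lem: upper-bound-dir-good}, because $\hat f=\tilde f$ on $\ConfBipGSet_n(d)\setminus\BipGSet_n(d)$.

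The only genuine modification is the $\Gamma_2$ contribution. Using $\hat f(G_1')=f(G_1')$, $\Exp\hat f(G_2')=h(G_2')$, $\Var\hat f(G_2')=w(G_2')$ together with Jensen's inequality applied to the defining averages of $h$ and $w$, I get
$$
\Exp(\hat f(G_1')-\hat f(G_2'))^2\leq\frac{2}{|\SNeigh(G_2')|}\sum_{G\in\SNeigh(G_2')}(f(G_1')-f(G))^2+\frac{1}{|\SNeigh(G_2')|^2}\sum_{G,G'\in\SNeigh(G_2')}(f(G)-f(G'))^2,
$$
which is strictly cleaner than the corresponding bound in the proof of Lemma~\ref{lem: Dirichlet-gamma1}, as no rescaling-induced $f(G_1')^2$ term appears. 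Summing as in that lemma and combining with the previous paragraph yields an analogue of Lemma~\ref{lem: upper-bound-dir-good} without the residual $Cdn^{-2}\Var_{\Psimple}(f)$ term originating from the rescaling.

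The remaining sums --- weighted differences over $s$--neighborhoods of pairs of multigraphs in $\categ([1,\cconst])$, within single $s$--neighborhoods, and type~$3$ configurations --- are controlled by the connection machinery of Section~\ref{sec: connections} via Lemmas~\ref{lem: upper bound-small r-2},~\ref{lem: upper bound-small r-1},~\ref{lem: multigraph with large r-2} and~\ref{lem: dir-type 3}, giving a combined bound of $C\Dir_{\Psimple}(f,f)+Cdn^{-2}\Var_{\Psimple}(f)$. For the surviving variance term I invoke Theorem~\ref{th: poincare}: $\Var_{\Psimple}(f)\leq Cnd\,\Dir_{\Psimple}(f,f)$, so that $\frac{d}{n^2}\Var_{\Psimple}(f)\leq \frac{Cd^2}{n}\Dir_{\Psimple}(f,f)$, which is negligible for fixed $d$. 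The main delicate point, exactly as in Section~\ref{sec: Dirichlet}, is controlling the $\Gamma_4$ contribution through connections; however, for constant $d$ the polynomial factors in $\cconst=O(\log n)$ appearing in Propositions~\ref{prop: number-tuples1} and~\ref{prop: number-tuples} are absorbed by the $n^{-1}$ factor without requiring any sharp optimization.
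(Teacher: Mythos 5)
Your proposal is correct and follows essentially the same route as the paper: decompose $\Dir_{\Pconfig}(\hat f,\hat f)$ over $\Gamma_1,\dots,\Gamma_6$, observe that only the $\Gamma_1$ and $\Gamma_2$ treatments change (and in fact simplify, since $\hat f=f$ on $\BipGSet_n(d)$ makes the rescaling argument of Lemma~\ref{lem: Dirichlet-gamma1} unnecessary), reuse the connection machinery for the remaining terms to reach $C\Dir_{\Psimple}(f,f)+C\frac{d}{n^2}\Var_{\Psimple}(f)$, and conclude via Theorem~\ref{th: poincare} together with $d$ being constant. One small inaccuracy: the $C\frac{d}{n^2}\Var_{\Psimple}(f)$ term in Lemma~\ref{lem: upper-bound-dir-good} stems from the $\Gamma_5$ contribution (the $h(G_1')^2$ terms), not from the rescaling, so it does not disappear when $\tilde f$ is replaced by $\hat f$ — but this is harmless, since your final bound retains the variance term anyway and absorbs it exactly as the paper does.
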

\begin{proof}
The proof is actually identical to the one in Section~\ref{sec: Dirichlet}. One starts similarly by partitioning the pairs of adjacent multigraphs into 
the sets $\Gamma_i$, $i=1,\ldots,6$ (see beginning of Section~\ref{sec: Dirichlet}). Since $\hat{f}$ and $\tilde f$ coincide on $\ConfBipGSet_n(d)\setminus \BipGSet_n(d)$, then one only needs to modify the treatment of the sets $\Gamma_1$ and $\Gamma_2$ which is done in Lemma~\ref{lem: Dirichlet-gamma1}.  
Since $\hat{f}$ and $f$ coincide on $\BipGSet_n(d)$, then one only needs to bound the corresponding sum over $\Gamma_2$. A quick look at Lemma~\ref{lem: Dirichlet-gamma1} reveals that the corresponding bound for the sum over $\Gamma_2$ follows from \eqref{eq: dirichlet-gamma2} since $\hat{f}$ coincides with $f$ on $\BipGSet_n(d)$. 
Therefore, we deduce similarly to Theorem~\ref{th: main comparison dirichlet} that 
$$
\Exp\, \Dir_{\Pconfig}(\hat{f},\hat{f})\leq C\Dir_{\Psimple}(f,f)+ C\frac{d}{n^2} \Var_{\Psimple}(f),
$$
for some universal constant $C$. Applying Theorem~\ref{th: poincare} and using that $d$ is constant, the result follows. 
\end{proof}

\end{document}